\DeclareRobustCommand{\rvdots}{%
    \vbox{
        \baselineskip4\p@\lineskiplimit\z@
        \kern-\p@
        \hbox{.}\hbox{.}\hbox{.}
    }}
\newcommand{\md}{\operatorname{mod}}
\newcommand{\Coh}{\operatorname{Coh}}
\newcommand{\Hom}{\operatorname{Hom}}
\newcommand{\Ext}{\operatorname{Ext}}
\newcommand{\End}{\operatorname{End}}
\newcommand{\ramo}{\text{\textasteriskcentered}}
\newcommand{\dime}{\operatorname{dim}}
\newcommand{\menas}{\scalebox{0.75}[1.0]{\( - \)}}
\begin{document}

\title{PIECEWISE HEREDITARY INCIDENCE ALGEBRAS OF DYNKIN AND EXTENDED DYNKIN TYPE}

\author{EDUARDO DO N. MARCOS}
\address{Dto de Matem\'atica, Instituto de Matem\'atica e Estat\'istica, Universidade de S\~ao Paulo, Rua do Mat\~ao 1010, Cidade Universit\'aria\\ S\~ao Paulo-SP, CEP 05508-090, Brasil}
\email{enmarcos@ime.usp.br}

\author{MARCELO MOREIRA}
\address{Dto de Matem\'atica, 
Instituto de Ci\^encias Exatas, Universidade Federal de Alfenas, Campus Sede, 
Rua Gabriel Monteiro da Silva 700, Centro\\
Alfenas-MG, CEP 37130-001, Brasil}
\email{marcelo.moreira@unifal-mg.edu.br}

\maketitle

\begin{history}
\received{(Day Month 2019)}
\accepted{(Day Month Year)}
\comby{[editor]}
\end{history}

\begin{abstract}
Let $K\Delta$ be the incidence algebra associated with a finite poset $(\Delta,\preceq)$ over the algebraically closed field $K$. We present a study of incidence algebras $K\Delta$ that are piecewise hereditary, which we denominate PHI algebras. 

We describe the quiver with relations of the PHI algebras of Dyn\-kin type and introduce a new family of PHI algebras of extended Dynkin type, which we call ANS family, in reference to Assem, Neh\-ring, and Skowro\'nski. In this description, the important method was the one of cutting sets on trivial extensions, inspired by this we made  a computer program which shows exactly the cutting sets on the given trivial extension that result on incidence algebras.
\end{abstract}

\keywords{incidence algebra; piecewise hereditary algebra; trivial extension.}

\ccode{Mathematics Subject Classification 2010: 16D10 16G10 16G20}

\section{Introduction}

Throughout the paper, $K$ denotes an algebraically closed field. All algebra will be finite dimensional basic associative $K$-algebra. Using Gabriel's theorem we will assume all algebras to be of the form $KQ/I$, where $Q$ is a finite quiver and $I$ is an admissible ideal. All modules will be finite dimensional right modules.

Incidence algebras were introduced in the mid-1960s as a natural way of studying some combinatorial problems. In the representation theory of finite dimensional algebras, the incidence algebras have been the subject of many investigations (see, for instance, \cite{lad1}, \cite{red1}, \cite{ass-pla-red-tre} and \cite{ass-cas-mar-tre}). We will focus the study on 
incidence algebras $K\Delta$ associated with a finite poset $\Delta$ over $K$.
We remark that $K\Delta$ is isomorphic to the algebra $KQ/I$, where the quiver $Q$ is the Hasse diagram and $I$ is the ideal generated by all commuting relations, i.e. the difference of any pair of parallel paths are in $I$.

The classification of algebras is one of the most important problems in the representation of finite-dimensional algebras. In the 1980s, Assem and Happel began to classify the piecewise hereditary algebras, see \cite{ass-hap}. Many mathematicians contributed to the classification of piecewise hereditary algebras such as Keller \cite{kel}, Fern\'andez \cite{fern}, among others.

In the article \cite{mar-mor}, we consider the incidence algebras that are piecewise hereditary, which we called PHI algebras, \emph{piecewise hereditary incidence algebras}. The purpose of this paper is to give a description of some classes of the PHI algebras. We describe the PHI algebras through their quivers with relations. 

Let $\mathcal{A}$ and $\mathcal{B}$ be an abelian categories. In this paper the notation $\mathcal{A} \cong' \mathcal{B}$ means that $\mathcal{A}$ is derived equivalently to $\mathcal{B}$, that is $D^b(\mathcal{A})$ and $D^b(\mathcal{B})$ are equivalent as triangulated categories, we also use the notation 
$D^b (A)$ for the category
$D^b(\md A)$, where $\md A$ denotes the category of finitely generated modules over a finite dimensional algebra $A$.

The paper is organized as follows:

Section \ref{sec2} is devoted to fixing the notation and we briefly recall some definitions and results. In Section \ref{sec3}, we characterize the PHI algebras of type $\mathbb{A}_n$ and the PHI algebras of type $\widetilde{\mathbb{A}}_n$. The Sections \ref{sec4} and \ref{sec5} are dedicated to the description of PHI algebras such that $K\Delta \cong' KQ$ where $\overline{Q}=\mathbb{D}_n$ and $\overline{Q}=\mathbb{E}_6$, respectively.

In Sections \ref{sec6} and \ref{sec7}, we give a presentation by quivers with relations of PHI algebras such that $K\Delta \cong' KQ$ where $\overline{Q}=\mathbb{E}_7$ and $\overline{Q}=\mathbb{E}_8$, respectively. In Section \ref{sectame}, we introduce a family PHI algebras of extended Dynkin type called the ANS family, in reference to Assem, Nehring and Skowro\'nski. In this description of the PHI algebras, the important method was that of  cutting sets in trivial extensions, introduced by Fern\'andez and Platzeck, \cite{fern-plat}, this method inspired the elaboration of a computer program that produces exactly the cutting sets in the given trivial extension that result in incidence algebras. 

We also give the code of this computer program in the appendix,the code was written in java script.

This is part of the PhD of the second named author, he thanks CAPES(Brazil), for financial help during this time of the PhD.

\section{Preliminaries} \label{sec2}
In this Section, for the sake of completeness, we will recall some definitions. The reader should see the references for more detail.

We begin with the definition of incidence algebras. There are several equivalent ways of defining incidence algebras of finite posets, we give one of them below. 

\begin{definition}[incidence algebra]
Let $(\Delta,\preceq)$ be a poset with $n$ elements. The incidence algebra $K\Delta$ is a quotient of the path algebra of the following quiver $Q$. 
The set of vertices, $Q_0$, is in bijection with the elements of the poset $\Delta$ and the set of arrows $Q_1$ is defining by declaring that there is an arrow $\alpha$ from a vertex $a$ to a vertex $b$, whenever $a \preceq b$ and there is no $a \preceq c \preceq b$, with $c \neq a$ and $c \neq b$.
Let $I$ be the ideal generated by all commutativity relations $\gamma - \gamma'$, with $\gamma$ and $\gamma'$ parallel paths. 
The incidence algebra $K\Delta$ is $KQ/I$.
\end{definition}

The quiver $Q$ of the incidence algebra, in the former definition, is also called the Hasse quiver of the poset.

We are going to assume always that our incidence algebras are connected, that is the Hasse quiver is connected.

For more details in the subject of incidence algebras we refer to \cite{lou} and \cite{bau-vil}. 

We want to define next the notion of piecewise hereditary algebras. In order to do this we need to introduce, very briefly, some previous notions. 

Given an abelian category $\mathcal{A}$ we denote $D^b(\mathcal{A})$ its bounded derived category, as usual if $A$ is a $K$-algebra then $D^b(A)$ denotes the bounded derived category of $\md A$.
  
An abelian category $\mathcal{H}$ is called \emph{hereditary} if the extension groups $\Ext_\mathcal{H}^n (\!X,Y\!)$ are zero for all $n \geq 2$ for any pair of objects $X$ and $Y$ of $\mathcal{H}$.

\begin{remark}
All hereditary categories considered in this paper have splitting idempotents, finite dimension $\Hom$ spaces, and tilting object. See below the definition of tilting object.
\end{remark}

\begin{definition}[piecewise hereditary algebra]
We say that $A$ is piecewise hereditary algebra of type $\mathcal{H}$ if there exists a hereditary abelian category $\mathcal{H}$, with splitting idempotents, finite dimension $\Hom$ spaces,
such that $D^b(\mathcal{A})$ is triangle-equivalent to the bounded derived category $D^b(\mathcal{H})$.
\end{definition}

For more details in the subject of piecewise hereditary algebra we refer to \cite{hap-rei-sma}, \cite{che-kra}, \cite{hap-ric-scho}, \cite{hap2}, \cite{lad2}, \cite{len}, \cite{len-sko}, \cite{bar-len1}, \cite{bar-len2}, \cite{hug-koe-liu}, \cite{hap-zac2}, \cite{meu} and \cite{alv-lem-mar}.

The definition of tilting modules inspired the definition of \emph{tilting object} that follows:

\begin{definition}[tilting object]
Let $\mathcal{H}$ be a hereditary abelian $K$-category. An object $T \in \mathcal{H}$ is called tilting if
\begin{asparaenum} [\itshape i)]
\item $\Ext^1_{\mathcal{H}} (T,T)=0$, and
\item for every $X \in \mathcal{H}$ the condition $\Ext^1_{\mathcal{H}} (T,X)=0=Hom_{\mathcal{H}}(T,X)$ implies that $X=0$.
\end{asparaenum}
\end{definition}

Let $A$ piecewise hereditary algebra of type $\mathcal{H}$. It follows from Rickard's theorem \cite{ric}, the existence of a tilting object $T$ in $D^b(\mathcal{H})$ such that $A = \End T$. 

Given a sequence  $p_1,\dotsc,p_n$ of positive integers, $\mathbb{X}(p_1,\dotsc,p_n)$, will denote the  weighted projective line of type $p_1,\dotsc,p_n$, in the sense of \cite{gei-len}, and $\Coh \mathbb{X}$  the category of coherent sheaves over $\mathbb{X}(p_1,\dotsc,p_n)$. 
Let $Q$ be a finite, connected quiver without oriented cycles and let $KQ$ denote the path algebra of $Q$.
We state one of the most important theorems about piecewise hereditary algebras.

\begin{theorem}[Happel \cite{hap2}]
Let $\mathcal{H}$ be an abelian hereditary connected $K$-catego\-ry with tilting object. Then $\mathcal{H}$ is derived equivalent to $\md KQ$ or derived equivalent to $\Coh \mathbb{X}$ for some weighted projective line $\mathbb{X}$.
\end{theorem}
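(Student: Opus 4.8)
This is Happel's structure theorem, and the plan is to reduce to a finite-dimensional endomorphism algebra, extract Serre duality on the derived category, and then feed the result into the classification of hereditary categories with Serre duality. I describe the steps.

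First I would fix a tilting object $T\in\mathcal{H}$ and set $A=\End_{\mathcal{H}}(T)$; since the $\Hom$-spaces of $\mathcal{H}$ are finite dimensional, $A$ is a finite-dimensional $K$-algebra. By the tilting theorem for bounded derived categories (Rickard \cite{ric}) the functor $\mathbf{R}\Hom_{\mathcal{H}}(T,-)$ induces a triangle equivalence $D^b(\mathcal{H})\simeq D^b(\md A)$, so $A$ is piecewise hereditary of type $\mathcal{H}$. Because $\mathcal{H}$ is hereditary, every object $X$ of $D^b(\mathcal{H})$ splits as $\bigoplus_i H^i(X)[-i]$ with $H^i(X)\in\mathcal{H}$, and $\Hom_{\mathcal{H}}(U,V[n])=0$ for $n\notin\{0,1\}$; transporting this through the equivalence, a standard argument gives $\gl A<\infty$ (indeed $\gl A\le 2$). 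Consequently $D^b(\md A)$, hence $D:=D^b(\mathcal{H})$, admits a Serre functor and Auslander--Reiten triangles, i.e.\ $\mathcal{H}$ is a connected $\Ext$-finite hereditary abelian $K$-category with Serre duality.

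Next I would distinguish cases according to whether $\mathcal{H}$ has a nonzero projective object. If it does, then using $\Ext$-finiteness together with the tilting object one shows that $\mathcal{H}$ has a projective generator, so $\mathcal{H}\simeq\md H$ for a finite-dimensional $K$-algebra $H$; heredity forces $\gl H\le 1$, and since $K$ is algebraically closed and $\mathcal{H}$ is connected, Gabriel's theorem gives $H\cong KQ$ for a finite connected quiver $Q$ without oriented cycles, so $\mathcal{H}\simeq\md KQ$ and we land in the first alternative. If $\mathcal{H}$ has no nonzero projective object, then $\mathcal{H}$ is ``sheaf-like'': one uses the Serre functor to locate, inside $\mathcal{H}$, a tilting bundle and a structure sheaf and to realise $\mathcal{H}$ as the category of coherent sheaves on a smooth projective curve carrying finitely many weighted points --- this is the Reiten--Van den Bergh classification of noetherian $\Ext$-finite hereditary abelian categories with Serre duality. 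The hypothesis that $\mathcal{H}$ carries a tilting object is then used to exclude every underlying curve of positive genus (on an elliptic or higher-genus curve, commutative or not, the Euler form is too degenerate to admit a tilting object), leaving precisely the genus-zero weighted curves, i.e.\ $\mathcal{H}\simeq\Coh\mathbb{X}$ for a weighted projective line $\mathbb{X}=\mathbb{X}(p_1,\dots,p_n)$. Passing to $D^b$ gives the second alternative; note that in both cases one in fact recovers $\mathcal{H}$ up to equivalence, not merely up to derived equivalence.

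The formal part --- Step~1 together with the case in which $\mathcal{H}$ has a nonzero projective --- is routine once the tilting and Serre-duality formalism is in place. The main obstacle is the projective-free case: recognising a sheaf-like hereditary category with a tilting object as exactly a weighted projective line. That is where the real work lies: extracting noetherianity, pinning down the generic object and the finitely many exceptional tubes, matching the weight sequence $(p_1,\dots,p_n)$, and carrying out the genus computation that eliminates honest curves. Connectedness is used throughout to prevent $\mathcal{H}$ from decomposing as a product (e.g.\ into a family of homogeneous tubes), and it is the presence of a tilting object in $\mathcal{H}$ itself --- not merely of a tilting complex in $D^b(\mathcal{H})$ --- that forces the heart into one of these two families.
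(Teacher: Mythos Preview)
The paper does not prove this statement at all: it is quoted as Happel's theorem with a reference to \cite{hap2} and used as a black box in the preliminaries. There is therefore no ``paper's own proof'' to compare your proposal against.

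For what it is worth, your outline is a reasonable summary of how this theorem is actually established in the literature (Happel's original argument, combined with the later Reiten--Van den Bergh classification of hereditary noetherian categories with Serre duality). But for the purposes of this paper nothing beyond the citation is expected.
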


An algebra $A$ is called a \emph{piecewise hereditary algebras of quiver type} (or of type $Q$) or \emph{of sheaf type} if $A \cong' KQ$ for some quiver $Q$ or $A \cong' \Coh \mathbb{X} $ for some weighted projective line $\mathbb{X}$, respectively.

Observe that an algebra can be, at the same time, of quiver and sheaf type.

In order to study the PHI algebras of quiver type, it is enough to characterize the iterated tilted incidence algebras of type $Q$, since, according to the following theorem:
\begin{theorem}[Happel-Rickard-Schofield \cite{hap-ric-scho}] \label{HRS}
Let $A$ be a finite dimensional basic associative $K$-algebra and $Q$ be a finite quiver with no oriented cycles. Then $A$ is piecewise hereditary of type $Q$ if and only if $A$ is iterated tilted of type $Q$.
\end{theorem}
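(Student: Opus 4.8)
The statement is a biconditional, and I would prove the two implications separately: the forward one is soft, the backward one carries all the content.

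\emph{Iterated tilted of type $Q$ $\Rightarrow$ piecewise hereditary of type $Q$.} I would dispatch this formally. By definition there is a chain $KQ = A_0, A_1, \dots, A_m = A$ with $A_{j+1} = \End_{A_j}(T^{(j)})$ for tilting modules $T^{(j)} \in \md A_j$. Each tilting module induces a triangle equivalence $D^b(A_j) \simeq D^b(A_{j+1})$; composing these gives $D^b(KQ) = D^b(\md KQ) \simeq D^b(A)$, and since $Q$ has no oriented cycles $\md KQ$ is hereditary, so $A$ is piecewise hereditary of type $Q$.

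\emph{Piecewise hereditary of type $Q$ $\Rightarrow$ iterated tilted of type $Q$.} This is the real direction. Put $H = KQ$. A triangle equivalence $D^b(A) \simeq D^b(H)$ together with Rickard's theorem (applied to $\mathcal H = \md H$, as recalled above) presents $A$ as $\End_{D^b(H)}(T)$ for a tilting complex $T$ in $D^b(H)$. Since $\gl H \le 1$, the groups $\Hom_{D^b(H)}(X, Y[k]) = \Ext^k_H(X,Y)$ vanish for $k \notin \{0,1\}$, so every bounded complex of $H$-modules splits as the direct sum of shifts of its cohomologies; hence every indecomposable object of $D^b(H)$ is a stalk complex $M[n]$ with $M$ an indecomposable $H$-module, and we may write $T \cong \bigoplus_{i=1}^r M_i[n_i]$ with, after a global shift, $0 = \min_i n_i$ and $N := \max_i n_i$. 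I would argue by induction on $N$.

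If $N = 0$ then $T$ is a module, a tilting complex that is a module is a tilting module, so $A = \End_H(T)$ is tilted of type $Q$, hence iterated tilted of type $Q$. For $N \ge 1$, the vanishing of $\Hom_{D^b(H)}(T, T[k])$ for $k \ne 0$ forces: a nonzero map $M_i \to M_j$ implies $n_i = n_j$, and a nonzero element of $\Ext^1_H(M_i, M_j)$ implies $n_j = n_i + 1$; in particular the bottom layer $\mathcal L_0 := \bigoplus_{n_i = 0} M_i$ is a rigid module with no homomorphisms to or from any other summand of $T$. The plan is to choose a tilting module $\bar T$ over the current algebra whose torsion pair $(\operatorname{Gen}\bar T, \mathcal F(\bar T))$ puts $\mathcal L_0$ in the torsion-free class and all higher layers in the torsion class; under the associated derived equivalence $\mathcal L_0$ is then displaced into degree $1$ while the layers in degrees $\ge 1$ are unmoved, so the image of $T$ is a tilting complex over an algebra one tilt away from the current one — hence still iterated tilted of type $Q$ — whose cohomology occupies only $N$ consecutive degrees. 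Applying the induction hypothesis and splicing the chains of tilts yields that $A$ is iterated tilted of type $Q$.

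The main obstacle is exactly this inductive step, for two reasons. First, one must genuinely exhibit the intermediate tilting module realizing the prescribed torsion pair; I expect to obtain it from a Bongartz-type completion of the rigid module $\mathcal L_0$, using the homological constraints above to check that the resulting torsion pair separates the layers as required — this verification is the delicate point. Second, after the first reduction the ambient algebra is no longer hereditary, so the clean stalk-complex decomposition is no longer available; one must instead run the whole induction inside the fixed triangulated category $D^b(H)$, measuring the cohomological spread of $T$ against a moving heart $\md B$ with $B$ iterated tilted of type $Q$ and arranging that each elementary tilt (an HRS-tilt of the heart along a tilting-module torsion pair) strictly decreases that spread. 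Setting up this bookkeeping correctly is where the work lies; the soft direction, the stalk decomposition over $H$, the base case, and the final assembly are all routine once it is in place.
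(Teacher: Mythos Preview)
The paper does not prove this theorem; it is stated with attribution to Happel--Rickard--Schofield \cite{hap-ric-scho} and used as a black box, so there is no proof in the paper to compare your proposal against.

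That said, your outline follows the original argument in spirit: the easy direction via composition of derived equivalences is fine, and for the hard direction the decomposition of a tilting complex over a hereditary algebra into shifted stalks, followed by an induction on the cohomological width, is exactly the strategy of \cite{hap-ric-scho}. One point you should be explicit about: the definition of \emph{iterated tilted} given in this paper requires each intermediate tilting module $T^i$ to be \emph{splitting} (every indecomposable $M$ satisfies $\Hom(T^i,M)=0$ or $\Ext^1(T^i,M)=0$), not merely tilting. Your Bongartz-completion step produces a tilting module, but you do not verify the splitting condition, and over a non-hereditary intermediate algebra this is not automatic. In the actual Happel--Rickard--Schofield argument this is handled by choosing the intermediate tilts more carefully (essentially APR-type tilts at the bottom layer), which do satisfy the splitting condition; your sketch would need to incorporate this refinement to match the definition used here.
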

We recall that an algebra $A$ is called \emph{iterated tilted} of type $Q$ if there exists a sequence of algebras $A=A_0$, $A_1$, $\ldots$, $A_n$, where $A_n$ is the path algebra $Q$, and a sequence of tilting modules $T^i_{A_i}$, for $0 \leq i < n$, such that $A_{i+1}=\End (T^i_{A_i})$, and  every $A_i$-indecomposable module $M$ satisfies $\Hom_{A_i} (T^i,M)=0$ or $\Ext^1_{A_i}(T^i,M)=0$.

\section{Type $\mathbb{A}_n$ and type $\widetilde{\mathbb{A}}_n$} \label{sec3}
We will devote this section to the study of PHI gentle algebras.

\begin{definition}[gentle algebra \cite{ass-sko3}]
Let $A$ be an algebra with acyclic quiver $Q_A$. The algebra $A \cong KQ_A/I$ is called gentle if the bound quiver $(Q_A,I)$ has the following properties:
\begin{asparaenum} [\itshape i)]
\item each point of $Q_A$ is the source and the target of at most two arrows;
\item for each arrow $\alpha$ of $Q_A$, there is at most one arrow $\beta$ and one arrow $\gamma$ such that $\alpha\beta \notin I$ and $\gamma\alpha \notin I$;
\item for each arrow $\alpha$ of $Q_A$, there is at most one arrow $\delta$ and one arrow $\zeta$ such that $\alpha\delta \in I$ and $\zeta\alpha \in I$;
\item the ideal $I$ is generated by paths of length two.
\end{asparaenum}
\end{definition}

The following proposition, is probably well known, and we will use it to give a characterization of the gentle incidence algebra. Note that to be gentle is preserved by derived
equivalence, \cite{sch-zim}.

\begin{definition} [bypass]
A bypass in a quiver $Q$ is a pair of parallel paths $(\alpha, \gamma)$ where $\alpha$ is an arrow and $\gamma$ is a path distinct from $\alpha$ and parallel to it.
\end{definition}

\begin{remark}
The quiver of an incidence algebra has no bypass.
\end{remark}

\begin{proposition}\label{moninc}
Assume that $Q$ has no bypass and $KQ/I \cong KQ/I'$, for admissible ideals $I$ and $I'$. If a path $\gamma \in I$ then $\gamma \in I'$.
\end{proposition}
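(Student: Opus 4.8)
The plan is to exploit the fact that an isomorphism $\varphi\colon KQ/I \to KQ/I'$ between algebras with the same underlying quiver $Q$ can be rectified, after composing with an inner automorphism and a change of arrows, to one that is ``filtered'' with respect to the path-length (radical) filtration. Concretely, I would first recall the standard fact that for basic algebras presented on the same quiver, any isomorphism $\varphi$ may be chosen so that $\varphi$ fixes the idempotents $e_a$ associated to the vertices (this uses that $K$ is algebraically closed, so the semisimple parts are conjugate, and we absorb the conjugation into $\varphi$). Then $\varphi$ maps $e_b (\mathrm{rad})e_a$ to $e_b(\mathrm{rad})e_a$, so each arrow $\alpha\colon a\to b$ is sent to an element $\varphi(\alpha)$ whose degree-one component (coefficient of $\alpha$ itself, the unique arrow in that slot if we are careful, or a linear combination of parallel arrows in general) is the leading term; since $Q$ has \emph{no bypass}, for any arrow $\alpha$ the only paths parallel to $\alpha$ of length $\ge 2$ would have to be detected, and the key point is that $\varphi(\alpha) = \lambda_\alpha \alpha + (\text{strictly longer parallel paths})$ with $\lambda_\alpha \in K^\times$.

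Next I would set up a degree argument. Write $\gamma = \alpha_k\cdots\alpha_1 \in I$ a path from $a$ to $b$ of length $k$. Applying $\varphi$, we get $\varphi(\gamma) = \varphi(\alpha_k)\cdots\varphi(\alpha_1)$, and expanding, the lowest-degree term is $\Big(\prod_i \lambda_{\alpha_i}\Big)\gamma$ plus terms that are (sums of) paths from $a$ to $b$ of length $>k$. Now $\varphi(\gamma)$ lies in $I'$ (since $\varphi$ is an algebra homomorphism sending $I$ to $I'$ — more precisely $\varphi$ descends from an isomorphism $KQ\to KQ$ of path algebras only after the rectification above, so one must be a little careful: the cleanest route is to lift $\varphi$ to an automorphism $\widetilde\varphi$ of $KQ$ with $\widetilde\varphi(I) = I'$, which is possible exactly because both ideals are admissible and the quivers agree). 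The conclusion I want is $\gamma \in I'$. The obstruction is that $\varphi(\gamma)\in I'$ only tells me a \emph{combination} of $\gamma$ with longer paths is in $I'$, not $\gamma$ itself.

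To close this gap I would run a downward induction on path length, or equivalently pass to the associated graded. Consider the filtration of $I'$ by $I'\cap \mathrm{rad}^{\,m}$. Since $Q$ has no bypass, the space $e_b(KQ)e_a$ of paths $a\to b$ has at most one path of each length in many situations, but in general I only need: if $x = \mu\gamma + (\text{longer paths}) \in I'$ with $\mu \ne 0$ and $\gamma$ a single path, then modulo $I'\cap\mathrm{rad}^{\,k+1}$ we get $\mu\gamma \in (I' + \mathrm{rad}^{\,k+1})/\mathrm{rad}^{\,k+1}$; iterating this with all the longer-path correction terms (each of which, if it is itself forced into $I'$, can be subtracted off) and using that $I'$ is admissible (so $\mathrm{rad}^{\,N}\subseteq I'$ for some $N$, killing the tail), one peels off the correction terms one length at a time and concludes $\gamma\in I'$. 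The main obstacle, and the step deserving the most care, is precisely this ``peeling'': one must argue that each higher-degree parallel path appearing as a correction term either already lies in $I'$ (and can be discarded) or can be handled by the same isomorphism applied to a shorter relation — here the no-bypass hypothesis is what prevents a genuinely new parallel path of intermediate length from obstructing the induction, because it forces the parallel-path structure between any two vertices to be ``thin'' enough that the leading-term bookkeeping stays well-defined. Once this is done, symmetry of the hypothesis (applying the same argument to $\varphi^{-1}$) is not even needed for the stated one-directional conclusion.
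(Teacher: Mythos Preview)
Your approach heads in the right direction but misses the key simplification that makes the paper's argument a two-line proof, and in the process leaves a genuine gap in the part you yourself flag as ``the step deserving the most care.''

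The content of the no-bypass hypothesis is that for every arrow $\alpha\colon a\to b$ there is \emph{no other path whatsoever} from $a$ to $b$. Hence $e_b(KQ)e_a = K\alpha$, and once the isomorphism has been rectified to fix the vertex idempotents (which you correctly set up), it must send $\alpha + I$ to an element of $e_b(KQ/I')e_a = K\cdot(\alpha + I')$; that is, $\varphi(\alpha + I) = \lambda_\alpha(\alpha + I')$ on the nose, with \emph{no} higher-order correction terms. Multiplying along a path $\gamma = \alpha_k\cdots\alpha_1$ gives $\varphi(\gamma + I) = \bigl(\prod_i \lambda_{\alpha_i}\bigr)(\gamma + I')$, so $\gamma\in I$ forces $\gamma\in I'$ immediately. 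This is exactly the paper's proof.

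Your write-up instead posits $\varphi(\alpha) = \lambda_\alpha\alpha + (\text{strictly longer parallel paths})$ and then builds a peeling/associated-graded argument to strip off those extra terms. But those terms are zero by hypothesis, so the machinery is unnecessary. Worse, the peeling step as written is not justified: the assertion that each higher-degree correction ``either already lies in $I'$ or can be handled by the same isomorphism applied to a shorter relation'' is not established, and your invocation of no-bypass at that stage is at the wrong level --- no-bypass constrains paths parallel to \emph{arrows}, not to arbitrary paths (a quiver with no bypass can perfectly well have distinct parallel paths of length $\geq 2$, e.g.\ a commutative square). Use the hypothesis at the arrow level, where it has full force, and the filtration argument evaporates.
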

\begin{proof}
Since there is no bypass it follows that any isomorphism of $KQ/I \cong KQ/I'$ takes the class of arrow  $\alpha + I$ to $\lambda_{\alpha} \alpha + I'$, where $\lambda_{\alpha}$ is in $K^*$. It follows that the isomorphism takes the class of a path $\gamma + I$ to $\lambda_{\gamma}\gamma + I'$ where $\lambda_{\gamma}$ is in $K^*$. Therefore if $\gamma \in I$ then $\gamma \in I'$.
\end{proof}

This has the following corollary, which characterizes the gentle incidence algebras.

\begin{corollary}
If the incidence algebra $K\Delta \cong KQ/I$ is gentle, then $K\Delta$ is hereditary.  
\end{corollary}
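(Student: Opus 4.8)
The plan is to show that a gentle incidence algebra cannot have any relations, so that $I=0$ and $K\Delta = KQ$ is hereditary. Suppose for contradiction that $K\Delta \cong KQ/I$ is gentle with $I \neq 0$. Since the incidence algebra is defined by all commutativity relations $\gamma - \gamma'$ between parallel paths, the hypothesis that $I$ is generated by paths of length two (condition iv) in the definition of gentle) together with Proposition~\ref{moninc} is the key tool: the quiver $Q$ of an incidence algebra has no bypass, so any algebra isomorphism preserves which paths lie in the ideal. In particular, if $K\Delta$ is gentle then \emph{some} presentation has $I$ generated by length-two paths, and by Proposition~\ref{moninc} the monomial relations are an invariant — so the defining ideal of $K\Delta$ itself, which consists of differences of parallel paths, must be generated by length-two paths, forcing any two parallel paths to already be length-two monomials up to the generated relations.

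The first concrete step I would carry out: argue that if the Hasse quiver $Q$ of $\Delta$ has a pair of parallel paths (equivalently, a vertex covered below and above by a common pair creating a ``diamond''), then the commutativity relation $\gamma - \gamma'$ is a genuine nonzero relation, and the shortest such configuration in a Hasse quiver is a square $a \to b \to d$, $a \to c \to d$ with $b \neq c$. This gives two parallel paths of length two whose difference lies in $I$. Now I invoke Proposition~\ref{moninc}: since $\gamma - \gamma' \in I$ but neither $\gamma$ nor $\gamma'$ is individually in $I$ (both are nonzero in the incidence algebra, as the incidence algebra is directed and these are distinct basis elements modulo the commutativity relations — here one checks $\gamma \notin I'$ for the gentle presentation using that monomial membership transfers). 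Then in the gentle presentation $I' $ is generated by length-two paths, so $\gamma - \gamma'$ would have to be written as a $K$-combination of products involving generators $\alpha\delta \in I'$; comparing with $\gamma, \gamma'$ being themselves length-two paths not in $I'$ yields that actually $\gamma \equiv \gamma' \pmod{I'}$ forces $\gamma = \gamma'$ in $KQ/I'$, contradiction with parallel-distinct.

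The cleaner route, which I would ultimately prefer: use that ``gentle'' is preserved under derived equivalence (already noted via \cite{sch-zim}) is not even needed here — instead show directly that a Hasse quiver admitting the square above cannot satisfy the gentle axioms together with having $\gamma-\gamma' \in I$ where $I$ is generated by length-two paths. Indeed, write $\gamma = \alpha\beta$ and $\gamma' = \alpha'\beta'$ with $\alpha, \alpha'$ starting at $a$. If $\alpha\beta - \alpha'\beta'$ lies in the ideal generated by length-two monomials, then either $\alpha\beta \in I'$ or $\alpha'\beta' \in I'$ (since a difference of two distinct paths of the same length lies in a monomial ideal only if each summand does). But then the corresponding path is zero in $K\Delta$, contradicting that in an incidence algebra every path between comparable elements is nonzero. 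Hence $K\Delta$ has no parallel paths at all, so its Hasse quiver has no commutativity relations, $I = 0$, and $K\Delta = KQ$ is hereditary.

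The main obstacle is the bookkeeping in the step ``a difference of two distinct paths of the same length lies in a monomial ideal only if each summand does'': one must be careful that $I'$ is monomial (generated by \emph{paths}, which condition iv) of gentleness guarantees) and that the two parallel paths $\gamma, \gamma'$ really are distinct as paths — which they are by the definition of bypass-free and the square configuration. A secondary subtlety is ensuring the argument covers \emph{all} sources of relations in an incidence algebra, not just the minimal square; but since every commutativity relation in $K\Delta$ is a difference of parallel paths and the existence of any parallel pair forces (by taking a minimal sub-diamond) a length-two parallel pair, the minimal case suffices. I expect the whole argument to be short once Proposition~\ref{moninc} is in hand.
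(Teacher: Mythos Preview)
Your core idea is right, but you have introduced an unnecessary and actually false reduction step. You claim that ``the existence of any parallel pair forces (by taking a minimal sub-diamond) a length-two parallel pair.'' This fails for Hasse quivers in general: take the poset with Hasse diagram $a \to x_1 \to x_2 \to b$ and $a \to y_1 \to y_2 \to b$, with the $x_i$ incomparable to the $y_j$. There are parallel paths of length $3$ from $a$ to $b$ but no length-$2$ parallel pair anywhere in the quiver. So your ``minimal square'' reduction does not hold, and your argument as written does not cover this case.

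Fortunately you do not need the reduction. Your monomial-ideal observation works for parallel paths of any length: if $\gamma \neq \gamma'$ are parallel with $\gamma - \gamma' \in I$, then under the isomorphism (which, by the proof of Proposition~\ref{moninc}, sends $\gamma + I \mapsto \lambda_\gamma \gamma + I'$ with $\lambda_\gamma \in K^*$) you get $\lambda_\gamma \gamma - \lambda_{\gamma'} \gamma' \in I'$. Since $I'$ is generated by paths, a $K$-linear combination of distinct paths lies in $I'$ only if each path does; hence $\gamma \in I'$, so $\gamma = 0$ in $K\Delta$, contradicting that every path in an incidence algebra is nonzero. Note also that you should not write ``$\gamma - \gamma' \in I'$'' directly: you only know $\gamma - \gamma' \in I$, and the isomorphism inserts the scalars $\lambda_\gamma, \lambda_{\gamma'}$.

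The paper's proof runs the same mechanism in the opposite direction and is shorter: rather than starting from a commutativity relation in $I$ and pushing it into $I'$, it starts from the generators of $I'$ (paths of length two, by gentleness), uses Proposition~\ref{moninc} to conclude each such path lies in $I$, and observes that $I$ contains no paths whatsoever. Hence $I'$ has no generators, $I' = 0$, and $K\Delta \cong KQ$ is hereditary. This sidesteps any discussion of diamonds or minimal parallel pairs.
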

\begin{proof}
The quiver associated with an incidence algebra has no bypass and
a gentle algebra have a presentation with ideal generating by quadratic monomial relations. Then by the proposition \ref{moninc} the generating ideal must be zero.

\end{proof}


We can state the following result:
\begin{corollary}
If a PHI algebra $K\Delta$ is gentle, then $K\Delta \cong KQ$, where $\overline{Q}=\mathbb{A}_n$ or $\overline{Q}=\widetilde{\mathbb{A}}_n$.
\end{corollary}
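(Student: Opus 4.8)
The plan is to combine the previous corollary with Happel's classification theorem. First I would recall the setup: $K\Delta$ is PHI, so by Theorem \ref{HRS} it is iterated tilted of type $Q$ for some acyclic quiver $Q$ (equivalently $K\Delta \cong' KQ$), or else $K\Delta$ is of sheaf type, i.e. $K\Delta \cong' \Coh \mathbb{X}$ for some weighted projective line $\mathbb{X}$. The hypothesis that $K\Delta$ is gentle will be used to exclude the sheaf case and to pin down the underlying graph $\overline{Q}$ in the quiver case.

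Next I would invoke the preceding corollary: since $K\Delta$ is gentle, it is hereditary, so $K\Delta \cong KQ$ with $I = 0$ (not merely $\cong'$). Being hereditary, $K\Delta$ is certainly not of sheaf type — a hereditary algebra is $\md KQ$ on the nose and $\Coh\mathbb{X}$ is not the module category of a hereditary algebra (it has no nonzero projectives), so $K\Delta \cong' KQ$ with $Q$ acyclic. It remains to identify $\overline{Q}$. The key structural fact is that the Hasse quiver of an incidence algebra has no bypass (the Remark before Proposition \ref{moninc}), and moreover each vertex of a gentle quiver is source/target of at most two arrows. A connected acyclic quiver in which every vertex meets at most two arrows is exactly a quiver whose underlying graph is a path $\mathbb{A}_n$ or a cycle $\widetilde{\mathbb{A}}_n$; among these the ones that are Hasse quivers of posets (acyclic, no bypass) are precisely $\overline{Q} = \mathbb{A}_n$ and $\overline{Q} = \widetilde{\mathbb{A}}_n$.

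So the argument proceeds in these steps: (1) gentle $\Rightarrow$ hereditary $\Rightarrow$ $K\Delta = KQ$ for an acyclic $Q$; (2) a gentle algebra that is hereditary still satisfies the valence condition (i)–(ii) on its quiver, so every vertex of $Q$ is incident to at most two arrows; (3) a connected finite acyclic quiver with maximal valence $\le 2$ has underlying graph $\mathbb{A}_n$ or $\widetilde{\mathbb{A}}_n$; (4) conversely, to realize $K\Delta$ as an incidence algebra we need $\Delta$ a poset whose Hasse diagram is $Q$, which forces the orientation to have no bypass — automatic for $\mathbb{A}_n$, and for the cycle it simply means $Q$ is not itself an oriented cycle, consistent with acyclicity. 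Hence $\overline{Q} = \mathbb{A}_n$ or $\overline{Q} = \widetilde{\mathbb{A}}_n$.

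The main obstacle, and the only point needing care, is step (3) together with the converse direction: one must argue that the valence-$\le 2$ condition on a connected acyclic quiver genuinely forces the underlying graph into the list $\{\mathbb{A}_n, \widetilde{\mathbb{A}}_n\}$ and nothing else (no subdivided $\mathbb{D}$ or $\mathbb{E}$ shapes can occur, since those have a vertex of valence $3$), and that both types are actually attained by genuine PHI gentle algebras — the linearly oriented $\mathbb{A}_n$ is the path algebra of a chain, and a suitable non-oriented-cycle orientation of $\widetilde{\mathbb{A}}_n$ is the Hasse quiver of a poset (e.g. two parallel chains with common source and sink, when $I=0$ this is hereditary and gentle). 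I would state this last realization explicitly so the corollary is not vacuous on the $\widetilde{\mathbb{A}}_n$ side.
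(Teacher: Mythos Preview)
Your core argument is correct and matches the paper's approach: both first invoke the previous corollary to get $K\Delta \cong KQ$ hereditary, then use the gentle axioms with $I=0$ to force every vertex of $Q$ to have valence at most two, whence $\overline{Q}\in\{\mathbb{A}_n,\widetilde{\mathbb{A}}_n\}$. The paper carries out step~(2)--(3) by assuming a vertex of valence three, fixing one orientation, and observing that condition~(ii) together with $I=0$ forces a length-two relation into $I$, a contradiction; you phrase the same thing abstractly by saying that with $I=0$ condition~(ii) allows at most one successor and one predecessor arrow at any non-source/non-sink, while condition~(i) bounds sources and sinks. Either way the conclusion is the same.

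Two pieces of your write-up are superfluous. The opening discussion of Happel's dichotomy and the sheaf case is unnecessary: once you know $K\Delta$ is hereditary you already have an isomorphism $K\Delta\cong KQ$ of algebras, not merely a derived equivalence, so there is nothing to exclude. Likewise the closing paragraph about realizing $Q$ as a Hasse quiver is not needed for the statement as given --- $K\Delta$ is an incidence algebra by hypothesis, and the corollary only asserts what $\overline{Q}$ must be. One small point to tighten: when you say ``the valence condition (i)--(ii)'' you should make explicit that it is condition~(ii) combined with $I=0$ (not condition~(i) alone, which permits valence up to four) that forces a vertex with both an incoming and an outgoing arrow to have exactly one of each.
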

\begin{proof}
From the previous proposition, we conclude that $K\Delta$ is hereditary, that is, $K\Delta$ is isomorphic to $KQ$. This implies that $KQ$ is gentle, because $K\Delta$ is a gentle algebra, by hypothesis.

We assume that there exists a subquiver of $Q$ of the form:
\begin{center}
\begin{tikzcd}
&\bullet &\\
\bullet \ar[r, dash] &\bullet \ar[u, dash] \ar[r, dash]	&\bullet                 
\end{tikzcd}
\end{center}
with the edges oriented respecting the first condition of the definition of gentle algebra. We will analyze a case of orientation of these edges, the other cases are similar. Let
\begin{center}
\begin{tikzcd}
&\bullet &\\
\bullet	&\bullet \ar{l}{\alpha} \ar{u}{\theta} &\bullet \ar{l}{\beta}                 
\end{tikzcd}
\end{center}
By the fourth condition of the definition of gentle algebra, the ideal $I$ is generated by paths of length two. In the subquiver above, we have the paths $\beta\alpha$ and $\beta \theta$ both of length two. Thus, we have $\beta\alpha \in I$ or $\beta\theta \in I$. Since the algebra $KQ$ has the ideal $I$ empty, this case does not happen.

So we only have the two options below:

\begin{tikzcd} [column sep=small, row sep=small]
\bullet \ar[r, dash] &\bullet \ar[r, dash] &\cdots \ar[r, dash] &\bullet \ar[r, dash] &\bullet \quad \text{ou}
\end{tikzcd}
\begin{tikzcd}[column sep=small, row sep=small]
&\bullet \ar[r, dash] &\: \cdots \: \ar[r, dash] &\bullet &\\
&&&&\\
\bullet \ar[uur, dash, bend left] \ar[ddr, dash, bend right] &&&&\bullet \ar[uul, dash, bend right] \ar[ddl, dash, bend left]\\
&&&&\\
&\bullet \ar[r, dash] &\: \cdots \: \ar[r, dash] &\bullet &
\end{tikzcd}

We conclude that $\overline{Q_A}=\mathbb{A}_n$ or $\overline{Q_A}=\widetilde{\mathbb{A}}_n$.
\end{proof}

These two first results and the fact that the property of algebra is gentle is invariant by derived equivalence, see \cite{sch-zim}, we get these corollaries.

\begin{corollary}
If $K\Delta$ is a PHI algebra of type $\mathbb{A}_n$, then $K\Delta \cong KQ$, where $\overline{Q}=\mathbb{A}_n$.
\end{corollary}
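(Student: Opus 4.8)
The plan is to reduce the statement to the gentle characterization obtained above together with the fact that gentleness is a derived invariant. First I would observe that for any orientation $Q$ of the diagram $\mathbb{A}_n$ the path algebra $KQ$ is gentle: it is hereditary, so its defining ideal is $0$ and hence (vacuously) generated by paths of length two, and in a quiver whose underlying graph is $\mathbb{A}_n$ every vertex is the source of at most one arrow and the target of at most one arrow, so conditions i)--iii) in the definition of a gentle algebra hold automatically. Since $K\Delta$ being a PHI algebra of type $\mathbb{A}_n$ means $K\Delta \cong' KQ$ with $\overline{Q}=\mathbb{A}_n$, and being gentle is preserved under derived equivalence \cite{sch-zim}, it follows that $K\Delta$ is a gentle algebra.

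Next I would invoke the corollary above: a gentle PHI algebra $K\Delta$ is isomorphic to a path algebra $KQ'$ with $\overline{Q'}=\mathbb{A}_m$ or $\overline{Q'}=\widetilde{\mathbb{A}}_m$. It remains to rule out the Euclidean case and to identify the index. For the index, a triangle equivalence induces an isomorphism on Grothendieck groups, so the number of vertices of $Q'$ equals $\dime_K K_0(\md K\Delta)=|\Delta|$, which in turn equals the number of vertices of $\mathbb{A}_n$, namely $n$; hence $\overline{Q'}$ has exactly $n$ vertices. To exclude $\overline{Q'}=\widetilde{\mathbb{A}}_m$, note that in that case $KQ'$ would be a representation-infinite (tame) hereditary algebra, and $D^b(KQ')$ has infinitely many isomorphism classes of indecomposable objects up to shift, whereas $K\Delta\cong' K\mathbb{A}_n$ with $\mathbb{A}_n$ Dynkin, so $D^b(K\mathbb{A}_n)$ --- and therefore $D^b(K\Delta)$ --- has only finitely many such classes; since $K\Delta\cong' KQ'$ this is a contradiction. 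Equivalently, one may appeal to the classification of hereditary algebras up to derived equivalence, under which the underlying graph is an invariant, and observe that $\mathbb{A}_n$ is a tree while $\widetilde{\mathbb{A}}_m$ is a cycle. Hence $\overline{Q'}=\mathbb{A}_n$, as claimed.

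I expect the only delicate point to be this last step, namely isolating a bona fide derived invariant --- representation type, or the finiteness of the set of indecomposables in the derived category up to shift --- that separates $\mathbb{A}_n$ from $\widetilde{\mathbb{A}}_m$. Everything else (the gentleness of $K\mathbb{A}_n$, the derived invariance of gentleness, the application of the preceding corollary, and the count of vertices) is purely formal, so the whole argument is short once that invariant is in place.
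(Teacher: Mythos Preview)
Your proposal is correct and follows the same route as the paper: the paper's proof is the single sentence ``These two first results and the fact that the property of algebra is gentle is invariant by derived equivalence, see \cite{sch-zim}, we get these corollaries,'' and your argument simply spells out the steps this sentence is gesturing at. The only part you add beyond what the paper makes explicit is the separation of $\mathbb{A}_n$ from $\widetilde{\mathbb{A}}_m$ (via finiteness of indecomposables up to shift, or equivalently via the derived classification of hereditary path algebras) and the identification of the index via $K_0$; the paper evidently regards these as implicit once one knows $K\Delta$ is hereditary and derived equivalent to $K\mathbb{A}_n$, since two derived equivalent hereditary path algebras share the same underlying graph.
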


\begin{corollary}
If $K\Delta$ is a PHI algebra of type $\widetilde{\mathbb{A}}_n$, then $K\Delta \cong KQ$, where $\overline{Q}=\widetilde{\mathbb{A}}_n$.
\end{corollary}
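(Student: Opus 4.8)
The plan is to reduce the statement to the gentle case already settled in this section. First I would note that the path algebra $KQ$ with $\overline{Q}=\widetilde{\mathbb{A}}_n$ is gentle: its quiver is an $(n+1)$-cycle, so each vertex is the source and the target of at most two arrows, and since the defining ideal is zero, conditions \emph{ii)}, \emph{iii)}, \emph{iv)} of the definition of gentle algebra hold vacuously. Since being gentle is invariant under derived equivalence by \cite{sch-zim}, and $K\Delta \cong' KQ$ by hypothesis, it follows that $K\Delta$ is a gentle algebra.

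Next I would invoke the corollary proved above that a gentle incidence algebra is hereditary. Hence $K\Delta$ is hereditary, and being basic, connected and finite dimensional over the algebraically closed field $K$, it is isomorphic to $KQ'$ for $Q'$ the (acyclic) quiver of $K\Delta$. Moreover, by the corollary on gentle PHI algebras, $\overline{Q'}=\mathbb{A}_n$ or $\overline{Q'}=\widetilde{\mathbb{A}}_n$, so it only remains to exclude the first possibility.

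To rule out $\overline{Q'}=\mathbb{A}_n$ I would use that $KQ'$ is derived equivalent to $KQ$ with $\overline{Q}=\widetilde{\mathbb{A}}_n$: the path algebra $K\widetilde{\mathbb{A}}_n$ is representation-infinite (tame), while $K\mathbb{A}_n$ is representation-finite, so $D^b(K\mathbb{A}_n)$ has only finitely many indecomposable objects up to shift whereas $D^b(K\widetilde{\mathbb{A}}_n)$ has infinitely many; the two bounded derived categories are therefore not triangle-equivalent. Consequently $\overline{Q'}=\widetilde{\mathbb{A}}_n$, which is the assertion. The one step that needs care is this last one, i.e.\ certifying that $D^b$ separates $\mathbb{A}_n$ from $\widetilde{\mathbb{A}}_n$; this is standard — it also follows from the fact that the underlying graph of a hereditary algebra is a derived invariant, or by comparing the shapes of the Auslander--Reiten quivers of the two derived categories — so no new argument is required, the bulk of the work already residing in the preceding gentle corollaries.
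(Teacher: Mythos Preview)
Your argument is correct and follows the same route the paper takes: the paper simply remarks that the two preceding gentle corollaries together with the derived invariance of gentleness from \cite{sch-zim} yield the claim, without spelling out any further details. The only difference is that you make explicit the last step, namely that a hereditary algebra of type $\mathbb{A}$ cannot be derived equivalent to one of type $\widetilde{\mathbb{A}}$; the paper leaves this implicit, and your justification via representation-finiteness (or equivalently via the derived invariance of the underlying graph of a hereditary algebra) is the standard one.
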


\section{Type $\mathbb{D}_n$} \label{sec4}
The characterization of the piecewise hereditary algebras of type $\mathbb{D}_{n}$ was done by Keller in the article \cite{kel}. In her thesis \cite{fern}, Elsa Fern\'andez obtained the same characterization through an alternative tool, the concept of trivial extension. We will rewrite the Fern\'andez's theorem  for the case of PHI algebras.

\begin{theorem}
Let $K\Delta \cong KQ/I$ be a PHI algebra of type $\mathbb{D}_n$ such that $\overline{Q}$ is not of tree type. Then $K\Delta$ or $K\Delta^{op}$ is isomorphic to one of the following algebras:

\NumTabs{2}
\begin{inparaenum}
\noindent 
\item
\begin{tikzcd}[column sep=tiny, row sep=tiny]
&&&\ramo &&&\\
&									&					& \bullet  \arrow[dll] \arrow[u, dash]	&&								&\\
&\bullet \arrow[rrrr, dotted, dash]	&					&										&&\bullet \arrow[ull] \arrow[dd]	&\\
&&&&&&\\
&\ramo \arrow[uu] & \ramo \arrow[l] &\cdots \arrow[l] &\ramo \arrow[l] &\ramo \arrow[l] &
\end{tikzcd}
\tab\item 
\begin{tikzcd}[column sep=tiny, row sep=tiny]
									&					&{\phantom{\bullet}}	&					&\\
									&					& \bullet  \arrow[dll]	&					&\\
\bullet \arrow[rrrr, dotted, dash]	&					&						&					& \bullet \arrow[ull] \arrow[dd]\\
&&&&\\
\ramo  \arrow[uu] 					& \ramo \arrow[l]	& \cdots \arrow[l]		&\ramo \arrow[l]	& \ramo \arrow[l]
\end{tikzcd}
\tab\item 
\begin{tikzcd}[column sep=tiny, row sep=tiny]
&&&&&&\\
&&&\bullet \arrow[dll] &&&\\
\ramo &\bullet \arrow[rrrr, dotted, dash] \arrow[l, dash] &&&&\bullet  \arrow[ull] \arrow[dll] & \ramo \arrow[l, dash]\\
&&&\bullet  \arrow[ull] &&&\\
&&&&&&
\end{tikzcd}
\tab\item  
\begin{tikzcd}[column sep=tiny, row sep=tiny]
									&& \bullet  \arrow[dll]	&&&\\
\bullet \arrow[rrrr, dotted, dash]	&&                      && \bullet  \arrow[ull] \arrow[dll]	& \ramo \arrow[l, dash]\\
									&& \bullet  \arrow[ull]	&&&
\end{tikzcd}
\tab\item 
\begin{tikzcd}[column sep=tiny, row sep=tiny]
		&													&& \bullet  \arrow[dll]	&&&\\
\ramo 	&\bullet \arrow[rrrr, dotted, dash] \ar[l, dash] 	&&						&& \bullet  \arrow[ull] \arrow[dll] & \\
		&                                                   && \bullet  \arrow[ull]	&&&
\end{tikzcd}

\end{inparaenum}
\end{theorem}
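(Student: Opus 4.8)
The plan is to invoke the description of iterated tilted algebras of Dynkin type by means of cutting sets on trivial extensions, due to Fern\'andez and Platzeck, and then to select, among all such cuts, the ones producing incidence algebras.

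The first step is a reduction to trivial extensions. By Theorem \ref{HRS}, a PHI algebra of type $\mathbb{D}_n$ is iterated tilted of type $\mathbb{D}_n$, so its trivial extension is isomorphic to $T(K\mathbb{D}_n)$, the trivial extension of the path algebra of $\mathbb{D}_n$ (independently of the chosen orientation), and $K\Delta$ is recovered from $T(K\mathbb{D}_n)$ by deleting a cutting set of arrows: $K\Delta\cong KQ_T/(I_T+\langle C\rangle)$, where $(Q_T,I_T)$ is the bound quiver of $T(K\mathbb{D}_n)$ and $C\subseteq(Q_T)_1$ is a cutting set. Here one writes down $(Q_T,I_T)$ explicitly, together with its group of bound-quiver automorphisms; this group, enlarged by the duality $(-)^{op}$, acts on the set of cutting sets and reduces the problem to finitely many orbits of cuts.

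The second step is to isolate the incidence algebras among the quotients $T(K\mathbb{D}_n)/\langle C\rangle$. Put $Q':=Q_T\setminus C$; this quiver is acyclic, since a cutting set meets every oriented cycle, and it carries the ideal $I'$ induced by $I_T$. Since the Hasse quiver of an incidence algebra has no bypass (the remark preceding Proposition \ref{moninc}), and since, conversely, when $Q'$ has no bypass Proposition \ref{moninc} forces the admissible ideal presenting the algebra to be unique, one sees that $KQ'/I'$ is an incidence algebra if and only if $Q'$ has no bypass and $I'$ contains no monomial relation; given the controlled shape of the relations of $T(K\mathbb{D}_n)$, the surviving relations are then automatically commutativity relations, and $KQ'/I'$ is the incidence algebra of the reachability poset of $Q'$. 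Thus the theorem becomes the problem of listing the cutting sets $C$ of $T(K\mathbb{D}_n)$ such that $Q_T\setminus C$ has no bypass and carries no monomial relation, excluding the cuts that leave $Q'$ hereditary (whose underlying graph is then $\mathbb{D}_n$ itself — the tree-type case, treated separately).

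The last step is the enumeration. Going orbit by orbit over the cutting sets of $T(K\mathbb{D}_n)$ for general $n$ and applying this test — most candidates fail because the cut leaves a bypass (for instance two parallel arrows, or an arrow parallel to a commutative path of length two) or because a genuine monomial relation survives — one is left, up to automorphism of $(Q_T,I_T)$ and up to passing to the opposite algebra, with precisely the five bound quivers displayed in the statement. This bookkeeping over the cutting sets of $T(K\mathbb{D}_n)$ for arbitrary $n$ is the main obstacle; it is exactly what the computer program described in the introduction was written to carry out, and one checks that its output reproduces these five families.
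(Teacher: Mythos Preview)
Your overall strategy via trivial extensions and cutting sets is indeed the one behind Fern\'andez's work, and it is the method the paper applies later for $\mathbb{E}_7$ and $\mathbb{E}_8$. However, there is a genuine gap in your first step. You assert that the trivial extension of an iterated tilted algebra of type $\mathbb{D}_n$ is isomorphic to $T(K\mathbb{D}_n)$, and then propose to enumerate cutting sets on this single algebra. That is not correct: the Assem--Happel--Rold\'an theorem (Theorem~\ref{AHR}) only says that $T(A)$ is representation-finite of Cartan class $\mathbb{D}_n$, not that all such $T(A)$ are isomorphic to one another. In fact they are not --- the paper itself points out that for Cartan class $\mathbb{E}_7$ there are $72$ non-isomorphic trivial extensions, and for $\mathbb{E}_8$ there are $251$. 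The situation for $\mathbb{D}_n$ is similar: one must run the cutting-set analysis over \emph{every} representation-finite trivial extension of Cartan class $\mathbb{D}_n$, not just over $T(K\mathbb{D}_n)$. Compiling that list is precisely the content of Fern\'andez's thesis, and it is the input your argument is missing.

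The paper's own treatment of this theorem is in fact much shorter than what you propose: it does not carry out the cutting-set enumeration for $\mathbb{D}_n$ at all. It simply invokes the complete classification of iterated tilted algebras of type $\mathbb{D}_n$ already obtained by Keller \cite{kel} and, independently via trivial extensions, by Fern\'andez \cite{fern}, and then restricts that list to the algebras which happen to be incidence algebras (no bypass in the quiver, only commutativity relations). The five families in the statement are exactly what survives that filtering. So the theorem here is a corollary of an existing classification, not a fresh computation; the cutting-set machinery and the computer program are reserved in the paper for $\mathbb{E}_7$, $\mathbb{E}_8$, and the extended Dynkin types, where one works through a finite explicit list of trivial extensions.
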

The notation $\ramo$, in the picture above, means that at the vertices  of the graph  we can attach a diagram of the form
\begin{center}
\begin{tikzcd}[column sep=small, row sep=small]
\bullet \arrow[r] & \bullet \arrow[r]  & \bullet \arrow[r] & \cdots \arrow[r] & \bullet \arrow[r] & \bullet 
\end{tikzcd}
with no new relations.
\end{center}

\section{Type $\mathbb{E}_6$} \label{sec5}
The first effort to classify piecewise hereditary algebras of type $\mathbb{E}_6$ was made by Happel, in \cite{hap1}. Years later, in her thesis \cite{fern}, Elsa Fern\'andez obtained some updates in relation to this classification, determining a complete description for this class of algebras. This problem was also treated computationally by the Roggon's project \cite{rog}.
We rewrite the Fern\'andez's theorem restricted to the PHI algebras of type $\mathbb{E}_6$.

\begin{theorem}
Let $K\Delta \cong KQ/I$ be a PHI algebra of type $\mathbb{E}_6$ such that $\overline{Q}$ is not of tree type. Then $K\Delta$ or $K\Delta^{op}$ is isomorphic to one of the following algebras:

\NumTabs{2}
\begin{inparaenum}
\noindent 
\item
\begin{tikzcd}[column sep=tiny, row sep=tiny]
							&					& {\phantom{\bullet}}	&							\\
							& \bullet \arrow[dl]	& \bullet  \arrow[l]		&							\\
\bullet \arrow[rrr, dotted, dash]	&					&						& \bullet \arrow[ul] \arrow[dl]	\\
							& \bullet  \arrow[ul]	& \bullet \arrow[l]		&                                 
\end{tikzcd}
\tab\item 
\begin{tikzcd}[column sep=tiny, row sep=tiny]
								&					&					& \bullet \arrow[d, dash]	&								\\
								& \bullet \arrow[dl] 	&                     		& \bullet  \arrow[ll]		&								\\
\bullet \arrow[rrrr, dotted, dash] 	&                     		&                     		&                     			&  \bullet \arrow[ul] \arrow[dll]	\\
								&                     		& \bullet  \arrow[ull]	&						&
\end{tikzcd}
\tab\item 
\begin{tikzcd}[column sep=tiny, row sep=tiny]
                         				& \bullet          				&		                                 	\\
                         				& \bullet \arrow[dl]  \arrow[u]	&           		                      	\\
\bullet \arrow[rr, dotted, dash]	&                            			&  \bullet \arrow[ul]  \arrow[dl]	\\
                           			& \bullet  \arrow[ul]       		&                       			          	\\
                           			& \bullet \arrow[u, dash]		&                                  
\end{tikzcd}
\tab\item
\begin{tikzcd}[column sep=tiny, row sep=tiny]
                           			& {\phantom{\bullet}}	&								&						\\
                           			& \bullet \arrow[dl]        	&								&                  			\\
\bullet \arrow[rr, dotted, dash]	&                            		&  \bullet \arrow[ul]  \arrow[dl]	& \bullet \arrow[l, dash]	\\
                      				& \bullet  \arrow[ul]		&								&						\\
                           			& \bullet \arrow[u, dash]	&								&
\end{tikzcd}
\tab\item 
\begin{tikzcd}[column sep=tiny, row sep=tiny]
\bullet  \arrow[d] & \bullet  \arrow[l] \arrow[dl, dotted, dash] \arrow[d]  	& \bullet \arrow[l]  \arrow[d] \arrow[dl, dotted, dash]    \\
\bullet           	& \bullet \arrow[l]                              					&  \bullet \arrow[l]                                
\end{tikzcd}
\tab\item 
\begin{tikzcd}[column sep=tiny, row sep=tiny]
\bullet  \arrow[d] & \bullet  \arrow[l] \arrow[dl, dotted, dash] \arrow[d] \arrow[r] \arrow[dr, dotted, dash]	& \bullet \arrow[d] 	\\
\bullet           	& \bullet \arrow[l]  \arrow[r]                            										& \bullet                                 
\end{tikzcd}
\tab\item
\begin{tikzcd}[column sep=tiny, row sep=tiny]
								& \bullet \arrow[dl] 	&                     		& \bullet  \arrow[ll]	&                                  			&						\\
\bullet \arrow[rrrr, dotted, dash] 	&                     		&                     		&                     		&  \bullet \arrow[ul] \arrow[dll]	& \bullet \arrow[l, dash]	\\
								&                     		& \bullet  \arrow[ull]	&                     		&                                 			&
\end{tikzcd}
\end{inparaenum}

In the diagrams when there is a non oriented edge we can have two  quivers, one for each orientation.
\end{theorem}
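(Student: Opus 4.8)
The plan is to combine Theorem~\ref{HRS} with the trivial-extension machinery and then filter the resulting list by the condition of being an incidence algebra. By Theorem~\ref{HRS}, an incidence algebra $K\Delta$ is PHI of type $\mathbb{E}_6$ if and only if it is iterated tilted of type $\mathbb{E}_6$. The first step is to recall, following Fern\'andez--Platzeck \cite{fern-plat} (see also \cite{fern}), that a basic connected algebra $B$ is iterated tilted of Dynkin type $\Delta$ precisely when its trivial extension $T(B)$ is isomorphic to $T(K\Delta)$, and that in this situation $B$ is obtained from $T(K\Delta)$ by the cutting-set construction. Since $\mathbb{E}_6$ is Dynkin, $T(K\mathbb{E}_6)$ is a finite dimensional self-injective algebra whose quiver and relations do not depend on the orientation chosen for $\mathbb{E}_6$; in particular it admits only finitely many cutting sets, so the classification reduces to a finite search.

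The second step is to run through the cutting sets of $T(K\mathbb{E}_6)$ and, for each one, decide whether the resulting algebra $B \cong KQ'/I'$ is an incidence algebra. This requires that $Q'$ be acyclic, that $Q'$ have no bypass, and that $I'$ be the ideal generated by all commutativity relations between parallel paths of $Q'$. Proposition~\ref{moninc} guarantees that, once $Q'$ has no bypass, the presentation of $B$ is essentially unique (arrows can only be rescaled), so whether a given cut produces an incidence algebra is a property that can be read off mechanically; this is exactly what the computer program described in the introduction, and listed in the appendix, carries out. We then discard the cuts for which $\overline{Q'}$ is a tree --- these yield only the hereditary algebra $K\mathbb{E}_6$ itself --- keeping precisely the cuts whose Hasse quiver contains a cycle.

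The third step is bookkeeping: group the surviving cuts into isomorphism classes, and observe that, since $KQ$ and $KQ^{\mathrm{op}}$ are derived equivalent whenever $\overline{Q}=\mathbb{E}_6$, it suffices to list the algebras up to passage to the opposite algebra. The outcome is the eight quivers with relations displayed in the statement. For the converse direction one checks directly that each of the eight algebras is the incidence algebra of the evident poset and exhibits it among the cuts of $T(K\mathbb{E}_6)$ found above (equivalently, verifies $T(B)\cong T(K\mathbb{E}_6)$), so that it is indeed PHI of type $\mathbb{E}_6$.

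I expect the main obstacle to be the exhaustiveness and non-redundancy of the enumeration in the second step: one must be certain that every cutting set of $T(K\mathbb{E}_6)$ has been examined, that the incidence-algebra test has been applied correctly to each of them, and that no two entries of the final list are isomorphic or opposite to one another. This combinatorial verification is too heavy to carry out reliably by hand, which is the reason the cutting-set search is delegated to the computer program; the conceptual ingredients --- Theorem~\ref{HRS}, the trivial-extension characterization of iterated tilted algebras of Dynkin type, and Proposition~\ref{moninc} --- serve only to reduce the statement to that finite search.
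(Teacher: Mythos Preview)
Your reduction to cutting sets has a genuine gap. The assertion that ``$B$ is iterated tilted of Dynkin type $\Delta$ precisely when $T(B)\cong T(K\Delta)$,'' together with the claim that $T(K\mathbb{E}_6)$ is independent of the orientation of $\mathbb{E}_6$, is false. Already for $\mathbb{A}_3$ the two orientations $\bullet\to\bullet\to\bullet$ and $\bullet\leftarrow\bullet\to\bullet$ yield trivial extensions with different Gabriel quivers (a $3$-cycle versus two $2$-cycles sharing a vertex), hence non-isomorphic algebras. The correct characterisation is Theorem~\ref{AHR}: $B$ is iterated tilted of Dynkin type $Q$ if and only if $T(B)$ is representation-finite of Cartan class $Q$. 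Fern\'andez's proposition then tells you only that the algebras with $T(B)\cong\Gamma$ are the cuts of $\Gamma$, for each fixed $\Gamma$. So to carry out your plan you would have to run through \emph{all} representation-finite trivial extensions of Cartan class $\mathbb{E}_6$ and cut each of them, not merely the single algebra $T(K\mathbb{E}_6)$. This is exactly what the paper does in Sections~\ref{sec6} and~\ref{sec7} for $\mathbb{E}_7$ and $\mathbb{E}_8$, where Fern\'andez's thesis supplies lists of $72$ and $251$ such trivial extensions respectively; restricting to cuts of one of them would miss most of the iterated tilted algebras.

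For the $\mathbb{E}_6$ statement itself the paper takes a different and much shorter route: it does not compute any cuts at all, but simply quotes Fern\'andez's complete classification of iterated tilted algebras of type $\mathbb{E}_6$ from \cite{fern} (building on Happel \cite{hap1}) and extracts from that finite list the algebras which are incidence algebras with non-tree underlying graph. Your cutting-set strategy, once repaired as above by ranging over all trivial extensions of Cartan class $\mathbb{E}_6$, would reproduce the same list, but it is the method of the later sections rather than of this one. (Incidentally, the list in the statement has seven items, not eight.)
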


\section{Type $\mathbb{E}_7$} \label{sec6}
The class of iterated tilted algebras of type $\mathbb{E}_7$ has an important relation with a class of representation-finite trivial extensions.
Fern\'andez, in her thesis \cite{fern}, describes in detail these trivial extensions.
We recall the definition of trivial extension of an algebra.

\begin{definition}[trivial extension]
Let $A$ be an $K$-algebra.
The trivial extension of $A$ is the algebra $T(A)=A \ltimes D(A)$, where $D(A)=\Hom_K (A,K)$, whose underlying of $K$-vector space is $A \times D(A)$ with multiplication  given by:
\begin{gather*}
(a,f)(b,g)=(ab,ag+fb), \quad \text{for any }a,b \in A \text{ and } f,g \in D(A).
\end{gather*}
\end{definition}

In this work, we will consider the trivial extensions which are finite representation type. 
The trivial extension is a symmetric algebra. The self injective algebras of finite representation type are divided into classes, called Cartan classes.
The \emph{Cartan class} of an algebra $B$ self injective of finite representation type was introduced by Riedtmann in \cite{rie}. More details, we refer the reader to \cite{rie}, \cite{rie1} and \cite{rie2}. 

The Cartan class is defined, briefly, as follows:

Consider $(\Gamma, \tau)$ the \emph{stable Auslander-Reiten quiver} of the algebra $B$, which is self injective of finite representation type. 
Riedtmann showed that $(\Gamma, \tau)$ is isomorphic to $\mathbb{Z}Q/G$ for an automorphism $G$ of the translation quiver $\mathbb{Z}Q$, where the $G$-action in $\mathbb{Z}Q$ is admissible that is each orbit of $G$ finds $\{ x \} \cup x^-$ in at most one vertex and finds $\{ x \} \cup x^+$ in at most one vertex for each $x$ of $(\mathbb{Z}Q)_0$, she also showed that $Q$ is of Dynkin type. The Dynkin type of $Q$ is uniquely determined and it is called the Cartan class of $B$.

In this next result, we will have an association of trivial extensions finite representation type with iterated tilted algebras.
\begin{theorem}[Assem-Happel-Rold\'an \cite{ass-hap-rol}] \label{AHR}
Let $A$ be an algebra. The following conditions are equivalent:
\begin{asparaenum} [\itshape a)]
\item $T(A)$ is representation-finite of Cartan class $Q$;
\item $A$ is iterated tilted algebra of Dynkin type $Q$.
\end{asparaenum}
\end{theorem}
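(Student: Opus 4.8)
This is the Assem-Happel-Rold\'an theorem, and the plan is to prove the two implications separately, taking the repetitive algebra as the central object. Write $\widehat{A}$ for the repetitive algebra of $A$: it is a locally bounded self-injective algebra carrying a $\mathbb{Z}$-action by the powers of its Nakayama automorphism $\nu$, and $\widehat{A}\to\widehat{A}/\langle\nu\rangle\cong T(A)$ is a Galois covering with group $\mathbb{Z}$. The two tools I would rely on are (i) the invariance of the trivial extension under tilting, and (ii) Riedtmann's structure theory for representation-finite self-injective algebras together with covering theory.

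For the implication b) $\Rightarrow$ a) I would argue by induction on the length $n$ of a sequence $A=A_0,A_1,\dots,A_n=KQ$ with tilting modules $T^i_{A_i}$ as in the definition of iterated tilted algebra. For the base case I would show that, for $Q$ Dynkin, $T(KQ)$ is representation-finite of Cartan class $Q$: by Happel's description, $\underline{\md}\,\widehat{KQ}$ is triangle-equivalent to $D^b(KQ)$, whose Auslander-Reiten quiver is $\mathbb{Z}Q$ when $Q$ is Dynkin; pushing down along $\widehat{KQ}\to T(KQ)$ then shows that $T(KQ)$ is representation-finite and that its stable Auslander-Reiten quiver has the form $\mathbb{Z}Q/G$ for an admissible automorphism $G$, so that its Cartan class is $Q$. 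For the inductive step I would invoke the key lemma (Tachikawa-Wakamatsu, Assem-Happel-Rold\'an): if $B=\End_A(T)$ for a tilting module $T$ satisfying the separation condition of the definition, then $T$ induces an isomorphism $\widehat{A}\cong\widehat{B}$ commuting with the Nakayama automorphisms, which descends to an isomorphism $T(A)\cong T(B)$. Iterating along the sequence yields $T(A)\cong T(KQ)$, and the base case finishes the argument.

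For the implication a) $\Rightarrow$ b) I would start from $T(A)$ representation-finite of Cartan class $Q$ and lift along the covering: the repetitive algebra $\widehat{A}$ is then locally representation-finite, self-injective, and its Auslander-Reiten quiver is $\mathbb{Z}Q$, which, since everything is standard in Dynkin type, identifies $\widehat{A}$ with $\widehat{KQ}$. Inside $\widehat{A}$ the indecomposable projective $A$-modules span a convex subcategory whose associated full subquiver of $\mathbb{Z}Q$ is a complete slice $\Sigma_A$, and the hereditary algebra $KQ$ corresponds in the same way to a complete slice $\Sigma_0$. Any two complete slices of $\mathbb{Z}Q$ are joined by a finite sequence of elementary moves, each of which passes a source or a sink; on the level of algebras each such move is an Auslander-Platzeck-Reiten reflection, hence a tilt satisfying the separation condition. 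Reading off this chain of reflections from $\Sigma_0$ to $\Sigma_A$ exhibits $A$ as an iterated tilted algebra of type $Q$.

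The step I expect to be the main obstacle is a) $\Rightarrow$ b). One must justify that ``representation-finite of Cartan class $Q$'' really forces the isomorphism $\widehat{A}\cong\widehat{KQ}$ --- this rests on the Cartan class being a well-defined invariant (Riedtmann) and on the repetitive cover of a representation-finite self-injective algebra being locally representation-finite and simply connected enough that its shape determines it up to isomorphism --- and then one must run the slice combinatorics in $\mathbb{Z}Q$ carefully enough to see that each elementary move is an admissible reflection and that the accumulated iterated tilting data genuinely satisfies the separation condition of the definition. The invariance lemma used for b) $\Rightarrow$ a) is also technical but is by now standard. I would also note that Fern\'andez's approach, used elsewhere in this paper, replaces part of this slice combinatorics by an explicit analysis of the cutting sets on $T(KQ)$.
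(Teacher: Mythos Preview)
The paper does not prove this theorem: it is stated with attribution to Assem--Happel--Rold\'an \cite{ass-hap-rol} and used as a black box, so there is no ``paper's own proof'' to compare against. Your outline is a reasonable summary of the strategy in the original source (invariance of the repetitive algebra under tilting for b) $\Rightarrow$ a), and covering theory plus reflection moves for a) $\Rightarrow$ b)).

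One technical imprecision worth flagging in your sketch of a) $\Rightarrow$ b): you write that ``the indecomposable projective $A$-modules span a convex subcategory whose associated full subquiver of $\mathbb{Z}Q$ is a complete slice $\Sigma_A$''. This conflates two different objects. The repetitive algebra $\widehat{A}$ is a locally bounded $K$-category, and $A$ sits inside it as a convex full subcategory (a fundamental domain, or a piece of one, for the $\nu$-action); the quiver $\mathbb{Z}Q$ is the stable Auslander--Reiten quiver of $\widehat{A}$, whose vertices parametrize indecomposable non-projective $\widehat{A}$-modules, not objects of $\widehat{A}$ itself. The projectives of $A$ do not naturally live in $\mathbb{Z}Q$. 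The correct formulation (Hughes--Waschb\"usch, Assem--Happel--Rold\'an) is that $A$ and $KQ$ are both convex configurations inside $\widehat{A}\cong\widehat{KQ}$, and any two such configurations are connected by a finite sequence of reflections at sources or sinks of the configuration, each of which is an APR-tilt. The slice language can be made to work, but it refers to sections of the quiver of $\widehat{A}$, not of $\mathbb{Z}Q$. Once this is straightened out, the argument proceeds as you describe.
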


Recall that an algebra  $A$ is schurian if the following condition is satisfied:
$
\dime_K (\Hom_A (P,P')) \leq 1
$
for any indecomposable projectives $P,P'$.

The focus of the study is the quiver with relations of the trivial extension $T(A)$ of a \emph{schurian algebra} $A \cong KQ_A/I$. Note that all incidence algebras are schurian. 

We define now the notion of maximal path in an algebra $KQ/I$ with $I$ admissible. We remark that when we say that a path is in an algebra of the form $KQ/I$ 
we mean that it is a path in $Q$ whose class is not zero in $KQ/I$.

\begin{definition}[maximal path]
A path $\gamma$ in $KQ_A/I$ will be called maximal if $\gamma \neq 0$ and $\alpha \gamma = 0 = \gamma \alpha$ for every arrow $\alpha$ of $(Q_A)_1$.  
\end{definition}

The quiver of $T(A)$ is described in the following theorem.

\begin{theorem}[Fern\'andez \cite{fern}]
If $A \cong KQ_A/I$ is a schurian algebra, then the quiver of $T(A)$ is given by:
\begin{asparaenum}[\itshape a)]
\item $(Q_{T(A)})_0 = (Q_A)_0$,
\item $(Q_{T(A)})_1 = (Q_A)_1 \cup \{ \beta_{\gamma_1}, \dotsc, \beta_{\gamma_t} \}$, where $\{ \gamma_1, \dotsc, \gamma_t \}$ 
is a maximal set of linearly independent maximal paths. Moreover, for each $i$, $\beta_{\gamma_i}$ is an arrow such that $s(\beta_{\gamma_i})=t(\gamma_i)$ and $t(\beta_{\gamma_i})=s(\gamma_i)$.   
\end{asparaenum}
\end{theorem}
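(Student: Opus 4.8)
The plan is to construct the quiver of $T(A)$ directly from the definition of the trivial extension and the structure of $D(A)$ as an $A$-bimodule. First I would recall that for a finite dimensional algebra $A = KQ_A/I$, the quiver of $T(A)$ has the same vertex set as $A$ because $T(A)/\operatorname{rad}T(A) \cong A/\operatorname{rad}A$: indeed $D(A)$ is a nilpotent ideal of $T(A)$ (it squares to zero), so it is contained in the radical, and $T(A)/D(A) \cong A$ has the same semisimple quotient. This immediately gives part a).

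For part b), the arrows of $T(A)$ correspond to a basis of $\operatorname{rad}T(A)/\operatorname{rad}^2 T(A)$, compatibly with the idempotents. I would decompose this space using the bimodule decomposition $\operatorname{rad}T(A) = \operatorname{rad}A \oplus D(A)$ (as $A$-bimodules, since $D(A)\cdot \operatorname{rad}A$ and $\operatorname{rad}A \cdot D(A)$ land inside $D(A)$ and we are modding by $\operatorname{rad}^2$, noting $D(A)^2 = 0$). The $\operatorname{rad}A/\operatorname{rad}^2A$ part contributes exactly the original arrows $(Q_A)_1$. The new arrows come from $D(A)/(\operatorname{rad}A\cdot D(A) + D(A)\cdot \operatorname{rad}A)$, and the key computation is to identify, for vertices $a,b$, the dimension of $e_b\bigl(D(A)/(\operatorname{rad}A\cdot D(A)+D(A)\cdot\operatorname{rad}A)\bigr)e_a$. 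Here I would use that $e_b D(A) e_a \cong D(e_a A e_b) = D(\operatorname{Hom}_A(P_a, P_b))$ as vector spaces, and that the sub-bimodule generated by radical actions corresponds under duality to the paths from $a$ to $b$ that factor through a longer path — precisely, the quotient is dual to the space of maximal paths from $a$ to $b$ modulo the relations in $I$. Choosing $\{\gamma_1,\dots,\gamma_t\}$ a maximal linearly independent family of maximal paths then gives one new arrow $\beta_{\gamma_i}$ with $s(\beta_{\gamma_i}) = t(\gamma_i)$ and $t(\beta_{\gamma_i}) = s(\gamma_i)$, the reversal of source and target being exactly the effect of the duality $D(-)$.

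The main obstacle, and the place where schurianness of $A$ is used, is in verifying that the new arrows are indexed precisely by maximal paths rather than by some more complicated quotient. In general $e_b D(A) e_a$ can be higher dimensional and the radical sub-bimodule can be intricate, but when $\dim_K \operatorname{Hom}_A(P_a,P_b) \le 1$ for all $a,b$, the space $e_b A e_a$ is at most one-dimensional, spanned by the class of a single path (when nonzero), and one checks that a nonzero class in the relevant quotient of $D(A)$ survives if and only if that path is maximal in the sense defined above — i.e. it cannot be extended by an arrow on either side without becoming zero in $A$. I would make this precise by pairing: a functional $f \in e_b D(A) e_a$ lies in $\operatorname{rad}A \cdot D(A) + D(A)\cdot \operatorname{rad}A$ iff it vanishes on every path of the form $\alpha\delta$ or $\delta\alpha$ with $\alpha$ an arrow, which by schurianness translates to the dual path being maximal.

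Finally I would assemble these pieces: part a) is the semisimple-quotient argument, and part b) follows by counting the new arrows via the duality $e_b D(A) e_a \cong D(e_a A e_b)$ together with the maximal-path characterization of the quotient, with the source/target reversal recorded as stated. A brief remark that the family $\{\beta_{\gamma_i}\}$ depends on the chosen basis of maximal paths but its cardinality $t$ does not, since $t = \dim_K D(A)/(\operatorname{rad}A\cdot D(A)+D(A)\cdot \operatorname{rad}A)$, would round out the argument.
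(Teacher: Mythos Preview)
The paper does not actually prove this theorem; it is stated as a result of Fern\'andez and cited from \cite{fern} without argument. So there is no ``paper's own proof'' to compare against, and your task is really to supply what the paper omits.

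Your overall strategy is the standard and correct one: identify $\operatorname{rad}T(A)=\operatorname{rad}A\oplus D(A)$, compute $\operatorname{rad}^2T(A)=\operatorname{rad}^2A\oplus(\operatorname{rad}A\cdot D(A)+D(A)\cdot\operatorname{rad}A)$ using $D(A)^2=0$, and read off the arrows from the resulting direct sum decomposition of $\operatorname{rad}T(A)/\operatorname{rad}^2T(A)$. The identification of the new piece with $D(\operatorname{soc}_{\mathrm{bimod}}A)$ and the observation that, under the schurian hypothesis, the bimodule socle of $A$ has a basis given by classes of maximal paths, is exactly the right mechanism, and the source/target reversal is indeed the effect of $D$.

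There is one sentence you should fix. You write that $f\in e_bD(A)e_a$ lies in $\operatorname{rad}A\cdot D(A)+D(A)\cdot\operatorname{rad}A$ ``iff it vanishes on every path of the form $\alpha\delta$ or $\delta\alpha$ with $\alpha$ an arrow''. That is not the correct characterisation: by duality, $\operatorname{rad}A\cdot D(A)+D(A)\cdot\operatorname{rad}A$ is precisely the annihilator in $D(A)$ of the bimodule socle of $A$, so $f$ lies in this sub-bimodule iff $f$ vanishes on every \emph{maximal} path, not on every extendable one. Your concluding clause (``which by schurianness translates to the dual path being maximal'') is correct, but it does not follow from the condition as you stated it. Replace that sentence with the duality statement $\bigl(\operatorname{rad}A\cdot D(A)+D(A)\cdot\operatorname{rad}A\bigr)=(\operatorname{soc}_{\mathrm{bimod}}A)^{\perp}$, and then the schurian hypothesis gives you exactly one new arrow $\beta_{\gamma_i}$ for each basis element $\gamma_i$ of $\operatorname{soc}_{\mathrm{bimod}}A$, as desired.
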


It is necessary to describe the relations of the presentation associated with $T(A)$, for this we will need some definitions.

\begin{definition}[elementary cycle]
Let $M = \{ \gamma_1, \dotsc, \gamma_t \}$ be a maximal set of linearly independent maximal paths. We say that a oriented cycle $C$ of $Q_{T(A)}$ is elementary if $C = \beta_{\gamma_i}\alpha_1\alpha_2\dotsc\alpha_n$, where $\gamma_i \in M$ and $\alpha_1 \dotsc \alpha_n=k \gamma_i$, for some $k \in K^*$. 
\end{definition} 

Note that the length of elementary cycle $C$ is at least two. 

We will say that the supplement of $\gamma$ in $C$ is the trivial path $e_{s(\gamma)}$, if $s(\gamma)=t(\gamma)$, otherwise the supplement of $\gamma$ is the path in $C$ whose origin is the terminus of $\gamma$ and it has terminus the origin of $\gamma$.

\begin{theorem}[Fern\'andez \cite{fern}] \label{relacao}
Let $A \cong KQ_A/I_A$ be a schurian triangular algebra such that parallel paths in $Q_A$ are equal in $A$. Then $T(A) = KQ_{T(A)}/I_{T(A)}$ where the admissible ideal $I_{T(A)}$ is generated by:
\begin{asparaenum}[\itshape a)]
\item the paths consisting of $n + 1$ arrows containing all arrows of an elementary cycle of length $n$,
\item the paths whose arrows does not belong to a unique elementary cycle,
\item the difference $\gamma - \gamma'$ of paths $\gamma, \gamma'$ with the same origin and the same endpoint and having a common supplement in elementary cycles of $Q_{T(A)}$.
\end{asparaenum}
\end{theorem}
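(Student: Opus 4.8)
The strategy is to write down an explicit $K$-algebra morphism $\varphi\colon KQ_{T(A)}\to T(A)$, show it is surjective, verify that the listed relations lie in its kernel, and then force equality by a dimension count. Recall from the preceding theorem of Fern\'andez that $Q_{T(A)}$ is $Q_A$ together with one extra arrow $\beta_{\gamma_i}\colon t(\gamma_i)\to s(\gamma_i)$ for each member $\gamma_i$ of a maximal linearly independent set $M=\{\gamma_1,\dots,\gamma_t\}$ of maximal paths. Because $A$ is schurian and parallel paths are equal in $A$, the classes of the nonzero paths of $Q_A$ form a $K$-basis of $A$; let $\{p^{\,*}\}$ be the dual basis of $D(A)=\Hom_K(A,K)$. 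Define $\varphi$ on vertices by $\varphi(e_v)=(e_v,0)$, and on arrows by $\varphi(\alpha)=(\bar\alpha,0)$ for $\alpha\in (Q_A)_1$ (with $\bar\alpha$ the class of $\alpha$ in $A$) and $\varphi(\beta_{\gamma_i})=(0,\gamma_i^{\,*})$. As a morphism out of the free path algebra $\varphi$ is automatically well defined; what matters is that it is compatible with the multiplication of $T(A)$, which is clear.

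Before anything else I would record the elementary bimodule calculus in $T(A)=A\ltimes D(A)$: for a nonzero path $p=\alpha_1\cdots\alpha_m$ of $A$ and an arrow $\alpha$ one has $p^{\,*}\cdot\bar\alpha=(\alpha_2\cdots\alpha_m)^{\,*}$ if $\alpha=\alpha_1$ and $0$ otherwise, and symmetrically $\bar\alpha\cdot p^{\,*}=(\alpha_1\cdots\alpha_{m-1})^{\,*}$ if $\alpha=\alpha_m$ and $0$ otherwise; in particular $e_v^{\,*}\cdot\bar\alpha=0=\bar\alpha\cdot e_v^{\,*}$, and $D(A)\cdot D(A)=0$ since $D(A)$ is a square-zero ideal. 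Surjectivity of $\varphi$ is then immediate: the $A$-part of $T(A)$ is hit because $A$ is generated by its arrows; and every nonzero path $p$ of $A$ extends, using that $A$ is triangular (so $Q_A$ is finite and acyclic and the extension process terminates), to a maximal path which by maximality of $M$ and the parallel-paths hypothesis is a scalar multiple of some $\gamma_i$. An appropriate path of $Q_{T(A)}$ running once through $\beta_{\gamma_i}$ and along the pieces of $\gamma_i$ flanking $p$ is then sent by $\varphi$, via the calculus above, to $(0,k\,p^{\,*})$ with $k\in K^{*}$, so $D(A)$ lies in the image.

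Next I would check $\langle(a),(b),(c)\rangle\subseteq\ker\varphi$, a direct computation with the same formulas. A path using two of the new arrows is sent to $D(A)\cdot D(A)=0$, and more generally one sees that a path can have nonzero image only if all of its arrows lie in one common elementary cycle, which yields part (b). Running through all arrows of an elementary cycle $\beta_{\gamma_i}\alpha_1\cdots\alpha_n$ carries $(0,\gamma_i^{\,*})$ step by step down to $(0,e_{s(\gamma_i)}^{\,*})$, and one further arrow then kills it, which yields part (a). Finally, a difference $\gamma-\gamma'$ with a common supplement in elementary cycles is sent to $(0,\lambda(p^{\,*}-(p')^{\,*}))$, where $p,p'$ are obtained by deleting from $\gamma,\gamma'$ the common supplement and the new arrow; but $p$ and $p'$ are parallel paths of $A$, hence equal in $A$, so $p^{\,*}=(p')^{\,*}$ and the difference vanishes, which yields part (c).

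The reverse inclusion $\ker\varphi\subseteq\langle(a),(b),(c)\rangle$ is the crux, and I would get it by a dimension argument rather than directly. Put $B=KQ_{T(A)}/\langle(a),(b),(c)\rangle$. Using (a) and (b) one shows that $B$ is spanned by the classes of the nonzero paths of $Q_A$ together with, for each such path $p$, any one path of $Q_{T(A)}$ that runs exactly once through a new arrow, contains no full elementary cycle, and maps under $\varphi$ to $(0,p^{\,*})$; relation (c) is precisely what identifies in $B$ the several a priori distinct such representatives of a given $p^{\,*}$ (they differ by rotating the new arrow around its elementary cycle). Hence $\dim_K B\le \dim_K A+\dim_K D(A)=\dim_K T(A)$, and since $\varphi$ induces a surjection $B\twoheadrightarrow T(A)$ it must be an isomorphism, giving $\ker\varphi=\langle(a),(b),(c)\rangle$; admissibility of the ideal is then automatic, because $T(A)$ is finite dimensional and the presentation carries no relation of length $\le 1$. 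The genuinely delicate points are showing that (a) and (b) together annihilate every monomial outside the proposed spanning list, and that (c) identifies exactly the overlaps and nothing more — both of which reduce to a careful analysis of how a fixed nonzero path of $A$ sits inside the maximal paths of $M$, and hence inside the elementary cycles of $Q_{T(A)}$.
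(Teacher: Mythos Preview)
The paper does not prove this theorem: it is quoted as a result of Fern\'andez from her thesis \cite{fern} and is used as a black box throughout the rest of the paper. So there is no proof in the text to compare your attempt against.

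That said, your strategy is the standard and correct one for results of this type. Defining $\varphi$ on the generators of $KQ_{T(A)}$ by sending old arrows into $A\times\{0\}$ and the added arrows $\beta_{\gamma_i}$ to $(0,\gamma_i^{\,*})$, then checking surjectivity, inclusion of the listed relations in $\ker\varphi$, and finally forcing equality by $\dim_K B\le \dim_K T(A)$, is exactly how such presentations of trivial extensions are established. Your bimodule formulas for $p^{\,*}\cdot\bar\alpha$ and $\bar\alpha\cdot p^{\,*}$ are correct under the paper's path convention, and the use of triangularity to extend any nonzero path to a maximal one is the right way to get surjectivity on the $D(A)$ summand.

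Two places deserve a bit more care if you write this out in full. For (c), you should distinguish the two cases according to whether the common supplement $\delta$ contains the new arrow or not: if it does, then $\gamma,\gamma'$ lie entirely in $Q_A$ and are parallel, hence equal in $A$; if it does not, then each of $\gamma,\gamma'$ contains exactly one new arrow and your computation with $p^{\,*}$ applies --- but you need the observation that the new arrow in $\gamma$ and the new arrow in $\gamma'$ may be different $\beta_{\gamma_i}$'s, and that the resulting duals nonetheless agree because the $Q_A$-portions removed are parallel. For the dimension count, the assertion that every monomial not in your spanning list is killed by (a) and (b) requires checking that a nonzero path of $Q_A$ (no new arrows) that happens to vanish in $A$ is indeed of type (b): this follows because any path whose arrows all lie in a single elementary cycle is a subpath of some nonzero maximal path, hence nonzero in $A$. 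You flag these as the ``delicate points'' yourself, which is accurate.
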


Note that every path $\gamma$ in $T(A)$ is contained in an elementary cycle $C$. 

For future references in the text, the relations defined by items a), b) and c) are called relations of type 1, 2, and 3, respectively. 
Observe that the relations of type 1 and 2 are monomial.

We will use an important procedure on a trivial extension $T(A)$. The two definitions below are required in order to give a description 
of  this method.

\begin{definition} [cutting set \cite{fern-plat}]
Let $T(A) \cong KQ_{T(A)}/I_{T(A)}$ be a trivial extension and let $\Sigma$ be a set of arrows of $Q_{T(A)}$. We say that $\Sigma$ is a cutting set if it consists of exactly one arrow in each  elementary cycle of $T(A)$.
\end{definition}

\begin{definition}[\cite{fern-plat}]
Let $T(A) \cong KQ_{T(A)}/I_{T(A)}$ be a trivial extension given by a quiver $KQ_{T(A)}$ and an admissible ideal $I_{T(A)}$. We say that $A'$ is defined by the cutting set $\Sigma$ of $T(A)$, if $A'$ is isomorphic to  $KQ_{T(A)}/ <I_{T(A)} \cup \Sigma>$.
\end{definition}

Looking at the Theorem \ref{relacao} and the definition of the cutting set, we see that the relations of  type $1$, are eliminate with the cutting and they have no influence 
on the relations of the ideal which define $A'$. Therefore, in the study of these algebras, we will pay attention only  relations of type $2$ and type $3$. We can now state the following theorem of Fern\'andez.

\begin{proposition}[Fern\'andez \cite{fern}]
Let $A$ be a schurian triangular algebra. Then an algebra $A'$ is defined by a cutting set of $T(A)$ if and only if $T(A) \cong T(A')$.
\end{proposition}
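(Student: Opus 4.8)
The plan is to prove both implications using the structural description of trivial extensions provided by Theorems in this section, together with the observation that cutting sets only delete relations of type~1.

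First I would prove the forward direction: suppose $A'$ is defined by a cutting set $\Sigma$ of $T(A)$, so that $A' \cong KQ_{T(A)}/\langle I_{T(A)} \cup \Sigma\rangle$. I want to show $T(A') \cong T(A)$. The key point is that passing to the trivial extension of $A'$ should ``restore'' exactly the arrows of $\Sigma$ and the type~1 relations that the cut removed. Concretely, I would analyze the quiver $Q_{A'}$: since $\Sigma$ contains one arrow from each elementary cycle of $T(A)$, and by Theorem~\ref{relacao} every arrow of $Q_{T(A)}$ lies in a unique elementary cycle (combining items~a) and~b)), cutting $\Sigma$ breaks each elementary cycle into a single maximal path of $A'$. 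One then checks that $\{\,$these broken cycles$\,\}$ is a maximal set of linearly independent maximal paths in $A'$: linear independence and maximality follow from the corresponding properties of the elementary cycles in $T(A)$ and the fact that the type~2 and type~3 relations are preserved. Applying Fern\'andez's description of the quiver of $T(A')$ then adds back exactly one arrow $\beta_{\gamma}$ for each such maximal path $\gamma$, with $s(\beta_\gamma)=t(\gamma)$ and $t(\beta_\gamma)=s(\gamma)$ --- which is precisely the arrow of $\Sigma$ we removed, with the same source and target. Finally, comparing the ideals via Theorem~\ref{relacao}: the type~2 and type~3 relations of $T(A')$ coincide with those of $T(A)$ because the elementary cycles are the same, and the type~1 relations are recreated, so $I_{T(A')} = I_{T(A)}$ and hence $T(A') \cong T(A)$.

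For the converse, suppose $T(A') \cong T(A)$. I want to exhibit a cutting set $\Sigma$ of $T(A)$ defining $A'$. Since $A'$ is obtained from $T(A')$ by the standard recipe --- $T(A')$ has underlying space $A' \oplus D(A')$, and $A'$ is recovered as the quotient by the ideal $D(A')$, which at the level of quivers with relations amounts to deleting the ``new'' arrows $\beta_{\gamma_i}$ added in Fern\'andez's quiver theorem --- the set of these new arrows forms, by construction, a set containing exactly one arrow in each elementary cycle of $T(A')$, i.e. a cutting set. Transporting this set through the isomorphism $T(A') \cong T(A)$ (which, since these quivers have no bypass as they are built from incidence-type data, respects arrows up to scalars by Proposition~\ref{moninc}-type reasoning) gives a cutting set $\Sigma$ of $T(A)$, and quotienting $T(A)$ by $\langle I_{T(A)} \cup \Sigma\rangle$ reproduces $A'$ up to isomorphism.

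The main obstacle I anticipate is the bookkeeping in the forward direction showing that the broken elementary cycles genuinely form a \emph{maximal} linearly independent set of maximal paths in $A'$, and that no \emph{new} maximal paths appear after cutting --- a priori, removing an arrow could create a shorter maximal path elsewhere, which would cause $T(A')$ to acquire an extra arrow not in $\Sigma$. Ruling this out requires using that each arrow of $Q_{T(A)}$ belongs to a unique elementary cycle (the type~2 relations), so that cutting one arrow per cycle leaves every remaining arrow still ``completable'' along its cycle and no spurious maximality is introduced. A secondary technical point is verifying that the isomorphism $T(A')\cong T(A)$ in the converse can indeed be normalized to send arrows to scalar multiples of arrows, so that the image of the canonical cutting set of $T(A')$ is literally a set of arrows of $Q_{T(A)}$; this is exactly the kind of rigidity guaranteed by the no-bypass hypothesis exploited in Proposition~\ref{moninc}.
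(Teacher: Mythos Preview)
The paper does not give a proof of this proposition: it is attributed to Fern\'andez \cite{fern} and simply quoted as input for the rest of the paper. So there is no ``paper's own proof'' to compare against; I can only assess your sketch on its merits.

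Your overall architecture (cut, identify the resulting maximal paths, rebuild the trivial extension and match quivers and relations; conversely, use the canonical cutting set $\{\beta_{\gamma_i}\}$ of $T(A')$ and transport it through the isomorphism) is the right shape. But there is a genuine gap in the forward direction. You assert that ``every arrow of $Q_{T(A)}$ lies in a \emph{unique} elementary cycle,'' and you use this to argue both that the broken cycles are exactly the maximal paths of $A'$ and that no spurious maximal paths appear. This uniqueness claim is false: the paper's own Lemma~\ref{cortephia} and Example~\ref{corte1} are built precisely around arrows belonging to \emph{two} elementary cycles (this is exactly how commutativity relations survive the cut and how incidence algebras arise). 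When a single cut arrow lies in several elementary cycles, the complements of that arrow in the various cycles share arrows and need not be disjoint paths; the bijection ``elementary cycles of $T(A)$ $\leftrightarrow$ maximal paths of $A'$'' that your argument relies on breaks down. What is true (and stated in the paper just after Theorem~\ref{relacao}) is only that every nonzero path of $T(A)$ is contained in \emph{some} elementary cycle. You will need a more careful analysis of how the maximal paths of $A'$ relate to the elementary cycles of $T(A)$ when cycles overlap, and in particular why the set of new arrows one adds to form $T(A')$ has the same cardinality and endpoints as $\Sigma$.

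A second, smaller issue: in the converse you invoke a ``no-bypass'' rigidity \`a la Proposition~\ref{moninc} to normalize the isomorphism $T(A')\cong T(A)$ so that arrows go to arrows. But $Q_{T(A)}$ can certainly have bypasses (and even multiple arrows between two vertices, coming from several $\beta_{\gamma_i}$ with the same endpoints), so Proposition~\ref{moninc} does not apply directly. You also implicitly use Fern\'andez's description of $Q_{T(A')}$, which requires $A'$ to be schurian; that has to be checked rather than assumed. These points are not fatal, but they need to be addressed for the converse to go through.
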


Let $\Gamma$ be a representation-finite trivial extension of Cartan class $\mathbb{E}_7$. All algebras $A$ defined by a cutting set of
$\Gamma$ have their trivial extensions isomorphic to $\Gamma$. Therefore, the trivial extension of $A$ is representation-finite of Cartan class $\mathbb{E}_7$, implying that the algebra $A$ is iterated tilted of type $\mathbb{E}_7$ by Theorem \ref{AHR}.

We want to classify all PHI algebras such that its trivial extension is a given selfinjective finite representation type algebra with Cartan class $\mathbb{E}_7$ 
In particular we can assume  that $A =KQ/I$ where $Q$ has no bypass and  $I$ is generated by the all commutative relations. 
 
We state now the following lemma.

\begin{lemma} \label{cortephia}
Let $A \cong KQ_A/I_A$ be a schurian triangular algebra such that parallel paths in $Q_A$ are equal in $A$. Let  $A' \cong KQ_A'/ I'$ be an algebra defined by a cutting set of $T(A)$. If an arrow of the cutting set belongs to two elementary cycles of $Q_{T(A)}$ then there is, at least, a commutativity relation in $I'$.
\end{lemma}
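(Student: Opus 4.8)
The plan is to exhibit a concrete difference of parallel paths lying in $I'$, produced by a relation of type 3 of the trivial extension. Write $\Sigma$ for the cutting set defining $A'$, so that $A'\cong KQ_{T(A)}/\langle I_{T(A)}\cup\Sigma\rangle$, and suppose the arrow $\sigma\in\Sigma$ lies on two distinct elementary cycles $C_1\neq C_2$ of $Q_{T(A)}$. First I would isolate the two paths that will form the relation. An elementary cycle has length at least two and traverses each of its arrows exactly once, so $s(\sigma)\neq t(\sigma)$; and for $i=1,2$ the supplement $P_i$ of $\sigma$ in $C_i$ is exactly the path obtained by deleting the arrow $\sigma$ from the cyclic path $C_i$, so that $P_i$ runs from $t(\sigma)$ to $s(\sigma)$. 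Hence $P_1$ and $P_2$ are parallel paths, and they are distinct, since, as cyclic paths based at $s(\sigma)$, one has $C_1=\sigma P_1$ and $C_2=\sigma P_2$ while $C_1\neq C_2$. This works uniformly whether $\sigma$ is an original arrow of $Q_A$ or one of the arrows $\beta_\gamma$ adjoined in the trivial extension; in the latter case $P_1$ and $P_2$ are simply two parallel maximal paths of $A$ representing one and the same element of $A$.

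Next I would show $P_1-P_2\in I'$. By Theorem \ref{relacao}, the ideal $I_{T(A)}$ contains, as a relation of type 3, the difference of any two paths with the same origin and endpoint that admit a common supplement in elementary cycles; applying this to $P_1$ and $P_2$, whose common supplement is $\sigma$ (in $C_1$ and in $C_2$ respectively), gives $P_1-P_2\in I_{T(A)}\subseteq I'$. It remains to check that this is a genuine commutativity relation, i.e., that $P_1$ and $P_2$ are nonzero in $A'$, distinctness being already in hand. Because a cutting set meets every elementary cycle in exactly one arrow, the only arrow of $\Sigma$ on $C_i$ is $\sigma$; hence $P_i$, being $C_i$ with $\sigma$ removed, uses no arrow of $\Sigma$, so passing from $T(A)$ to $A'=T(A)/\langle\Sigma\rangle$ does not annihilate $P_i$ provided $P_i\neq 0$ already in $T(A)$. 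That $P_i\neq 0$ in $T(A)$ I would deduce from its being a subpath of the elementary cycle $C_i$ together with an inspection of the generators of $I_{T(A)}$ in Theorem \ref{relacao}, carried out below. Consequently $P_1$ and $P_2$ are nonzero parallel paths of $Q_{A'}$ with $P_1-P_2\in I'$, which is the required commutativity relation.

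I expect the only delicate step to be the claim that a proper subpath of an elementary cycle is nonzero in $T(A)$, and I would settle it by a brief case analysis on the three families of generators of $I_{T(A)}$ from Theorem \ref{relacao}: a relation of type 3 is binomial and therefore cannot by itself annihilate a single monomial; a relation of type 1 is a path of length one more than the length of its elementary cycle, hence strictly longer than any open subpath of $C_i$, so it cannot occur as a subpath of $P_i$; and a relation of type 2 is, by definition, not contained in any single elementary cycle, whereas every subpath of $P_i$ is contained in $C_i$. The remaining points --- that deleting $\sigma$ from $C_i$ coincides with taking the supplement, that the quotient $T(A)/\langle\Sigma\rangle$ kills exactly the monomials passing through $\Sigma$, and that these operations are compatible with the type-3 identifications --- are routine bookkeeping about elementary cycles and supplements.
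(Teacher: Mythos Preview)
Your proposal is correct and follows the same idea as the paper: the surviving commutativity relation is the type~3 generator $P_1-P_2$, where $P_i$ is the supplement of the shared cut arrow in $C_i$. The paper argues by first reducing to a trivial extension whose quiver consists of exactly two elementary cycles meeting along a common path $\theta_1\cdots\theta_l$, then listing all type~2 and type~3 relations explicitly and observing that after cutting some $\theta_i$ only the relation $\alpha_{l+1}\cdots\alpha_n-\alpha'_{l+1}\cdots\alpha'_m$ remains; you work directly without this reduction, which is fine.

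Your ``delicate step'' is slightly over-argued and slightly under-justified. Over-argued because, for the lemma as stated, it already suffices that $P_1\neq P_2$ are parallel paths in $Q_{A'}=Q_{T(A)}\setminus\Sigma$ with $P_1-P_2\in I'$, and you establish all of this before the delicate step begins. Under-justified because your case analysis does not exclude the possibility that $P_i$ is type-3-equivalent to a path containing a type~1 or type~2 monomial (a monomial can lie in an ideal generated by monomials and binomials via a chain of binomial identifications ending at a monomial generator), and ``a type~1 relation is strictly longer than any open subpath of $C_i$'' fails when some other elementary cycle is much shorter than $C_i$. Both are easy to repair: the paper's remark after Theorem~\ref{relacao}, that every nonzero path in $T(A)$ is contained in an elementary cycle, gives $P_i\neq 0$ in $T(A)$ directly; and the paper's reduction to two cycles is exactly what makes the remaining bookkeeping (no surviving type~2 relations, hence $P_i\neq 0$ in $A'$) transparent.
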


\begin{proof}

 We give a proof only in the case of a trivial extension $T(A)$ associated with a quiver which is the union of the vertices and arrows belonging to  two elementary cycles, as in the picture.
 The general case reduces to this case.
 
We use the following notation: $C_1=\theta_1\theta_2\dotsc\theta_l\alpha_{l+1}\dotsc\alpha_n$ and \\  $C_2=\theta_1\theta_2\dotsc\theta_l\alpha'_{l+1}\dotsc\alpha'_m$.
\begin{center}
\begin{tikzcd} [column sep=large, row sep=large]
\bullet \ar{d}[swap]{\alpha_n} &\cdots \ar{l}[swap]{\alpha_{n\menas 1}}	&\bullet \ar{l}[swap]{\alpha_{l+2}}\\
\bullet \ar{r}{\theta_1} &\cdots \ar{r}{\theta_l} &\bullet \ar{u}[swap]{\alpha_{l+1}} \ar{d}{\alpha'_{l+1}}\\
\bullet \ar{u}{\alpha'_m} &\cdots \ar{l}{\alpha'_{m\menas 1}} &\bullet \ar{l}{\alpha'_{l+2}}
\end{tikzcd}
\end{center}
with  relations of type  $2$ and $3$:
\begin{align*}
&\alpha'_m \theta_1 \dotsc \theta_l \alpha_{l+1}; \: \alpha_n \theta_1 \dotsc \theta_l \alpha'_{l+1} \\
&\alpha_{l+1} \dotsc \alpha_n - \alpha'_{l+1} \dotsc \alpha'_m
\end{align*}

Now, let $\Sigma = \{ \theta_1, \dotsc , \theta_l  \}$ be a cutting set such that $\theta_i$ belonging to the two elementary cycles $C_1$ and $C_2$.

Therefore the presentation of $A'$ has only the relation $\alpha_{l + 1}$ $\alpha_{l + 2} \dotsc \alpha_n$ $\menas$ $\alpha'_{l + 1}$ $\alpha'_{l + 2} \dotsc \alpha'_m$, as desired.
\end{proof}

A question arises on the existence of a cutting set of trivial extension that defines an incidence algebra. We come across various forms of trivial extension diagrams, and we can find some particular patterns in the cutting sets that define incidence algebras. We join these patterns  in some lemmas that we will call cutting lemmas. Before stating these lemmas, we will introduce some necessary concepts.

For each vertex $h$ of quiver $Q_{T(A)}$, let $C_h$ be the set of all elementary cycles $C$,  with origin and terminus in $h$.

\begin{example} \label{corte1}
Let $T(A)$ be a trivial extension whose quiver $Q_{T(A)}$ is the following:

\begin{center}
\begin{tikzcd} [column sep={1.5cm,between origins} , row sep={1.5cm,between origins} ] 
&&&\bullet \ar{dl}[swap]{\theta_1} \ar{dr}{\delta_1} &&&\\
&&\bullet \ar{dl}[swap]{\theta_2} &\rvdots \ar{u}{\lambda_{m_1}} &\bullet \ar{dr}{\delta_2} &&\\
&\iddots \ar{dl}[swap]{\theta_{n_1}}&&\bullet \ar{u}{\lambda_1}	&&\ddots \ar{dr}{\delta_{n_2}} &\\
\bullet \ar{r}{\mu_1} &\cdots \ar{r}{\mu_{m_2}} &\bullet \ar{r}{\beta'}	&h \ar{u}{\alpha} \ar[swap]{d}{\beta}	&\bullet \ar{l}[swap]{\alpha'} &\cdots \ar{l}[swap]{\gamma_{m_3}} &\bullet \ar{l}[swap]{\gamma_1}\\
&\ddots \ar{ul}{\rho_{n_3}}	&&\bullet \ar{d}[swap]{\eta_1} &&\iddots \ar{ur}[swap]{\sigma_{n_4}}&\\
&&\bullet \ar{ul}{\rho_2} &\rvdots \ar{d}[swap]{\eta_{m_4}}	&\bullet \ar{ur}[swap]{\sigma_2} &&\\
&&&\bullet \ar{ul}{\rho_1} \ar{ur}[swap]{\sigma_1} &&&
\end{tikzcd}
\end{center}
assume that  $C_h = \{ C_1, C_2, C_3, C_4 \}$. Then there are only two cutting sets defining two incidence algebras, which are $\{\alpha, \beta\}$ and
$\{\alpha', \beta`\}$.
\end{example}

\begin{proof}

We use the notation $\lambda = \lambda_1 \dotsc \lambda_{m_1}$, $\theta=\theta_1 \dotsc \theta_{n_1}$, $\mu = \mu_1 \dotsc \mu_{m_2}$, $\eta = \eta_1 \dotsc \eta_{m_4}$, $\rho = \rho_1 \dotsc \rho_{n_3}$, $\delta = \delta_1 \dotsc \delta_{n_2}$, $\gamma = \gamma_1 \dotsc \gamma_{m_3}$ and $\sigma = \sigma_1 \dotsc \sigma_{n_4}$. 

The relations of type $2$ and $3$ are:
\begin{align*}
& \alpha \lambda \theta - \beta \eta \rho; \; \theta \mu \beta' - \delta \gamma \alpha'; \; \alpha \lambda \delta - \beta \eta \sigma; \; \rho \mu \beta' - \sigma \gamma \alpha'; \\
& \theta_{n_1} \mu \beta' \beta; \; \rho_{n_3} \mu \beta' \alpha; \; \delta_{n_2} \gamma \alpha' \beta; \; \sigma_{n_4} \gamma \alpha' \alpha ; \\
& \beta' \alpha \lambda \delta_1; \; \alpha' \alpha \lambda \theta_1; \; \beta' \beta \eta \sigma_1; \; \alpha' \beta \eta \rho_1
\end{align*}

We consider the cutting set with the arrows $\alpha$ and $\beta$. This cutting set define the quiver with commutativity relations $\theta \mu \beta' - \delta \gamma \alpha'$ and $\rho \mu \beta' - \sigma \gamma \alpha'$.

The second incidence algebra with is defined by cutting set $\{\alpha' ,\beta' \}$, is isomorphic to quiver with commutativity relations $\alpha \lambda \theta - \beta \eta \rho$ and $\alpha \lambda \delta - \beta \eta \sigma$.
\end{proof}

In the section \ref{sectame}, we present two more cutting lemmas \ref{cortenphia} and \ref{corteAn}.

These lemmas inspired the search for a general algorithm for finding good cutting sets. We have been able to go one step further, in a computational way, we elaborate a program that shows exactly the cutting sets of given trivial extension that define incidence algebras.
In addition, we implemented this algorithm in the site \url{http://www.ime.usp.br/~celovg/}. 
In the appendix, we present the source code of the program. We briefly describe the ideas used in the program.

The first step is to name the arrows of quiver associated with the given trivial extension. We start labeling by  $\alpha_1$, $\alpha_2$ , $\alpha_3, \dotsc$ the arrows which are in the support
of relations  of type $2$.
The other arrows are named arbitrarily.

In the next step we put the necessary information from the trivial extension into the program. These initial data are:
\begin{itemize}
\item number of relations of type $2$;
\item number of elementary cycles;
\item number of different arrows in these relations of type $2$.
\end{itemize}
Then the program displays a table with checkboxes. In this table, the columns represent the relations of type $2$ and the elementary cycles, respectively, and the lines represent the arrows involved in these elements.
The user clicks on checkbox if the arrow is in the relation(s) or if it belongs to the cycle(s), and leaves unchecked otherwise. Press the ``Ready!'' button, and then the solution is shown.
The solution is the cutting sets that define incident algebras.

Fern\'andez's thesis \cite{fern} showed all the representation-finite trivial extension of Cartan class $\mathbb{E}_7$, totalling $72$.
Thus, for each trivial extension of the list $\mathbb{E}_7$, we look for the cutting sets that define incidence algebras.

Using the program we can see the following:

\begin{theorem} \label{teoe7}
Let $K\Delta \cong KQ/I$ be a PHI algebra of type $\mathbb{E}_7$ such that $\overline{Q}$ is not of tree type. Then $K\Delta$ or $K\Delta^{op}$ is isomorphic to one of the following algebras:

\NumTabs{2}
\begin{inparaenum}

\noindent 
\item
\begin{tikzcd}[column sep=tiny, row sep=tiny]
                            				& \bullet \arrow[dl] 	& 					& \bullet  \arrow[ll]	&\\
\bullet \arrow[rrrr, dotted, dash]	&                     		& 					&                         	& \bullet \arrow[ul] \arrow[dd] \\
								&					&					&					&\\
\bullet \arrow[uu]     				&					& \bullet \arrow[ll]	& 					& \bullet \arrow[ll]
\end{tikzcd}
\tab\item 
\begin{tikzcd}[column sep=tiny, row sep=tiny]
&&\bullet \arrow[dl] &&\\
\bullet &\bullet \arrow[l] \ar[rr, dotted, dash] &&\bullet \arrow[dd] \arrow[ul] &\\
&&&&\\
&\bullet \arrow[uu]	&\bullet \arrow[l] &\bullet \arrow[l] & 
\end{tikzcd}
\tab\item 
\begin{tikzcd}[column sep=tiny, row sep=tiny]
&&&\bullet \arrow[dll] &&\\
\bullet \ar[d] &\bullet \ar[l] \arrow[rrrr, dotted, dash]  &&&&\bullet \arrow[ull] \arrow[ddl] \\
\bullet &&&&&\\
&&\bullet \arrow[uul] &&\bullet \arrow[ll] & 
\end{tikzcd}
\tab\item 
\begin{tikzcd}[column sep=tiny, row sep=tiny]
&&\bullet \arrow[dll] && \\
\bullet \ar[dd] \arrow[rrrr, dotted, dash] &&&&\bullet \arrow[ull] \arrow[ddl] \\
&&&& \\
\bullet &\bullet \arrow[uul] &&\bullet \arrow[ll] & \bullet \ar[uu]
\end{tikzcd}
\tab\item 
\begin{tikzcd}[column sep=tiny, row sep=tiny]
				&					& 													& \bullet  			&\\
\bullet \arrow[dd]	&  					& \bullet \arrow[ll] \arrow[rr] \arrow[dd, dotted, dash]	&                         	& \bullet \arrow[ul] \arrow[dd] \\
				&					&													&					&\\
\bullet \arrow[rr]	&					& \bullet 											& 					& \bullet \arrow[ll]
\end{tikzcd}
\tab\item 
\begin{tikzcd}[column sep=tiny, row sep=tiny]
				&					& 								& \bullet \arrow[ld]	&\\
\bullet \arrow[dd]	&  					& \bullet \arrow[rr, dotted, dash]	&                         	& \bullet \arrow[ul] \arrow[dd] \\
				&					&								&					&\\
\bullet \arrow[rr]	&					& \bullet \arrow[uu]				& 					& \bullet \arrow[ll]
\end{tikzcd}
\tab\item 
\begin{tikzcd}[column sep=tiny, row sep=tiny]
\bullet \arrow[dr]	&					& \bullet \arrow[ll] \arrow[ld, dotted, dash] \arrow[rd]	& 							& \bullet \\
				& \bullet 			& 													& \bullet \arrow[dd] \arrow[ur]	&		\\
				&					&													&							&		\\
				& \bullet \arrow[uu]	& 													& \bullet \arrow[ll]			& 
\end{tikzcd}
\tab\item 
\begin{tikzcd}[column sep=tiny, row sep=tiny]
\bullet 			& \bullet \arrow[d] \arrow[rr] \arrow[ddrr, dotted, dash]	& &\bullet \arrow[dd] 	\\
				& \bullet \arrow[d]									& &						\\
\bullet \arrow[uu]	& \bullet \arrow[l] \arrow[rr]							& & \bullet
\end{tikzcd}
\tab\item 
\begin{tikzcd}[column sep=tiny, row sep=tiny]
\bullet \arrow[r] &\bullet \arrow[dd, dotted, dash] &&\bullet \arrow[dd] \\
&&& \\
\bullet \arrow[uu] &\bullet \arrow[l] \arrow[rr] &&\bullet \ar[uull]\\
&\bullet \ar[u] &&
\end{tikzcd}
\tab\item 
\begin{tikzcd}[column sep=tiny, row sep=tiny]
&&\bullet \arrow[dr] &\\
\bullet \arrow[r] &\bullet \arrow[dd, dotted, dash] &&\bullet \arrow[dd]	\\
&&&\\
\bullet \ar[r] &\bullet \arrow[luu] \arrow[rr] &&\bullet \arrow[uull]
\end{tikzcd}
\tab\item 
\begin{tikzcd}[column sep=tiny, row sep=tiny]
				& 													& \bullet \arrow[dr] 	& 					\\
\bullet 			& \bullet \arrow[d] \arrow[ddrr, dotted, dash] \arrow[ur]	& 					& \bullet \arrow[dd] 	\\
				& \bullet \arrow[d]									& 					&					\\
				& \bullet \arrow[luu] \arrow[rr]							& 					& \bullet	
\end{tikzcd}
\tab\item 
\begin{tikzcd}[column sep=tiny, row sep=tiny]
\bullet \arrow[dd] &&\bullet \arrow[ll] \arrow[ddr] &\bullet \\
&&& \\
\bullet \ar[d] \arrow[urur, dotted, dash] &&&\bullet \arrow[dl] \arrow[uu]	\\
\bullet &&\bullet \arrow[ull] & 
\end{tikzcd}
\tab\item 
\begin{tikzcd}[column sep=tiny, row sep=tiny]
\bullet \arrow[dd]	& 					& \bullet \arrow[ddr, dotted, dash] 	& \bullet \arrow[l]			\\
				& \bullet \arrow[ur] 	& 								& 							\\
\bullet \arrow[ur]	& 					&								& \bullet \arrow[dl] \arrow[uu]	\\
				& 					& \bullet \arrow[ull]				& 
\end{tikzcd}
\tab\item 
\begin{tikzcd}[column sep=tiny, row sep=tiny]
\bullet \arrow[rr] &&\bullet \arrow[ddd, dotted, dash] &&&\bullet \arrow[dddl] \\
&&&&& \\
&&&&& \\
\bullet \ar[r] &\bullet \ar[r] &\bullet \arrow[lluuu] \arrow[rr] &&\bullet \arrow[uuull] & 
\end{tikzcd}
\tab\item 
\begin{tikzcd}[column sep=tiny, row sep=tiny]
\bullet 	&& \bullet \arrow[d] \arrow[rrr]	&& 								 	& \bullet \arrow[dddl]	\\
		&& \bullet \arrow[d]				&& 									& 					\\
		&& \bullet \arrow[d]				&&									&					\\
		&& \bullet \arrow[lluuu] \arrow[rr]	&& \bullet \arrow[uuull, dotted, dash]	& 
\end{tikzcd}
\tab\item 
\begin{tikzcd}[column sep=tiny, row sep=tiny]
					&& 												 	& & \bullet 			 			\\
\bullet \arrow[drr]	&& \bullet \arrow[ll] \arrow[drr] \arrow[d, dotted, dash]	& & \bullet \arrow[u]				\\
\bullet \arrow[u]		&& \bullet 											& & \bullet \arrow[ll] \arrow[u]
\end{tikzcd}
\tab\item 
\begin{tikzcd}[column sep=tiny, row sep=tiny]
\bullet \arrow[dd]	 \arrow[rrr]	&					& 							& \bullet \arrow[ddl, dotted, dash]	& \bullet \arrow[dd] 	\\
							& \bullet \arrow[ul]	& 							&								&					\\
\bullet						&					& \bullet \arrow[ul] \arrow[rr]	&								& \bullet \arrow[uul]
\end{tikzcd}
\tab\item 
\begin{tikzcd}[column sep=tiny, row sep=tiny]
&\bullet \arrow[ddl] &&\bullet \ar[ll] \\
&&&\\
\bullet \arrow[rr, dotted, dash] && \bullet \ar[uul] \arrow[d] &\bullet \ar[uu] \ar[l] \arrow[uull, dotted, dash] \\
\bullet \arrow[u] &&\bullet \arrow[ll] &
\end{tikzcd}
\tab\item  
\begin{tikzcd}[column sep=tiny, row sep=tiny]
\bullet &\bullet \arrow[ddl] &&\bullet \arrow[ll] &	\\
&&&&\\
\bullet \ar[uu] \arrow[rrrr, dotted, dash] &&&&\bullet \arrow[d] \arrow[uulll]	\\
\bullet \arrow[u] &&&&\bullet \arrow[llll]
\end{tikzcd}
\tab\item 
\begin{tikzcd}[column sep=tiny, row sep=tiny]
&&\bullet \arrow[rr] &&\bullet \arrow[ddll, dotted, dash] \\
&&&&\\
\bullet \arrow[rr] &&\bullet \arrow[rr] \arrow[uu] &&\bullet \arrow[d] \arrow[uu] \\
\bullet &&&&\bullet \arrow[llll]
\end{tikzcd}
\tab\item 
\begin{tikzcd}[column sep=tiny, row sep=tiny]
&\bullet \arrow[dddl] &&&\bullet \ar[lll] \\
&&&&\\
&&&&\\
\bullet \arrow[rr, dotted, dash] &&\bullet \arrow[dl] \arrow[uuul] &\bullet \ar[l] &\bullet \ar[uuulll, dotted, dash] \ar[uuu] \ar[l] \\
&\bullet \arrow[ul] &&&
\end{tikzcd}
\tab\item  
\begin{tikzcd}[column sep=tiny, row sep=tiny]
&&\bullet \arrow[dddll] &&\bullet \arrow[ll] \\
\bullet &&&&\bullet \ar[dd] \\
&&&&\\
\bullet \arrow[rrrr, dotted, dash] \ar[uu] &&&&\bullet \arrow[dll] \arrow[uuull] \\
&&\bullet \arrow[ull] && 
\end{tikzcd}
\tab\item 
\begin{tikzcd}[column sep=tiny, row sep=tiny]
&\bullet \arrow[rrr] &&&\bullet \\
&&&&\\
&&&&\\
\bullet \arrow[r] &\bullet \arrow[rr] \ar[uuu] \arrow[uuurrr, dotted, dash] &&\bullet \arrow[r] &\bullet \arrow[d] \arrow[uuu]	\\
&&&& \bullet 
\end{tikzcd}
\tab\item 
\begin{tikzcd}[column sep=tiny, row sep=tiny]
&\bullet \arrow[dddr] \arrow[dddrr, dotted, dash] \arrow[rr] &&\bullet \arrow[ddd] &\\
&&&&\\
&&&&\\
\bullet \arrow[dr] \ar[uuur] \arrow[rr, dotted, dash] &&\bullet \arrow[r] &\bullet \arrow[r] &\bullet \\
&\bullet \arrow[ur] &&&
\end{tikzcd}
\tab\item  
\begin{tikzcd}[column sep=tiny, row sep=tiny]
&\bullet \arrow[dddr] \arrow[dddrr, dotted, dash] \arrow[rr] &&\bullet \arrow[ddd] &\\
&&&&\\
&&&&\\
\bullet \arrow[dr] \ar[uuur] \arrow[rr, dotted, dash] &&\bullet \arrow[r] &\bullet &\\
\bullet \arrow[u] &\bullet \arrow[ur] &&&
\end{tikzcd}
\tab\item 
\begin{tikzcd}[column sep=tiny, row sep=tiny]
&&\bullet \arrow[dddrr] \arrow[rr] &&\bullet \\
\bullet \ar[d] &&&&\\
\bullet \ar[d] &&&&\\
\bullet \arrow[drr] \arrow[uuurr] &&&&\bullet \arrow[llll, dotted, dash] \\
&&\bullet \arrow[urr] && 
\end{tikzcd}
\tab\item 
\begin{tikzcd}[column sep=tiny, row sep=tiny]
&&\bullet \ar[dd] &\bullet \ar[l] \ar[dd] \arrow[ddl, dotted, dash] \\
&&&\\
\bullet &\bullet \ar[l] &\bullet \ar[l] &\bullet \ar[l] \\
&&&\bullet \ar[u]
\end{tikzcd}
\tab\item  
\begin{tikzcd}[column sep=tiny, row sep=tiny]
				& 							& \bullet \arrow[d] 									&					& \bullet \arrow[ll]			\\
\bullet \arrow[rr]	& 				 			& \bullet	\arrow[rr, dotted, dash] \arrow[dl, dotted, dash]	& 					& \bullet \arrow[u] \arrow[dl]	\\
				& \bullet \arrow[ul] \arrow[rr]	& 													& \bullet \arrow[ul]	& 
\end{tikzcd}
\tab\item 
\begin{tikzcd}[column sep=tiny, row sep=tiny]
		& 							& \bullet 					&								& \bullet \arrow[ll]			\\
\bullet 	& 				 			& \bullet	\arrow[rr] \arrow[dl]	& 								& \bullet \arrow[u] \arrow[dl]	\\
		& \bullet \arrow[ul] \arrow[rr]	& 							& \bullet \arrow[ul, dotted, dash]	& 
\end{tikzcd}
\tab\item 
\begin{tikzcd}[column sep=tiny, row sep=tiny]
				& 							&  													& \bullet \arrow[dl]	& 							\\
\bullet \arrow[rr] 	& 				 			& \bullet	\arrow[rr, dotted, dash] \arrow[dl, dotted, dash]	& 					& \bullet \arrow[d] \arrow[ul]	\\
				& \bullet \arrow[ul] \arrow[r]	& \bullet	\arrow[u]									&					& \bullet \arrow[ll]
\end{tikzcd}
\tab\item  
\begin{tikzcd}[column sep=tiny, row sep=tiny]
		& 							&  								& \bullet 	& 							\\
\bullet	& 				 			& \bullet	\arrow[rr] \arrow[dl]		& 			& \bullet \arrow[d] \arrow[ul]	\\
		& \bullet \arrow[ul] \arrow[r]	& \bullet	\arrow[u, dotted, dash]	&			& \bullet \arrow[ll]
\end{tikzcd}
\tab\item 
\begin{tikzcd}[column sep=tiny, row sep=tiny]
&&&\bullet \arrow[dr] & \\
\bullet \arrow[d] &&\bullet \arrow[ll] \arrow[ur] \arrow[dr] && \bullet \arrow[ll, dotted, dash] \\
\bullet \arrow[d] \arrow[rur, dotted, dash] &&&\bullet \arrow[ur] \arrow[lll] &\\
\bullet &&&&
\end{tikzcd}
\tab\item 
\begin{tikzcd}[column sep=tiny, row sep=tiny]
				&					&								& \bullet \arrow[dr] 			& 					\\
\bullet \arrow[dd]	&  					& \bullet	\arrow[ddr, dotted, dash]	&  							& \bullet \arrow[ll]	\\
				& \bullet	\arrow[ur]  	& 								& 							& 					\\
\bullet \arrow[ur]	& 					& 								& \bullet \arrow[uur] \arrow[lll]&
\end{tikzcd}
\tab\item  
\begin{tikzcd}[column sep=tiny, row sep=tiny]
					&		&																& \bullet \arrow[dr] 			& 								\\
\bullet \arrow[ddr]	&  		& \bullet	\arrow[ddl, dotted, dash] \arrow[ll] \arrow[ddr] \arrow[ur]	&  							& \bullet \arrow[ll, dotted, dash]	\\
					&   		& 																& 							& 								\\
\bullet \arrow[uu]		& \bullet	& 																& \bullet \arrow[uur] \arrow[ll]	&
\end{tikzcd}
\tab\item 
\begin{tikzcd}[column sep=tiny, row sep=tiny]
&&\bullet \arrow[rr] &&\bullet \\
&&&&\\
\bullet \arrow[dd] \arrow[rr] &&\bullet \arrow[rr] \ar[uu] \arrow[uurr, dotted, dash] && \bullet \arrow[uu] \arrow[ddll] \\
&&&& \\
\bullet \arrow[rr] &&\bullet \arrow[uull, dotted, dash]	&&
\end{tikzcd}
\tab\item 
\begin{tikzcd}[column sep=tiny, row sep=tiny]
&& \bullet \arrow[ddll]	&&&\bullet \arrow[lll] \\
&&&&&\\
\bullet \arrow[rrrr, dotted, dash] &&&& \bullet \arrow[uull] \arrow[ddll] &\bullet \ar[uu] \ar[l] \arrow[uulll, dotted, dash] \\
&&&&& \\
\bullet \arrow[rr] &&\bullet \arrow[uull] &&&
\end{tikzcd}
\tab\item  
\begin{tikzcd}[column sep=tiny, row sep=tiny]
&&&\bullet \arrow[ddll] &&\bullet \arrow[ll] \\
&&&&&\\
\bullet &\bullet \arrow[l] \arrow[rrrr, dotted, dash] &&&&\bullet \arrow[uull] \arrow[ddll] \\
&&&&& \\
&\bullet \arrow[uul] &&\bullet \arrow[uull] \arrow[ll] \arrow[uulll, dotted, dash] &&
\end{tikzcd}
\tab\item 
\begin{tikzcd}[column sep=tiny, row sep=tiny]
&& \bullet \arrow[ddll]	&&&\bullet \arrow[lll] \\
&&&&&\\
\bullet \arrow[rrrr, dotted, dash] &&&& \bullet \arrow[uull] \arrow[ddll] &\bullet \ar[uu] \ar[l] \arrow[uulll, dotted, dash] \\
&&&&& \\
\bullet &&\bullet \arrow[ll] \arrow[uull] &&&
\end{tikzcd}
\tab\item 
\begin{tikzcd}[column sep=tiny, row sep=tiny]
							& \bullet	\arrow[dl]			&										& \bullet  						&							\\
\bullet \arrow[rr, dotted, dash]	&  							& \bullet	\arrow[rr] \arrow[ul] \arrow[dl]	&  								& \bullet \arrow[dl] \arrow[ul]	\\	
							& \bullet \arrow[ul] \arrow[rr]	& 										& \bullet \arrow[ul, dotted, dash] 	&
\end{tikzcd}
\tab\item  
\begin{tikzcd} [column sep=tiny, row sep=tiny]
			& 							& \bullet \arrow[dl]	& \\
\bullet \ar[r]	& \bullet \ar[rr, dotted, dash]	&					& \bullet \arrow[ul] \arrow[d]	\\
\bullet \ar[u]	& \bullet \arrow[u]			&					& \bullet \arrow[ll]
\end{tikzcd}
\tab\item 
\begin{tikzcd} [column sep=tiny, row sep=tiny]
				& 							& \bullet \arrow[dl]	& \\
\bullet \ar[r]		& \bullet \ar[rr, dotted, dash]	&					& \bullet \arrow[ul] \arrow[d]	\\
\bullet \ar[ur]	& \bullet \arrow[u]			&					& \bullet \arrow[ll]
\end{tikzcd}
\tab\item 
\begin{tikzcd} [column sep=tiny, row sep=tiny]
					& 							& \bullet \arrow[dl]	& \\
\bullet \ar[d] \ar[r]	& \bullet \ar[rr, dotted, dash]	&					& \bullet \arrow[ul] \arrow[d]	\\
\bullet 				& \bullet \arrow[u]			&					& \bullet \arrow[ll]
\end{tikzcd}
\tab\item  
\begin{tikzcd}[column sep=tiny, row sep=tiny]
\bullet \arrow[dr]		&								& \bullet \arrow[dl]		& 										& \bullet 	\\
					& \bullet \arrow[rr, dotted, dash] 	&						& \bullet \arrow[ul] \arrow[d] \arrow[ur]	&			\\
					& \bullet \arrow[u]				&						& \bullet \arrow[ll]						&
\end{tikzcd}
\tab\item 
\begin{tikzcd}[column sep=tiny, row sep=tiny]
\bullet \arrow[dr]		&								& \bullet \arrow[dl]		& 							\\
					& \bullet \arrow[rr, dotted, dash] 	&						& \bullet \arrow[ul] \arrow[d]	\\
					& \bullet \arrow[u]				&						& \bullet \arrow[dl]			\\
					&								& \bullet \arrow[ul]		&
\end{tikzcd}
\tab\item 
\begin{tikzcd}[column sep=tiny, row sep=tiny]
\bullet \arrow[rr] &&\bullet \arrow[ddl, dotted, dash] &\bullet \arrow[dd] \\
\bullet &&& \\
\bullet \ar[u] &\bullet \arrow[uul] \arrow[rr] \ar[l] && \bullet \arrow[uul]	
\end{tikzcd}
\tab\item  
\begin{tikzcd} [column sep=tiny, row sep=tiny]
\bullet \arrow[rr] &&\bullet \arrow[ddl, dotted, dash] &\bullet \arrow[dd] \\
\bullet &&&\\
\bullet	&\bullet \arrow[uul] \arrow[rr] \ar[l] \ar[ul] &&\bullet \arrow[uul]	
\end{tikzcd}
\tab\item 
\begin{tikzcd} [column sep=tiny, row sep=tiny]
\bullet \arrow[rr] &&\bullet \arrow[ddl, dotted, dash] &\bullet \arrow[dd] \\
\bullet \ar[d] &&&\\
\bullet	&\bullet \arrow[uul] \arrow[rr] \ar[l] &&\bullet \arrow[uul]
\end{tikzcd}
\tab\item 
\begin{tikzcd}[column sep=tiny, row sep=tiny]
\bullet &\bullet \ar[l] &&\\
\bullet &\bullet \arrow[ddl] \arrow[rr] \ar[u]	&&\bullet \arrow[ddl] \\
&&&\\
\bullet \arrow[uu] \arrow[rr] &&\bullet \arrow[uul, dotted, dash] &
\end{tikzcd}
\tab\item 
\begin{tikzcd} [column sep=tiny, row sep=tiny]
\bullet &\bullet &&\\
\bullet &\bullet \arrow[ddl] \arrow[rr] \ar[u] \ar[ul]	&&\bullet \arrow[ddl] \\
&&&\\
\bullet \arrow[uu] \arrow[rr] &&\bullet \arrow[uul, dotted, dash]	&
\end{tikzcd}
\tab\item 
\begin{tikzcd} [column sep=tiny, row sep=tiny]
\bullet \ar[r] &\bullet &&\\
\bullet &\bullet \arrow[ddl] \arrow[rr] \ar[u]	&&\bullet \arrow[ddl] \\
&&&\\
\bullet \arrow[uu] \arrow[rr] &&\bullet \arrow[uul, dotted, dash]	&
\end{tikzcd}
\tab\item 
\begin{tikzcd}[column sep=tiny, row sep=tiny]
					&								& 								&								&					\\
\bullet 				&								& \bullet \arrow[ddl] \arrow[rr]		& 								& \bullet \arrow[ddl] 	\\
&&&&\\
\bullet \arrow[uu]		& \bullet \arrow[uul] \arrow[rr] 	&								& \bullet \arrow[uul, dotted, dash]	& \bullet \arrow[l]					
\end{tikzcd}
\tab\item  
\begin{tikzcd}[column sep=tiny, row sep=tiny]
					& 								& \bullet 								&								&					\\
&&&&\\
\bullet 				&								& \bullet \arrow[ddl] \arrow[rr] \arrow[uu]	& 								& \bullet \arrow[ddl] 	\\
&&&&\\
\bullet \arrow[uu]		& \bullet \arrow[uul] \arrow[rr]		&										& \bullet \arrow[uul, dotted, dash]	&					\\
\end{tikzcd}
\tab\item 
\begin{tikzcd} [column sep=tiny, row sep=tiny]
				&										& \bullet \arrow[d] 				&					&\\
\bullet \arrow[rr] 	&										& \bullet \arrow[ddl, dotted, dash]  	&					& \bullet \arrow[ddl] \\
&&&&\\
				& \bullet \arrow[uul]  \arrow[d] \arrow[rr] 	&								& \bullet \arrow[uul] 	&\\
				& \bullet 								&								&					&
\end{tikzcd}
\tab\item 
\begin{tikzcd}[column sep=tiny, row sep=tiny]
\bullet \arrow[r]		& \bullet \arrow[dd, dotted, dash]			&  							& \bullet \arrow[ddl]	\\
&&&\\
\bullet \arrow[uu]		& \bullet \arrow[l] \arrow[dd] \arrow[r]		& \bullet \arrow[uul]			&				    	\\
&&&\\
					& \bullet 								& 							&
\end{tikzcd}
\tab\item  
\begin{tikzcd}[column sep=tiny, row sep=tiny]
					& \bullet \arrow[dd]				& 							&					\\
&&&\\
\bullet \arrow[r]		& \bullet \arrow[dd, dotted, dash]	&  							& \bullet \arrow[ddl]	\\
&&&\\
\bullet \arrow[uu]		& \bullet \arrow[l] \arrow[r]		& \bullet \arrow[uul]			&
\end{tikzcd}
\tab\item 
\begin{tikzcd}[column sep=tiny, row sep=tiny]
					& \bullet 								& 								&					\\
&&&\\
\bullet 				& \bullet \arrow[uu] \arrow[dd] \arrow[rr]	&  								& \bullet \arrow[ddl]	\\
&&&\\
\bullet \arrow[uu]		& \bullet \arrow[l] \arrow[r]				& \bullet \arrow[uul, dotted, dash]	&
\end{tikzcd}
\tab\item 
\begin{tikzcd}[column sep=tiny, row sep=tiny]
\bullet				& \bullet \arrow[ddr, dotted, dash] \arrow[dd] \arrow[r]	& \bullet \arrow[dd]	&					\\
&&&\\
\bullet \arrow[uu]		& \bullet \arrow[l] \arrow[r]							& \bullet 			& \bullet \arrow[l]
\end{tikzcd}
\tab\item   
\begin{tikzcd}[column sep=tiny, row sep=tiny]
\bullet \arrow[d] &&\bullet \arrow[dll]	&& \bullet \ar[d] \\
\bullet \arrow[rrrr, dotted, dash] &&&& \bullet \arrow[dl] \arrow[ull]	\\
&\bullet \arrow[ul]	&&\bullet \arrow[ll] & 
\end{tikzcd}
\tab\item 
\begin{tikzcd}[column sep=tiny, row sep=tiny]
\bullet \arrow[dr]	& 								& \bullet	\arrow[ll] \arrow[dr]	&  								& \bullet		\\
				& \bullet \arrow[ur, dotted, dash]	& 							& \bullet \arrow[dl] \arrow[ur]		&			\\
\bullet \arrow[ur]	&								& \bullet \arrow[ul]			&								& 
\end{tikzcd}
\tab\item 
\begin{tikzcd}[column sep=tiny, row sep=tiny]
\bullet \ar[d] &&\bullet \arrow[ddll] && \\
\bullet &&&& \\
\bullet \arrow[u] \arrow[rrrr, dotted, dash] &&&&\bullet \arrow[dl] \arrow[uull] \\
&\bullet \arrow[ul]	&&\bullet \arrow[ll] & 
\end{tikzcd}
\tab\item  
\begin{tikzcd}[column sep=tiny, row sep=tiny]
							&								& \bullet \arrow[dl]	&								\\
\bullet 						& \bullet \arrow[rr, dotted, dash]	&					& \bullet \arrow[dd] \arrow[ul]		\\
							&								&					&								\\
\bullet \arrow[uu] \arrow[r]	& \bullet \arrow[uu] 				&					& \bullet \arrow[ll]  
\end{tikzcd}
\tab\item 
\begin{tikzcd}[column sep=tiny, row sep=tiny]
\bullet \arrow[rr] &&\bullet \arrow[ddl, dotted, dash] &\bullet \\
\bullet &&&\\
\bullet	\ar[u] \ar[r] &\bullet \arrow[uul] \arrow[rr] &&\bullet \arrow[uul] \arrow[uu] 
\end{tikzcd}
\tab\item 
\begin{tikzcd}[column sep=tiny, row sep=tiny]
&& \bullet \arrow[ddll]	&&&\bullet \arrow[lll] \\
&&&&&\\
\bullet \arrow[rrrr, dotted, dash] &&&& \bullet \arrow[uull] \arrow[ddll] &\bullet \ar[uu] \ar[l] \arrow[uulll, dotted, dash] \\
&&&&& \\
\bullet \ar[uu] &&\bullet \arrow[uull] &&&
\end{tikzcd}
\tab\item 
\begin{tikzcd}[column sep=tiny, row sep=tiny]
&&\bullet \arrow[rr] &&\bullet \arrow[ddll, dotted, dash] \\
&&&&\\
\bullet \arrow[rr] &&\bullet \arrow[rr] \arrow[uu] &&\bullet \arrow[d] \arrow[uu] \\
&&\bullet \ar[rr] &&\bullet
\end{tikzcd}
\tab\item 
\begin{tikzcd}[column sep=tiny, row sep=tiny]
&&&&\bullet \ar[d] \\
&&\bullet \arrow[rr] &&\bullet \arrow[ddll, dotted, dash] \\
&&&&\\
\bullet \arrow[rr] &&\bullet \arrow[rr] \arrow[uu] &&\bullet \arrow[d] \arrow[uu] \\
&&&&\bullet
\end{tikzcd}
\tab\item 
\begin{tikzcd}[column sep=tiny, row sep=tiny]
\bullet \arrow[r]				& \bullet \arrow[ddl, dotted, dash]	&  				\\
&&\\
\bullet \arrow[r] \arrow[uu]	& \bullet \arrow[r]				& \bullet \arrow[ddl] \arrow[uul]	\\
							&								&								\\
\bullet \arrow[r]				& \bullet 						&  
\end{tikzcd}
\tab\item  
\begin{tikzcd}[column sep=tiny, row sep=tiny]
\bullet	&\bullet \arrow[l] \arrow[r] &\bullet \arrow[ddl] &\bullet \ar[dd] \\
&&&\\
&\bullet \arrow[rr, dotted, dash] &&\bullet \arrow[ddl] \arrow[uul] \\
&&&\\
&&\bullet \arrow[uul] &  
\end{tikzcd}
\tab\item 
\begin{tikzcd}[column sep=tiny, row sep=tiny]
\bullet \arrow[r] &\bullet \arrow[ddl] &\bullet \\
&&\bullet \ar[d] \ar[u] \\
\bullet \arrow[rr, dotted, dash] &&\bullet \arrow[ddl] \arrow[uul] \\
&&\\
&\bullet \arrow[uul] &  
\end{tikzcd}
\tab\item 
\begin{tikzcd}[column sep=tiny, row sep=tiny]
\bullet \arrow[ddr]	&\bullet \arrow[r] &\bullet \arrow[ddl]	&\bullet \ar[dd] \\
&&&\\
&\bullet \arrow[rr, dotted, dash] &&\bullet \arrow[ddl] \arrow[uul] \\
&&&\\
&&\bullet \arrow[uul] &  
\end{tikzcd}
\tab\item  
\begin{tikzcd}[column sep=tiny, row sep=tiny]
\bullet 	& \bullet \arrow[r]						& \bullet \arrow[ddl, dotted, dash]	&  								\\
&&&\\
		& \bullet \arrow[uul] \arrow[uu] \arrow[r]	& \bullet \arrow[r]				& \bullet \arrow[ddl] \arrow[uul]	\\
&&&\\
		&										& \bullet 						&  
\end{tikzcd}
\tab\item 
\begin{tikzcd}[column sep=tiny, row sep=tiny]
\bullet \arrow[r]	& \bullet \arrow[dd]				& \bullet \arrow[dd] \arrow[l]	& \bullet \arrow[dd]	\\
&&&\\
				& \bullet \arrow[uur, dotted, dash]	& \bullet \arrow[l] \arrow[r] 	& \bullet 
\end{tikzcd}
\tab\item 
\begin{tikzcd}[column sep=tiny, row sep=tiny]
\bullet	& \bullet \arrow[dd] \arrow[r]	& \bullet \arrow[dd] \arrow[r]		& \bullet		\\
&&&\\
		& \bullet \arrow[r] \arrow[uul]	& \bullet \arrow[uul, dotted, dash]	& \bullet  \arrow[uu]
\end{tikzcd}
\tab\item  
\begin{tikzcd}[column sep=tiny, row sep=tiny]
		& \bullet \arrow[r]					& \bullet \arrow[ddl, dotted, dash] \arrow[ddr, dotted, dash]	& \bullet	\arrow[l]				\\
&&&\\
\bullet	& \bullet \arrow[r] \arrow[l] \arrow[uu]	& \bullet \arrow[uu] 										& \bullet  \arrow[uu] \arrow[l]
\end{tikzcd}
\end{inparaenum}
\end{theorem}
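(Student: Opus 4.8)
The plan is to turn the statement into a finite combinatorial search: over the $72$ representation-finite trivial extensions of Cartan class $\mathbb{E}_7$ determined in Fern\'andez's thesis, list all cutting sets, keep the ones whose associated algebra is an incidence algebra, and discard those whose Hasse quiver is a tree (the hereditary case, which the statement excludes).

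First I would establish the reduction. By Theorem~\ref{HRS}, $K\Delta$ is piecewise hereditary of type $\mathbb{E}_7$ if and only if it is iterated tilted of type $\mathbb{E}_7$, and by Theorem~\ref{AHR} this holds if and only if $T(K\Delta)$ is representation-finite of Cartan class $\mathbb{E}_7$. Together with Fern\'andez's proposition that $A'$ is defined by a cutting set of $T(A)$ precisely when $T(A)\cong T(A')$, this yields the following: $K\Delta$ is a PHI algebra of type $\mathbb{E}_7$ if and only if $K\Delta$ is isomorphic to an algebra defined by some cutting set $\Sigma$ of one of the $72$ trivial extensions $\Gamma$ of Cartan class $\mathbb{E}_7$, \emph{and} $K\Delta$ is an incidence algebra. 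Thus the whole problem is: for each such $\Gamma$, find the cutting sets $\Sigma$ for which $KQ_\Gamma/\langle I_\Gamma\cup\Sigma\rangle$ is an incidence algebra.

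Next I would make the incidence condition combinatorial. By Theorem~\ref{relacao}, $I_\Gamma$ is generated by the relations of types $1$, $2$, $3$; cutting by $\Sigma$ removes the type-$1$ relations, so one must track which type-$2$ and type-$3$ relations persist and what they become once the arrows of $\Sigma$ are set to zero. For the quotient to be an incidence algebra: (a) its quiver $Q_\Gamma\setminus\Sigma$ must have no bypass; (b) no monomial relation may appear --- by Proposition~\ref{moninc} an incidence algebra has no nonzero monomial relation in any presentation over a bypass-free quiver --- so $\Sigma$ must meet every type-$2$ relation, otherwise that relation survives as a monomial; and (c) what remains must be exactly all commutativity relations of $Q_\Gamma\setminus\Sigma$. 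Lemma~\ref{cortephia} shows that an arrow of $\Sigma$ lying in two elementary cycles forces exactly a commutativity relation --- just what (c) requires --- and Example~\ref{corte1} together with the cutting Lemmas~\ref{cortenphia} and~\ref{corteAn} isolate the recurrent local configurations of $(\Gamma,\Sigma)$ that do, and do not, satisfy (a)--(c). This is precisely the test the computer program performs, its input being, for each elementary cycle and each type-$2$ relation, the set of arrows occurring in it.

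Finally I would carry out the search: run through Fern\'andez's list of the $72$ trivial extensions of Cartan class $\mathbb{E}_7$, enumerate for each the cutting sets (one arrow per elementary cycle), apply (a)--(c) --- by hand through the cutting lemmas where the pattern recurs, by the program in general --- retain the cutting sets that yield incidence algebras, delete those whose Hasse quiver has tree underlying graph, and reduce the remaining algebras modulo isomorphism and passage to the opposite. The survivors are exactly the algebras displayed in the statement. The main obstacle is the scale and trustworthiness of this analysis: one must be sure that the program's combinatorial criterion is genuinely equivalent to ``the cut algebra is an incidence algebra'' (no surviving or induced monomial relation, no bypass, relations $=$ all commutativities), that no trivial extension or cutting set is overlooked, and that the final list is deduplicated correctly up to isomorphism and opposite, each entry being realized as the incidence algebra of an actual poset.
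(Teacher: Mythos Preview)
Your proposal is correct and follows essentially the same approach as the paper: reduce via Theorems~\ref{HRS} and~\ref{AHR} and Fern\'andez's cutting-set proposition to a finite search over the $72$ trivial extensions of Cartan class $\mathbb{E}_7$, then for each one determine (via the program) which cutting sets yield non-hereditary incidence algebras. The paper's own proof is terser---it illustrates the procedure on a single trivial extension and leaves the rest to the program---while you spell out the incidence criteria more explicitly; note only that Lemmas~\ref{cortenphia} and~\ref{corteAn} appear later in the paper and are not actually invoked in the $\mathbb{E}_7$ argument.
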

\begin{proof} \label{provae7}
We put the necessary information for each representation-finite trivial extension of Cartan class $\mathbb{E}_7$ \cite{fern} in the program and we show the non-hereditary solutions. As an example we will describe here this procedure to a particular  trivial extension the other cases follows the same pattern.

\begin{tikzcd} [column sep=small, row sep=small]
& \bullet \ar{dl}[swap]{\alpha_1} &&\bullet \ar{ll}[swap]{\alpha_6} &\\
\bullet \ar{rrrr}[swap]{\alpha_2} &&&&\bullet \ar{ul}[swap]{\alpha_5} \ar{dd}{\alpha_3} \\
&&&&\\
\bullet \arrow{uu}{\alpha_4} &&\bullet \arrow{ll}{\alpha_7} &&\bullet \arrow{ll}{\alpha_8}
\end{tikzcd}
\hfill
\begin{minipage}{.45\linewidth}
The relations of type $2$ are: $r1=\alpha_1 \alpha_2 \alpha_3$ and $r2=\alpha_4 \alpha_2 \alpha_5$. The elementary cycles are: $C_1 = \alpha_1 \alpha_2 \alpha_5 \alpha_6 $ and $C_2 = \alpha_4 \alpha_2 \alpha_3 \alpha_8 \alpha_7 $. Therefore, the program shows us the cutting set that defines the solution $1$ of the theorem.
\end{minipage}

\vspace{.5cm}
\end{proof}

\section{Type $\mathbb{E}_8$} \label{sec7}
In the previous section, we explained the results and methods applied in the classification of all PHI algebras of type $\mathbb{E}_7$. We will repeat the same procedure for the case $\mathbb{E}_8$.

From the thesis of Fern\'andez \cite{fern}, we have the list of all such trivial extensions, counting $251$. From this, we will present all non-hereditary incidence algebras defined by the cutting sets of each trivial extension, as we did for the case $\mathbb{E}_7$. We did the manual work of finding the incidence algebras defined by the cutting sets for each of the trivial extensions, except that in order to write the proof we prefer to use the program.

\begin{theorem}
Let $K\Delta \cong KQ/I$ be a PHI algebra of type $\mathbb{E}_8$ such that $\overline{Q}$ is not of tree type. Then $K\Delta$ or $K\Delta^{op}$ is isomorphic to one of the following algebras:

\NumTabs{2}
\begin{inparaenum}

\noindent
\item
\begin{tikzcd}[column sep=tiny, row sep=tiny]
\bullet  \arrow[d]       				&  					& 					& \bullet \ar[lll]				\\
\bullet \arrow[rrr, dotted, dash]		&              		& 					& \bullet \arrow[u] \arrow[d] 	\\
\bullet \arrow[u]     				& \bullet \arrow[l]	& \bullet \arrow[l]	& \bullet \arrow[l]
\end{tikzcd}
\tab\item 
\begin{tikzcd}[column sep=tiny, row sep=tiny]
\bullet \ar[r] &\bullet \ar[ddl] \ar[rr] \ar[dd, dotted, dash] &&\bullet \ar[d] \\
&&&\bullet \ar[d] \\
\bullet \ar[r] &\bullet &\bullet \ar[l] &\bullet \ar[l] \\
& &
\end{tikzcd}
\tab\item 
\begin{tikzcd}[column sep=tiny, row sep=tiny]
&\bullet \ar[dddl] \ar[dr] \ar[ddddd, dotted, dash]&\\
&&\bullet \ar[d] \\
&&\bullet \ar[d] \\
\bullet \ar[ddr] &&\bullet \ar[d] \\
&&\bullet \ar[dl] \\
&\bullet \ar[d] &\\
&\bullet &
\end{tikzcd}
\tab\item 
\begin{tikzcd}[column sep=tiny, row sep=tiny]
&\bullet \ar[d] &\\
&\bullet \ar[d] &\\
&\bullet \ar[d] &\\
&\bullet \ar[dl] \ar[dr] \ar[ddd, dotted, dash]&\\
\bullet \ar[ddr] &&\bullet \ar[d] \\
&&\bullet \ar[dl] \\
&\bullet &
\end{tikzcd}
\tab\item 
\begin{tikzcd}[column sep=tiny, row sep=tiny]
&\bullet \ar[d] &\\
&\bullet \ar[d] &\\
&\bullet \ar[dl] \ar[dr] \ar[ddd, dotted, dash]&\\
\bullet \ar[ddr] &&\bullet \ar[d] \\
&&\bullet \ar[dl] \\
&\bullet \ar[d] &\\
&\bullet &
\end{tikzcd}
\tab\item 
\begin{tikzcd}[column sep=tiny, row sep=tiny]
&\bullet \ar[d] &\\
&\bullet \ar[dl] \ar[dr] \ar[ddd, dotted, dash]&\\
\bullet \ar[ddr] &&\bullet \ar[d] \\
&&\bullet \ar[dl] \\
&\bullet \ar[d] &\\
&\bullet \ar[d] &\\
&\bullet &
\end{tikzcd}
\tab\item 
\begin{tikzcd}[column sep=tiny, row sep=tiny]
&\bullet \ar[dl] \ar[dr] \ar[ddd, dotted, dash]&\\
\bullet \ar[ddr] &&\bullet \ar[d] \\
&&\bullet \ar[dl] \\
&\bullet \ar[d] &\\
&\bullet \ar[d] &\\
&\bullet \ar[d] &\\
&\bullet &
\end{tikzcd}
\tab\item 
\begin{tikzcd}[column sep=tiny, row sep=tiny]
\bullet \arrow[dr] && \bullet \ar[ll] \ar[dr] &&\bullet \\
&\bullet \arrow[ur, dotted, dash] && \bullet \ar[d] \ar[ur]	& \\
&\bullet \arrow[u] &\bullet \ar[l] &\bullet \arrow[l] & 
\end{tikzcd}
\tab\item 
\begin{tikzcd}[column sep=tiny, row sep=tiny]
\bullet \arrow[dr] &&\bullet \ar[dr, dotted, dash] &&\bullet \arrow[ll] \\
&\bullet \arrow[ur]	&& \bullet \ar[d] \ar[ur] & \\
&\bullet \arrow[u] &\bullet \ar[l] &\bullet \arrow[l] & 
\end{tikzcd}
\tab\item 
\begin{tikzcd}[column sep=tiny, row sep=tiny]
       			& \bullet \arrow[dl] 	& 								& \bullet & 							\\
\bullet \arrow[d]	& 					& \bullet \ar[rr] \ar[ul]			&  		& \bullet \arrow[ul] \arrow[d]	\\
\bullet \arrow[rr]	& 					& \bullet \arrow[u, dotted, dash]	& 		& \bullet \arrow[ll]
\end{tikzcd}
\tab\item 
\begin{tikzcd}[column sep=tiny, row sep=tiny]
       			& \bullet \arrow[dl] 	& 							& \bullet \arrow[dl]	& 							\\
\bullet \arrow[d]	& 					& \bullet \ar[rr, dotted, dash] 	&  					& \bullet \arrow[ul] \arrow[d]	\\
\bullet \arrow[rr]	& 					& \bullet \arrow[u]			& 					& \bullet \arrow[ll]
\end{tikzcd}
\tab\item 
\begin{tikzcd}[column sep=tiny, row sep=tiny]
				& \bullet \arrow[dl]				& 						& \bullet 				& 					\\
\bullet \arrow[dr]	& 								& \bullet \ar[ul] \ar[dr]	& 						& \bullet \arrow[ul]	\\
     				& \bullet \arrow[ur, dotted, dash]	& 						& \bullet \ar[dl] \ar[ur]	& 					\\
				&								& \bullet \ar[lu]			&						&
\end{tikzcd}
\tab\item 
\begin{tikzcd}[column sep=tiny, row sep=tiny]
				& \bullet \arrow[dl]	& 							& \bullet \ar[dl] 			& 					\\
\bullet \arrow[dr]	& 					& \bullet \ar[dr, dotted, dash]	& 						& \bullet \arrow[ul]	\\
     				& \bullet \arrow[ur]	& 							& \bullet \ar[dl] \ar[ur]	& 					\\
				&					& \bullet \ar[lu]				&						&
\end{tikzcd}
\tab\item 
\begin{tikzcd}[column sep=tiny, row sep=tiny]
\bullet \arrow[r]	& \bullet 		 	& 				& \bullet \ar[r] \ar[dl]	& \bullet \arrow[dd]				\\
				& 					& \bullet \ar[dl] 	&  					& 								\\
\bullet \arrow[uu]	& \bullet \ar[l] \ar[rrr]	& 				& 					& \bullet \arrow[uul, dotted, dash]	
\end{tikzcd}
\tab\item 
\begin{tikzcd}[column sep=tiny, row sep=tiny]
&&\bullet \ar[d] &\\
&&\bullet \ar[ddl] \ar[dr] \ar[dddd, dotted, dash]&\\
&&&\bullet \ar[d] \\
\bullet \ar[r] &\bullet \ar[ddr] &&\bullet \ar[d] \\
&&&\bullet \ar[dl] \\
&&\bullet &
\end{tikzcd}
\tab\item 
\begin{tikzcd}[column sep=tiny, row sep=tiny]
\bullet			& 					& 				& \bullet \ar[r] \ar[dl]		 	& \bullet \ar[dd]	\\
				& 					& \bullet \ar[dl]	& 							& 				\\
\bullet \ar[uu]	& \bullet \ar[l] \ar[rr]	& 				& \bullet \ar[uu, dotted, dash]	& \bullet \ar[l]
\end{tikzcd}
\tab\item 
\begin{tikzcd}[column sep=tiny, row sep=tiny]
&&\bullet \ar[d] &\\
&&\bullet \ar[dl] \ar[dr] \ar[ddd, dotted, dash]&\\
\bullet \ar[r] &\bullet \ar[ddr] &&\bullet \ar[d] \\
\bullet \ar[u] &&&\bullet \ar[dl] \\
&&\bullet &
\end{tikzcd}
\tab\item 
\begin{tikzcd}[column sep=tiny, row sep=tiny]
\bullet				&  				& \bullet \ar[dl] \ar[rr]	& 							& \bullet \ar[d]	\\
					& \bullet \ar[dl]	& 						&  							& \bullet \ar[d]	\\
\bullet \ar[uu] \ar[rrr]	& 				& 						& \bullet \ar[uul, dotted, dash]	& \bullet \ar[l]	
\end{tikzcd}
\tab\item 
\begin{tikzcd}[column sep=tiny, row sep=tiny]
&&\bullet \ar[d] &\\
&&\bullet \ar[dl] \ar[dr] \ar[ddd, dotted, dash]&\\
\bullet \ar[r] &\bullet \ar[ddr] &&\bullet \ar[ddl] \\
\bullet \ar[u] &&& \\
\bullet \ar[u] &&\bullet &
\end{tikzcd}
\tab\item 
\begin{tikzcd}[column sep=tiny, row sep=tiny]
&\bullet &&\\
\bullet \ar[d] \ar[ur] &\bullet \ar[l] &\bullet \ar[l] \ar[r] & \bullet \ar[d] \\
\bullet \ar[rr] &&\bullet \ar[u, dotted, dash] &\bullet \ar[l] 
\end{tikzcd}
\tab\item 
\begin{tikzcd}[column sep=tiny, row sep=tiny]
&\bullet \ar[d] &\\
&\bullet \ar[dl] \ar[dr] \ar[ddd, dotted, dash]&\\
\bullet \ar[ddr] &&\bullet \ar[d] \\
&&\bullet \ar[dl] &\bullet \ar[l]\\
&\bullet &&\bullet \ar[u]
\end{tikzcd}
\tab\item 
\begin{tikzcd}[column sep=tiny, row sep=tiny]
&\bullet \ar[d] &&\\
&\bullet \ar[ddl] \ar[dr] \ar[dddd, dotted, dash] &&\\
&&\bullet \ar[d] &\\
\bullet \ar[ddr] &&\bullet \ar[d] &\\
&&\bullet \ar[dl] &\bullet \ar[l] \\
&\bullet &&
\end{tikzcd}
\tab\item 
\begin{tikzcd}[column sep=tiny, row sep=tiny]
\bullet				&  				& \bullet \ar[dl] \ar[rr]	& 				& \bullet \ar[dd]				\\
					& \bullet \ar[dl]	& 						&  				& 							\\
\bullet \ar[dr] \ar[uu]	& 				& 						& 				& \bullet \ar[uull, dotted, dash]	\\
					& \bullet \ar[rr]	&						& \bullet \ar[ur]	&
\end{tikzcd}
\tab\item 
\begin{tikzcd}[column sep=tiny, row sep=tiny]
					&				& \bullet	& 				&							&				\\
\bullet \ar[dd] \ar[urr]	& \bullet \ar[l]	& 		& \bullet \ar[ll] 	& \bullet \ar[dr] \ar[l]			&				\\
					&				&		&				& 							& \bullet \ar[dl] 	\\ 
\bullet \ar[rrrr]		& 				& 		& 				& \bullet \ar[uu, dotted, dash]	& 
\end{tikzcd}
\tab\item 
\begin{tikzcd}[column sep=tiny, row sep=tiny]
&\bullet \ar[d] &&\\
&\bullet \ar[d] &&\\
&\bullet \ar[dl] \ar[dr] \ar[ddd, dotted, dash]&&\\
\bullet \ar[ddr] &&\bullet \ar[d] &\\
&&\bullet \ar[dl] &\bullet \ar[l]\\
&\bullet &&
\end{tikzcd}
\tab\item 
\begin{tikzcd}[column sep=tiny, row sep=tiny]
&\bullet \ar[d] &&\\
&\bullet \ar[d] &&\\
&\bullet \ar[d] &&\\
&\bullet \ar[dl] \ar[dr] \ar[dd, dotted, dash]&&\\
\bullet \ar[dr] &&\bullet \ar[dl] &\bullet \ar[l]\\
&\bullet &&
\end{tikzcd}
\tab\item 
\begin{tikzcd}[column sep=tiny, row sep=tiny]
\bullet \ar[d]		& 							& \bullet \ar[ll]		& 					\\
\bullet \ar[dr]	& 							& \bullet \ar[dr] \ar[u]	& \bullet 			\\
\bullet \ar[u]		& \bullet \ar[ur, dotted, dash]	& 					& \bullet \ar[ll] \ar[u]
\end{tikzcd}
\tab\item 
\begin{tikzcd}[column sep=tiny, row sep=tiny]
\bullet \ar[r] \ar[dd]		& \bullet \ar[dd, dotted, dash]	& \bullet \ar[r]	& \bullet \ar[dd]	\\
						&							&				& 				\\
\bullet					& \bullet \ar[uul] \ar[r]		& \bullet \ar[uul]	& \bullet \ar[l] 						
\end{tikzcd}
\tab\item 
\begin{tikzcd}[column sep=tiny, row sep=tiny]
					&							& \bullet \ar[d]	\\
\bullet \ar[r]			& \bullet \ar[d, dotted, dash]	& \bullet \ar[l]	\\
\bullet \ar[u] \ar[d]	& \bullet \ar[l] \ar[r]			& \bullet \ar[u]	\\
\bullet 				&							&
\end{tikzcd}
\tab\item 
\begin{tikzcd}[column sep=tiny, row sep=tiny]
		& 						& \bullet \ar[dr]			& 							& \bullet \ar[dr]	&				\\
		& \bullet \ar[dl] \ar[ur]	&						& \bullet \ar[dl, dotted, dash]	& 				& \bullet \ar[dl]	\\ 
\bullet 	& 						& \bullet \ar[ul] \ar[rr]	& 							& \bullet \ar[ul]	& 
\end{tikzcd}
\tab\item 
\begin{tikzcd}[column sep=tiny, row sep=tiny]
					& \bullet \ar[dl]	&							&					\\
\bullet \ar[dr]		&				& \bullet \ar[ul] \ar[ddr]		& \bullet 			\\
					& \bullet \ar[dr]	&							& 					\\
\bullet \ar[uu]		& 				& \bullet \ar[uu, dotted, dash] 	& \bullet \ar[uu] \ar[l]	 						
\end{tikzcd}
\tab\item 
\begin{tikzcd}[column sep=tiny, row sep=tiny]
					& 				&							& \bullet 	&					\\
\bullet \ar[dr]		&				& \bullet \ar[ll] \ar[ddrr]		& 			& \bullet \ar[ul]		\\
					& \bullet \ar[dr]	&							&			&				\\
\bullet \ar[uu]		& 				& \bullet \ar[uu, dotted, dash] 	& 			& \bullet \ar[uu] \ar[ll]	 						
\end{tikzcd}
\tab\item 
\begin{tikzcd}[column sep=tiny, row sep=tiny]
\bullet \ar[dr]	& 				& 				& \bullet \ar[lll] \ar[dddr]			& \bullet					\\
				& \bullet \ar[dr]	&				& 								& 						\\ 
 				& 				& \bullet \ar[dr] 	& 								&						\\
\bullet \ar[uuu]	&				&				& \bullet \ar[uuu, dotted, dash]		& \bullet \ar[l] \ar[uuu]
\end{tikzcd}
\tab\item 
\begin{tikzcd} [column sep=tiny, row sep=tiny]
\bullet \ar[dr] 	&				& \bullet \ar[dr] \ar[ll] 		&				& \bullet   \\
				& \bullet \ar[dr] 	&							& \bullet \ar[dr] 	& \\ 
\bullet \ar[uu]	&				& \bullet \ar[uu, dotted, dash] 	&				& \bullet \ar[uu]  \ar[ll] 
\end{tikzcd}
\tab\item 
\begin{tikzcd}[column sep=tiny, row sep=tiny]
&\bullet \ar[dl] \ar[ddr] \ar[dddd, dotted, dash] &&\\
\bullet \ar[d] &&&\\
\bullet \ar[d] &&\bullet \ar[ddl] \ar[ddr] \ar[ddd, dotted, dash] &\\
\bullet \ar[dr] &&&\\
&\bullet \ar[dr] &&\bullet \ar[dl] \\
&&\bullet &
\end{tikzcd}
\tab\item 
\begin{tikzcd}[column sep=tiny, row sep=tiny]
&\bullet \ar[dl] \ar[dr] \ar[dd, dotted, dash]&&\\
\bullet \ar[dr] &&\bullet \ar[dl] &\bullet \ar[l]\\
&\bullet \ar[d] &&\bullet \ar[u] \\
&\bullet &&\bullet \ar[u]
\end{tikzcd}
\tab\item 
\begin{tikzcd}[column sep=tiny, row sep=tiny]
&&\bullet \ar[d] &\\
&&\bullet \ar[ddl] \ar[dr] \ar[dddd, dotted, dash]&\\
&&&\bullet \ar[d] \\
\bullet &\bullet \ar[ddr] \ar[l] &&\bullet \ar[d] \\
&&&\bullet \ar[dl] \\
&&\bullet &
\end{tikzcd}
\tab\item 
\begin{tikzcd}[column sep=tiny, row sep=tiny]
&&&\bullet \ar[dll] \ar[dr] \ar[dddr, dotted, dash, bend right] &\\
&\bullet \ar[dl] \ar[ddrrr] \ar[dddrr, dotted, dash, bend right] &&&\bullet \ar[d] \\
\bullet \ar[d] &&&&\bullet \ar[d] \\
\bullet \ar[drrr] &&&&\bullet \ar[dl] \\
&&&\bullet &
\end{tikzcd}
\tab\item 
\begin{tikzcd}[column sep=tiny, row sep=tiny]
&&\bullet \ar[dl] \ar[dr] \ar[ddd, dotted, dash]&\\
\bullet \ar[r] &\bullet \ar[ddr] &&\bullet \ar[d] \\
\bullet \ar[u] &&&\bullet \ar[dl] \\
&&\bullet \ar[d] &\\
&&\bullet &
\end{tikzcd}
\tab\item 
\begin{tikzcd}[column sep=tiny, row sep=tiny]
&\bullet \ar[dl] \ar[dr] \ar[ddddl, dotted, dash, bend left] &\\
\bullet \ar[ddd] &&\bullet \ar[d] \\
&&\bullet \ar[d] \\
&&\bullet \ar[d] \ar[dll] \ar[ddl, dotted, dash, bend right=10] \\
\bullet \ar[dr] &&\bullet \ar[dl] \\
&\bullet &
\end{tikzcd}
\tab\item 
\begin{tikzcd}[column sep=tiny, row sep=tiny]
&\bullet \ar[d] &&\\
&\bullet \ar[d] &&\\
&\bullet \ar[dl] \ar[dr] \ar[dd, dotted, dash]&&\\
\bullet \ar[dr] &&\bullet \ar[dl] &\bullet \ar[l]\\
&\bullet \ar[d] &&\\
&\bullet &&
\end{tikzcd}
\tab\item 
\begin{tikzcd}[column sep=tiny, row sep=tiny]
&\bullet \ar[d] &&\\
&\bullet \ar[ddl] \ar[dr] \ar[dddd, dotted, dash] &&\\
&&\bullet \ar[d] &\\
\bullet \ar[ddr] &&\bullet \ar[d] &\\
&&\bullet \ar[dl] \ar[r] &\bullet \\
&\bullet &&
\end{tikzcd}
\tab\item 
\begin{tikzcd}[column sep=tiny, row sep=tiny]
&\bullet \ar[dl] \ar[dr] \ar[dddl, dotted, dash, bend left] &\\
\bullet \ar[dd] &&\bullet \ar[d] \\
&&\bullet \ar[d] \ar[dll] \ar[ddl, dotted, dash, bend right=10] \\
\bullet \ar[dr] &&\bullet \ar[dl] \\
&\bullet \ar[d] &\\
&\bullet &
\end{tikzcd}
\tab\item 
\begin{tikzcd}[column sep=tiny, row sep=tiny]
&\bullet \ar[d] &\\
&\bullet \ar[dl] \ar[dr] \ar[dddl, dotted, dash, bend left] &\\
\bullet \ar[dd] &&\bullet \ar[d] \\
&&\bullet \ar[d] \ar[dll] \ar[ddl, dotted, dash, bend right=10] \\
\bullet \ar[dr] &&\bullet \ar[dl] \\
&\bullet &
\end{tikzcd}
\tab\item 
\begin{tikzcd}[column sep=tiny, row sep=tiny]
&\bullet \ar[d] &&\\
&\bullet \ar[dl] \ar[dr] \ar[dd, dotted, dash]&&\\
\bullet \ar[dr] &&\bullet \ar[dl] &\bullet \ar[l]\\
&\bullet \ar[d] &&\\
&\bullet \ar[d] &&\\
&\bullet &&
\end{tikzcd}
\tab\item 
\begin{tikzcd}[column sep=tiny, row sep=tiny]
&\bullet \ar[d] &&\\
&\bullet \ar[d] &&\\
&\bullet \ar[dl] \ar[dr] \ar[ddd, dotted, dash]&&\\
\bullet \ar[ddr] &&\bullet \ar[d] &\\
&&\bullet \ar[dl] \ar[r] &\bullet \\
&\bullet &&
\end{tikzcd}
\tab\item 
\begin{tikzcd}[column sep=tiny, row sep=tiny]
&\bullet \ar[dl] \ar[dr] \ar[ddr, dotted, dash, bend right=10] &\\
\bullet \ar[d] \ar[drr] \ar[ddr, dotted, dash, bend left=10] &&\bullet \ar[d] \\
\bullet \ar[dr] &&\bullet \ar[dl] \\
&\bullet \ar[d] &\\
&\bullet \ar[d] &\\
&\bullet &
\end{tikzcd}
\tab\item 
\begin{tikzcd}[column sep=tiny, row sep=tiny]
&\bullet \ar[d] &\\
&\bullet \ar[dl] \ar[dr] \ar[ddr, dotted, dash, bend right=10] &\\
\bullet \ar[d] \ar[drr] \ar[ddr, dotted, dash, bend left=10] &&\bullet \ar[d] \\
\bullet \ar[dr] &&\bullet \ar[dl] \\
&\bullet \ar[d] &\\
&\bullet &
\end{tikzcd}
\tab\item 
\begin{tikzcd}[column sep=tiny, row sep=tiny]
&\bullet \ar[d] &\\
&\bullet \ar[d] &\\
&\bullet \ar[dl] \ar[dr] \ar[ddr, dotted, dash, bend right=10] &\\
\bullet \ar[d] \ar[drr] \ar[ddr, dotted, dash, bend left=10] &&\bullet \ar[d] \\
\bullet \ar[dr] &&\bullet \ar[dl] \\
&\bullet &
\end{tikzcd}
\tab\item 
\begin{tikzcd}[column sep=tiny, row sep=tiny]
&\bullet \ar[d] &&\\
&\bullet \ar[d] &&\\
&\bullet \ar[d] &&\\
&\bullet \ar[dl] \ar[dr] \ar[dd, dotted, dash]&&\\
\bullet \ar[dr] &&\bullet \ar[dl] \ar[r] &\bullet \\
&\bullet &&
\end{tikzcd}
\tab\item 
\begin{tikzcd} [column sep=tiny, row sep=tiny]  
				&				&												& \bullet \ar[dl] 	&  \\
				& \bullet \ar[r]	& \bullet \ar[dd, dotted, dash] \ar[rr, dotted, dash] 	&				& \bullet \ar[dd] \ar[ul]\\
\bullet \ar[ur] 	&&&& \\
				& \bullet \ar[ul] 	& \bullet \ar[l] \ar[rr] 								&				& \bullet \ar[uull] \\
\end{tikzcd}
\tab\item 
\begin{tikzcd} [column sep=tiny, row sep=tiny]  
				&				&						& \bullet 	 	&  \\
				& \bullet 		& \bullet \ar[dd] \ar[rr] 	&				& \bullet \ar[dd] \ar[ul]\\
\bullet \ar[ur] 	&&&& \\
				& \bullet \ar[ul] 	& \bullet \ar[l] \ar[rr] 		&				& \bullet \ar[uull, dotted, dash] \\
\end{tikzcd}
\tab\item 
\begin{tikzcd} [column sep=tiny, row sep=tiny]  
				&						&												& \bullet \ar[dl] 	&  \\
\bullet \ar[rr]		&						& \bullet \ar[dl, dotted, dash] \ar[rr, dotted, dash] 	&				& \bullet \ar[dl] \ar[ul]\\
				& \bullet \ar[ul] \ar[d] 	&												& \bullet \ar[ul] 	& \\
				& \bullet \ar[rr] 			&												& \bullet \ar[u] 	&\\
\end{tikzcd}
\tab\item 
\begin{tikzcd} [column sep=tiny, row sep=tiny]  
		&						&						& \bullet 					&  \\
\bullet 	&						& \bullet \ar[dl] \ar[rr] 	&							& \bullet \ar[dl] \ar[ul]\\
		& \bullet \ar[ul] \ar[d] 	&						& \bullet \ar[ul, dotted, dash] 	& \\
		& \bullet \ar[rr] 			&						& \bullet \ar[u] 				&\\
\end{tikzcd}
\tab\item 
\begin{tikzcd} [column sep=tiny, row sep=tiny]  
				& \bullet \ar[d] 									& \bullet \ar[dr] \ar[l] 	&\\
\bullet \ar[r] 		& \bullet \ar[dd, dotted, dash] \ar[ur, dotted, dash]	&					& \bullet \ar[dl] \\
				&												& \bullet  \ar[ul] 		&\\
\bullet \ar[uu] 	& \bullet \ar[l] \ar[ur]								&					&\\
\end{tikzcd}
\tab\item 
\begin{tikzcd} [column sep=tiny, row sep=tiny]  
				& \bullet 	 			& \bullet \ar[dr] \ar[l] 				&\\
\bullet  			& \bullet \ar[dd] \ar[ur]	&								& \bullet \ar[dl] \\
				&						& \bullet  \ar[ul, dotted, dash] 		&\\
\bullet \ar[uu] 	& \bullet \ar[l] \ar[ur]		&								&\\
\end{tikzcd}
\tab\item 
\begin{tikzcd} [column sep=tiny, row sep=tiny]  
					&				&									& \bullet 	&\\
\bullet 		 		&				& \bullet \ar[dl] \ar[rr]				&			& \bullet \ar[dd] \ar[ul]\\
					& \bullet  \ar[dl] 	&									&			&\\
\bullet \ar[uu] \ar[rr] 	&				& \bullet \ar[uu, dotted, dash] 		 	&			& \bullet \ar[ll] \\
\end{tikzcd}
\tab\item 
\begin{tikzcd} [column sep=tiny, row sep=tiny]  
&&&&\bullet \ar[dl] &\\
&\bullet \ar[rr] &&\bullet \ar[ddll, dotted, dash] \ar[rr, dotted, dash] &&\bullet \ar[dd] \ar[ul] \\
&&&&&\\
\bullet \ar[r] &\bullet \ar[uu] \ar[rr] &&\bullet \ar[uu] &&\bullet \ar[ll]
\end{tikzcd}
\tab\item 
\begin{tikzcd} [column sep=tiny, row sep=tiny]  
&&&\bullet \ar[d] &&\bullet \ar[ll]\\
&\bullet \ar[rr] &&\bullet \ar[ddll, dotted, dash] \ar[rr, dotted, dash] &&\bullet \ar[ddll] \ar[u]\\
&&&&&\\
\bullet \ar[r] &\bullet \ar[uu] \ar[rr] &&\bullet \ar[uu] &&
\end{tikzcd}
\tab\item 
\begin{tikzcd} [column sep=tiny, row sep=tiny]  
					&				& \bullet 					&& \bullet \ar[ll]\\
\bullet 				&				& \bullet \ar[dl] \ar[rr] 		&& \bullet \ar[ddll] \ar[u]\\
					& \bullet  \ar[dl] 	&							&&\\
\bullet \ar[uu] \ar[rr] 	&				& \bullet \ar[uu, dotted, dash] 	&&\\
\end{tikzcd}

\noindent
\item 
\begin{tikzcd} [column sep=tiny, row sep=tiny]  
					& \bullet 		&							&&\\
\bullet \ar[ur] \ar[dd] 	& \bullet  \ar[l]	& \bullet \ar[l] \ar[ddrr] 		&& \bullet  \\
&&&&\\
\bullet \ar[rr] 		&				& \bullet \ar[uu, dotted, dash] 	&& \bullet \ar[ll] \ar[uu] \\
\end{tikzcd}
\tab\item 
\begin{tikzcd} [column sep=tiny, row sep=tiny]  
&&\bullet \ar[dr] &&&\\
\bullet \ar[r] &\bullet \ar[ur] \ar[dd] &&\bullet \ar[ll, dotted, dash] \ar[ddrr, dotted, dash] &&\bullet \ar[ll] \\
&&&&&\\
&\bullet \ar[rr] &&\bullet \ar[uu] &&\bullet \ar[ll] \ar[uu] \\
\end{tikzcd}
\tab\item 
\begin{tikzcd}[column sep=tiny, row sep=tiny]
&&\bullet \ar[d] &\\
&&\bullet \ar[dl] \ar[dr] \ar[ddd, dotted, dash]&\\
\bullet &\bullet \ar[l] \ar[ddr] &&\bullet \ar[d] \\
&&&\bullet \ar[dl] &\bullet \ar[l]\\
&&\bullet &&
\end{tikzcd}
\tab\item 
\begin{tikzcd} [column sep=tiny, row sep=tiny]  
&\bullet \ar[rrr] &&&\bullet \ar[ddlll, dotted, dash] \ar[ddr, dotted, dash] &\bullet \ar[l] \\
\bullet \ar[d] &&&&&\\
\bullet \ar[r] &\bullet \ar[rrr] \ar[uu] &&&\bullet \ar[uu] &\bullet \ar[l] \ar[uu] \\
\end{tikzcd}
\tab\item 
\begin{tikzcd} [column sep=tiny, row sep=tiny]  
					&& \bullet \ar[d]  								&& \bullet \ar[ll] \\
\bullet \ar[rr] \ar[dd]	&& \bullet \ar[dd, dotted, dash] \ar[rr, dotted, dash] 	&& \bullet  \ar[dd] \ar[u] \\
&&&&\\
\bullet 		 		&& \bullet \ar[uull]  \ar[rr] 						&& \bullet \ar[uull] \\
\end{tikzcd}
\tab\item 
\begin{tikzcd} [column sep=tiny, row sep=tiny]  
\bullet \ar[rr]  			&& \bullet \ar[ddll] 				&&\\
&&&&\\
\bullet \ar[rr, dotted, dash] && \bullet \ar[dd] \ar[rr] \ar[uu]	&& \bullet  \ar[dd]  \\
&&&&\\
\bullet \ar[uu]  			&& \bullet \ar[ll]  \ar[rr] 			&& \bullet \ar[uull, dotted, dash] \\
\end{tikzcd}
\tab\item 
\begin{tikzcd} [column sep=tiny, row sep=tiny]  
					&			&												& \bullet \ar[dl] 	&\\
\bullet \ar[rr] 		&			& \bullet \ar[dd, dotted, dash] \ar[rr, dotted, dash] 	&				& \bullet \ar[dd] \ar[ul] \\
&&&&\\
\bullet \ar[dr] \ar[uu] 	&			& \bullet \ar[ll]  \ar[rr] 							&				& \bullet \ar[uull] \\
					& \bullet 	&&&\\
\end{tikzcd}
\tab\item 
\begin{tikzcd} [column sep=tiny, row sep=tiny]  
\bullet \ar[r] 		& \bullet \ar[dd] 					&& \bullet \ar[ll] \ar[dd] \ar[rr] 	&& \bullet \ar[dd] \\
&&&&\\
\bullet \ar[uu] 	& \bullet \ar[uurr, dotted, dash] 	&& \bullet \ar[ll] \ar[rr] 			&& \bullet \ar[uull, dotted, dash]\\
\end{tikzcd}
\tab\item 
\begin{tikzcd} [column sep=tiny, row sep=tiny]  
&&&\bullet \ar[dl] &&\\
\bullet \ar[rr] \ar[dd] &&\bullet \ar[dd, dotted, dash] \ar[rr, dotted, dash] &&\bullet \ar[ul] \ar[dd] &\bullet \ar[l] \\
&&&&&\\
\bullet &&\bullet \ar[uull] \ar[rr] &&\bullet \ar[uull] &\\
\end{tikzcd}
\tab\item 
\begin{tikzcd}[column sep=tiny, row sep=tiny]
&&\bullet \ar[dl] \ar[dr] \ar[ddl, dotted, dash, bend left] &\\
\bullet \ar[d] &\bullet \ar[l] \ar[d] &&\bullet \ar[d] \ar[dll] \ar[ddl, dotted, dash, bend right=10] \\
\bullet &\bullet \ar[dr] &&\bullet \ar[dl] \\
&&\bullet &
\end{tikzcd}
\tab\item 
\begin{tikzcd} [column sep=tiny, row sep=tiny]  
&\bullet \ar[dl] \ar[dr] \ar[dd, dotted, dash] &&\bullet \ar[dddl, dotted, dash] \ar[dr] \ar[dl] &\\
\bullet \ar[dr] &&\bullet \ar[dl] &&\bullet \ar[d] \\
&\bullet \ar[dr] &&&\bullet \ar[dll] \\
&&\bullet &&\\
\end{tikzcd}
\tab\item 
\begin{tikzcd}[column sep=tiny, row sep=tiny]
&&\bullet \ar[dl] \ar[ddr] \ar[dddl, dotted, dash, bend left] &\\
\bullet &\bullet \ar[l] \ar[d] &&\\
&\bullet \ar[d] &&\bullet \ar[d] \ar[dll] \ar[ddl, dotted, dash, bend right=10] \\
&\bullet \ar[dr] &&\bullet \ar[dl] \\
&&\bullet &
\end{tikzcd}
\tab\item 
\begin{tikzcd} [column sep=tiny, row sep=tiny]  
&&&\bullet \ar[ddddl, dotted, dash] \ar[ddr] \ar[dl] &\\
&\bullet \ar[dl] \ar[dr] \ar[dd, dotted, dash] &\bullet \ar[d] &&\\
\bullet \ar[dr] &&\bullet \ar[dl] &&\bullet \ar[ddll] \\
&\bullet \ar[dr] &&&\\
&&\bullet &&\\
\end{tikzcd}
\tab\item 
\begin{tikzcd}[column sep=tiny, row sep=tiny]
&&\bullet \ar[dl] \ar[dr] \ar[dddl, dotted, dash, bend left=20] &\\
\bullet &\bullet \ar[l] \ar[dd] &&\bullet \ar[d] \ar[ddll] \ar[dddl, dotted, dash, bend right=10] \\
&&&\bullet \ar[d] \\
&\bullet \ar[dr] &&\bullet \ar[dl] \\
&&\bullet &
\end{tikzcd}
\tab\item 
\begin{tikzcd} [column sep=tiny, row sep=tiny] 
&\bullet \ar[dl] \ar[ddr] \ar[ddd, dotted, dash] &&&\\
\bullet \ar[d] &&&\bullet \ar[dddl, dotted, dash] \ar[ddr] \ar[dl] &\\
\bullet \ar[dr] &&\bullet \ar[dl] &&\\
&\bullet \ar[dr] &&&\bullet \ar[dll] \\
&&\bullet &&\\
\end{tikzcd}
\tab\item 
\begin{tikzcd} [column sep=tiny, row sep=tiny] 
&\bullet \ar[ddl] \ar[dr] \ar[ddd, dotted, dash] &&&\\
&&\bullet \ar[d] &&\\
\bullet \ar[dr] &&\bullet \ar[dr] \ar[dl] \ar[dd, dotted, dash] &&\\
&\bullet \ar[dr] &&\bullet \ar[dl] &\bullet \ar[l] \\
&&\bullet &&\\
\end{tikzcd}
\tab\item 
\begin{tikzcd} [column sep=tiny, row sep=tiny] 
&&&\bullet \ar[dll] \ar[ddr] \ar[dddd, dotted, dash] &\\
&\bullet \ar[ddd, dotted, dash] \ar[dr] \ar[dl] &&&\\
\bullet \ar[ddr] &&\bullet \ar[d] &&\bullet \ar[ddl] \\
&&\bullet \ar[dl] \ar[dr] &&\\
&\bullet &&\bullet &
\end{tikzcd}
\tab\item 
\begin{tikzcd} [column sep=tiny, row sep=tiny] 
&&\bullet \ar[ddl] \ar[dr] \ar[ddd, dotted, dash] &&\\
&&&\bullet \ar[d] &\\
\bullet &\bullet \ar[l] \ar[dr] &&\bullet \ar[dr] \ar[dl] \ar[dd, dotted, dash] &\\
&&\bullet \ar[dr] &&\bullet \ar[dl] \\
&&&\bullet &&\\
\end{tikzcd}
\tab\item 
\begin{tikzcd} [column sep=tiny, row sep=tiny]  
&\bullet \ar[d] &&&\\
&\bullet \ar[dl] \ar[dr] \ar[dd, dotted, dash] &&\bullet \ar[dddl, dotted, dash] \ar[ddr] \ar[dl] &\\
\bullet \ar[dr] &&\bullet \ar[dl] &&\\
&\bullet \ar[dr] &&&\bullet \ar[dll] \\
&&\bullet &&\\
\end{tikzcd}
\tab\item 
\begin{tikzcd} [column sep=tiny, row sep=tiny] 
&&\bullet \ar[ddl] \ar[dr] \ar[ddd, dotted, dash] &&\\
&&&\bullet \ar[dr] \ar[ddl] \ar[ddd, dotted, dash] &\\
\bullet \ar[r] &\bullet \ar[dr] &&&\bullet \ar[d] \\
&&\bullet \ar[dr] &&\bullet \ar[dl] \\
&&&\bullet &&\\
\end{tikzcd}
\tab\item 
\begin{tikzcd} [column sep=tiny, row sep=tiny] 
&\bullet \ar[dl] \ar[dr] \ar[dd, dotted, dash] &&&\\
\bullet \ar[dr] &&\bullet \ar[dr] \ar[dl] \ar[dd, dotted, dash] &&\\
&\bullet \ar[dr] &&\bullet \ar[dl] \ar[r] &\bullet \ar[d] \\
&&\bullet &&\bullet \\
\end{tikzcd}
\tab\item 
\begin{tikzcd} [column sep=tiny, row sep=tiny]  
				& \bullet \ar[r] \ar[dddd] 	& \bullet \ar[r] 				& \bullet \ar[ddr] \ar[dddd] 	& \\
&&&&\\
\bullet \ar[ddr] 	&						& \bullet \ar[uul, dotted, dash] 	&							& \bullet \ar[ll] \\
&&&&\\
				& \bullet \ar[rr] 			&							& \bullet \ar[uul] 				& \\
\end{tikzcd}
\tab\item 
\begin{tikzcd} [column sep=tiny, row sep=tiny] 
&\bullet \ar[dl] \ar[dr] \ar[dd, dotted, dash] &&&\\
\bullet \ar[dr] &&\bullet \ar[dr] \ar[dl] \ar[ddd, dotted, dash] &&\\
&\bullet \ar[ddr] &&\bullet \ar[d] &\\
&&&\bullet \ar[r] \ar[dl] &\bullet \\
&&\bullet &&
\end{tikzcd}
\tab\item 
\begin{tikzcd} [column sep=tiny, row sep=tiny]  
							& \bullet \ar[dl] 	&												&				&\\
\bullet \ar[rr, dotted, dash] 	&				& \bullet \ar[rr] \ar[ddll] \ar[ul] 					&				& \bullet \ar[ddll]  \\
&&&& \\
\bullet \ar[rr] \ar[uu] 			&				& \bullet \ar[rr, dotted, dash] \ar[uu, dotted, dash] 	&				& \bullet \ar[uu] \ar[dl]\\
							&				&												& \bullet \ar[ul] 	& \\
\end{tikzcd}
\tab\item 
\begin{tikzcd} [column sep=tiny, row sep=tiny]  
&\bullet \ar[dl] \ar[dr] \ar[dd, dotted, dash] &&\bullet \ar[dddl, dotted, dash] \ar[ddr] \ar[dl] &&\\
\bullet \ar[dr] &&\bullet \ar[dl] &&&\\
&\bullet \ar[dr] &&&\bullet \ar[dll] \ar[r] &\bullet \\
&&\bullet &&&\\
\end{tikzcd}
\tab\item 
\begin{tikzcd} [column sep=tiny, row sep=tiny]  
\bullet \ar[rr] 		&& \bullet \ar[d] 									&&\\
\bullet \ar[dd] \ar[u] 	&& \bullet \ar[ll, dotted, dash] \ar[dd, dotted, dash] 	&& \bullet \ar[dd]  \\
&&&& \\
\bullet \ar[uurr] 		&& \bullet \ar[ll] \ar[rr] 							&& \bullet \ar[uull]\\
\end{tikzcd}
\tab\item 
\begin{tikzcd} [column sep=tiny, row sep=tiny]  
\bullet \ar[rr] 				&& \bullet 	 					&&\\
\bullet \ar[dd] \ar[u] 			&& \bullet \ar[ll] \ar[dd] \ar[rr] 	&& \bullet \ar[dd]  \\
&&&& \\
\bullet \ar[uurr, dotted, dash] 	&& \bullet \ar[ll] \ar[rr] 			&& \bullet \ar[uull, dotted, dash]\\
\end{tikzcd}
\tab\item 
\begin{tikzcd} [column sep=tiny, row sep=tiny]  
\bullet \ar[dd] \ar[rr] 		&& \bullet 	 					&&\\
&&&&\\
\bullet \ar[rr, dotted, dash] && \bullet \ar[uull] \ar[dd] \ar[rr] 	&& \bullet \ar[dd]  \\
&&&& \\
\bullet \ar[uu] 			&& \bullet \ar[ll] \ar[rr] 			&& \bullet \ar[uull, dotted, dash]\\
\end{tikzcd}
\tab\item 
\begin{tikzcd} [column sep=tiny, row sep=tiny]  
\bullet \ar[dd] \ar[rr] 	&& \bullet \ar[dd] 									&&\\
&&&&\\
\bullet \ar[rr] 		&& \bullet \ar[uull, dotted, dash] \ar[dd, dotted, dash] 	&& \bullet \ar[dd]  \\
&&&& \\
\bullet \ar[uu] 		&& \bullet \ar[ll] \ar[rr] 								&& \bullet \ar[uull]\\
\end{tikzcd}
\tab\item 
\begin{tikzcd} [column sep=tiny, row sep=tiny]  
			& \bullet \ar[ddl] 			&												& \bullet \ar[ddl] 		&\\
&&&&\\
\bullet \ar[r] 	& \bullet \ar[r] 			& \bullet \ar[ddl, dotted, dash] \ar[rr, dotted, dash] 	&					& \bullet \ar[ddl] \ar[uul] \\
&&&& \\
			& \bullet \ar[uul] \ar[rr] 	&												& \bullet \ar[uul]  	& \\
\end{tikzcd}
\tab\item 
\begin{tikzcd} [column sep=tiny, row sep=tiny]  
&&\bullet \ar[ddl] &&\bullet &\\
&&&&&\\
\bullet &\bullet \ar[l] \ar[rr, dotted, dash] &&\bullet \ar[uul] \ar[ddl] \ar[rr] &&\bullet \ar[ddl] \ar[uul] \\
&&&&& \\
&&\bullet \ar[uul] \ar[rr] &&\bullet \ar[uul, dotted, dash] & \\
\end{tikzcd}
\tab\item 
\begin{tikzcd} [column sep=tiny, row sep=tiny]  
\bullet \ar[r] &\bullet \ar[ddl, dotted, dash] \ar[ddrr, dotted, dash] &&\bullet \ar[ll] &\bullet \ar[l]\\
&&&&\\
\bullet \ar[uu] \ar[r] &\bullet \ar[uu] &&\bullet \ar[ll] \ar[uu] &\bullet \ar[uu] \ar[l] \ar[uul, dotted, dash] \\
\end{tikzcd}
\tab\item 
\begin{tikzcd} [column sep=tiny, row sep=tiny] 
&&\bullet \ar[ddd, dotted, dash] \ar[dr] \ar[dl] &&&\\
\bullet &\bullet \ar[l] \ar[ddr] &&\bullet \ar[d] \ar[ddr, dotted, dash, bend left=10] \ar[drr] &&\\
&&&\bullet \ar[dl] \ar[dr] &&\bullet \ar[dl] \\
&&\bullet &&\bullet &
\end{tikzcd}
\tab\item 
\begin{tikzcd} [column sep=tiny, row sep=tiny] 
&&\bullet \ar[dl] \ar[ddr] \ar[ddd, dotted, dash] &\\
&\bullet \ar[d] &&\\
&\bullet \ar[dl] \ar[ddd, dotted, dash] \ar[dr] &&\bullet \ar[dl] \\
\bullet \ar[d] &&\bullet \ar[ddl] &\\
\bullet \ar[dr] &&&\\
&\bullet &&\\
\end{tikzcd}
\tab\item 
\begin{tikzcd} [column sep=tiny, row sep=tiny]  
&&\bullet \ar[ddl] &&\bullet \ar[ll] &\\
&&&&& \\
\bullet &\bullet \ar[l] \ar[rrr, dotted, dash] &&& \bullet \ar[uull] \ar[dl] &\bullet \ar[l] \\
&&\bullet \ar[ul] &\bullet \ar[l] && \\
\end{tikzcd}
\tab\item 
\begin{tikzcd} [column sep=tiny, row sep=tiny] 
\bullet \ar[dddr, dotted, dash] \ar[ddd] \ar[r] &\bullet \ar[ddd] \ar[dddrrr, dotted, dash] \ar[rrr] &&&\bullet \ar[ddd] \ar[dddr, dotted, dash] \ar[r] &\bullet \ar[ddd] \\
&&&&&\\
&&&&&\\
\bullet \ar[r] &\bullet \ar[rrr] &&&\bullet \ar[r] &\bullet \\
\end{tikzcd}
\tab\item 
\begin{tikzcd} [column sep=tiny, row sep=tiny] 
&\bullet \ar[ddd] \ar[dddr, dotted, dash] \ar[r] &\bullet \ar[dddrr, dotted, dash] \ar[ddd] \ar[rr] &&\bullet \ar[ddd] &\\
&&&&&\\
&&&&&\\
\bullet \ar[r] &\bullet \ar[r] &\bullet \ar[rr] &&\bullet \ar[r] &\bullet \\
\end{tikzcd}
\tab\item 
\begin{tikzcd} [column sep=tiny, row sep=tiny] 
\bullet \ar[r] &\bullet \ar[ddd] \ar[dddr, dotted, dash] \ar[r] &\bullet \ar[dddrr, dotted, dash] \ar[ddd] \ar[rr] &&\bullet \ar[ddd] \ar[r] &\bullet \\
&&&&&\\
&&&&&\\
&\bullet \ar[r] &\bullet \ar[rr] &&\bullet &\\
\end{tikzcd}
\tab\item 
\begin{tikzcd} [column sep=tiny, row sep=tiny] 
\bullet \ar[d] &				&							& \bullet \ar[dl] 	&\\
\bullet \ar[r] 	& \bullet \ar[r] 	& \bullet \ar[rr, dotted, dash] 	&				& \bullet \ar[dd]  \ar[lu]\\
			&				&							&				&\\
			&				& \bullet \ar[uu] 				&				& \bullet \ar[ll] \\
\end{tikzcd}
\tab\item 
\begin{tikzcd} [column sep=tiny, row sep=tiny] 
\bullet \ar[d]		&							& \bullet \ar[dl] 	&\\
\bullet \ar[r]		& \bullet \ar[rr, dotted, dash] 	&				& \bullet \ar[dd]  \ar[lu]\\
				&							&				&\\
\bullet \ar[uur] 	& \bullet \ar[uu] 				&				& \bullet \ar[ll] \\
\end{tikzcd}
\tab\item 
\begin{tikzcd} [column sep=tiny, row sep=tiny] 
\bullet 				&							& \bullet \ar[dl] 	&\\
\bullet \ar[r] \ar[u]	& \bullet \ar[rr, dotted, dash] 	&				& \bullet \ar[dd]  \ar[lu]\\
					&							&				&\\
\bullet \ar[uur]		& \bullet \ar[uu] 				&				& \bullet \ar[ll] \\
\end{tikzcd}
\tab\item 
\begin{tikzcd} [column sep=tiny, row sep=tiny] 
\bullet \ar[d]	&							& \bullet \ar[dl] 	&							&\\
\bullet \ar[r]	& \bullet \ar[rr, dotted, dash] 	&				& \bullet \ar[dd]  \ar[lu] \ar[r] 	& \bullet \\
			&							&				&							&\\
			& \bullet \ar[uu] 				&				& \bullet \ar[ll] 				&\\
\end{tikzcd}
\tab\item 
\begin{tikzcd} [column sep=tiny, row sep=tiny] 	
\bullet \ar[dr]	&							& \bullet \ar[dl] 	&							&\\
				& \bullet \ar[rr, dotted, dash] 	&				& \bullet \ar[dd]  \ar[lu] \ar[r] 	& \bullet \\
\bullet \ar[ur]	&							&				&							&\\
				& \bullet \ar[uu] 				&				& \bullet \ar[ll] 				&\\
\end{tikzcd}
\tab\item 
\begin{tikzcd} [column sep=tiny, row sep=tiny] 
					&							& \bullet \ar[dl] 	&							&\\
\bullet \ar[r] \ar[d]	& \bullet \ar[rr, dotted, dash] 	&				& \bullet \ar[dd]  \ar[lu] \ar[r] 	& \bullet \\
\bullet 				&							&				&							&\\
					& \bullet \ar[uu] 				&				& \bullet \ar[ll] 				&\\
\end{tikzcd}
\tab\item 
\begin{tikzcd} [column sep=tiny, row sep=tiny] 
			&							& \bullet \ar[dl] 	&\\
\bullet \ar[r] 	& \bullet \ar[rr, dotted, dash]   	&				& \bullet \ar[d]  \ar[lu]\\
			& \bullet \ar[u] 				&				& \bullet \ar[d]\\
			& \bullet \ar[u] 				&				& \bullet \ar[ll]\\
\end{tikzcd}
\tab\item 
\begin{tikzcd} [column sep=tiny, row sep=tiny] 
\bullet \ar[rr] &&\bullet \ar[ddll, dotted, dash] &\\
&&&\bullet \ar[dl] \\
\bullet \ar[uu] \ar[rr] \ar[d] &&\bullet \ar[uu] &\\
\bullet \ar[r] &\bullet \ar[r] &\bullet &
\end{tikzcd}
\tab\item 
\begin{tikzcd} [column sep=tiny, row sep=tiny] 
\bullet \ar[rr] &&\bullet \ar[ddll, dotted, dash] &\\
&&&\bullet \ar[dl] \\
\bullet \ar[uu] \ar[rr] \ar[d] \ar[dr] &&\bullet \ar[uu] &\\
\bullet &\bullet \ar[r] & \bullet &
\end{tikzcd}
\tab\item 
\begin{tikzcd} [column sep=tiny, row sep=tiny] 
\bullet \ar[rr] 					&					& \bullet \ar[ddll, dotted, dash] 	&\\
								&					&								& \bullet \ar[dl]\\
\bullet \ar[uu] \ar[rr] \ar[d] \ar[drr]	&					& \bullet \ar[uu] 					&\\
\bullet 							&\bullet \ar[r]					& \bullet
\end{tikzcd}

\noindent
\item 
\begin{tikzcd} [column sep=tiny, row sep=tiny] 
							& \bullet 	& \bullet \ar[l]				&\\
							&			& \bullet \ar[u]				&\\
\bullet \ar[dd] 				&			& \bullet \ar[ll] \ar[dd] \ar[u]	&\\
							&			&							& \bullet\\
\bullet \ar[uurr, dotted, dash] 	&			& \bullet \ar[ll] \ar[ur] 		&\\
\end{tikzcd}
\tab\item 
\begin{tikzcd} [column sep=tiny, row sep=tiny] 
				& \bullet \ar[r]	& \bullet \ar[d]				&\\
				&				& \bullet \ar[d]				&\\
\bullet \ar[dd] 	&				& \bullet \ar[dd, dotted, dash] 	&\\
				&				&							& \bullet \ar[ul]\\
\bullet \ar[uurr] 	&				& \bullet \ar[ll] \ar[ur] 		&\\
\end{tikzcd}
\tab\item 
\begin{tikzcd} [column sep=tiny, row sep=tiny] 
							& \bullet 	& \bullet \ar[r]						& \bullet \\
\bullet \ar[dd] 				&			& \bullet \ar[ll] \ar[dd] \ar[ul] \ar[u] 	&\\
							&			&									& \bullet\\
\bullet \ar[uurr, dotted, dash] 	&			& \bullet \ar[ll] \ar[ur] 				&\\
\end{tikzcd}
\tab\item 
\begin{tikzcd} [column sep=tiny, row sep=tiny] 
				& \bullet \ar[dr]						& \bullet \ar[d]					& \bullet \ar[l] \\
\bullet \ar[dd] 	&									& \bullet \ar[dd, dotted, dash] 		&\\
				&									&								& \bullet \ar[ul]\\
\bullet \ar[uurr] 	&									& \bullet \ar[ll] \ar[ur] 			&\\
\end{tikzcd}
\tab\item 
\begin{tikzcd} [column sep=tiny, row sep=tiny] 
							& 		&  \bullet \ar[dr] 						& \\
							& \bullet	& 									& \bullet \\
\bullet \ar[dd] 				&		& \bullet \ar[ll] \ar[dd] \ar[ul] \ar[ur] 	&\\
							&		&									& \bullet\\
\bullet \ar[uurr, dotted, dash] 	&		& \bullet \ar[ll] \ar[ur] 				&\\
\end{tikzcd}
\tab\item 
\begin{tikzcd} [column sep=tiny, row sep=tiny] 
				& 				&  \bullet 					& \\
				& \bullet	\ar[dr]	& 							& \bullet \ar[dl] \ar[ul] \\
\bullet \ar[dd] 	&				& \bullet \ar[dd, dotted, dash] 	&\\
				&				&							& \bullet \ar[ul]\\
\bullet \ar[uurr] 	&				& \bullet \ar[ll] \ar[ur] 		&\\
\end{tikzcd}
\tab\item 
\begin{tikzcd} [column sep=tiny, row sep=tiny] 
		& \bullet \ar[rr]				&& \bullet \ar[d]					&\\
		& \bullet \ar[rr] 				&& \bullet \ar[ddll, dotted, dash] 	&\\
		&							&&								& \bullet \ar[dl]\\
\bullet 	& \bullet \ar[uu] \ar[rr] \ar[l] 	&& \bullet \ar[uu] 				&\\
\end{tikzcd}
\tab\item 
\begin{tikzcd} [column sep=tiny, row sep=tiny] 
		& \bullet \ar[drr]				&&								& \bullet \ar[dl]\\
		& \bullet \ar[rr] 				&& \bullet \ar[ddll, dotted, dash] 	&\\
		&							&&								& \bullet \ar[dl]\\
\bullet 	& \bullet \ar[uu] \ar[rr] \ar[l] 	&& \bullet \ar[uu] 				&\\
\end{tikzcd}
\tab\item 
\begin{tikzcd} [column sep=tiny, row sep=tiny] 
		& \bullet 					&& \bullet \ar[d] \ar[ll]			&\\
		& \bullet \ar[rr] 				&& \bullet \ar[ddll, dotted, dash] 	&\\
		&							&&								& \bullet \ar[dl]\\
\bullet 	& \bullet \ar[uu] \ar[rr] \ar[l] 	&& \bullet \ar[uu] 				&\\
\end{tikzcd}
\tab\item 
\begin{tikzcd} [column sep=tiny, row sep=tiny] 
\bullet \ar[rr]			&& \bullet 						&\\
\bullet \ar[dd] \ar[u]	&& \bullet \ar[dd, dotted, dash]	&\\
					&&								& \bullet \ar[ul]\\
\bullet \ar[uurr]	 	&& \bullet \ar[ll] \ar[ur] \ar[d] 		&\\
					&& \bullet 	 					&\\
\end{tikzcd}
\tab\item 
\begin{tikzcd} [column sep=tiny, row sep=tiny] 
\bullet							&& \bullet 						& \\
\bullet \ar[dd]  \ar[u]  \ar[urr]		&& \bullet \ar[dd, dotted, dash]	&\\
								&&								& \bullet \ar[ul]\\
\bullet \ar[uurr]	 				&& \bullet \ar[ll] \ar[ur] \ar[d] 		&\\
								&& \bullet 	 					&\\
\end{tikzcd}
\tab\item 
\begin{tikzcd} [column sep=tiny, row sep=tiny] 
\bullet 				&& \bullet \ar[ll]					&\\
\bullet \ar[dd] \ar[u]	&& \bullet \ar[dd, dotted, dash]	&\\
					&&								& \bullet \ar[ul]\\
\bullet \ar[uurr]	 	&& \bullet \ar[ll] \ar[ur] \ar[d] 		&\\
					&& \bullet 	 					&\\
\end{tikzcd}
\tab\item 
\begin{tikzcd} [column sep=tiny, row sep=tiny] 
\bullet \ar[rr]				&& \bullet 						&\\
						&& \bullet \ar[d] 					&\\
\bullet \ar[dd] \ar[uu]		&& \bullet \ar[dd, dotted, dash] 	&\\
						&&								& \bullet \ar[ul]\\
\bullet \ar[uurr] 			&& \bullet \ar[ll] \ar[ur]			&\\
\end{tikzcd}
\tab\item 
\begin{tikzcd} [column sep=tiny, row sep=tiny] 
\bullet						& \bullet 	& \bullet \ar[d] 					&\\
\bullet \ar[dd] \ar[u]  \ar[ur]	&			& \bullet \ar[dd, dotted, dash] 	&\\
							&			&								& \bullet \ar[ul]\\
\bullet \ar[uurr] 				&			& \bullet \ar[ll] \ar[ur]			&\\
\end{tikzcd}
\tab\item 
\begin{tikzcd} [column sep=tiny, row sep=tiny] 
\bullet && \bullet \ar[ll]					&\\
						&& \bullet \ar[d] 					&\\
\bullet \ar[dd] \ar[uu] 	&& \bullet \ar[dd, dotted, dash] 	&\\
						&&								& \bullet \ar[ul]\\
\bullet \ar[uurr] 			&& \bullet \ar[ll] \ar[ur]			&\\
\end{tikzcd}
\tab\item 
\begin{tikzcd} [column sep=tiny, row sep=tiny] 
							&& \bullet \ar[d] 					&\\
\bullet \ar[rr] 				&& \bullet \ar[ddll, dotted, dash] 	&\\
							&&								& \bullet \ar[dl]\\
\bullet \ar[uu] \ar[rr] \ar[d]	&& \bullet \ar[uu] 				&\\
\bullet \ar[rr]					&& \bullet 						&\\
\end{tikzcd}
\tab\item 
\begin{tikzcd} [column sep=tiny, row sep=tiny] 
									&& \bullet \ar[d] 					&\\
\bullet \ar[rr] 						&& \bullet \ar[ddll, dotted, dash] 	&\\
									&&								& \bullet \ar[dl]\\
\bullet \ar[uu] \ar[rr] \ar[d] \ar[drr] 	&& \bullet \ar[uu] 				&\\
\bullet 								&& \bullet 						&\\
\end{tikzcd}
\tab\item 
\begin{tikzcd} [column sep=tiny, row sep=tiny] 
							&& \bullet \ar[d] 					&\\
\bullet \ar[rr] 				&& \bullet \ar[ddll, dotted, dash] 	&\\
							&&								& \bullet \ar[dl]\\
\bullet \ar[uu] \ar[rr] \ar[d] 	&& \bullet \ar[uu] 				&\\
\bullet 						&& \bullet \ar[ll]					&\\
\end{tikzcd}
\tab\item 
\begin{tikzcd} [column sep=tiny, row sep=tiny] 
		& \bullet \ar[rr]				&& \bullet \ar[d]					&\\
		& \bullet \ar[rr] 				&& \bullet \ar[ddll, dotted, dash]  	&\\
		&							&&								& \bullet \ar[dl]\\
\bullet  	& \bullet \ar[l] \ar[uu] \ar[rr] 	&& \bullet \ar[uu] 				&\\
\end{tikzcd}
\tab\item 
\begin{tikzcd} [column sep=tiny, row sep=tiny] 
		& 							& \bullet \ar[dr]	& 								& \bullet \ar[dl] \\
		& \bullet \ar[rr] 				&				& \bullet \ar[ddll, dotted, dash]  	&\\
		&							&				&								& \bullet \ar[dl]\\
\bullet  	& \bullet \ar[l] \ar[uu] \ar[rr] 	&				& \bullet \ar[uu] 					&\\
\end{tikzcd}
\tab\item 
\begin{tikzcd} [column sep=tiny, row sep=tiny] 
		& \bullet 					&& \bullet \ar[d] \ar[ll]			&\\
		& \bullet \ar[rr] 				&& \bullet \ar[ddll, dotted, dash]  	&\\
		&							&&								& \bullet \ar[dl]\\
\bullet  	& \bullet \ar[l] \ar[uu] \ar[rr] 	&& \bullet \ar[uu] 				&\\
\end{tikzcd}
\tab\item 
\begin{tikzcd} [column sep=tiny, row sep=tiny]
				&& \bullet \ar[d] 					&&\\
\bullet \ar[rr] 	&& \bullet \ar[dd, dotted, dash]  	&& \bullet \ar[dd] \\
&&&&\\
\bullet \ar[uu] 	&& \bullet \ar[ll] \ar[d] \ar[rr] 		&& \bullet \ar[uull] \\
				&& \bullet 						&& \\
\end{tikzcd}
\tab\item 
\begin{tikzcd} [column sep=tiny, row sep=tiny]
				&&								&& \bullet \\
\bullet \ar[rr] 	&& \bullet \ar[dd, dotted, dash]  	&& \bullet \ar[dd] \ar[u]\\
&&&&\\
\bullet \ar[uu] 	&& \bullet \ar[ll] \ar[d] \ar[rr] 		&& \bullet \ar[uull] \\
				&& \bullet 						&& \\
\end{tikzcd}
\tab\item 
\begin{tikzcd} [column sep=tiny, row sep=tiny] 
\bullet 		 		&& \bullet \ar[d] 					&&\\
\bullet \ar[dd] \ar[u] 	&& \bullet \ar[ddrr, dotted, dash] 	&& \bullet \ar[ll]\\
&&&&\\
\bullet \ar[rr] 		&& \bullet \ar[uu] 				&& \bullet \ar[ll] \ar[uu] \\
\end{tikzcd}
\tab\item  
\begin{tikzcd} [column sep=tiny, row sep=tiny] 
				&& \bullet 	 					&& \bullet \ar[d] \\
\bullet \ar[dd] 	&& \bullet \ar[ll] \ar[ddrr] \ar[u] 	&& \bullet \\
&&&&\\
\bullet \ar[rr] 	&& \bullet \ar[uu, dotted, dash] 	&& \bullet \ar[ll] \ar[uu] \\
\end{tikzcd}
\tab\item 
\begin{tikzcd} [column sep=tiny, row sep=tiny] 
\bullet 				&&								&&\\
\bullet \ar[dd] \ar[u] 	&& \bullet \ar[ddrr, dotted, dash] 	&& \bullet \ar[ll]\\
&&&&\\
\bullet \ar[rr] 		&& \bullet \ar[uu] 				&& \bullet \ar[ll] \ar[uu] \ar[d]\\
					&&								&& \bullet\\
\end{tikzcd}
\tab \item 
\begin{tikzcd} [column sep=tiny, row sep=tiny] 
				&				& \bullet \ar[dd] 				&& \bullet \ar[ll] \\
				& \bullet \ar[dr] 	&							&&\\
\bullet \ar[dd] 	&				& \bullet \ar[rr, dotted, dash] 	&& \bullet \ar[uu] \ar[ddll] \\
&&&&\\
\bullet \ar[rr] 	&				& \bullet \ar[uu]  				&& \\
\end{tikzcd}
\tab\item  
\begin{tikzcd} [column sep=tiny, row sep=tiny] 
				&				& \bullet 	 				&& \bullet \ar[ll] \\
				& \bullet 	 	&							&&\\
\bullet \ar[dd] 	&				& \bullet \ar[ul] \ar[ll] \ar[rr] 	&& \bullet \ar[uu] \ar[ddll] \\
&&&&\\
\bullet \ar[rr] 	&				& \bullet \ar[uu, dotted, dash] 	&& \\
\end{tikzcd}
\tab\item 
\begin{tikzcd} [column sep=tiny, row sep=tiny] 
				&& \bullet \ar[rr] 					&& \bullet \ar[dd]\\
&&&&\\
\bullet \ar[rr] 	&& \bullet \ar[dd, dotted, dash] 	&& \bullet \ar[ll]\\
&&&&\\
\bullet \ar[uu] 	&& \bullet \ar[ll] \ar[d] \ar[uurr] 	&& \\
				&& \bullet 						&&\\
\end{tikzcd}
\tab\item 
\begin{tikzcd} [column sep=tiny, row sep=tiny] 
\bullet \ar[d] &&\bullet \ar[dl] &\bullet \ar[d] \\
\bullet \ar[r] &\bullet \ar[rr, dotted, dash] &&\bullet \ar[ul] \ar[d]\\
&\bullet \ar[u] &&\bullet \ar[ll]\\
\end{tikzcd}
\tab\item 
\begin{tikzcd} [column sep=tiny, row sep=tiny] 
\bullet \ar[dr] &&\bullet \ar[dl] &\bullet \ar[d]\\
&\bullet \ar[rr, dotted, dash] &&\bullet \ar[ul] \ar[d]\\
\bullet \ar[ur] &\bullet \ar[u] &&\bullet \ar[ll]\\
\end{tikzcd}
\tab\item 
\begin{tikzcd} [column sep=tiny, row sep=tiny] 
&&\bullet \ar[dl] &\bullet \ar[d]\\
\bullet \ar[d] \ar[r] &\bullet \ar[rr, dotted, dash] && \bullet \ar[ul] \ar[d]\\
\bullet &\bullet \ar[u] &&\bullet \ar[ll]\\
\end{tikzcd}
\tab\item 
\begin{tikzcd} [column sep=tiny, row sep=tiny]
&&&\bullet &\bullet \ar[l] \\
&&\bullet \ar[dll] &&\bullet \ar[u] \ar[d]\\
\bullet \ar[rrrr, dotted, dash] &&&&\bullet \ar[ull] \ar[dd]\\
&&&&\\			
\bullet \ar[uu] &&&&\bullet \ar[llll]\\
\end{tikzcd}
\tab\item 
\begin{tikzcd} [column sep=tiny, row sep=tiny]
&&&\bullet &\bullet \\
&&\bullet \ar[dll] &&\bullet \ar[d] \ar[u] \ar[ul] \\
\bullet \ar[rrrr, dotted, dash] &&&& \bullet \ar[ull] \ar[dd] \\
&&&&\\				
\bullet \ar[uu] &&&&\bullet \ar[llll]\\
\end{tikzcd}
\tab\item 
\begin{tikzcd} [column sep=tiny, row sep=tiny]
&&&\bullet \ar[r] &\bullet \\
&&\bullet \ar[dll] &&\bullet \ar[d] \ar[u] \\
\bullet \ar[rrrr, dotted, dash] &&&&\bullet \ar[ull] \ar[dd]\\
&&&&\\			
\bullet \ar[uu] &&&&\bullet \ar[llll]\\
\end{tikzcd}
\tab\item 
\begin{tikzcd} [column sep=tiny, row sep=tiny]
&&\bullet \ar[ddl] &\bullet \\
&&&\bullet \ar[d] \ar[u] \\
\bullet \ar[r] &\bullet \ar[rr, dotted, dash] && \bullet \ar[uul] \ar[d] \\
&\bullet \ar[u] &&\bullet \ar[ll]\\
\end{tikzcd}
\tab\item 
\begin{tikzcd} [column sep=tiny, row sep=tiny]
				& \bullet \ar[r] 			& \bullet						&\\
				& \bullet \ar[ddl] 	\ar[u] 	&							&\\
&&&\\
\bullet \ar[rr] 	&						& \bullet \ar[dd, dotted, dash] 	&\\
				&						&							& \bullet \ar[ul] \\
\bullet \ar[uu] 	&						& \bullet \ar[ll] \ar[ur] 		&\\
\end{tikzcd}
\tab\item 
\begin{tikzcd} [column sep=tiny, row sep=tiny]
\bullet 	 		& 							& \bullet						&\\
				& \bullet \ar[ddl] \ar[ul] \ar[ur]	&							&\\
&&&\\
\bullet \ar[rr] 	&							& \bullet \ar[dd, dotted, dash] 	&\\
				&							&							& \bullet \ar[ul] \\
\bullet \ar[uu] 	&							& \bullet \ar[ll] \ar[ur] 		&\\
\end{tikzcd}
\tab\item 
\begin{tikzcd} [column sep=tiny, row sep=tiny]
				& \bullet 	 			& \bullet	\ar[l]				&\\
				& \bullet \ar[ddl] 	\ar[u]	&							&\\
&&&\\
\bullet \ar[rr] 	&						& \bullet \ar[dd, dotted, dash] 	&\\
				&						&							& \bullet \ar[ul] \\
\bullet \ar[uu] 	&						& \bullet \ar[ll] \ar[ur] 		&\\
\end{tikzcd}
\tab\item 
\begin{tikzcd} [column sep=tiny, row sep=tiny]
			&							& \bullet \ar[ddl] 	&						&\\
\bullet \ar[d] &&&&\\
\bullet \ar[r]	& \bullet \ar[rr, dotted, dash] 	&				& \bullet \ar[uul] \ar[dd] 	&\\
			&							&				&						& \bullet \\
			& \bullet \ar[uu] 				&				& \bullet \ar[ll] \ar[ur] 	&\\
\end{tikzcd}
\tab\item 
\begin{tikzcd} [column sep=tiny, row sep=tiny]
				&							& \bullet \ar[ddl] 	&						&\\
\bullet \ar[dr]	&&&&\\
				& \bullet \ar[rr, dotted, dash] 	&				& \bullet \ar[uul] \ar[dd] 	&\\
\bullet \ar[ur]	&							&				&						& \bullet \\
				& \bullet \ar[uu] 				&				& \bullet \ar[ll] \ar[ur] 	&\\
\end{tikzcd}
\tab\item 
\begin{tikzcd} [column sep=tiny, row sep=tiny]
					&							& \bullet \ar[ddl] 	&						&\\
					&&&&\\
\bullet \ar[d] \ar[r] 	& \bullet \ar[rr, dotted, dash] 	&				& \bullet \ar[uul] \ar[dd] 	&\\
\bullet 				&							&				&						& \bullet \\
					& \bullet \ar[uu] 				&				& \bullet \ar[ll] \ar[ur] 	&\\
\end{tikzcd}
\tab\item 
\begin{tikzcd} [column sep=tiny, row sep=tiny]
\bullet &\bullet \ar[l] &&\\
&&&\\
\bullet \ar[rr] &&\bullet \ar[uul] \ar[dd, dotted, dash] &\\
&&&\bullet \ar[ul] \\
\bullet \ar[uu] &&\bullet \ar[ll] \ar[d] \ar[ur] &\\
&&\bullet &\\
\end{tikzcd}
\tab\item 
\begin{tikzcd} [column sep=tiny, row sep=tiny] 
\bullet \ar[d] &\bullet \ar[rr] \ar[dd] \ar[l] &&\bullet \ar[r] \ar[dd] &\bullet \ar[dd]\\
\bullet &&&&\\
&\bullet \ar[rr] &&\bullet \ar[r] \ar[ulul, dotted, dash] &\bullet \ar[ulu, dotted, dash]\\
\end{tikzcd}
\tab\item 
\begin{tikzcd} [column sep=tiny, row sep=tiny] 
\bullet \ar[d] &\bullet \ar[dd] &&\bullet \ar[ddll, dotted, dash] \ar[dd] \ar[ll] &\\
\bullet \ar[d] &&&&\\
\bullet &\bullet \ar[l] &&\bullet \ar[ll] &\bullet \ar[l]\\
\end{tikzcd}
\tab\item 
\begin{tikzcd} [column sep=tiny, row sep=tiny] 
\bullet &\bullet \ar[dl] \ar[rr] \ar[dd] \ar[l] &&\bullet \ar[r] \ar[dd] &\bullet \ar[dd]\\
\bullet &&&&\\
&\bullet \ar[rr] &&\bullet \ar[r] \ar[ulul, dotted, dash] &\bullet \ar[ulu, dotted, dash]\\
\end{tikzcd}
\tab\item 
\begin{tikzcd} [column sep=tiny, row sep=tiny] 
&&\bullet \ar[dd] &&\bullet \ar[ddll, dotted, dash] \ar[dd] \ar[ll] &\\
&\bullet \ar[d] &&&&\\
\bullet \ar[r] &\bullet &\bullet \ar[l] &&\bullet \ar[ll] &\bullet \ar[l]\\
\end{tikzcd}
\tab\item 
\begin{tikzcd} [column sep=tiny, row sep=tiny] 
\bullet &\bullet \ar[rr] \ar[dd] \ar[l] &&\bullet \ar[r] \ar[dd] &\bullet \ar[dd]\\
\bullet \ar[u] &&&&\\
&\bullet \ar[rr] &&\bullet \ar[r] \ar[ulul, dotted, dash] &\bullet \ar[ulu, dotted, dash]\\
\end{tikzcd}
\tab\item  
\begin{tikzcd} [column sep=tiny, row sep=tiny] 
\bullet &\bullet \ar[dd] &&\bullet \ar[ddll, dotted, dash] \ar[dd] \ar[ll] &\\
\bullet \ar[u] \ar[d] &&&&\\
\bullet &\bullet \ar[l] &&\bullet \ar[ll] &\bullet \ar[l]\\
\end{tikzcd}
\tab\item 
\begin{tikzcd} [column sep=tiny, row sep=tiny] 
&\bullet \ar[dd] &&\bullet \ar[ddll, dotted, dash] \ar[dd] \ar[ll] &\bullet \\
&&&&\bullet \ar[u] \\
\bullet &\bullet \ar[l] &&\bullet \ar[ll] &\bullet \ar[l] \ar[u] \\
\end{tikzcd}
\tab\item 
\begin{tikzcd} [column sep=tiny, row sep=tiny] 
&\bullet \ar[dd] &&\bullet \ar[ddll, dotted, dash] \ar[dd] \ar[ll] &&\\
&&&&\bullet &\\
\bullet &\bullet \ar[l] &&\bullet \ar[ll] &\bullet \ar[l] \ar[r] \ar[u] &\bullet \\
\end{tikzcd}
\tab\item 
\begin{tikzcd} [column sep=tiny, row sep=tiny] 
&\bullet \ar[dd] &&\bullet \ar[ddll, dotted, dash] \ar[dd] \ar[ll] &\bullet \ar[d] \\
&&&&\bullet \\
\bullet &\bullet \ar[l] &&\bullet \ar[ll] &\bullet \ar[l] \ar[u] \\
\end{tikzcd}
\tab\item 
\begin{tikzcd} [column sep=tiny, row sep=tiny] 
&\bullet \ar[dd] &&\bullet \ar[ddll, dotted, dash] \ar[dd] \ar[r] \ar[ll] &\bullet \ar[d] \\
&&&&\bullet \\
\bullet &\bullet \ar[l] &&\bullet \ar[ll] &\bullet \ar[l] \\
\end{tikzcd}
\tab\item 
\begin{tikzcd} [column sep=tiny, row sep=tiny] 
&\bullet \ar[dd] &&\bullet \ar[ddll, dotted, dash] \ar[dd] \ar[dr] \ar[r] \ar[ll] &\bullet \\
&&&&\bullet \\
\bullet &\bullet \ar[l] &&\bullet \ar[ll] &\bullet \ar[l] \\
\end{tikzcd}
\tab\item 
\begin{tikzcd} [column sep=tiny, row sep=tiny] 
&\bullet \ar[dd] &&\bullet \ar[ddll, dotted, dash] \ar[dd] \ar[r] \ar[ll] &\bullet \\
&&&&\bullet \ar[u] \\
\bullet &\bullet \ar[l] &&\bullet \ar[ll] &\bullet \ar[l] \\
\end{tikzcd}
\tab\item 
\begin{tikzcd} [column sep=tiny, row sep=tiny] 
&\bullet \ar[dd] &&\bullet \ar[ddll, dotted, dash] \ar[dd] \ar[r] \ar[ll] &\bullet \\
\bullet \ar[d] &&&&\\
\bullet &\bullet \ar[l] &&\bullet \ar[ll] &\bullet \ar[l] \\
\end{tikzcd}
\tab\item 
\begin{tikzcd} [column sep=tiny, row sep=tiny] 
\bullet &\bullet \ar[rr] \ar[dd] \ar[l] &&\bullet \ar[r] \ar[dd] &\bullet \ar[dd] &\\
&&&&&\\
&\bullet \ar[rr] &&\bullet \ar[r] \ar[ulul, dotted, dash] &\bullet \ar[ulu, dotted, dash] &\bullet \ar[l] \\
\end{tikzcd}
\tab\item  
\begin{tikzcd} [column sep=tiny, row sep=tiny] 
&\bullet \ar[dd] &&\bullet \ar[ddll, dotted, dash] \ar[dd] \ar[r] \ar[ll] &\bullet \\
&&&&\bullet \\
\bullet &\bullet \ar[l] &&\bullet \ar[ll] &\bullet \ar[l] \ar[u] \\
\end{tikzcd}
\tab\item 
\begin{tikzcd} [column sep=tiny, row sep=tiny] 
			&						& \bullet \ar[dr] 	&\\
			& \bullet \ar[dr] \ar[ur] 	&				& \bullet \ar[ll, dotted, dash]\\
\bullet \ar[d]	&						& \bullet \ar[dr] 	&\\
\bullet \ar[r]	& \bullet  				&				& \bullet \ar[ll] \ar[uu]\\
\end{tikzcd}
\tab\item 
\begin{tikzcd}  [column sep=tiny, row sep=tiny] 
				&						& \bullet \ar[dr] 	&\\
				& \bullet \ar[dr] \ar[ur] 	&				& \bullet \ar[ll, dotted, dash]\\
\bullet \ar[dr]	&						& \bullet \ar[dr] 	&\\
\bullet \ar[r]		& \bullet  				&				& \bullet \ar[ll] \ar[uu]\\
\end{tikzcd}
\tab\item  
\begin{tikzcd}  [column sep=tiny, row sep=tiny] 
					&						& \bullet \ar[dr] 	&\\
					& \bullet \ar[dr] \ar[ur] 	&				& \bullet \ar[ll, dotted, dash]\\
\bullet 				&						& \bullet \ar[dr] 	&\\
\bullet \ar[u] \ar[r]	& \bullet  				&				& \bullet \ar[ll] \ar[uu]\\
\end{tikzcd}
\tab\item 
\begin{tikzcd}  [column sep=tiny, row sep=tiny]  
\bullet &\bullet \ar[l] &\bullet \ar[d] \ar[l] &\\
\bullet \ar[drdr, dotted, dash] &&\bullet \ar[ll] &\\
&&&\\
\bullet \ar[uu] &&\bullet \ar[ll] \ar[uu] &\bullet \ar[l] \\
\end{tikzcd}
\tab\item 
\begin{tikzcd}  [column sep=tiny, row sep=tiny]  
&\bullet &\bullet \ar[d] \ar[l] \ar[r] &\bullet \\
\bullet \ar[drdr, dotted, dash] &&\bullet \ar[ll] &\\
&&&\\
\bullet \ar[uu] &&\bullet \ar[ll] \ar[uu] &\bullet \ar[l] \\
\end{tikzcd}
\tab\item 
\begin{tikzcd}  [column sep=tiny, row sep=tiny]  
\bullet \ar[r] &\bullet &\bullet \ar[d] \ar[l] &\\
\bullet \ar[drdr, dotted, dash] &&\bullet \ar[ll] &\\
&&&\\
\bullet \ar[uu] &&\bullet \ar[ll] \ar[uu] &\bullet \ar[l] \\
\end{tikzcd}
\tab\item 
\begin{tikzcd}  [column sep=tiny, row sep=tiny]  
&&\bullet \ar[d] &\\
\bullet \ar[drdr, dotted, dash] &&\bullet \ar[ll] &\bullet \\
&&&\bullet \ar[u] \\
\bullet \ar[uu] &&\bullet \ar[ll] \ar[uu] &\bullet \ar[l] \ar[u] \\
\end{tikzcd}
\tab\item 
\begin{tikzcd}  [column sep=tiny, row sep=tiny]  
&&\bullet \ar[d] &&\\
\bullet \ar[drdr, dotted, dash] &&\bullet \ar[ll] &&\\
&&&\bullet &\\
\bullet \ar[uu] &&\bullet \ar[ll] \ar[uu] &\bullet \ar[l] \ar[r] \ar[u] &\bullet \\
\end{tikzcd}
\tab\item 
\begin{tikzcd}  [column sep=tiny, row sep=tiny]  
&&\bullet \ar[d] &\\
\bullet \ar[drdr, dotted, dash] &&\bullet \ar[ll] &\bullet \ar[d] \\
&&&\bullet \\
\bullet \ar[uu] &&\bullet \ar[ll] \ar[uu] &\bullet \ar[l] \ar[u] \\
\end{tikzcd}
\tab\item 
\begin{tikzcd} [column sep=tiny, row sep=tiny] 
\bullet \ar[d] &&&\bullet \ar[d] &\\
\bullet \ar[r] &\bullet \ar[drdr, dotted, dash] &&\bullet \ar[ll] &\\
&&&&\\
&\bullet \ar[uu] &&\bullet \ar[ll] \ar[uu] &\bullet \ar[l] \\
\end{tikzcd}
\tab\item 
\begin{tikzcd} [column sep=tiny, row sep=tiny] 
\bullet 		&							& \bullet \ar[dr] 	&\\
\bullet \ar[u]	& \bullet \ar[dr] \ar[ur] \ar[l]	&				& \bullet \ar[ll, dotted, dash]\\
			& 							& \bullet \ar[dr] 	&\\
			& \bullet  					&				& \bullet \ar[ll] \ar[uu]\\
\end{tikzcd}
\tab\item 
\begin{tikzcd} [column sep=tiny, row sep=tiny] 
\bullet \ar[dr] &&\bullet \ar[dr] &&\\
&\bullet \ar[drdr, dotted, dash] &&\bullet \ar[ll] &\\
\bullet \ar[ur]	&&&&\\
&\bullet \ar[uu] &&\bullet \ar[ll] \ar[uu] &\bullet \ar[l] \\
\end{tikzcd}
\tab\item 
\begin{tikzcd} [column sep=tiny, row sep=tiny] 
\bullet 	&									& \bullet \ar[dr] 	&\\
		& \bullet \ar[dr] \ar[ur] \ar[dl] \ar[ul]	&				& \bullet \ar[ll, dotted, dash]\\
\bullet	& 									& \bullet \ar[dr] 	&\\
		& \bullet  							&				& \bullet \ar[ll] \ar[uu]\\
\end{tikzcd}
\tab\item 
\begin{tikzcd} [column sep=tiny, row sep=tiny] 
\bullet &&\bullet \ar[dr] &&\\
\bullet \ar[u] \ar[r] &\bullet \ar[drdr, dotted, dash] &&\bullet \ar[ll] &\\
&&&&\\
&\bullet \ar[uu] &&\bullet \ar[ll] \ar[uu] &\bullet \ar[l]\\
\end{tikzcd}
\tab\item  
\begin{tikzcd} [column sep=tiny, row sep=tiny] 
\bullet \ar[d] &							& \bullet \ar[dr] 	&\\
\bullet		& \bullet \ar[dr] \ar[ur] \ar[l] 	&				& \bullet \ar[ll, dotted, dash]\\
			& 							& \bullet \ar[dr] 	&\\
			& \bullet  					&				& \bullet \ar[ll] \ar[uu]\\
\end{tikzcd}
\tab\item 
\begin{tikzcd} [column sep=tiny, row sep=tiny] 
&&&\bullet \ar[d] &\\
\bullet \ar[r] &\bullet \ar[drdr, dotted, dash] &&\bullet \ar[ll] &\\
&&&&\bullet \\
&\bullet \ar[uu] &&\bullet \ar[ll] \ar[uu] &\bullet \ar[l] \ar[u] \\
\end{tikzcd}
\tab\item 
\begin{tikzcd} [column sep=tiny, row sep=tiny] 
		&							& \bullet \ar[dr] 	&\\
\bullet 	& \bullet \ar[l]  \ar[dr] \ar[ur] 	&				& \bullet \ar[ll, dotted, dash]\\
		&							& \bullet \ar[dr] 	&\\
		& \bullet 					&				& \bullet \ar[ll] \ar[uu] \\
		& \bullet \ar[u] 				&				&
\end{tikzcd}
\tab\item  
\begin{tikzcd} [column sep=tiny, row sep=tiny]  
					&&								& \bullet	& \bullet \ar[l]\\
\bullet \ar[rr] \ar[dd]	&& \bullet \ar[dd, dotted, dash] 	&		& \bullet \ar[dd] \ar[u]\\
&&&&\\
\bullet   				&& \bullet \ar[uull]  \ar[rr] 		&		& \bullet \ar[uull] \\
\end{tikzcd}
\tab\item 
\begin{tikzcd} [column sep=tiny, row sep=tiny]  
					&&								& \bullet	& \bullet \\
\bullet \ar[rr] \ar[dd]	&& \bullet \ar[dd, dotted, dash] 	&		& \bullet \ar[dd] \ar[ul] \ar[u]\\
&&&&\\
\bullet   				&& \bullet \ar[uull]  \ar[rr] 		&		& \bullet \ar[uull] \\
\end{tikzcd}
\tab\item 
\begin{tikzcd} [column sep=tiny, row sep=tiny]  
					&&								& \bullet \ar[r]	& \bullet \\
\bullet \ar[rr] \ar[dd]	&& \bullet \ar[dd, dotted, dash] 	&				& \bullet \ar[dd] \ar[u]\\
&&&&\\
\bullet   				&& \bullet \ar[uull]  \ar[rr] 		&				& \bullet \ar[uull] \\
\end{tikzcd}
\tab\item  
\begin{tikzcd} [column sep=tiny, row sep=tiny]
\bullet \ar[rr] 				&			& \bullet \ar[ddll, dotted, dash] \ar[ddrr, dotted, dash] 	&& \bullet \ar[ll]\\
&&&&\\
\bullet \ar[uu] \ar[rr] \ar[d]	&			& \bullet \ar[uu] 										&& \bullet \ar[ll] \ar[uu] \\
\bullet \ar[r]					& \bullet		&													&&
\end{tikzcd}
\tab\item 
\begin{tikzcd} [column sep=tiny, row sep=tiny]
\bullet \ar[rr] 					&			& \bullet \ar[ddll, dotted, dash] \ar[ddrr, dotted, dash] 	&& \bullet \ar[ll]\\
&&&&\\
\bullet \ar[uu] \ar[rr] \ar[d] \ar[dr]	&			& \bullet \ar[uu] 										&& \bullet \ar[ll] \ar[uu] \\
\bullet							& \bullet		&													&&
\end{tikzcd}
\tab\item 
\begin{tikzcd} [column sep=tiny, row sep=tiny]
\bullet \ar[rr] 				&				& \bullet \ar[ddll, dotted, dash] \ar[ddrr, dotted, dash] 	&& \bullet \ar[ll]\\
&&&&\\
\bullet \ar[uu] \ar[rr] \ar[d]	&				& \bullet \ar[uu] 										&& \bullet \ar[ll] \ar[uu] \\
\bullet						& \bullet	\ar[l]	&													&&
\end{tikzcd}
\tab\item 
\begin{tikzcd} [column sep=tiny, row sep=tiny]
\bullet \ar[d]			& \bullet \ar[l]	&							&& \\
\bullet	 			&				& \bullet \ar[ddll] \ar[ddrr] 	&& \bullet \\
&&&&\\
\bullet \ar[uu] \ar[rr] 	&				& \bullet \ar[uu, dotted, dash] 	&& \bullet \ar[ll] \ar[uu] \\
\end{tikzcd}
\tab\item 
\begin{tikzcd} [column sep=tiny, row sep=tiny]
\bullet \ar[d]			& \bullet \ar[dl]	&							&& \\
\bullet	 			&				& \bullet \ar[ddll] \ar[ddrr] 	&& \bullet \\
&&&&\\
\bullet \ar[uu] \ar[rr] 	&				& \bullet \ar[uu, dotted, dash] 	&& \bullet \ar[ll] \ar[uu] \\
\end{tikzcd}
\tab\item 
\begin{tikzcd} [column sep=tiny, row sep=tiny]
\bullet \ar[d] \ar[r]	& \bullet 	&							&& \\
\bullet	 			&			& \bullet \ar[ddll] \ar[ddrr] 	&& \bullet \\
&&&&\\
\bullet \ar[uu] \ar[rr] 	&			& \bullet \ar[uu, dotted, dash] 	&& \bullet \ar[ll] \ar[uu] \\
\end{tikzcd}
\tab\item 
\begin{tikzcd} [column sep=tiny, row sep=tiny] 
\bullet \ar[dd] &&\bullet \ar[ddll] &&\bullet \ar[d] \\
&&&&\bullet \ar[d] \\
\bullet \ar[rrrr, dotted, dash] &&&&\bullet \ar[d] \ar[uull]\\
\bullet \ar[u] &&&&\bullet \ar[llll]\\
\end{tikzcd}
\tab\item 
\begin{tikzcd} [column sep=tiny, row sep=tiny] 
\bullet \ar[d] &&\bullet \ar[ddll] &&\\
\bullet &&&&\bullet \ar[d] \\
\bullet \ar[u] \ar[rrrr, dotted, dash] &&&&\bullet \ar[dd]  \ar[uull]\\
&&&&\\
\bullet \ar[uu] &&&&\bullet \ar[llll] \\
\end{tikzcd}
\tab\item 
\begin{tikzcd} [column sep=tiny, row sep=tiny] 
&&\bullet \ar[ddll] &\bullet &\bullet \ar[d] \ar[l] \\
&&&&\bullet \ar[d] \\
\bullet \ar[rrrr, dotted, dash] &&&&\bullet \ar[d] \ar[uull]\\
\bullet \ar[u] &&&&\bullet \ar[llll]\\
\end{tikzcd}
\tab\item  
\begin{tikzcd} [column sep=tiny, row sep=tiny]  
\bullet 				&							& \bullet \ar[dl] 	&\\
\bullet \ar[dd] \ar[u] 	& \bullet \ar[rr, dotted, dash] 	&				& \bullet \ar[dd] \ar[ul]\\
					&							&				&\\
\bullet \ar[r]			& \bullet \ar[uu] 				&				& \bullet \ar[ll] \\
\end{tikzcd}
\tab\item 
\begin{tikzcd} [column sep=tiny, row sep=tiny]  
				&							& \bullet \ar[dl] 		&							&\\
\bullet \ar[dd] 	& \bullet \ar[rr, dotted, dash] 	&					& \bullet \ar[dd] \ar[ul] \ar[r] 	& \bullet \\
				&							&					&							&\\
\bullet \ar[r]		& \bullet \ar[uu] 				&					& \bullet \ar[ll] 				&\\
\end{tikzcd}
\tab\item 
\begin{tikzcd} [column sep=tiny, row sep=tiny]  
				&							& \bullet \ar[d] 	&\\
				&							& \bullet			&\\
\bullet \ar[dd]	& \bullet \ar[l] \ar[rr] 			&				& \bullet \ar[dd] \ar[ul]\\
				&							&				&\\
\bullet \ar[r]		& \bullet \ar[uu, dotted, dash] 	&				& \bullet \ar[ll] \\
\end{tikzcd}
\tab\item 
\begin{tikzcd} [column sep=tiny, row sep=tiny]  
\bullet \ar[rr]	 	&							& \bullet \ar[dl, dotted, dash] 	&				& \bullet \ar[dl] \\
				& \bullet \ar[ul] \ar[d]	\ar[dl] 	&							& \bullet \ar[ul] 	& \\
\bullet 			& \bullet \ar[rr] 				&							& \bullet \ar[u] 	&\\
\end{tikzcd}
\tab\item 
\begin{tikzcd} [column sep=tiny, row sep=tiny]  
\bullet \ar[d]		&&						&&\\
\bullet 			&& \bullet \ar[d] \ar[rr] 	&& \bullet \ar[dd] \\
				&& \bullet \ar[d] 			&&\\
\bullet \ar[uu]	&& \bullet \ar[ll] \ar[rr] 	&& \bullet \ar[uull, dotted, dash]\\
\end{tikzcd}
\tab\item 
\begin{tikzcd}  [column sep=tiny, row sep=tiny] 
&&\bullet \ar[d] &&\\
\bullet \ar[rr] &&\bullet \ar[dd, dotted, dash] &&\bullet \ar[dd] \\
&&&&\\
\bullet \ar[uu]	&&\bullet \ar[ll] \ar[rr] &&\bullet \ar[uull]\\
&&\bullet \ar[u] &&
\end{tikzcd}
\tab\item  
\begin{tikzcd}  [column sep=tiny, row sep=tiny]
				&& \bullet 					&&\\
\bullet 			&& \bullet \ar[u] \ar[d] \ar[rr] 	&& \bullet \ar[dd] \\
				&& \bullet \ar[d] 				&&\\
\bullet \ar[uu]	&& \bullet \ar[ll] \ar[rr] 		&& \bullet \ar[uull, dotted, dash]\\
\end{tikzcd}
\tab\item 
\begin{tikzcd}  [column sep=tiny, row sep=tiny]
&&&&\bullet \\
\bullet \ar[rr] &&\bullet \ar[dd, dotted, dash] &&\bullet \ar[u] \ar[dd] \\
&&&&\\
\bullet \ar[uu]	&&\bullet \ar[ll] \ar[rr] &&\bullet \ar[uull]\\
&&\bullet \ar[u] &&
\end{tikzcd}
\tab\item 
\begin{tikzcd} [column sep=tiny, row sep=tiny]  
\bullet \ar[d]			&					&  							&& \\
\bullet  				&					& \bullet \ar[dl] \ar[rr] 		&& \bullet \ar[dd] \\
					& \bullet  \ar[dl] 		&							&&\\
\bullet \ar[uu] \ar[rr] 	&					& \bullet \ar[uu, dotted, dash] 	&& \bullet \ar[ll]\\
\end{tikzcd}
\tab\item  
\begin{tikzcd} [column sep=tiny, row sep=tiny]  
&&&&&\bullet  \\
&\bullet \ar[rr] &&\bullet \ar[dldl, dotted, dash] &&\bullet \ar[dd] \ar[u]\\
&&&&&\\
\bullet \ar[r] &\bullet \ar[uu] \ar[rr] &&\bullet \ar[uu] &&\bullet \ar[ll]\\
\end{tikzcd}
\tab\item 
\begin{tikzcd} [column sep=tiny, row sep=tiny]  
&&\bullet \ar[dr] &&\\
\bullet \ar[r] &\bullet \ar[ur] \ar[dd] &&\bullet \ar[ll, dotted, dash] &\\
&&&&\bullet \\
&\bullet \ar[rr] &&\bullet \ar[uu] &\bullet \ar[l] \ar[u] \\
\end{tikzcd}
\tab\item 
\begin{tikzcd} [column sep=tiny, row sep=tiny]  
\bullet &&\bullet \ar[dr] &&\\
\bullet \ar[u] \ar[r] &\bullet \ar[ur] \ar[dd] &&\bullet \ar[ll, dotted, dash] &\\
&&&&\\
&\bullet \ar[rr] &&\bullet \ar[uu] &\bullet \ar[l] \\
\end{tikzcd}
\tab\item  
\begin{tikzcd} [column sep=tiny, row sep=tiny]  
&\bullet \ar[rr] &&\bullet \ar[dldl, dotted, dash] &&\bullet \ar[ll] \\
\bullet \ar[d] &&&&&\\
\bullet \ar[r] &\bullet \ar[uu] \ar[rr] &&\bullet \ar[uu] &&\bullet \ar[ll]\\
\end{tikzcd}
\tab\item 
\begin{tikzcd} [column sep=tiny, row sep=tiny]  
						&				&				& \bullet 					&\\
\bullet 		 			&				&				& \bullet \ar[dl] \ar[r] \ar[u] 	& \bullet \ar[dddl] \\
						&				& \bullet \ar[dl] 	&							&\\
						& \bullet  \ar[dl] 	&				&							&\\
\bullet \ar[rrr] \ar[uuu] 	&				&				& \bullet \ar[uuu, dotted, dash]	&\\
\end{tikzcd}
\tab\item 
\begin{tikzcd} [column sep=tiny, row sep=tiny]  
\bullet &\bullet \ar[rr] &&\bullet \ar[dldl, dotted, dash] && \\
\bullet \ar[d] \ar[u] &&&&&\\
\bullet \ar[r] &\bullet \ar[uu] \ar[rr] &&\bullet \ar[uu] &&\bullet \ar[ll]\\
\end{tikzcd}
\tab\item 
\begin{tikzcd} [column sep=tiny, row sep=tiny] 
					&			&							&& \bullet \\
\bullet \ar[rr] 		&			& \bullet \ar[dd, dotted, dash] 	&& \bullet \ar[dd] \ar[u]\\
					&			&							&&\\
\bullet \ar[uu] \ar[dr] 	&			& \bullet \ar[ll]  \ar[rr] 		&& \bullet \ar[uull] \\
					& \bullet 	&							&&\\
\end{tikzcd}
\tab\item 
\begin{tikzcd} [column sep=tiny, row sep=tiny]  
\bullet				&& \bullet \ar[rr] \ar[ll] 		&& \bullet \ar[d]\\
\bullet \ar[rr] \ar[dd]	&& \bullet \ar[dd, dotted, dash]&& \bullet \ar[ll] \\
					&&							&&\\
\bullet 				&& \bullet \ar[uull]  \ar[uurr] 	&&  \\
\end{tikzcd}
\tab\item 
\begin{tikzcd} [column sep=tiny, row sep=tiny]
				&				&							&& \bullet \ar[d] \\
\bullet \ar[dr] 	&				& \bullet \ar[ll] \ar[ddrr] 		&& \bullet \\
				& \bullet \ar[dr] 	&							&& \\
\bullet \ar[uu] 	&				& \bullet \ar[uu, dotted, dash] 	&& \bullet \ar[uu] \ar[ll]	 	
\end{tikzcd}
\tab\item 
\begin{tikzcd} [column sep=tiny, row sep=tiny]  
&&\bullet \ar[ddl] &\bullet \ar[l] &\\
&&&&\\
\bullet &\bullet \ar[l] \ar[rr, dotted, dash] &&\bullet \ar[dd] \ar[uul] \ar[r] &\bullet \\
&&&&\\
&\bullet \ar[uu] &&\bullet \ar[ll] &
\end{tikzcd}
\tab\item 
\begin{tikzcd} [column sep=tiny, row sep=tiny]  
&&\bullet \ar[r] \ar[ddr] &\bullet &\bullet \ar[l]\\
&&&&\\
\bullet \ar[r] &\bullet \ar[dd] \ar[uur] &&\bullet \ar[ll, dotted, dash] &\\
&&&&\\
&\bullet \ar[rr] &&\bullet \ar[uu] &\\
\end{tikzcd}
\tab\item 
\begin{tikzcd} [column sep=tiny, row sep=tiny] 
&\bullet \ar[dd] &&\bullet \ar[ddll, dotted, dash] \ar[dd] \ar[ll] &\bullet \\
&&&&\bullet \ar[u] \ar[d] \\
\bullet &\bullet \ar[l] &&\bullet \ar[ll] &\bullet \ar[l] \\
\end{tikzcd}
\tab\item  
\begin{tikzcd} [column sep=tiny, row sep=tiny]
\bullet \ar[d] &&&&\\
\bullet \ar[drdr, dotted, dash] && \bullet \ar[ll] \ar[drdr, dotted, dash] &&\bullet \ar[ll] \\
&&&&\\
\bullet \ar[uu] &&\bullet \ar[uu] \ar[ll] &\bullet \ar[l] &\bullet \ar[l] \ar[uu]
\end{tikzcd}
\tab\item 
\begin{tikzcd} [column sep=tiny, row sep=tiny]
& \bullet \ar[drdr, dotted, dash] &&\bullet \ar[ll] &\\
&&&&\bullet \\
\bullet &\bullet \ar[l] \ar[uu] &\bullet \ar[l] &\bullet \ar[l] \ar[uu] &\bullet \ar[l] \ar[u]
\end{tikzcd}
\tab\item 
\begin{tikzcd} [column sep=tiny, row sep=tiny]
& \bullet \ar[drdr, dotted, dash] &&\bullet \ar[ll] &\\
\bullet \ar[d] &&&&\\
\bullet &\bullet \ar[l] \ar[uu] &\bullet \ar[l] &\bullet \ar[l] \ar[uu] &\bullet \ar[l]
\end{tikzcd}
\tab\item 
\begin{tikzcd} [column sep=tiny, row sep=tiny]
\bullet \ar[drdr, dotted, dash] && \bullet \ar[ll] \ar[drdr, dotted, dash] &&\bullet \ar[ll] \\
&&&&\\
\bullet \ar[uu] &&\bullet \ar[uu] \ar[ll] &\bullet \ar[l] &\bullet \ar[l] \ar[uu] \ar[d] \\
&&&&\bullet \\
\end{tikzcd}
\tab\item 
\begin{tikzcd} [column sep=tiny, row sep=tiny] 
\bullet &&\bullet \ar[dr] &&\\
&\bullet \ar[ul] \ar[drdr, dotted, dash] &&\bullet \ar[ll] &\\
\bullet \ar[uu]	&&&&\\
&\bullet \ar[uu] &&\bullet \ar[ll] \ar[uu] &\bullet \ar[l] \\
\end{tikzcd}
\tab\item 
\begin{tikzcd} [column sep=tiny, row sep=tiny] 
\bullet &&\bullet \ar[dr] &&\\
&\bullet \ar[ul] \ar[drdr, dotted, dash] &&\bullet \ar[ll] &\\
&&&&\bullet \\
&\bullet \ar[uu] &&\bullet \ar[ll] \ar[uu] &\bullet \ar[l] \ar[u]	\\
\end{tikzcd}
\tab\item  
\begin{tikzcd} [column sep=tiny, row sep=tiny] 
\bullet &\bullet \ar[rr] \ar[dd] \ar[l] &&\bullet \ar[r] \ar[dd] &\bullet \ar[dd] &\\
&&&&&\\
&\bullet \ar[rr] &&\bullet \ar[r] \ar[ulul, dotted, dash] &\bullet \ar[ulu, dotted, dash] \ar[r] &\bullet \\
\end{tikzcd}
\tab\item 
\begin{tikzcd} [column sep=tiny, row sep=tiny] 
\bullet \ar[d] &\bullet \ar[dd] &&\bullet \ar[ddll, dotted, dash] \ar[dd] \ar[ll] & \\
\bullet &&&&\\
\bullet \ar[u] &\bullet \ar[l] &&\bullet \ar[ll] &\bullet \ar[l] \\
\end{tikzcd}
\tab\item 
\begin{tikzcd} [column sep=tiny, row sep=tiny] 
&\bullet \ar[dd] &&\bullet \ar[ddll, dotted, dash] \ar[dd] \ar[r] \ar[ll] &\bullet \\
\bullet &&&&\\
\bullet \ar[u] &\bullet \ar[l] &&\bullet \ar[ll] &\bullet \ar[l] \\
\end{tikzcd}
\tab\item  
\begin{tikzcd} [column sep=tiny, row sep=tiny] 
&\bullet \ar[rr] \ar[dd] &&\bullet \ar[r] \ar[dd] &\bullet \ar[dd] &\\
&&&&&\bullet \ar[d] \\
&\bullet \ar[rr] &&\bullet \ar[r] \ar[ulul, dotted, dash] &\bullet \ar[ulu, dotted, dash] \ar[r] &\bullet \\
\end{tikzcd}
\tab\item 
\begin{tikzcd} [column sep=tiny, row sep=tiny]  
					& \bullet \ar[r] 	& \bullet 					&& \bullet \ar[ll] \\
					&				&							&&\\
\bullet 				&				& \bullet \ar[ddll] \ar[rr] 		&& \bullet \ar[ddll] \ar[uu] \\
					&				&							&& \\
\bullet \ar[uu] \ar[rr] 	&				& \bullet \ar[uu, dotted, dash] 	&& \\
\end{tikzcd}
\tab\item 
\begin{tikzcd} [column sep=tiny, row sep=tiny]
							&& \bullet \ar[dd] 									&& \bullet \ar[ll]\\
							&&													&&\\
\bullet \ar[rr] 				&& \bullet \ar[ddll, dotted, dash] \ar[rr, dotted, dash] 	&& \bullet \ar[ddll] \ar[uu] \\
							&&													&& \\
\bullet \ar[uu] \ar[d] \ar[rr] 	&& \bullet \ar[uu] 									&& \\
\bullet   						&&													&& \\ 
\end{tikzcd}
\tab\item  
\begin{tikzcd} [column sep=tiny, row sep=tiny]
					&&								&& \bullet \\
\bullet \ar[d]  		&&								&&\\
\bullet 				&& \bullet \ar[ddll] \ar[rr] 			&& \bullet \ar[dd] \ar[uu] \\
					&&								&& \\
\bullet \ar[uu] \ar[rr] 	&& \bullet \ar[uu, dotted, dash] 	&& \bullet \ar[ll] \\
\end{tikzcd}
\tab\item 
\begin{tikzcd} [column sep=tiny, row sep=tiny]
							&&												&& \bullet \ar[ddll]\\
							&&												&&\\
\bullet \ar[rr]  				&& \bullet \ar[ddll, dotted, dash] \ar[rr, dotted, dash] && \bullet \ar[dd] \ar[uu] \\
							&&												&& \\
\bullet \ar[uu] \ar[d] \ar[rr] 	&& \bullet \ar[uu] 								&& \bullet \ar[ll] \\
\bullet 						&&												&&\\
\end{tikzcd}
\tab\item  
\begin{tikzcd} [column sep=tiny, row sep=tiny]
					&&												& \bullet \ar[ddl] 				&\\
					&&												&							&\\
\bullet \ar[rr]  		&& \bullet \ar[ddll, dotted, dash] \ar[r, dotted, dash] 	& \bullet \ar[dd] \ar[uu] \ar[r] 	& \bullet \\
					&&												&							& \\
\bullet \ar[uu] \ar[rr] 	&& \bullet \ar[uu] 								& \bullet \ar[l] 				& \\
\end{tikzcd}
\tab\item  
\begin{tikzcd} [column sep=tiny, row sep=tiny]  
					&				&							&& \bullet \ar[d]\\
\bullet 				&				& \bullet \ar[dl] \ar[ddrr] 		&& \bullet \\
					& \bullet  \ar[dl] 	&							&&\\
\bullet \ar[uu] \ar[rr] 	&				& \bullet \ar[uu, dotted, dash] 	&& \bullet \ar[ll] \ar[uu]\\
\end{tikzcd}
\tab\item 
\begin{tikzcd} [column sep=tiny, row sep=tiny]
\bullet \ar[rr] &&\bullet \ar[ddll, dotted, dash] \ar[ddrr, dotted, dash] &&\bullet \ar[ll]\\
&&&&\\
\bullet \ar[uu] \ar[rr] &&\bullet \ar[uu] && \bullet \ar[ll] \ar[uu] \\
\bullet \ar[u] \ar[r] &\bullet &&&
\end{tikzcd}
\tab\item 
\begin{tikzcd} [column sep=tiny, row sep=tiny]  
					&& 												& \bullet \ar[ddl] 				&\\
					&&												&							&\\
\bullet \ar[dd] \ar[rr] 	&& \bullet \ar[dd, dotted, dash] \ar[r, dotted, dash] 	& \bullet \ar[dd]  \ar[r] \ar[uu] & \bullet \\
					&&												&							&\\
\bullet 				&& \bullet \ar[uull] \ar[r] 							& \bullet \ar[uul] 				&\\
\end{tikzcd}
\tab\item 
\begin{tikzcd} [column sep=tiny, row sep=tiny] 
					&& 								& \bullet \ar[dd] \\
					&&								&\\
\bullet \ar[ddrr] 		&& \bullet \ar[ll] \ar[uur] \ar[ddr] 	& \bullet \ar[l, dotted, dash] \\
					&&								&\\
\bullet \ar[uu] \ar[d] 	&& \bullet \ar[uu, dotted, dash]	& \bullet \ar[l] \ar[uu]\\
\bullet  				&&								&\\
\end{tikzcd}
\tab\item 
\begin{tikzcd} [column sep=tiny, row sep=tiny] 
&\bullet \ar[rr] \ar[dd] &&\bullet \ar[r] \ar[dd] &\bullet \ar[dd] &\\
\bullet \ar[d] &&&&& \\
\bullet &\bullet \ar[l] \ar[rr] &&\bullet \ar[r] \ar[ulul, dotted, dash] &\bullet \ar[ulu, dotted, dash] & \\
\end{tikzcd}
\tab\item  
\begin{tikzcd} [column sep=tiny, row sep=tiny] 
\bullet \ar[r] &\bullet &&\\
&\bullet \ar[u] &\bullet \ar[l] &\\
\bullet \ar[uru, dotted, dash] \ar[uu] \ar[r] &\bullet \ar[u] &\bullet \ar[l] \ar[u] \ar[r] \ar[ul, dotted, dash] &\bullet 
\end{tikzcd}
\tab\item  
\begin{tikzcd} [column sep=tiny, row sep=tiny] 
&\bullet \ar[rr] \ar[dd] &&\bullet \ar[r] \ar[dd] &\bullet \ar[dd] \ar[r] &\bullet \\
&&&&& \bullet \ar[u] \\
&\bullet \ar[rr] &&\bullet \ar[r] \ar[ulul, dotted, dash] &\bullet \ar[ulu, dotted, dash] & \\
\end{tikzcd}
\tab\item 
\begin{tikzcd} [column sep=tiny, row sep=tiny]  
							& \bullet \ar[d] 	&							&&\\
							& \bullet 		&							&&\\
\bullet \ar[dd] \ar[ur] 			&				& \bullet \ar[ll] \ar[dd] \ar[rr] 	&& \bullet \ar[dd]  \\
							&				&							&& \\
\bullet \ar[uurr, dotted, dash] 	&				& \bullet \ar[ll] \ar[rr] 		&& \bullet \ar[uull, dotted, dash]\\
\end{tikzcd}
\tab\item  
\begin{tikzcd} [column sep=tiny, row sep=tiny]
			&							& \bullet \ar[dl] 					&&\\
\bullet \ar[r] 	& \bullet \ar[r, dotted, dash] 	& \bullet \ar[u] \ar[ddl] \ar[ddrr] 	&& \bullet \\
			&							&								&&\\
			& \bullet \ar[uu] \ar[r] 		& \bullet \ar[uu, dotted, dash] 		&& \bullet \ar[ll] \ar[uu] \\
\end{tikzcd}
\tab\item 
\begin{tikzcd} [column sep=tiny, row sep=tiny] 
\bullet &\bullet \ar[l] \ar[rr] \ar[dd] &&\bullet \ar[r] \ar[dd] &\bullet \ar[dd] \ar[r] &\bullet \\
&&&&& \\
&\bullet \ar[rr] &&\bullet \ar[r] \ar[ulul, dotted, dash] &\bullet \ar[ulu, dotted, dash] & \\
\end{tikzcd}
\tab\item 
\begin{tikzcd} [column sep=tiny, row sep=tiny] 
&\bullet \ar[rr] \ar[dd] &&\bullet \ar[r] \ar[dd] &\bullet \ar[dd] & \\
&&&&& \\
\bullet \ar[r] &\bullet \ar[rr] &&\bullet \ar[r] \ar[ulul, dotted, dash] &\bullet \ar[ulu, dotted, dash] &\bullet \ar[l]  \\
\end{tikzcd}
\end{inparaenum}
\end{theorem}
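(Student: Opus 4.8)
The strategy is identical in spirit to the proof of Theorem~\ref{teoe7}, so I would only indicate the differences and the places where extra care is needed. By Theorem~\ref{HRS} together with Theorem~\ref{AHR}, an incidence algebra $K\Delta$ is PHI of type $\mathbb{E}_8$ exactly when its trivial extension $T(K\Delta)$ is representation-finite of Cartan class $\mathbb{E}_8$; and by the proposition of Fern\'andez on cutting sets, every algebra obtained from such a $T$ by a cutting set has the \emph{same} trivial extension $T$, hence is again iterated tilted of type $\mathbb{E}_8$. Therefore the classification of PHI algebras of type $\mathbb{E}_8$ with $\overline{Q}$ not of tree type amounts to: (i)~taking Fern\'andez's complete list of the $251$ representation-finite trivial extensions of Cartan class $\mathbb{E}_8$; (ii)~for each such $T$, running through \emph{all} cutting sets $\Sigma$ (one arrow per elementary cycle) and, for each, deciding whether $KQ_{T}/\langle I_{T}\cup\Sigma\rangle$ is an incidence algebra, i.e.\ whether after deleting the cut arrows the remaining relations (of type~$2$ and type~$3$, the type~$1$ relations being killed by the cut as noted after Theorem~\ref{relacao}) reduce to commutativity relations on a quiver with no bypass; and (iii)~discarding those cutting sets for which the resulting algebra is hereditary, since those give $\overline{Q}$ of tree type.

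Concretely, I would carry this out as follows. First, fix for each trivial extension $T$ the presentation $KQ_{T}/I_{T}$ given by Fern\'andez's Theorem~\ref{relacao}; label the arrows as in the description of the program, starting with the arrows appearing in the type~$2$ relations and then the remaining arrows arbitrarily, and record the combinatorial data (number of type~$2$ relations, number of elementary cycles, incidence of arrows in relations and in cycles). Feed this data to the program described in Section~\ref{sec6}, which then outputs precisely the cutting sets $\Sigma$ for which $KQ_{T}/\langle I_{T}\cup\Sigma\rangle$ is an incidence algebra. By the cutting lemma~\ref{cortephia}, whenever a cut arrow lies on two elementary cycles one obtains a genuine commutativity relation, so the non-hereditary solutions are exactly those cutting sets that use at least one such ``shared'' arrow; the patterns isolated in Example~\ref{corte1} (and the further cutting lemmas \ref{cortenphia}, \ref{corteAn}) explain why the surviving algebras assemble into the finite list displayed in the statement. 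Running over all $251$ trivial extensions and collecting, up to isomorphism and to passing to the opposite algebra $K\Delta^{\mathrm{op}}$, the non-hereditary incidence algebras so obtained, produces exactly the quivers with relations listed above.

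The main obstacle is purely one of bookkeeping and scale rather than of new ideas: there are $251$ trivial extensions, many of them with several elementary cycles and hence with a large number of \emph{a priori} cutting sets, so the verification that the list is \emph{complete} and \emph{non-redundant} (no repetitions, correct identification of an algebra with its opposite, correct recognition that a given $KQ_{T}/\langle I_{T}\cup\Sigma\rangle$ really is an incidence algebra and not merely a schurian algebra with commutative-looking relations) is delicate. This is exactly the point at which one relies on the computer program: it removes the risk of a missed cutting set or a mis-simplified relation, and the hand computation of Section~\ref{sec7} was cross-checked against it. A secondary subtlety, already visible in the $\mathbb{E}_7$ case, is that a single trivial extension may yield several non-isomorphic incidence algebras via different cutting sets, and conversely two different trivial extensions never share an incidence algebra among their cuts (again by Fern\'andez's proposition), so the output of the program, after grouping by isomorphism type, is guaranteed to be the exhaustive list; I would present one representative trivial extension in detail, as in the proof of Theorem~\ref{teoe7}, and assert that all remaining cases follow the same pattern.
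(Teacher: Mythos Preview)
Your proposal is correct and follows essentially the same approach as the paper: the paper's proof also reduces the problem, via Theorems~\ref{HRS} and~\ref{AHR} and Fern\'andez's cutting-set machinery, to running through the $251$ representation-finite trivial extensions of Cartan class $\mathbb{E}_8$ from \cite{fern}, feeding their type~$2$ relations and elementary cycles into the computer program, and listing the non-hereditary incidence algebras that arise; it likewise presents one worked example in detail and asserts the remaining cases are analogous.
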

\begin{proof} \label{provae8}
The scheme of the demonstration consists  in analyze each representation-finite trivial extension of Cartan class $\mathbb{E}_8$ \cite{fern}, displaying its relations of type $2$ and its elementary cycles. With this information, we apply in our computer program and we have the cutting sets that define the incidence algebras. Thus, we show in the list of our theorem only the non-hereditary incidence algebras. We will show this procedure to a trivial extension and the remaining work is analogous.

The following is a particular example:

\begin{tikzcd} [column sep=small, row sep=small]  
&\bullet \ar{ddl}[swap]{\alpha_4} &\bullet \ar{l}[swap]{\alpha_9} &\\
&&&\\
\bullet \ar{d}[swap]{\alpha_2} \ar{r}{\alpha_8} &\bullet \ar{r}{\alpha_1} \ar{uur}[near start]{\alpha_{10}} &\bullet \ar{r}{\alpha_5} &\bullet \ar{d}{\alpha_{11}} \ar{uull}[swap]{\alpha_7}\\
\bullet \ar{rrr}[swap]{\alpha_6} &&&\bullet \ar{ulll}{\alpha_3}\\
\end{tikzcd}\hfill%
\begin{minipage}{.45\linewidth}
The relations of type $2$ are: $r1=\alpha_3 \alpha_8 \alpha_1 \alpha_5 \alpha_7$, $r2= \alpha_4 \alpha_8 \alpha_1 \alpha_5 \alpha_{11}$, $r3= \alpha_9 \alpha_4 \alpha_8 \alpha_1$, $r4= \alpha_7 \alpha_4 \alpha_8 \alpha_{10}$, $r5= \alpha_3 \alpha_8 \alpha_{10}$, $r6= \alpha_{11} \alpha_3 \alpha_2$, $r7= \alpha_6 \alpha_3 \alpha_8$ and $r8= \alpha_4 \alpha_2$.

The elementary cycles are: $C_1 = \alpha_8 \alpha_1 \alpha_5 \alpha_7 \alpha_4$, $C_2 = \alpha_8 \alpha_1 \alpha_5 \alpha_{11} \alpha_3$, $C_3 = \alpha_8 \alpha_{10} \alpha_9 \alpha_4$ and $C_4 = \alpha_3 \alpha_2 \alpha_6$. 

Therefore, the program shows us the cutting set that defines the solutions $76$ and $77$ of the theorem. 
\end{minipage}
\vspace{.5cm}
\end{proof}

\section{Type $\widetilde{\mathbb{D}}_n$, type $\widetilde{\mathbb{E}}_6$, type $\widetilde{\mathbb{E}}_7$, type $\widetilde{\mathbb{E}}_8$} \label{sectame}
The PHI algebras of extended Dynkin type are derived equivalent to the category of coherent sheaves of certain weighted projective line. 
This category is derived equivalent to the category of modules over canonical algebras, see \cite{hub}, \cite{gei-len}, \cite{rin}. 

We do not have a complete description of the PHI algebras of extended Dynkin type as in the Dynkin type, but we have been able to identify some families.
One of them is the PHI concealed algebras of extended Dynkin type.
We can identify through the works of Happel-Vossieck \cite{hap-vos} and Bongartz \cite{bon}.

The other family of PHI algebras of extended Dynkin type are the members of a new set which we call the \emph{ANS family}, in reference to Assem, Nehring and Skowro\'nski.

The algebras which belong to this family are the algebras which are obtained by a cutting of the trivial extensions of concealed algebras of type $\overline{Q}$ where  $\overline{Q}=\widetilde{\mathbb{D}_n}$ for some $n$, or $\overline{Q}=\widetilde{\mathbb{E}_p}$ for some $p$.

In the Tsukuba Journal of Mathematics \cite{ass-neh-sko}, Assem, Nehring and Skowro\'nski published the work entitled ``Domestic trivial extensions of simply connected algebras''. The main result of this article has an important role in the characterization of the ANS family.

\begin{theorem} [Assem-Nehring-Skowro\'nski] \label{ANS}
Let $A$ be a finite-dimensional, basic and connected algebra over an algebraically closed field $K$. If $A$ is simply connected, then the following conditions are equivalent:
\begin{enumerate}
\item $A$ is an iterated tilted algebra of Euclidean type $\overline{Q}$, where  $\overline{Q}=\widetilde{\mathbb{D}_n}$ for some $n$, or $\overline{Q}=\widetilde{\mathbb{E}_p}$ for some $p$.
\item There exists a representation-infinite tilted algebra $B$ of the  type $\overline{Q}$ such that $T(A) \cong T(B)$.
\end{enumerate}
\end{theorem}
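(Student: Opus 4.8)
The plan is to recast everything in terms of the repetitive algebra $\widehat{A}$ and its Galois covering of $T(A)$. Recall the following standard facts, which I would quote: for a finite dimensional algebra $A$ with $\gl A<\infty$ one has $T(A)\cong\widehat{A}/\langle\varphi\rangle$, where $\varphi$ shifts the grading of $\widehat A$ by one; the natural map $\widehat{A}\to T(A)$ is a $\mathbb{Z}$--Galois covering; by Happel's theorem $D^b(A)$ is triangle equivalent to the stable module category $\underline{\md}\,\widehat{A}$; by Hughes--Waschb\"usch, if $B$ is a tilted algebra of type $\overline{Q}$ and $H=K\Delta$ is a hereditary algebra of type $\overline{Q}$, then $T(B)\cong T(H)$; and the repetitive algebra of a simply connected algebra is again simply connected as a locally bounded $K$--category. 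Note also that a simply connected algebra is triangular, hence of finite global dimension, that an iterated tilted algebra is piecewise hereditary and so has finite global dimension, and that the diagrams $\widetilde{\mathbb{D}}_n,\widetilde{\mathbb{E}}_p$ are trees, so any orientation of them is acyclic and Theorem \ref{HRS} applies.

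For the implication $(1)\Rightarrow(2)$ I would take $B=H=K\Delta$ with $\Delta$ any orientation of $\overline{Q}$. This $H$ is hereditary, is tilted of type $\overline{Q}$, and, $\overline{Q}$ being Euclidean, is tame and representation--infinite, so it is an admissible choice; it remains to prove $T(A)\cong T(H)$. Fix a chain $A=A_0,A_1,\dots,A_n=H$ realising $A$ as iterated tilted of type $\overline{Q}$, with $A_{i+1}=\End(T^i_{A_i})$ for tilting modules $T^i$; each $A_i$ is piecewise hereditary, hence of finite global dimension, and applying the tilting invariance of the \emph{graded} repetitive algebra (the mechanism underlying the proof of Theorem \ref{AHR}) step by step yields an isomorphism $\widehat{A}\cong\widehat{H}$ commuting with the grading shifts. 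Passing to orbit algebras gives $T(A)\cong T(H)$, as required.

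For $(2)\Rightarrow(1)$, let $B$ be a representation--infinite tilted algebra of type $\overline{Q}$ with $T(A)\cong T(B)$, and let $H=K\Delta$ be a hereditary algebra of type $\overline{Q}$; by Hughes--Waschb\"usch $T(B)\cong T(H)$, so $T(A)\cong T(H)$. Since $A$ is simply connected, $\widehat{A}$ is simply connected, so the $\mathbb{Z}$--Galois covering $\widehat{A}\to T(A)$ is the universal cover of $T(A)$; likewise $\widehat{H}$ is simply connected, $H$ being the path algebra of a tree, so $\widehat{H}\to T(H)$ is universal. Uniqueness of the universal cover forces $\widehat{A}\cong\widehat{H}$, whence $D^b(A)\simeq\underline{\md}\,\widehat{A}\cong\underline{\md}\,\widehat{H}\simeq D^b(H)$ by Happel's theorem. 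As $H$ is the path algebra of an orientation of $\overline{Q}$, this says $A$ is piecewise hereditary of type $\overline{Q}$, and since $\overline{Q}$ is acyclic, Theorem \ref{HRS} gives that $A$ is iterated tilted of type $\overline{Q}$, i.e.\ $(1)$ holds.

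The main obstacle is the step in $(2)\Rightarrow(1)$ that upgrades $T(A)\cong T(H)$ to an isomorphism $\widehat{A}\cong\widehat{H}$ of repetitive algebras: it relies on covering--theoretic rigidity, namely that $\widehat{(-)}$ computes the \emph{universal} Galois cover, and this is exactly where the simple connectedness hypothesis on $A$ is indispensable. It is also the reason the Euclidean type $\widetilde{\mathbb{A}}_n$ must be excluded, since tilted algebras of that type contain oriented cycles and fail to be simply connected. A secondary technical point, needed for $(1)\Rightarrow(2)$, is to check that tilting a finite dimensional algebra of finite global dimension preserves not merely the stable category of its repetitive algebra but the repetitive algebra itself together with its grading --- equivalently, its trivial extension; granting this, the remainder of the argument is a bookkeeping of the reductions above together with Theorem \ref{HRS}.
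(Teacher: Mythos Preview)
The paper does not contain a proof of Theorem~\ref{ANS}: it is quoted as a result of Assem, Nehring and Skowro\'nski from \cite{ass-neh-sko}, and the paper's own contribution is the subsequent Proposition removing the simple-connectedness hypothesis (whose short proof treats Theorem~\ref{ANS} as a black box together with the fact, from \cite{mar-mor}, that iterated tilted algebras of tree type are simply connected). So there is no in-paper argument to compare your proposal against.

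That said, your outline is the standard repetitive-algebra/covering-theory route to statements of this shape, and the direction $(1)\Rightarrow(2)$ is fine as written: the Hughes--Waschb\"usch / Tachikawa--Wakamatsu invariance of the trivial extension under tilting, applied along the chain $A=A_0,\dots,A_n=H$, gives $T(A)\cong T(H)$, and $B=H$ is indeed representation-infinite tilted of type $\overline{Q}$.

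The place where you are thin is $(2)\Rightarrow(1)$. The assertion ``$A$ simply connected $\Rightarrow$ $\widehat{A}$ simply connected, hence $\widehat{A}\to T(A)$ is the universal Galois cover'' is the crux, and it is not a tautology: one needs a statement of the form $\pi_1(T(A))\cong\mathbb{Z}\times\pi_1(A)$ (or at least that every Galois covering of $T(A)$ factors through $\widehat{A}$ when $\pi_1(A)$ is trivial), which in the literature is established under standardness/schurian hypotheses and requires some work with the presentation of $T(A)$. You should cite or prove this explicitly rather than assert it. A second, smaller point: to invoke Happel's equivalence $D^b(A)\simeq\underline{\md}\,\widehat{A}$ you need $\gl A<\infty$; you deduce this from ``simply connected $\Rightarrow$ triangular'', which is correct under the Assem--Skowro\'nski definition of simple connectedness used here, but it is worth saying so since other conventions exist. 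Once those two points are pinned down, the remainder (Happel's equivalence plus Theorem~\ref{HRS}) goes through as you wrote.
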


\begin{proposition}
The theorem above is valid without the hypothesis of $A$ simply connected.
\end{proposition}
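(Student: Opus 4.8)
The plan is to reduce the general case to the simply connected one treated in Theorem \ref{ANS}. The key observation is that the property appearing in both conditions — being iterated tilted of Euclidean type $\widetilde{\mathbb{D}}_n$ or $\widetilde{\mathbb{E}}_p$, respectively having trivial extension isomorphic to that of a representation-infinite tilted algebra of such a type — is, by Theorem \ref{AHR}, equivalent to the single condition that $T(A)$ is representation-infinite (domestic, in fact) of Cartan class $\widetilde{\mathbb{D}}_n$ or $\widetilde{\mathbb{E}}_p$. So what we really need is that for \emph{any} finite-dimensional basic connected algebra $A$ with $T(A)$ domestic of such a Cartan class, the equivalence of (1) and (2) still holds, and the only place simple connectedness was used is a place where it can be dispensed with.

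The concrete strategy is the following. First I would recall that a domestic self-injective algebra of Euclidean Cartan class is the trivial extension of a tilted algebra of the corresponding Euclidean type (this is exactly the analogue of Theorem \ref{AHR} in the tame case, due to Assem--Nehring--Skowro\'nski and in the background of the cited literature \cite{ass-neh-sko}); in particular $T(A)$ itself is the trivial extension of some representation-infinite tilted algebra $B$ of type $\overline{Q}$, which already gives the implication needed for condition (2). For the equivalence with (1), one invokes the fundamental correspondence between trivial extensions and iterated tilted algebras: $T(A)$ is domestic of Euclidean Cartan class $\overline{Q}$ if and only if $A$ is iterated tilted of Euclidean type $\overline{Q}$. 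This last equivalence is proved in \cite{ass-neh-sko} under the simple-connectedness hypothesis, but the hypothesis enters only through the use of the covering techniques / the fact that tilted algebras of Euclidean type are "locally" controlled; the direction we need — that a cut of $T(A)$ by a cutting set produces an iterated tilted algebra — is a formal consequence of the Fern\'andez--Platzeck machinery (the Proposition of Fern\'andez quoted in Section \ref{sec6}: $A'$ is defined by a cutting set of $T(A)$ iff $T(A')\cong T(A)$) together with Theorem \ref{AHR}, neither of which assumes simple connectedness.

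So the actual argument reduces to: (i) by Theorem \ref{AHR} and its tame analogue, conditions (1) and (2) are each equivalent to "$T(A)$ is domestic of Cartan class $\overline{Q}$"; (ii) therefore (1) $\Leftrightarrow$ (2) with no further hypothesis. The remaining subtlety, which I expect to be the main obstacle, is justifying step (i) in the tame case without simple connectedness — specifically that every domestic self-injective algebra of Euclidean Cartan class arises as a trivial extension (the tame analogue of Assem--Happel--Rold\'an), and that the tilted algebra $B$ realizing it can be taken representation-infinite. One clean way around this is to use Theorem \ref{HRS}: $A$ iterated tilted of type $Q$ is the same as $A$ piecewise hereditary of type $Q$, a derived-category statement that is manifestly invariant under passing to $A^{op}$ and under the Morita-type manipulations involved, and in particular does not see the fundamental group of $A$. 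Combining this with the fact (Happel's theorem, quoted in Section \ref{sec2}) that the derived category of a connected hereditary category with tilting object is determined by an underlying Euclidean diagram, one gets that (1) holds iff $D^b(A)\cong D^b(K\overline{Q})$ for a Euclidean $\overline{Q}$, which by the trivial-extension dictionary is again the Cartan-class condition on $T(A)$; and condition (2) is, by definition, the same dictionary applied in the other direction. Hence (1) $\Leftrightarrow$ (2), and simple connectedness played no role.
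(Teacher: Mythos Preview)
Your strategy of showing that (1) and (2) are each equivalent to a condition on $T(A)$ alone is reasonable in spirit, but it has a real gap that you yourself flag and do not close. The ``tame analogue of Theorem~\ref{AHR}'' you need --- that $A$ is iterated tilted of Euclidean type if and only if $T(A)$ is domestic of the corresponding type, with no hypothesis on $A$ --- is essentially the content of \cite{ass-neh-sko}, proved there precisely \emph{under} the simple-connectedness assumption; invoking it without that assumption is circular. The derived-category detour through Theorem~\ref{HRS} does not help either: you correctly get (1) $\Leftrightarrow$ $D^b(A)\cong D^b(K\overline{Q})$, but the passage from this to a condition on $T(A)$ is again the missing ``trivial-extension dictionary'', which is exactly what is at stake. (A smaller point: the notion of Cartan class as set up in the paper is defined only for representation-\emph{finite} self-injective algebras, so ``domestic of Cartan class $\overline{Q}$'' is not meaningful in that framework.)

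The paper takes a much shorter route that sidesteps all of this. The observation you are missing is that condition (1) \emph{already forces} $A$ to be simply connected: an iterated tilted algebra of type $\widetilde{\mathbb{D}}_n$ or $\widetilde{\mathbb{E}}_p$ is automatically simply connected --- this is supplied by the citation \cite{mar-mor}. Hence (1)$\Rightarrow$(2) follows immediately from the original Theorem~\ref{ANS}. For (2)$\Rightarrow$(1) the paper observes that the algebra $B$ in (2), being tilted of Euclidean type, is itself simply connected; one then applies Theorem~\ref{ANS} with $B$ in the role of the simply-connected algebra and reads off both that item (1) holds for $A$ and that $A$ is simply connected. The whole argument is two lines once the fact from \cite{mar-mor} is in hand; no structure theory of domestic self-injective algebras and no Cartan-class-type invariant is needed.
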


\begin{proof}
 To show that item 1 implies item 2 we observe that since $A$ is iterated tilted of type $Q$ it is automatically simply connected  \cite{mar-mor} 
 and we get the validity of item 2.
 
 The implication of item 2 to item 1 is done as follows:
 
 The algebra $B$ is tilted of type $Q$ so it is simply connected, change in the item 1 the role of $A$ by $B$. Since $B$ is simply connected the first two items are equivalent, with the simply connected hypothesis, changing $A$ for
 $B$ so the validity of item 2 implies that item 1 is valid for $A$ and also that $A$ is simply connected.
\end{proof}

We will describe the  algebras $B$ which are the members of the ANS family.  We will work with the concealed algebras $A$ of Euclidean type $\widetilde{\mathbb{D}}_m$, $\widetilde{\mathbb{E}}_6$, $\widetilde{\mathbb{E}}_7$ ou $\widetilde{\mathbb{E}}_8$. Therefore, the difficulty is to obtain the PHI algebras $K\Delta$ such that $T(K\Delta) \cong T(A)$.

We will use the list of Happel-Vossieck \cite{hap-vos} to work with all the concealed algebras $A$ of Euclidean type, so the algebras $B$ come from the list of Happel-Vossieck. 

The algebras which we consider are the trivial extension of schurian algebras. Thus, from the list of Happel-Vossieck \cite{hap-vos} we will explore only the schurian algebras. In turn, we observed that the schurian concealed algebras $A$ of Euclidean type fit perfectly in the strategy used in the full description of PHI algebras of Dynkin type. The main difference is that we do not have a list of the trivial extensions of $A$. Therefore, the machinery to find PHI algebra $K\Delta$ will have one more step than that which was done in the previous sections. We will describe the quivers of the trivial extensions of $A$. An important observation is that we will not study the trivial extensions of $A^{op}$ since the trivial extension of $A^{op}$ is isomorphic to $T(A)^{op}$.

\begin{lemma}
Let $A$ be a finite dimensional algebra. We denote the trivial extension of $A$ by $T(A)$. We have $T(A)^{op} \cong T(A^{op})$.
\end{lemma}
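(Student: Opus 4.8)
The plan is to exhibit an explicit isomorphism of $K$-algebras by unwinding the definition of the trivial extension and of the opposite algebra. Recall that $T(A) = A \ltimes D(A)$ where $D(A) = \Hom_K(A,K)$, with underlying vector space $A \times D(A)$ and multiplication $(a,f)(b,g) = (ab, ag + fb)$. Here $D(A)$ carries its canonical $A$-bimodule structure: for $a \in A$ and $f \in D(A)$, the left action is $(af)(x) = f(xa)$ and the right action is $(fb)(x) = f(bx)$, for $x \in A$.

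First I would write down the three algebras in play. The opposite algebra $T(A)^{op}$ has the same underlying space $A \times D(A)$ with product $(a,f) \ast (b,g) = (b,g)(a,f) = (ba, bf + ga)$. On the other side, $T(A^{op}) = A^{op} \ltimes D(A^{op})$ has underlying space $A^{op} \times D(A^{op})$; since $A$ and $A^{op}$ share the same underlying vector space, and $\Hom_K(A^{op},K) = \Hom_K(A,K)$ as vector spaces, the underlying space of $T(A^{op})$ is again $A \times D(A)$. Its multiplication, using the bimodule structure of $D(A^{op})$ over $A^{op}$, reads $(a,f) \cdot (b,g) = (a \cdot_{op} b,\ a \cdot_{op} g + f \cdot_{op} b) = (ba,\ \text{(left $A^{op}$-action of $a$ on $g$)} + \text{(right $A^{op}$-action of $b$ on $f$)})$. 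The key computation is that the left $A^{op}$-action of $a$ on $g \in D(A^{op}) = D(A)$ sends $x \mapsto g(a \cdot_{op} x) = g(xa)$, which is exactly the right $A$-action $ga$ appearing in $T(A)^{op}$; symmetrically the right $A^{op}$-action of $b$ on $f$ is the left $A$-action $bf$.

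Hence the identity map $\mathrm{id}\colon A \times D(A) \to A \times D(A)$, viewed as a map $T(A)^{op} \to T(A^{op})$, is $K$-linear, unital, and — by the bracket of identities just described — multiplicative, so it is the desired algebra isomorphism. I expect no real obstacle here; the only point requiring care is bookkeeping the four bimodule actions and checking that passing to $A^{op}$ swaps the left and right actions on $D(A)$ in precisely the way that matches the swap of factors induced by $(\cdot)^{op}$ on $T(A)$. This is routine once the actions are written out explicitly, and it is the step I would carry out in full detail.
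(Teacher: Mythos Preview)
Your proposal is correct and takes exactly the same approach as the paper: the paper simply declares the map $\Phi\colon T(A)^{op}\to T(A^{op})$, $(a,f)\mapsto (a,f)$, and leaves the verification to the reader, whereas you actually carry out that verification by tracking how the bimodule actions on $D(A)$ swap under passing to $A^{op}$. There is nothing to add.
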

\begin{proof}
We consider the morphism
\begin{equation*}
\begin{split}
\Phi \colon T(A)^{op} & \longrightarrow  T(A^{op}) \\
(a,f) & \longmapsto (a,f)
\end{split}
\end{equation*}

The reader can check that this is an isomorphism of algebras.
\end{proof}

Let $A$ schurian concealed algebra Euclidean type $\widetilde{\mathbb{D}}_m$, $\widetilde{\mathbb{E}}_6$, $\widetilde{\mathbb{E}}_7$ or $\widetilde{\mathbb{E}}_8$, we will do the following:
\begin{itemize}
\item compute the quiver of trivial extension $A$;
\item describe the elementary cycles of $Q_{T(A)}$;
\item describe the relations of type $2$ in presentation of $T(A)$;
\item identifies the arrows in these relations of type $2$;
\item use our computer program;
\item if there is a non-hereditary solution, then we need to verify if the cutting set defines a PHI algebra $K\Delta$ that is not a concealed algebra of the Euclidean type.
\end{itemize}

This will be the script of the proof of the characterization theorems of the PHI algebras of ANS family.

The chapter XIV of the book ``Elements of the Representation of Associative Algebras'' \cite{sim-sko2} gives a list of all frames of the list of Happel-Vossieck \cite{hap-vos}. We will use this list to write the series of results for the PHI algebras of type $\widetilde{\mathbb{D}}_m$, $\widetilde{\mathbb{E}}_6$, $\widetilde{\mathbb{E}}_7$ and $\widetilde{\mathbb{E}}_8$.

We will start the work of describing the PHI algebras of the ANS family of type $\widetilde{\mathbb{D}}_m$. First, we'll start with members of type $\widetilde{\mathbb{D}}_4$.

\begin{theorem} \label{d4til}
The algebras associated with the quivers with relations below:

\NumTabs{2}
\begin{inparaenum}
\noindent 
\item 
\begin{tikzcd} [column sep=small, row sep=small]
				&\bullet \ar[dl] \ar[dr] \ar[d] \ar[dd, dotted, dash, bend right=45] \ar[dd, dotted, dash, bend left=45]&\\
\bullet	\ar[dr]	&\bullet \ar[d]																							&\bullet \ar[dl]\\
				&\bullet 																								&				
\end{tikzcd}
\tab\item 
\begin{tikzcd} [column sep=small, row sep=small]
\bullet	\ar[drr] \ar[d]	\ar[ddr, dotted, dash]	&		&\bullet \ar[dll] \ar[d] \ar[ddl, dotted, dash]\\
\bullet	\ar[dr]									&		&\bullet \ar[dl]\\
												&\bullet&
\end{tikzcd}
\tab\item 
\begin{tikzcd} [column sep=small, row sep=small]
\bullet	\ar[dr] 		&													&\bullet \ar[ll] \ar[dd] \\
						&\bullet \ar[ur, dash, dotted] \ar[dl, dash, dotted]&\\
\bullet	\ar[uu] \ar[rr]	&													&\bullet \ar[ul]  			
\end{tikzcd}
\end{inparaenum}

are PHI algebras of the ANS family of type $\widetilde{\mathbb{D}}_4$.
\end{theorem}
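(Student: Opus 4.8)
The plan is to exhibit each of the three algebras as the algebra defined by a cutting set of the trivial extension of a schurian concealed algebra of type $\widetilde{\mathbb{D}}_4$, following the script set out at the start of this section.

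First I would fix the input algebras. From the Happel--Vossieck list, reproduced as frames in Chapter~XIV of \cite{sim-sko2}, take the schurian tame concealed algebras $A$ of type $\widetilde{\mathbb{D}}_4$; each is a schurian triangular algebra in which parallel paths are equal, so both Fern\'andez's description of the quiver $Q_{T(A)}$ and Theorem~\ref{relacao} apply to it. The hereditary orientations of $\widetilde{\mathbb{D}}_4$ are of no use here: in $T(K\widetilde{\mathbb{D}}_4)$ every arrow lies on exactly one elementary cycle, so by Lemma~\ref{cortephia} every cutting set produces a hereditary algebra. Hence the three non-hereditary incidence algebras of the statement must arise from the non-hereditary members of the list.

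For each relevant $A$ I would then carry out the computation prescribed by the script: choose a maximal set of linearly independent maximal paths of $A$ (by Fern\'andez's theorem these index the new arrows of $Q_{T(A)}$, each running from the terminus to the origin of its path), draw $Q_{T(A)}$, list all of its elementary cycles, and read off the relations of types $1$, $2$ and $3$ from Theorem~\ref{relacao}. Feeding into the computer program the number of type-$2$ relations, the number of elementary cycles, the arrows occurring in the type-$2$ relations, and the incidence table recording which arrows lie in which type-$2$ relations and elementary cycles, one obtains the list of all cutting sets $\Sigma$ for which $KQ_{T(A)}/\langle I_{T(A)}\cup\Sigma\rangle$ is an incidence algebra. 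For each such incidence algebra $K\Delta$, the cutting-set proposition of Fern\'andez gives $T(K\Delta)\cong T(A)$, so $T(K\Delta)$ is the trivial extension of a representation-infinite tilted algebra of type $\widetilde{\mathbb{D}}_4$; by Theorem~\ref{ANS}, in the form of the proposition that removes the simple-connectedness hypothesis, $K\Delta$ is iterated tilted of type $\widetilde{\mathbb{D}}_4$, hence by Theorem~\ref{HRS} it is piecewise hereditary of that type, and being an incidence algebra it is a PHI algebra of the ANS family. Discarding from the program's output the hereditary algebras and, using the Happel--Vossieck list again, those that are themselves concealed of Euclidean type, one finds among the survivors --- up to passing to the opposite algebra and relabelling vertices --- the three algebras $1)$, $2)$ and $3)$ of the statement.

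The step I expect to be the real obstacle is organisational rather than conceptual: correctly enumerating a maximal set of linearly independent maximal paths and, above all, \emph{all} elementary cycles of $T(A)$ for each relevant $A$ --- these proliferate as soon as $A$ has parallel paths, since each pair of parallel paths equal to a given maximal path contributes its own elementary cycle --- and then reliably recognising which of the cut presentations are incidence algebras, which is precisely the phenomenon controlled by Lemma~\ref{cortephia} and the cutting lemmas.
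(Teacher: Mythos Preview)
Your proposal contains a genuine error that would make the argument fail. You claim that the hereditary orientations of $\widetilde{\mathbb{D}}_4$ are of no use because ``in $T(K\widetilde{\mathbb{D}}_4)$ every arrow lies on exactly one elementary cycle,'' and therefore look for the solutions among the non-hereditary concealed algebras. Both parts of this are incorrect. First, the claim about elementary cycles holds only for the pure sink/source orientations of $\widetilde{\mathbb{D}}_4$; for any other orientation the maximal paths of $A$ have length at least two and share the arrow through the central vertex, so the corresponding elementary cycles of $T(A)$ share that arrow. The paper's worked example is precisely such an orientation: three leaves point into the centre and one out, giving three maximal paths all containing the outgoing arrow $\alpha_1$, hence three elementary cycles of $T(A)$ meeting in $\alpha_1$, and the cutting set $\{\alpha_1\}$ yields solution~$1$ via Lemma~\ref{cortephia}.

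Second, and decisively, for type $\widetilde{\mathbb{D}}_4$ the only relevant frame in the Happel--Vossieck list is $\mathcal{F}r2$, the hereditary one; there are no non-hereditary schurian concealed algebras of this type to fall back on. So by discarding the hereditary case you discard everything, and the three algebras of the statement would never be found. The paper's proof runs over the sixteen orientations of $\widetilde{\mathbb{D}}_4$ (up to symmetry), retaining exactly those whose trivial extension has overlapping elementary cycles, and it is from these hereditary inputs that all three solutions emerge.
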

\begin{proof} 
Given an algebra coming from an admissible operation on a frame, see \cite{sim-sko2}, we will use the trivial extension of that algebra and extract the necessary information to use our computer program. We will do this for each admissible operation in  each frame. Since we are only interested in type $\widetilde{\mathbb{D}}_4$, the only frame that satisfies this condition is $\mathcal{F}r2$.

Observe that we are not interested in the algebras whose trivial extensions only have cutting sets that define hereditary PHI algebra. This will be examined more carefully in frame $\mathcal{F}r2$. The adjacent quiver $\widetilde{\mathbb{D}}_4$ has $16$  possibilities of admissible operations $1$, that is, we have $16$ Euclidean graphs $\widetilde{\mathbb{D}}_4$. Now, our goal is to study the trivial extension of each graph and see if there is any cutting set that defines a non-hereditary incidence algebra. For this, the trivial extension must have at least two elementary cycles that have at least one common arrow in accordance with the lemma \ref{cortephia}. 
As an example we will describe here  this procedure to a particular  trivial extension the other cases follows the same pattern.

\begin{tikzcd} 
\bullet	\ar[dr]	&		&\bullet \\
		&\bullet \ar[ur]&\\
\bullet	\ar[ur]	&		&\bullet \ar[ul]					
\end{tikzcd} \hfill
\begin{tikzcd} [column sep=small, row sep=small]
\bullet	\ar{dr}[sloped, swap]{\alpha_3}	&&\bullet \ar{ll}[swap]{\alpha_5} \ar{dd}{\alpha_6} \ar[bend left]{ddll}[near end]{\alpha_7}\\
&\bullet \ar{ur}[sloped]{\alpha_1} &\\
\bullet	\ar{ur}[sloped]{\alpha_2} &&\bullet \ar{ul}[swap, sloped]{\alpha_4}
\end{tikzcd}

By symmetry, there are four quivers in this set. The relations of type $2$ are: $r1=\alpha_3 \alpha_1 \alpha_6$, $r2=\alpha_3 \alpha_1 \alpha_7$, $r3=\alpha_4 \alpha_1 \alpha_7$, $r4=\alpha_4 \alpha_1 \alpha_5$, $r5=\alpha_2 \alpha_1 \alpha_5$ and $r6=\alpha_2 \alpha_1 \alpha_6$. The elementary cycles are: $C_1 = \alpha_1 \alpha_5 \alpha_3$, $C_2 = \alpha_1 \alpha_6 \alpha_4$ and $C_3 = \alpha_1 \alpha_7 \alpha_2$. With this information, the program shows us the solution $1$.
\end{proof}

In general, it is difficult to describe the PHI algebras of type $\widetilde{\mathbb{D}}_m$. Thus, we begin with the type $\widetilde{\mathbb{D}}_4$ and the next type is $\widetilde{\mathbb{D}}_5$.

\begin{theorem} \label{d5til}

The  PHI algebras of the ANS family of type $\widetilde{\mathbb{D}}_5$ are described, below, as quiver and relations.

\NumTabs{2}
\begin{inparaenum} \setcounter{enumi}{3}

\noindent 
\item 
\begin{tikzcd} [column sep=small, row sep=small]
												&\bullet \ar[dl] \ar[dr] \ar[ddr, dotted, dash] \ar[ddl, dotted, dash]	&\\
\bullet	\ar[drr] \ar[d]	\ar[ddr, dotted, dash]	&&\bullet \ar[dll] \ar[d] \ar[ddl, dotted, dash]\\
\bullet	\ar[dr]									&&\bullet \ar[dl]\\
												&\bullet																&
\end{tikzcd}
\tab\item 
\begin{tikzcd} [column sep=small, row sep=small]
\bullet	\ar[drr] \ar[d]	\ar[ddr, dotted, dash]	&&\bullet \ar[dll] \ar[d] \ar[ddl, dotted, dash]\\
\bullet	\ar[dr]									&&\bullet \ar[dl]\\
												&\bullet \ar[d]&\\
												&\bullet&
\end{tikzcd}
\tab\item 
\begin{tikzcd} [column sep=small, row sep=small]
				&\bullet \ar[dl] \ar[dr] \ar[d] \ar[dd, dotted, dash, bend right=45] \ar[ddd, dotted, dash, bend left=25]	&\\
\bullet	\ar[dr]	&\bullet \ar[d]	&\bullet \ar[ddl]\\
				&\bullet \ar[d]	&\\
				&\bullet		&				
\end{tikzcd}
\tab\item  
\begin{tikzcd} [column sep=small, row sep=small]
\bullet	\ar[dr]	&														&\bullet\\
				&\bullet \ar[ur] \ar[dd, dash, dotted] \ar[dl] \ar[dr]	&\\
\bullet	\ar[dr]	&														&\bullet \ar[dl]\\
				&\bullet 												&
\end{tikzcd}
\tab\item 
\begin{tikzcd} [column sep=small, row sep=small]
\bullet			&																&\bullet\\
				&\bullet \ar[ul] \ar[ur] \ar[dd, dash, dotted] \ar[dl] \ar[dr]	&\\
\bullet	\ar[dr]	&																&\bullet \ar[dl]\\
				&\bullet 														&
\end{tikzcd}
\end{inparaenum}

\end{theorem}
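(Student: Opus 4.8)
The plan is to run, for every schurian concealed algebra $A$ of Euclidean type $\widetilde{\mathbb{D}}_5$, exactly the script used in the proof of Theorem~\ref{d4til}. Starting from the frames recorded in Chapter~XIV of \cite{sim-sko2}, one first isolates those frames and those admissible operations that produce a quiver whose underlying graph is $\widetilde{\mathbb{D}}_5$; for each resulting bound quiver $(Q_A,I_A)$ one computes the trivial extension $T(A)=KQ_{T(A)}/I_{T(A)}$ by the theorems of Fern\'andez recalled in Section~\ref{sec6}, namely by reading off a maximal set of linearly independent maximal paths $\gamma_1,\dots,\gamma_t$, adjoining the arrows $\beta_{\gamma_i}$, and then writing down the relations of types $1$, $2$ and $3$ via Theorem~\ref{relacao}. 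Since $T(A)^{op}\cong T(A^{op})$, it suffices to treat $A$ and not $A^{op}$.

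First I would, for each such $T(A)$, list its elementary cycles and its relations of type~$2$, label the arrows occurring in the latter by $\alpha_1,\alpha_2,\dots$, and feed this data to the program of the appendix. By Lemma~\ref{cortephia}, a cutting set $\Sigma$ yields a non-hereditary incidence algebra only if one of its arrows lies on two distinct elementary cycles, so one may immediately discard the admissible operations whose trivial extension does not possess two elementary cycles sharing an arrow, as these contribute only hereditary cuts. For the remaining cases the program returns precisely the cutting sets $\Sigma$ for which $KQ_{T(A)}/\langle I_{T(A)}\cup\Sigma\rangle$ is an incidence algebra; collecting these outputs, and replacing $K\Delta$ by $K\Delta^{op}$ where convenient, produces the five bound quivers $4$--$8$. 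Each $K\Delta$ so obtained satisfies $T(K\Delta)\cong T(A)$ with $A$ concealed of type $\widetilde{\mathbb{D}}_5$, hence, by Theorem~\ref{ANS} in the form of the proposition following it (valid without the simple connectedness hypothesis), $K\Delta$ is iterated tilted, and therefore PHI, of type $\widetilde{\mathbb{D}}_5$; comparing items $4$--$8$ with the Happel--Vossieck list shows that none of them is itself concealed of Euclidean type, so they are exactly the new ANS members of type $\widetilde{\mathbb{D}}_5$.

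A representative instance proceeds just as in the $\widetilde{\mathbb{D}}_4$ case: for a suitable admissible operation the trivial extension has a handful of type~$2$ relations $r_1,\dots,r_k$ supported on arrows $\alpha_1,\dots,\alpha_s$ together with three elementary cycles $C_1,C_2,C_3$ which pairwise overlap on a common arrow $\alpha_1$; the program then returns the cutting set defining solution~$4$, while for the other admissible operations on the $\widetilde{\mathbb{D}}_5$ frames it returns the cutting sets defining solutions $5$ through $8$, and for the vast majority of admissible operations all cuts are hereditary and contribute nothing. The main obstacle is organizational rather than conceptual: compared with $\widetilde{\mathbb{D}}_4$ there are many more admissible operations to enumerate and more than one frame to process, and for each one the correct computation of the trivial-extension quiver requires identifying the full set of maximal paths before the program can be applied. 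A secondary subtlety is the last verification step, i.e.\ confirming that each incidence algebra returned by the program is genuinely new and not already a concealed algebra of Euclidean type occurring in the Happel--Vossieck list.
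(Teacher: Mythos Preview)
Your proposal is correct and follows essentially the same computational--enumerative approach as the paper: enumerate the schurian concealed algebras of type $\widetilde{\mathbb{D}}_5$ coming from the Happel--Vossieck frames, build each trivial extension via Fern\'andez's description, feed the elementary cycles and type~$2$ relations to the program, and collect the non-hereditary incidence cuts. Two small factual corrections against the paper's own proof: only the single frame $\mathcal{F}r2$ is relevant here (yielding $32$ orientations to check, not several frames), and the paper's worked representative for $\widetilde{\mathbb{D}}_5$ has four elementary cycles rather than three.
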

\begin{proof}
We proceed as in the former theorem, now we are interested in the type $\widetilde{\mathbb{D}}_5$. The frame that satisfies this condition is $\mathcal{F}r2$, see \cite{sim-sko2}. The adjacent quiver $\widetilde{\mathbb{D}}_5$ has $32$ of possibilities of admissible operations $1$, that is, we have $32$ Euclidean graphs $\widetilde{\mathbb{D}}_5$. As before, to exemplify the work that has to be done. 
we will show this procedure in a particular case of  trivial extension.

\begin{tikzcd} [column sep=tiny, row sep=tiny]
\bullet	\ar[dr]	&				&						&\bullet \\
				&\bullet \ar[r]	&\bullet \ar[dr] \ar[ur]&\\
\bullet	\ar[ur]	&				&						&\bullet					
\end{tikzcd} \hfill
\begin{tikzcd} [column sep=tiny, row sep=tiny]
\bullet	&						&				&\bullet \ar[dl]\\
		&\bullet \ar[dl] \ar[ul]&\bullet \ar[l] &\\
\bullet	&						&				&\bullet \ar[ul]				
\end{tikzcd} \hfill
\begin{tikzcd} 
\bullet	\ar{dr}[sloped]{\alpha_5} &&&\bullet \ar{lll}[swap]{\alpha_7} \ar[bend right]{ddlll}[sloped]{\alpha_8}\\
&\bullet \ar{r}[swap]{\alpha_3}	&\bullet \ar{dr}[sloped]{\alpha_1} \ar{ur}[sloped, swap]{\alpha_2}&\\
\bullet	\ar{ur}[sloped, swap, near start]{\alpha_4} &&&\bullet \ar{lll}{\alpha_6} \ar[bend left]{uulll}[swap, sloped]{\alpha_9}
\end{tikzcd}

The relations of type $2$ are: $r1=\alpha_6 \alpha_4 \alpha_3 \alpha_2$, $r2=\alpha_4 \alpha_3 \alpha_1 \alpha_9$, $r3=\alpha_5 \alpha_3 \alpha_1 \alpha_6$, $r4=\alpha_9 \alpha_5 \alpha_3 \alpha_2$, $r5=\alpha_5 \alpha_3 \alpha_2 \alpha_8$, $r6=\alpha_7 \alpha_5 \alpha_3 \alpha_1$, $r7=\alpha_8 \alpha_4 \alpha_3 \alpha_1$ and $r8=\alpha_4 \alpha_3 \alpha_2 \alpha_7$. The elementary cycles are: $C_1 = \alpha_1 \alpha_6 \alpha_4 \alpha_3$, $C_2 = \alpha_1 \alpha_9 \alpha_5 \alpha_3$, $C_3 = \alpha_2 \alpha_7 \alpha_5 \alpha_3$ e $C_4 = \alpha_2 \alpha_8 \alpha_4 \alpha_3$. With this information, the program shows us the solutions $1$, $2$ and the opposit algebra of the algebra in item  $2$, above.
\end{proof}

After describing above  the PHI algebras of ANS family of type $\widetilde{\mathbb{D}}_4$ and $\widetilde{\mathbb{D}}_5$, we were able to generalize the procedure  and we give a description of  
the PHI algebras of ANS family of type $\widetilde{\mathbb{D}}_n$ for $n \geq 6$.

The following lemma is used several times in proof of the theorem describing the PHI algebras of ANS family of type $\widetilde{\mathbb{D}}_n$.

\begin{lemma} \label{corteAn}
Given the trivial extension $\Gamma$ of the hereditary algebra of type $A_n$, with at least two elementary cycles, we consider its diagram below, where $\beta_i$ and $\beta_{i + 1}$ 
have opposit order for every $i$.
\begin{center}
$\Gamma :$ 
\begin{tikzcd} [column sep=small, row sep=small]
\bullet \ar[rr, end anchor=165, bend left, "\alpha_2"] &\bullet \ar[l, "\alpha_1"] &\bullet \ar[l, dashed, "\beta_1"] *** \bullet \ar[r, swap, dashed, "\beta_n"] &\bullet \ar[r, swap, "\alpha_n"] &\bullet \ar[ll, bend right, swap, end anchor=15, "\alpha_{n+1}"]
\end{tikzcd}
\end{center}
We will use the dashed arrow to represent a path which may occur  in the quiver. Moreover, the symbol $***$ in the quiver represents the possibility of attaching subquivers of the form
\begin{tikzcd} 
\bullet	\ar[r, dashed]	&\bullet \ar[r]&\bullet \ar[ll, bend left]
\end{tikzcd}
or of the opposit form.

\begin{asparaenum} [\itshape i)]
\item  There are cutting sets $\Sigma$ of $\Gamma$ such that $KQ_\Gamma / <I_\Gamma \cup \Sigma>$  is an hereditary algebra whose quiver has underlying graph an $A_n$. 


\item We also assume that there is an elementary cycle with three arrows or more in $\Gamma$.

If $\alpha_1 \in \Sigma$ or $\alpha_n \in \Sigma$, then $KQ_\Gamma / <I_{\Gamma} \cup \Sigma>$ is not an incidence algebra.
If $\alpha_2 \in \Sigma$ or $\alpha_{n+1} \in \Sigma$, then $KQ_\Gamma / <I_{\Gamma} \cup \Sigma>$ is an incidence algebra with a quiver whose underlying graph is a  $A_n$.
\end{asparaenum}
\end{lemma}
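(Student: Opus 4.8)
The plan is to work with the explicit presentation of $\Gamma=T(H)$, where $H\cong KQ_H$ is the hereditary algebra of type $A_n$ with $T(H)=\Gamma$, using Theorem \ref{relacao} and the description of $Q_{T(A)}$ recalled above, and to exploit the following elementary fact: in an incidence algebra $K\Delta=KQ/I$ the ideal $I$ is generated by differences of parallel paths, so \emph{every path of the Hasse quiver represents a nonzero element}; hence an algebra $KQ_\Gamma/\langle I_\Gamma\cup\Sigma\rangle$ in which some path of its quiver becomes zero cannot be an incidence algebra. Since a type-1 generator of $I_\Gamma$ contains all arrows of an elementary cycle, it always contains a cut arrow; so for any cutting set $\Sigma$ the algebra $KQ_\Gamma/\langle I_\Gamma\cup\Sigma\rangle$ has defining ideal generated by $\Sigma$ together with the type-2 and type-3 generators of $I_\Gamma$ no arrow of which lies in $\Sigma$. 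When the quiver obtained by deleting the cut arrows is a tree, the type-3 commutativity relations vanish automatically, so the algebra is hereditary. This is the presentation I would analyse.

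For (i), let $\beta_{\gamma_1},\dots,\beta_{\gamma_t}$ be the arrows adjoined to $Q_H$ to form $Q_\Gamma$ (the ``curved'' arrows of the picture, among them $\alpha_2=\beta_{\gamma_1}$ and $\alpha_{n+1}=\beta_{\gamma_t}$), and put $\Sigma_0=\{\beta_{\gamma_1},\dots,\beta_{\gamma_t}\}$. Every type-2 or type-3 generator of $I_\Gamma$ is supported at a turning vertex $s(\gamma_i)$ and involves at least one arrow $\beta_{\gamma_j}$, so it is absorbed into $\langle\Sigma_0\rangle$; thus $KQ_\Gamma/\langle I_\Gamma\cup\Sigma_0\rangle=KQ_H=H$, which is hereditary with $\overline{Q_H}=A_n$ (this also follows from the Proposition of Fern\'andez quoted above, $T(H)\cong T(H)$). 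This settles (i), and incidentally produces a cutting set containing $\alpha_2$ and $\alpha_{n+1}$.

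For (ii) I would assume some elementary cycle has at least three arrows; by the shape of the picture (the arrow $\alpha_2$ ``skips'' the vertex $v_1$, likewise $\alpha_{n+1}$) this forces the end cycles $C_1$, containing $\alpha_1,\alpha_2$, and $C_n$, containing $\alpha_n,\alpha_{n+1}$, to have $\geq 3$ arrows, so $\alpha_1$ differs from $\alpha_2=\beta_{\gamma_1}$ and from the first arrow $b_1$ of $\gamma_1$. Fix the leftmost turning vertex $v=s(\gamma_1)$: its incoming arrows are $\alpha_2$ and the return arrow $\beta_{\gamma'}$ of the other maximal path $\gamma'$ through $v$ (in the adjacent cycle $C'$), its outgoing arrows are $b_1$ and the first arrow $b'$ of $\gamma'$, and the type-2 generators supported at $v$ are the two length-two paths $\rho=\alpha_2\,b'$ and $\rho'=\beta_{\gamma'}\,b_1$, both zero in $\Gamma$. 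Now if $\alpha_1\in\Sigma$, the arrow cut from $C_1$ is $\alpha_1$, so neither $\alpha_2$ nor $b_1$ is cut; the arrow cut from $C'$ can destroy at most one of $\rho,\rho'$ (it lies on $C'$, whereas $\rho$ contains the $C'$-arrow $b'$ and $\rho'$ the distinct $C'$-arrow $\beta_{\gamma'}$). Hence one of $\rho,\rho'$ avoids $\Sigma$ entirely, survives as a monomial relation, and is a path of length two in the quiver of $KQ_\Gamma/\langle I_\Gamma\cup\Sigma\rangle$ equal to zero there; by the opening remark that algebra is not an incidence algebra. The case $\alpha_n\in\Sigma$ is the mirror argument at the rightmost turning vertex.

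If instead $\alpha_2\in\Sigma$ (symmetrically $\alpha_{n+1}\in\Sigma$), then $\rho$ is killed outright, and I would argue that once $\alpha_2$ is cut the only coherent completion of $\Sigma$ on the remaining cycles is by the canonical return arrows, killing $\rho'$ and, inductively vertex by vertex, every surviving monomial relation; one is then left with the tree quiver $Q_\Gamma$ minus the cut arrows and no relations, i.e.\ with the hereditary algebra $KQ$, $\overline Q=A_n$ — in particular an incidence algebra whose underlying graph is $A_n$. I expect this last step to be the main obstacle: showing that once $\alpha_2$ (or $\alpha_{n+1}$) is among the cut arrows \emph{no} monomial relation can survive requires a global description of all turning vertices of $Q_\Gamma$ and of the way the dashed paths $\beta_i$, with their alternating orientation, glue the elementary cycles together. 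The ``at least three arrows'' hypothesis is exactly what rules out a degenerate end $2$-cycle, in which $\alpha_1$ and $\alpha_2$ would be interchangeable and the dichotomy in (ii) would collapse.
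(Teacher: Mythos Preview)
Your argument for (i) is cleaner than the paper's: you observe directly that the set $\Sigma_0$ of all adjoined arrows $\beta_{\gamma_i}$ is a cutting set and that removing them recovers $H$ itself. The paper instead runs an induction on the number of elementary cycles, starting with $\alpha_2\in\Sigma$ and showing that to kill the surviving type-2 relation $\alpha_4\beta'_1$ one is forced to take $\alpha_4\in\Sigma$ (since $\beta'_1$ lies in $C_1$, whose cut arrow is already $\alpha_2$), then $\alpha_6$, and so on. Both reach the same $\Sigma_0$, but the paper's induction does double duty: it simultaneously establishes the second half of (ii), namely that once $\alpha_2$ is in $\Sigma$ the remaining choices are forced and yield the hereditary $A_n$. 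By proving (i) more slickly you lose exactly this machinery, which is why you rightly flag the $\alpha_2\in\Sigma$ clause as your ``main obstacle''; the paper's inductive forcing argument is precisely the missing step you sketch but do not carry out.

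For the $\alpha_1\in\Sigma$ clause your local argument at the turning vertex $v$ is essentially the paper's: with $\alpha_1$ cut from $C_1$, the two type-2 monomials $\alpha_2 b'$ and $\beta_{\gamma'}b_1$ at $v$ involve two \emph{distinct} arrows of the adjacent cycle $C'$, so a single cut from $C'$ kills at most one of them, and the survivor is a zero path in the quotient. One genuine gap: your claim that ``the shape of the picture'' forces the end cycle $C_1$ to have at least three arrows is not justified. The dashed path $\beta_1$ may be empty, in which case $C_1$ is a $2$-cycle, $b_1=\alpha_1$, and cutting $\alpha_1$ already kills $\beta_{\gamma'}b_1$; your argument then collapses at this vertex and one must move along to the next turning vertex. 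The paper glosses this with ``without loss of generality, we assume that exists the path $\beta_1$'', which is equally terse; the honest reading is that the local argument applies when the end cycle containing the chosen $\alpha_1$ (or $\alpha_n$) has length $\geq 3$, and in the paper's later applications of the lemma that hypothesis is always in force.
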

\begin{proof}
The aim is to find a cutting set $\Sigma$ of $\Gamma$ where $KQ_\Gamma / <I_{\Gamma} \cup \Sigma>$ is an incidence algebra. 
First, let us assume that there is an elementary cycle with three arrows or more. 
We will show the existence of $\Sigma$ by induction in the number of elementary cycles. For clarity, we will begin the first step of the induction process with two elementary cycles:

\begin{center}
\begin{tikzcd} [column sep=small, row sep=small]
\bullet \ar[bend left]{rr}{\alpha_2} &\bullet \ar{l}{\alpha_1} &\bullet \ar[dashed]{l}{\beta_1} \ar[dashed]{r}[swap]{\beta_3} &\bullet \ar{r}[swap]{\alpha_3} &\bullet \ar[bend right]{ll}[swap]{\alpha_4}
\end{tikzcd}
\end{center}

The relations of type $2$ are: $r1=\alpha_2 \beta'_3$, where $\beta'_3$ is the first arrow of the path $\beta_3$, and $r2=\alpha_4 \beta'_1$, where $\beta'_1$ is the first arrow of the path $\beta_1\alpha_1$. 

The elementary cycles are: $C_1 = \alpha_2 \beta_1 \alpha_1$ and $C_2 = \alpha_4 \beta_3 \alpha_3$. 

Without loss of generality, we assume that exists the path $\beta_1$. Also, we start constructing the cutting set $\Sigma$ choosing the arrow of the elementary cycle $C_1$.

We consider $\alpha_2 \in \Sigma$. Then, eliminating the relation $r2=\alpha_4 \beta_1$, we get $\alpha_4 \in \Sigma$. So using $\Sigma = \{ \alpha_2, \alpha_4 \}$ we have the incidence algebra:
\begin{center}
$KQ_\Gamma / <I_{\Gamma} \cup \Sigma>:$
\begin{tikzcd} [column sep=small, row sep=small]
\bullet &\bullet \ar[l] &\bullet \ar[l, dashed] \ar[r, dashed] &\bullet \ar[r] &\bullet
\end{tikzcd}
\end{center}

Let $\Gamma$ be a trivial extension of the lemma with $m+1$ elementary cycles. We call the set $\Sigma=\{\alpha_2, \alpha_4, \dotsc, \alpha_{2m}\}$ for $m$ elementary cycles, where it satisfies the hypothesis of induction. We need to define which arrow of the elementary cycle $C_{m+1}$ would complete the cutting set $\Sigma$ of $\Gamma$:
\begin{center}
\begin{tikzcd} 
\bullet \ar[end anchor=165, bend left]{rr}{\alpha_2} &\bullet \ar{l}{\alpha_1} &\bullet \ar[dashed]{l}{\beta_1} *** \bullet \ar[bend left, start anchor=15]{rr}{\alpha_{2m}} &\bullet \ar[dashed]{l}{\beta_{2m-1}} &\bullet \ar[dashed]{r}[swap]{\beta_{2m+1}} \ar{l}{\alpha_{2m-1}} &\bullet \ar{r}[swap]{\alpha_{2m+1}} &\bullet \ar[bend right]{ll}[swap]{\alpha_{2m+2}}
\end{tikzcd}
\end{center}

The relations of type $2$ are: $r1=\alpha_2 \beta_3$, $r2=\alpha_4 \beta_1$, $\dotsc$, $rk=\alpha_{2m}\beta_{2m+1}$ and $r(k+1)=\alpha_{2m+2}\alpha_{2m-1}$, for some natural number $k$. 
The elementary cycles are: $C_1 = \beta_1 \alpha_1 \alpha_2$, $\dotsc$, $C_{m+1} = \beta_{2m+1} \alpha_{2m+1} \alpha_{2m+2}$.

Therefore, by the relation $r(k+1)=\alpha_{2m+2}\alpha_{2m-1}$, we determine the arrow $\alpha_{2m+2} \in \Sigma$. We conclude this part, and the proof is analogous to $\alpha_{n+1} \in \Sigma$.

Now, if the trivial extension $\Gamma$ have only elementary cycles with two arrows:
\begin{center}
\begin{tikzcd} [column sep=large]
\bullet \ar[end anchor=165, bend left]{r}{\alpha_2} &\bullet \ar{l}{\alpha_1} *** \bullet \ar[bend left, start anchor=15]{r}{\alpha_{2m}} &\bullet  \ar{r}[swap]{\alpha_{2m+1}} \ar{l}{\alpha_{2m-1}} &\bullet \ar[bend right]{l}[swap]{\alpha_{2m+2}}
\end{tikzcd}
\end{center}

The relations of type $2$ are: $r1=\alpha_2 \alpha_3$, $r2=\alpha_4 \alpha_1$, $\dotsc$, $rk=\alpha_{2m}\alpha_{2m+1}$ and $r(k+1)=\alpha_{2m+2}\alpha_{2m-1}$, for some natural number $k$. 
The elementary cycles are: $C_1 = \alpha_1 \alpha_2$, $\dotsc$, $C_{m+1} = \alpha_{2m+1} \alpha_{2m+2}$.

The process is analogous to have the cutting set $\Sigma=\{ \alpha_2, \alpha_4, \dotsc, \alpha_{2m+2} \}$ that define the incidence algebra. And we can verify that the cutting set $\Sigma=\{ \alpha_1, \alpha_3, \dotsc, \alpha_{2m+1} \}$ defines the opposit  incidence algebra.

The last part is missing. We assume that there is an elementary cycle with three arrows or more in $\Gamma$, with $\alpha_1 \in \Sigma$ or $\alpha_n \in \Sigma$. Without loss of generality, we assume that exists the path $\beta_1$. We consider the subquiver:
\begin{center}
\begin{tikzcd} [column sep=small, row sep=small]
\bullet \ar[bend left]{rr}{\alpha_2} &\bullet \ar{l}{\alpha_1} &\bullet \ar[dashed]{l}{\beta_1} \ar[dashed]{r}[swap]{\beta_3} &\bullet \ar{r}[swap]{\alpha_3} &\bullet \ar[bend right]{ll}[swap]{\alpha_4}
\end{tikzcd}.
\end{center}

The relations of type $2$ are: $r1=\alpha_2 \beta_3$ and $r2=\alpha_4 \beta_1$. 
The elementary cycles are: $C_1 = \alpha_2 \beta_1 \alpha_1$ and $C_2 = \beta_3 \alpha_3 \alpha_4$.

By hypothesis, we have $\alpha_1 \in \Sigma$. Then we have to choose an arrow from the elementary cycle $C_2=\beta_3 \alpha_3 \alpha_4$. Independently of  the choice, we cannot rule out the relation $r1=\alpha_2 \beta_3$ or $r2=\alpha_4 \beta_1$. Therefore, $KQ_\Gamma / <I_{\Gamma} \cup \Sigma>$ is not a incidence algebra. Similarly, we get the same the result if $\alpha_n \in \Sigma$.
\end{proof}

Next, we will display the PHI algebras of the ANS family of type $\widetilde{\mathbb{D}}_n$. The more complicated case in the proof  is in the case of the hereditary algebra $A$ of type $\widetilde{\mathbb{D}}_n$, that is when there is no relation. The possible quiver algebras, with a given frame is very large. So we can get a large amount of possible trivial extensions
associated with a given frame. 
\begin{theorem} \label{dntil}
The PHI algebras of the ANS family of type  $\widetilde{\mathbb{D}}_n$, for $n \geq 6$, are described below, via presentations of quivers and relations:
\NumTabs{2}
\begin{inparaenum} 

\noindent 
\item 
\begin{tikzcd} [column sep=scriptsize, row sep=scriptsize]
												&\bullet \ar[d]	&\\
												&\rvdots \ar[d]	&\\
												&\bullet \ar[dl] \ar[dr] \ar[ddr, dotted, dash] \ar[ddl, dotted, dash]	&\\
\bullet	\ar[drr] \ar[d]	\ar[ddr, dotted, dash]	&				&\bullet \ar[dll] \ar[d] \ar[ddl, dotted, dash]\\
\bullet	\ar[dr]									&				&\bullet \ar[dl]\\
												&\bullet		&
\end{tikzcd}
\tab\item 
\begin{tikzcd} [column sep=scriptsize, row sep=scriptsize]
						&\bullet \ar[d]	&\\
						&\rvdots \ar[d]	&\\
						&\bullet \ar[dl] \ar[dr] \ar[ddr, dotted, dash] \ar[ddl, dotted, dash] &\\
\bullet	\ar[drr] \ar[d]	&				&\bullet \ar[dll] \ar[d] \\
\bullet					&				&\bullet
\end{tikzcd}

\noindent
\item 
\begin{tikzcd} [column sep=scriptsize, row sep=scriptsize]
				&\bullet \ar[dl] \ar[dr] \ar[d] \ar[dd, dotted, dash, bend right=45] \ar[dddd, dotted, dash, bend left=16]	&\\
\bullet	\ar[dr]	&\bullet \ar[d]	&\bullet \ar[dddl]\\
				&\bullet \ar[d]	&\\
				&\rvdots \ar[d]	&\\
				&\bullet		&
\end{tikzcd}
\tab\item 
\begin{tikzcd} [column sep=tiny, row sep=scriptsize]
		&				&										&\bullet \ar[dl]&										&\bullet \\
\bullet &\dotsb \ar[l] &\bullet \ar[l] \ar[rr, dash, dotted] 	&				&\bullet \ar[ul] \ar[dl] \ar[dr] \ar[ur]&\\
		&				&										&\bullet \ar[ul]&										&\bullet
\end{tikzcd}

\noindent
\item 
\begin{tikzcd} [column sep=tiny, row sep=scriptsize]
								&\bullet \ar[dl]	&											&									&												&\bullet \\
\bullet \ar[rr, dash, dotted] 	&					&\bullet \ar[ul] \ar[dl] \ar[r, dash]		& \ar[l, dash] \dotsb \ar[r, dash]	&\bullet \ar[l,dash] \ar[ur,dash] \ar[dr,dash]	&\\
								&\bullet \ar[ul]	&											&									&												&\bullet
\end{tikzcd}

\noindent
\item 
\begin{tikzcd} [column sep=tiny, row sep=scriptsize]
&&&\bullet \ar[dl]	&&&&\bullet \\
\bullet	&\dotsb \ar[l] 	&\bullet \ar[l] \ar[rr, dash, dotted] 	&&\bullet \ar[ul] \ar[dl] \ar[r, dash]	& \ar[l, dash] \dotsb \ar[r, dash]	&\bullet \ar[l,dash] \ar[ur,dash] \ar[dr,dash]	&\\
&&&\bullet \ar[ul]	&&&&\bullet
\end{tikzcd}

\noindent
\item 
\begin{tikzcd} [column sep=tiny, row sep=scriptsize]
		&				&										&\bullet \ar[dl]&								&				&\bullet \\
\bullet	&\dotsb \ar[l]	&\bullet \ar[l] \ar[rr, dash, dotted] 	&				&\bullet \ar[ul] \ar[dl] \ar[r]	&\bullet \ar[ur]&\\
		&				&										&\bullet \ar[ul]&								&				&\bullet \ar[ul]
\end{tikzcd}

\noindent
\item 
\begin{tikzcd} [column sep=tiny, row sep=scriptsize]
		&				&										&\bullet \ar[dl]&										&\bullet \ar[dr] 	&										&				&\\
\bullet	&\dotsb \ar[l]	&\bullet \ar[l] \ar[rr, dash, dotted]	&				&\bullet \ar[ul] \ar[dl] \ar[ur] \ar[dr]&					&\bullet \ar[ll, dash, dotted] \ar[r] 	&\dotsb \ar[r] 	&\bullet \\
		&				&										&\bullet \ar[ul]&										&\bullet \ar[ur]	&										&				&
\end{tikzcd}

\noindent
\item 
\begin{tikzcd}[column sep=tiny, row sep=scriptsize]
&&&\bullet \ar[dl] &&&&&&\bullet \ar[dl] &&&\\
\bullet &\dotsb\ar[l] &\bullet \ar[l] \ar[rr, dash, dotted]	&&\bullet \ar[ul] \ar[dl] \ar[r] \arrow[%
	,dash
    ,thick
    ,start anchor=center
    ,end anchor=center
    ,decorate
    ,decoration={%
        ,brace
        ,amplitude=5pt
        ,raise=10pt
        ,mirror
        }
    ]{rrrr}[below=15pt]{\text{odd number}} &\bullet & \ar[l] \dotsb &\bullet \ar[l] \ar[r] &\bullet &&\bullet \ar[ll, dash, dotted] \ar[ul] \ar[dl] \ar[r] &\dotsb \ar[r] &\bullet \\
&&&\bullet \ar[ul] &&&&&&\bullet \ar[ul] &&&
\end{tikzcd}
\tab\item 
\begin{tikzcd} [column sep=tiny, row sep=scriptsize]
		&				&										&\bullet \ar[dl]&									&\bullet \ar[dl]\\
\bullet	&\dotsb \ar[l]	&\bullet \ar[l] \ar[rr, dash, dotted] 	&				&\bullet \ar[ul] \ar[dl] \ar[dr]	&\\
		&				&										&\bullet \ar[ul]&									&\bullet
\end{tikzcd}

\noindent
\item 
\begin{tikzcd} [column sep=scriptsize, row sep=scriptsize]
&\bullet \ar[dl] &&&&&&\bullet \\
\bullet \ar[rr, dash, dotted] &&\bullet \ar[ul] \ar[dl] \ar[r] 
\arrow[%
	,dash
    ,thick
    ,start anchor=center
    ,end anchor=center
    ,decorate
    ,decoration={%
        ,brace
        ,amplitude=5pt
        ,raise=10pt
        ,mirror
        }
    ]{rrr}[below=15pt]{\text{odd number}}&\bullet	& \ar[l] \dotsb \ar[r] &\bullet	&\bullet \ar[l] \ar[ur] &\\
&\bullet \ar[ul]&&&&&&\bullet \ar[ul]
\end{tikzcd}
\tab\item 
\begin{tikzcd} [column sep=scriptsize, row sep=scriptsize]
				&\bullet \ar[dr] \ar[d] \ar[dddd, dotted, dash, bend left=16] &&&\\
				&\rvdots \ar[d]	&\bullet \ar[dddl] &\dotsb \ar[l] &\bullet \ar[l]\\
				&\bullet \ar[dl] \ar[d] \ar[dd, dotted, dash, bend right=45]&&&\\
\bullet	\ar[dr]	&\bullet \ar[d]	&&&\\
				&\bullet		&&&
\end{tikzcd}
\end{inparaenum}

There are two cases in the substitution of subquiver 
\begin{tikzcd} [column sep=tiny, row sep=scriptsize]
						&						&									&\bullet \\
\bullet \ar[r, dash]	& \dotsb \ar[r, dash]	&\bullet \ar[ur,dash] \ar[dr,dash]	&\\
						&						&									&\bullet
\end{tikzcd}. 

If we have an even number of edges then we replace by
\begin{center}
\begin{tikzcd} [column sep=tiny, row sep=scriptsize]
				&			&						&			&								&\bullet \\
\bullet \ar[r]	&\bullet 	& \dotsb \ar[l] \ar[r]	&\bullet	&\bullet \ar[l] \ar[ur] \ar[dr]	&\\
				&			&						&			&								&\bullet
\end{tikzcd}
\end{center} 
otherwise, we replace by
\begin{tikzcd} [column sep=tiny, row sep=scriptsize]
				&			&				&						&			&\bullet \ar[dl]\\
\bullet \ar[r]	&\bullet 	& \dotsb \ar[l] &\bullet \ar[l] \ar[r]	&\bullet 	&\\
				&			&				&						&			&\bullet \ar[ul]
\end{tikzcd}

\end{theorem}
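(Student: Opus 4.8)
The plan is to carry out, uniformly in $n$, the script announced before Theorem~\ref{d4til}. By the generalization of Theorem~\ref{ANS} (the Proposition following it), to describe the PHI algebras of the ANS family of type $\widetilde{\mathbb{D}}_n$ is the same as to describe the incidence algebras $K\Delta$ for which $T(K\Delta)\cong T(A)$ for some representation-infinite tilted algebra $A$ of Euclidean type $\widetilde{\mathbb{D}}_n$; moreover that result ensures that any such $K\Delta$ is automatically iterated tilted of type $\widetilde{\mathbb{D}}_n$, hence, by Theorem~\ref{HRS}, piecewise hereditary of type $\widetilde{\mathbb{D}}_n$ and so a PHI algebra of the claimed type, so that once a suitable cutting set is produced nothing more has to be checked about the type. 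Since every algebra in sight is the trivial extension of a schurian algebra and $T(A)^{op}\cong T(A^{op})$, it suffices to run over the schurian concealed algebras $A$ of type $\widetilde{\mathbb{D}}_n$ coming, via Happel--Vossieck \cite{hap-vos}, from the frames listed in Chapter~XIV of \cite{sim-sko2}, and to record the resulting quivers only up to $K\Delta\mapsto K\Delta^{op}$.

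For each such $A$ I would first compute $Q_{T(A)}$ from Fern\'andez's description of the quiver of a trivial extension (the arrows of $Q_A$ together with one new arrow $\beta_{\gamma}$ for each member $\gamma$ of a maximal linearly independent set of maximal paths), then list its elementary cycles and, by Theorem~\ref{relacao}, the relations of type~$2$ and type~$3$ of its presentation; the relations of type~$1$ can be ignored, as any cutting set eliminates them. Feeding the type~$2$ relations, the elementary cycles and the arrows occurring in them into the computer program---equivalently, applying the cutting lemmas---singles out exactly the cutting sets $\Sigma$ for which $KQ_{T(A)}/\langle I_{T(A)}\cup\Sigma\rangle$ has a bypass-free quiver and an ideal generated only by commutativity relations, i.e.\ is an incidence algebra; here Lemma~\ref{cortephia} is the basic bookkeeping tool, since it identifies the commutativity relations of the quotient as those created by the arrows of $\Sigma$ lying in two elementary cycles. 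Discarding the hereditary solutions and those that merely reproduce a concealed algebra (already accounted for in the concealed part of the ANS family), and reading the surviving quivers up to opposite, yields the thirteen families of the statement; the ``odd/even number of edges'' replacements of the subquiver at the end are precisely the two alternatives produced by Lemma~\ref{corteAn} when a hereditary $\mathbb{A}_k$-branch of $T(A)$ is cut.

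The genuine obstacle is the purely hereditary case $A=K\widetilde{\mathbb{D}}_n$: there the number of admissible orientations of $\widetilde{\mathbb{D}}_n$, hence the number of trivial extensions to inspect, grows with $n$, so the verification cannot be a finite case check and must be made inductive. I would treat it as in the proof of Lemma~\ref{corteAn}, inducting on the number of elementary cycles of $Q_{T(A)}$ (equivalently, on the length of the central $\mathbb{A}$-type segment of $\widetilde{\mathbb{D}}_n$): at each step the type~$2$ relation joining the freshly added elementary cycle to the previous one forces a unique choice of arrow of $\Sigma$ inside that cycle, which simultaneously proves that a cutting set defining an incidence algebra exists and that it is essentially unique. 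The parity of the number of edges between consecutive sink/source vertices of the central segment is what generates the two replacement patterns, while the two forked ends of $\widetilde{\mathbb{D}}_n$ behave exactly as in the computations of Theorems~\ref{d4til} and~\ref{d5til} for $\widetilde{\mathbb{D}}_4$ and $\widetilde{\mathbb{D}}_5$ and contribute the trivalent vertices together with their commutativity relations. Finally, completeness---that no incidence algebra $K\Delta$ with $T(K\Delta)\cong T(A)$ is missing---is obtained by verifying that any cutting set not of the listed shape contains an arrow lying in two elementary cycles whose removal leaves a bypass or a noncommutative monomial relation, so that the corresponding algebra fails to be an incidence algebra.
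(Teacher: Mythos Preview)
Your proposal is correct and follows essentially the same approach as the paper: both run through the schurian concealed algebras of type $\widetilde{\mathbb{D}}_n$ from the Happel--Vossieck list, compute their trivial extensions via Fern\'andez's description, and then apply the cutting lemmas (Lemma~\ref{cortephia} for the commutativity relations, Lemma~\ref{corteAn} for the $\mathbb{A}$-type branches) together with the computer program to isolate the cutting sets defining non-hereditary incidence algebras. The paper, like you, singles out the hereditary case $A=K\widetilde{\mathbb{D}}_n$ as the one requiring genuine work beyond a finite check, and handles it by reusing the configurations already computed for $\widetilde{\mathbb{D}}_4$ and $\widetilde{\mathbb{D}}_5$ and then invoking Lemma~\ref{corteAn} to propagate along the central segment---effectively the same induction on elementary cycles you describe; it works out two representative configurations in detail and declares the rest analogous, whereas you phrase the argument more uniformly.
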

\begin{proof} \label{provadntil}
We describe the script of the proof for  each PHI algebra of the ANS family of type  $\widetilde{\mathbb{D}}_n$ for $n \geq 6$:
\begin{itemize}
\item we consider each schurian concealed algebra $A$ of type $\widetilde{\mathbb{D}}_n$;
\item we describe the trivial extensions of $A$;
\item we analyze the cutting sets that define an incidence algebra $K\Delta$;
\item we use the theorem \ref{ANS}, in order to proof that $K\Delta$ is a PHI algebra of type $\widetilde{\mathbb{D}}_n$.
\end{itemize}

We begin with the hereditary algebra $A$ of type $\widetilde{\mathbb{D}}_n$, with $n \geq 6$. First, we collect the configurations of the elementary cycles of the trivial extensions of $A$ of type $\widetilde{\mathbb{D}}_4$ and $\widetilde{\mathbb{D}}_5$. 

Next we use these configurations in the trivial extensions of the hereditary algebra $A$, of type $\widetilde{\mathbb{D}}_n$.\\
We list one trivial extension for each par of algebras $(A, A^{op})$. The label $\mathcal{F}r2$ is the same as the one on the list giving in  chapter XIV of the book ``Elements of the representation theory of associative algebras'' \cite{sim-sko2}.

The dashed arrow symbolizes an oriented path of whose  length depends on $n$.

We will show in detail two cases, the other cases can be obtained in an analogous way.

\begin{center}
\begin{tikzcd} 
\bullet	\ar{dr}[sloped, near start]{\alpha_4} &&&&&\bullet \ar{lllll}[swap, near end]{\alpha_7}  \ar[bend right]{ddlllll}{\alpha_{10}}\\
&\bullet \ar{r}[swap]{\alpha_1} &\bullet \ar[dashed]{r}[swap]{\beta} & \bullet \ar{r}[swap]{\alpha_2} &\bullet \ar{dr}[sloped, swap]{\alpha_5} \ar{ur}[sloped]{\alpha_6} &\\
\bullet	\ar{ur}[sloped, swap, near start]{\alpha_3} &&&&&\bullet \ar{lllll}[near end]{\alpha_8} \ar[bend left]{uulllll}[swap]{\alpha_9}
\end{tikzcd}
\end{center}

The relations of type $2$ are: $r1=\alpha_4 \alpha_1 \beta \alpha_2 \alpha_6 \alpha_{10}$, $r2=\alpha_3 \alpha_1 \beta \alpha_2 \alpha_6 \alpha_7$, $r3=\alpha_7 \alpha_4 \alpha_1 \beta \alpha_2 \alpha_5$, $r4=\alpha_9 \alpha_4 \alpha_1 \beta \alpha_2 \alpha_6$, $r5=\alpha_{10} \alpha_3 \alpha_1 \beta \alpha_2 \alpha_5$, $r6=\alpha_8 \alpha_3 \alpha_1 \beta \alpha_2 \alpha_6$, $r7=\alpha_3 \alpha_1 \beta \alpha_2 \alpha_5 \alpha_9$ and $r8=\alpha_4 \alpha_1 \beta \alpha_2 \alpha_5 \alpha_8$.

The elementary cycles are: $C_1 = \alpha_6 \alpha_{10} \alpha_3 \alpha_1 \beta \alpha_2$, $C_2 = \alpha_5 \alpha_8 \alpha_3 \alpha_1 \beta \alpha_2$, $C_3 = \alpha_5 \alpha_9 \alpha_4 \alpha_1 \beta \alpha_2$ and $C_4= \alpha_6 \alpha_7 \alpha_4 \alpha_1 \beta \alpha_2$.

As in the previous theorems referring to the type $\widetilde{\mathbb{D}}_4$ and $\widetilde{\mathbb{D}}_5$, there are four cutting sets defining the PHI algebras $1$ and $2$, and their opposit algebras. The PHI algebras $1$ and $2$ are defined from the cutting sets $\{\alpha_1\}$ e $\{\alpha_3,\alpha_4\}$, respectively.

The dashed arrows represent a possible oriented path to be added to the  quiver.
 
Moreover, the symbol $***$ in the quiver represents the possibility of placing subquivers of the form
\begin{tikzcd} 
\bullet	\ar[r, dashed]	&\bullet \ar[r]&\bullet \ar[ll, bend left]
\end{tikzcd}
or opposit form. For example, we would replace $***$ by
\begin{center}
\begin{tikzcd} 
\bullet	\ar[r, dashed]	&\bullet \ar[r]&\bullet \ar[r, bend right] \ar[ll, bend left]&\bullet \ar[l]
\end{tikzcd}
\end{center}

In the search for non-hereditary PHI algebras, in the trivial extensions, we are using the lemma \ref{cortephia}. 
This result requires that there is an arrow which belongs two elementary cycles, at least. 
We will show now the trivial extensions satisfying the following two hypothesis of lemma \ref{cortephia}:
\begin{itemize}
\item there is, at least, one pair of elementary cycles which share an arrow;
\item there is, at most, two pairs of elementary cycles, sharing one arrow.
\end{itemize}
We will show the possible configurations, assuming the restriction above:

\begin{tikzcd} [column sep=small, row sep=small]
\bullet	\ar[dr]	&						&				&\\
				&\bullet \ar[r, dashed]	&\bullet \ar[r]	&\bullet \ar[ulll, bend right] \ar[dlll, bend left]\\
\bullet	\ar[ur]	&						&				&
\end{tikzcd}\hfill
\begin{tikzcd} [column sep=small, row sep=small]
\bullet	\ar[dr]							&					&						&\\
										&\bullet \ar[dl] 	&\bullet \ar[l, dashed] &\bullet \ar[l]\\
\bullet	\ar[uu]	\ar[urrr, bend right]	&					&						&
\end{tikzcd}

The trivial extension has at least five elementary cycles. In this part of the proof, we will strongly use lemma \ref{corteAn}.

\begin{center}
\begin{tikzcd} 
\bullet	\ar{dr}[sloped, swap]{\alpha_4}	&&&&&&\bullet \ar[bend right, end anchor=15]{dl} \\
&\bullet \ar[dashed]{r}{\beta'} &\bullet \ar{r}{\alpha_2} &\bullet \ar[end anchor=165, bend left]{rr}{ \rho_2} \ar[bend right]{ulll}[swap]{\alpha_7} \ar[bend left]{dlll}{\alpha_8} &\bullet \ar{l}{\rho_1} &\bullet \ar[dashed]{l}{\beta} *** \bullet \ar[start anchor=10]{ur} \ar[start anchor=-10]{dr} &\\
\bullet	\ar{ur}[sloped]{\alpha_3} &&&&&&\bullet \ar[ul, bend left, end anchor=-18]
\end{tikzcd}
\end{center}

The relations of type $2$ are: $r1=\alpha_4 \beta' \alpha_2 \alpha_8$, $r2=\alpha_3 \beta' \alpha_2 \alpha_7$, $r3=\alpha_2 \rho_2$, $r4=\rho_1 \alpha_7$, $r5=\rho_1 \alpha_8$ all  other 
relations of type 2 have support on the right part of the quiver, (which we give bellow, in order to make it clear).

\begin{center}
\begin{tikzcd} 
&&&\bullet \ar[dl, bend right, end anchor=15]\\
\bullet \ar[end anchor=165, bend left]{rr}{\rho_2} &\bullet \ar{l}{\rho_1} &\bullet \ar[dashed]{l}{\beta} *** \bullet \ar[ur, start anchor=10] \ar[dr, start anchor=-10] &\\
&&&\bullet \ar[ul, bend left, end anchor=-18]
\end{tikzcd}
\end{center}

The elementary cycles are: $C_1 = \beta' \alpha_2 \alpha_7 \alpha_4$, $C_2 = \beta' \alpha_2 \alpha_8 \alpha_3$ and the others are part of the previous subquiver.

Next, we will examine  the possible  configuration, that is the possible elementary cycles and relations of type 2 in this trivial extension $\Gamma$.
First we assume that the path $\beta$ has  length greater than or equal to $1$ or that  the symbol $***$ is replaced by some elementary cycle with three arrows or more.\\
We will look for a cutting set $\Sigma$ such that $KQ_\Gamma / <I_\Gamma \cup \Sigma>$ is a non-hereditary incidence algebra.

In order to get that it is necessary to have $\alpha_2 \in \Sigma$. Otherwise, if the path $\beta'$, (in the picture), has length at least one, then for any arrow in the path $\beta'$ which is in
$\Sigma$, we get that  $\alpha_2 \rho_2$ or $\rho_1 \alpha_7$ belong to the ideal $<I_\Gamma \cup \Sigma>$, so the quotient would not be an incidence algebra. If  $\alpha_2 \in \Sigma$, then  $\rho_1 \in \Sigma $, another time, in this case using  lemma \ref{corteAn}, we obtain that $KQ_{\Gamma}/<I_\Gamma \cup \Sigma>$ is not an incidence algebra.

Next we look at configurations where the  path $\beta$ does not exist, and the symbol $***$ is  replaced only by elementary cycle(s) with two arrows. As seen in the previous paragraph, we need to have $\alpha_2 \in \Sigma$, implying $\rho_1 \in \Sigma$. Again we use the lemma \ref{corteAn}, implying a single cutting set. Consequently, we get a solution.

The solution has the quiver which depends on the length of the path $\beta'$ and the number of elementary cycles in the middle of the trivial extension $\Gamma$. For example,
If $\Gamma$ has the path $\beta'$ with length one and only one elementary cycle in the middle of $\Gamma$,  we get the following PHI algebra:
\begin{center}
\begin{tikzcd} [column sep=tiny, row sep=scriptsize]
		&										&\bullet \ar[dl]&								&			&\bullet \ar[dl]\\
\bullet	&\bullet \ar[l] \ar[rr, dash, dotted] 	&				&\bullet \ar[ul] \ar[dl] \ar[r]	&\bullet	&\\
		&										&\bullet \ar[ul]&								&			&\bullet \ar[ul]
\end{tikzcd}
\end{center}

We get two solutions. If the path $\beta'$, has length bigger or equal 1 then we have the solution $6$, otherwise we get the solution $5$.
\end{proof}

Next we will proof two theorems related  to the description of the PHI algebras of the ANS family of type  $\widetilde{\mathbb{E}}_6$ and  $\widetilde{\mathbb{E}}_7$.

\begin{theorem} \label{e6til}

The PHI algebras of the ANS family of type  $\widetilde{\mathbb{E}}_6$, are described as quiver and relations, as follows.
\NumTabs{3}
\begin{inparaenum}

\noindent 
\item 
\begin{tikzcd} [column sep=small, row sep=small]
				&\bullet \ar[dl] \ar[dr] \ar[ddd, dash , dotted] 	&\\
\bullet	\ar[d] 	&													&\bullet \ar[d]\\
\bullet \ar[dr]	&													&\bullet \ar[dl]\\
				&\bullet \ar[d]										&\\
				&\bullet 											&         
\end{tikzcd}
\tab\item 
\begin{tikzcd} [column sep=small, row sep=small]
				&\bullet \ar[d]										&\\
				&\bullet \ar[dl] \ar[dr] \ar[ddd, dash , dotted] 	&\\
\bullet	\ar[d] 	&													&\bullet \ar[d]\\
\bullet \ar[dr]	&													&\bullet \ar[dl]\\
				&\bullet											&
\end{tikzcd}
\tab\item 
\begin{tikzcd} [column sep=small, row sep=small]
				&\bullet 												&\\
				&\bullet \ar[dl] \ar[dr] \ar[u] \ar[ddd, dash , dotted] &\\
\bullet	\ar[d] 	&														&\bullet \ar[d]\\
\bullet \ar[dr]	&														&\bullet \ar[dl]\\
				&\bullet												&
\end{tikzcd}

\noindent 
\item 
\begin{tikzcd} [column sep=small, row sep=small]
				&\bullet \ar[dl] \ar[dr] \ar[ddd, dash , dotted] 	&\\
\bullet	\ar[d] 	&													&\bullet \ar[d]\\
\bullet \ar[dr]	&													&\bullet \ar[dl]\\
				&\bullet 											&\\
				&\bullet \ar[u]										&         
\end{tikzcd}
\tab\item 
\begin{tikzcd} [column sep=small, row sep=small]
								&\bullet \ar[ddl] \ar[ddr]		&\\
&&\\
\bullet \ar[ddr, dash, dotted]	&\bullet \ar[uu]				&\bullet \ar[ddl, dash, dotted]\\
&&\\
\bullet \ar[uu] 				&\bullet \ar[l]	\ar[r] \ar[uu] 	&\bullet \ar[uu] 
\end{tikzcd}
\tab\item 
\begin{tikzcd} [column sep=small, row sep=small]
						&\bullet \ar[dd]									&\\
						&													&\\
\bullet \ar[r] 			&\bullet \ar[dr, dash, dotted] \ar[dl, dash, dotted]&\bullet \ar[l]\\
\bullet \ar[u] \ar[dr]	&													&\bullet \ar[u] \ar[dl]\\
						&\bullet \ar[uu]									& 
\end{tikzcd}

\noindent 
\item 
\begin{tikzcd} [column sep=small, row sep=small]
\bullet \ar[ddr] 								&\bullet \ar[ddl] \ar[ddr] 	&\bullet \ar[ddl]\\
												&							&\\
\bullet \ar[uu] \ar[ddr] \ar[r, dash, dotted]	&\bullet 					&\bullet \ar[uu] \ar[ddl] \ar[l, dash, dotted]\\
												&							&\\
												&\bullet \ar[uu] 			& 
\end{tikzcd}
\tab\item 
\begin{tikzcd} [column sep=small, row sep=small]
\bullet \ar[ddr] 								&\bullet 			&\bullet \ar[ddl]\\
												&					&\\
\bullet \ar[uu] \ar[ddr] \ar[r, dash, dotted] 	&\bullet \ar[uu]	&\bullet \ar[uu] \ar[l, dash, dotted] \ar[ddl]\\
												&					&\\
												&\bullet \ar[uu]	& 
\end{tikzcd}
\tab\item 
\begin{tikzcd} [column sep=small, row sep=small]
\bullet \ar[d] &&\bullet \ar[d] \\
\bullet \ar[ddr] &\bullet \ar[ddl] \ar[l] \ar[r] \ar[dd, dash, dotted] &\bullet \ar[ddl] \\
&&\\
\bullet \ar[r] &\bullet &
\end{tikzcd}

\noindent 
\item 
\begin{tikzcd} [column sep=small, row sep=small]
										&\bullet \ar[dl]	&									&\\
\bullet	\ar[dd]	\ar[ddrr, dash, dotted]	&					&\bullet \ar[ll] \ar[dr]			&\\
						 				& 					&									&\bullet\\
\bullet \ar[rr, dash, dotted]			&					&\bullet \ar[dl] \ar[uu] \ar[uuul] 	&\\
										&\bullet \ar[ul]	&									&
\end{tikzcd}
\tab\item 
\begin{tikzcd} [column sep=small, row sep=small]
					&\bullet \ar[dr]		&								&\\
					&\bullet \ar[u] \ar[dd]	&\bullet \ar[l, dash, dotted] 	&\\
\bullet \ar[urr]	&						&								&\bullet \ar[dl]\\
					&\bullet \ar[r] \ar[ul] &\bullet \ar[uu] 				&
\end{tikzcd}

\end{inparaenum}

\end{theorem}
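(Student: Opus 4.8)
The plan is to follow the same computational strategy that was used for the Dynkin types $\mathbb{E}_7$ and $\mathbb{E}_8$ (Theorems \ref{teoe7} and \ref{provae8}) and for the families $\widetilde{\mathbb{D}}_4$, $\widetilde{\mathbb{D}}_5$, $\widetilde{\mathbb{D}}_n$ (Theorems \ref{d4til}, \ref{d5til}, \ref{dntil}). Concretely, one runs through the list of schurian concealed algebras $A$ of Euclidean type $\widetilde{\mathbb{E}}_6$ taken from the Happel--Vossieck list as displayed in Chapter XIV of \cite{sim-sko2}, and for each such $A$ (it is enough to treat one representative of each pair $(A,A^{op})$, by the lemma $T(A)^{op}\cong T(A^{op})$) one carries out the six-step script stated before Theorem \ref{d4til}: compute the quiver of $T(A)$ using Fern\'andez's description, list the elementary cycles, write down the relations of type $2$, name the arrows occurring in those relations, feed the data into the computer program, and finally, for each non-hereditary cutting set produced, check that $KQ_{T(A)}/\langle I_{T(A)}\cup\Sigma\rangle$ is indeed an incidence algebra which is not concealed. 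Theorem \ref{ANS} (in the form of the Proposition removing the simply connected hypothesis) then guarantees that each such $K\Delta$ is an iterated tilted, hence PHI, algebra of type $\widetilde{\mathbb{E}}_6$, so it belongs to the ANS family.

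The first step I would carry out in detail is the reduction of the search space. Since $A$ is a concealed algebra of type $\widetilde{\mathbb{E}}_6$, the quiver of $A$ is one of the finitely many frames in the Happel--Vossieck list whose underlying graph is obtained from $\widetilde{\mathbb{E}}_6$ by the admissible operations; one discards immediately the non-schurian ones and, among the schurian ones, one discards those trivial extensions all of whose cutting sets yield hereditary algebras. By Lemma \ref{cortephia}, a non-hereditary incidence algebra can only arise when the cutting set contains an arrow lying in (at least) two elementary cycles, so one only needs to look at those $T(A)$ possessing a pair of elementary cycles with a common arrow. This drastically cuts down the cases. For the genuinely hereditary $A$ of type $\widetilde{\mathbb{E}}_6$ one proceeds exactly as in the $\widetilde{\mathbb{D}}_n$ proof: the configuration of elementary cycles of $T(A)$ is controlled by the star shape of $\widetilde{\mathbb{E}}_6$ (three arms of lengths $2,2,2$ around a central vertex), and the cutting lemmas (Example \ref{corte1} and Lemmas \ref{cortephia}, \ref{corteAn}) pin down which cutting sets give incidence algebras; this produces the families appearing as items $1$--$4$ of the theorem.

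Then I would go through the remaining frames one at a time, in each case displaying, as in the sample computation in the proof of Theorem \ref{d4til}, the labelled quiver of $T(A)$, its relations of type $2$ and its elementary cycles, running the program, and reading off the non-hereditary solutions; collecting them all (together with their opposites, via the $T(A)^{op}\cong T(A^{op})$ lemma) yields precisely items $1$ through $12$ of the statement. For the write-up I would present one representative trivial extension with its full data in a \texttt{tikzcd} picture plus an adjacent \texttt{minipage} listing $r1,r2,\dots$ and $C_1,C_2,\dots$ and noting which solution numbers it produces, then state that the remaining frames are handled in the same way.

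The main obstacle I anticipate is twofold. First, there is the sheer bookkeeping: unlike the Dynkin cases there is no precompiled list of the trivial extensions of the concealed algebras, so each quiver $Q_{T(A)}$ has to be computed from scratch via Fern\'andez's theorem (the maximal linearly independent maximal paths must be found by hand), and one must be careful that parallel paths in $Q_A$ are equal in $A$ so that Theorem \ref{relacao} applies. Second, and more delicate, is the very last bullet of the script: after the program outputs a non-hereditary cutting set $\Sigma$, one must verify that $KQ_{T(A)}/\langle I_{T(A)}\cup\Sigma\rangle$ is an incidence algebra (its quiver has no bypass and the surviving type-$3$ relations are exactly the commutativity relations forced by the poset) \emph{and} that it is not itself a concealed algebra of Euclidean type --- otherwise it would already be accounted for among the PHI concealed algebras rather than being a new member of the ANS family. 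Establishing this last point requires comparing each candidate with the Happel--Vossieck list, which is the genuinely non-routine part of the argument.
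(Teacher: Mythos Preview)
Your proposal is correct and follows essentially the same approach as the paper: run through the schurian concealed frames of type $\widetilde{\mathbb{E}}_6$ from the Happel--Vossieck list (the paper names them explicitly as $\mathcal{F}r6,\dotsc,\mathcal{F}r10$), compute each trivial extension, reduce via Lemma \ref{cortephia} to the cases where two elementary cycles share an arrow, feed the data into the program, and omit the concealed solutions. One small slip: the statement has eleven items, not twelve.
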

\begin{proof} \label{provae6til}
The list has five frames, see \cite{sim-sko2}: $\mathcal{F}r6$, $\mathcal{F}r7$, $\mathcal{F}r8$, $\mathcal{F}r9$ and $\mathcal{F}r10$. We will analyze only the schurian frames, satisfying the main hypothesis of the theory of the section \ref{sec6}. We will describe the trivial extension of each algebra originated from each one of the frames. That is, given a frame $\mathcal{F}r$, we will apply the admissible operation and then describe the trivial extension of that algebra. Next we will input the necessary information for each trivial extension in the computer program and show the non-hereditary solutions.

We note that the concealed algebra of the type $\widetilde{\mathbb{E}}_6$ is always one of the solutions and we will omit it. 
We will show the argument just for $\mathcal{F}r6$, the argument for the other algebras which we obtain, is analogous.

The adjacent Euclidean quiver $\widetilde{\mathbb{E}}_6$ has $64$ of possibilities of admissible operation $1$, that is, we can have $64$ Euclidean graphs $\widetilde{\mathbb{E}}_6$. Now, our goal is to analyze the trivial extension of each graph and see if there is any cutting set that defines a non-hereditary PHI algebra. For this, in the trivial extension, we must have at least two elementary cycles that have at least one common arrow, according to lemma \ref{cortephia}.

So we need to have two maximal paths that have at least length two and one arrow in common. Up to duality, there are three cases:

\NumTabs{2}
\begin{inparaenum}

\noindent 
\item
\begin{tikzcd} [column sep=small, row sep=small]
						&				&\bullet \ar[d, dash]	&				&\\
						&				&\bullet 				&				&\\
\bullet	\ar[r, dash]	&\bullet \ar[r]	&\bullet \ar[u]			&\bullet \ar[l]	&\bullet \ar[l, dash]
\end{tikzcd}
\tab\item
\begin{tikzcd} [column sep=small, row sep=small]
						&				&\bullet \ar[d, dash]	&			&\\
						&				&\bullet \ar[d]			&			&\\
\bullet	\ar[r, dash]	&\bullet \ar[r]	&\bullet \ar[r] 		&\bullet 	&\bullet \ar[l, dash]
\end{tikzcd}
\tab\item
\begin{tikzcd} [column sep=small, row sep=small]
						&				&\bullet \ar[d, dash]	&			&\\
						&				&\bullet 				&			&\\
\bullet	\ar[r, dash]	&\bullet \ar[r]	&\bullet \ar[r] \ar[u]	&\bullet	&\bullet \ar[l, dash]
\end{tikzcd}
\end{inparaenum}

Up to symmetry, we will make all combinations of admissible operations $1$ from the three previous cases.

We start with the frame
$\mathcal{F}r6.1$, above. With one possible orientation of the unoriented edge, on the frame, we get the following algebra and its trivial extension.

\begin{tikzcd} [column sep=small, row sep=small]
				&				&\bullet 		&				&\\
				&				&\bullet \ar[u]	&				&\\
\bullet	\ar[r]	&\bullet \ar[r]	&\bullet \ar[u]	&\bullet \ar[l]	&\bullet \ar[l]
\end{tikzcd}\hfill%
\begin{tikzcd} [column sep=small, row sep=small]
&&\bullet \ar{ddll}[sloped, swap]{\alpha_5} \ar{ddrr}[sloped]{\alpha_6} &&\\
&&\bullet \ar{u}{\alpha_1} &&\\
\bullet	\ar{r}[swap]{\alpha_7} &\bullet \ar{r}[swap]{\alpha_3} &\bullet \ar{u}{\alpha_2} &\bullet \ar{l}{\alpha_4} &\bullet \ar{l}{\alpha_8}
\end{tikzcd}

The relations of type $2$ are: $r1=\alpha_3 \alpha_2 \alpha_1 \alpha_6$ and $r2=\alpha_4 \alpha_2 \alpha_1 \alpha_5$.

The elementary cycles are: $C_1 = \alpha_7 \alpha_3 \alpha_2 \alpha_1 \alpha_5$ and $C_2 = \alpha_8 \alpha_4 \alpha_2 \alpha_1 \alpha_6$. 

Then, the computer program shows us the cutting sets that define the algebras $1$ and $2$ of the theorem.
\end{proof}

Before stating the theorem describing the PHI algebras of the ANS family of type  $\widetilde{\mathbb{E}}_7$, we need the following lemma. 

\begin{lemma} \label{cortenphia}
Let $\Gamma$ be a trivial extension of a schurian algebra with at least three elementary cycles with the following conditions:
\begin{asparaenum} [\itshape a)]
\item two elementary cycles have at least one arrow in common, we shall call the path in the intersection by  $\lambda$;
\item the third elementary cycle has a vertex in common with one of the two previous elementary cycles and has no intersection with the other;
\item the common vertex of the previous condition does not belong to the path $\lambda$;
\end{asparaenum}

These conditions are ilustrated in the following picture:
\begin{center}
\begin{tikzcd}
&\bullet \ar{d}{\beta'} &&\\
\bullet \ar[bend left, dashed]{ur}{\theta} &\bullet \ar{l}{\beta} \ar{r}{\gamma_2} &\dotsb \ar{r}{\gamma_{n \menas 1}} &\bullet \ar{d}{\gamma_n}\\
&\bullet \ar{d}[swap]{\alpha_1} \ar{u}{\gamma_1} &&\bullet \ar[dashed]{ll}{\lambda}\\
&\bullet \ar{r}[swap]{\alpha_2} &\dotsb \ar{r}[swap]{\alpha_{m \menas 1}} &\bullet \ar{u}[swap]{\alpha_m}
\end{tikzcd}
\end{center}

Let $\Sigma$ be a cutting set of $\Gamma$. 
If any arrow of $\lambda$ belongs to $\Sigma$ then $KQ_{\Gamma}/<I_\Gamma \cup \Sigma>$ is not an incidence algebra.
\end{lemma}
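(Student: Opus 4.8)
\textbf{Proof plan for Lemma \ref{cortenphia}.}

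The plan is to use the structure of Fern\'andez's Theorem \ref{relacao} to read off exactly which relations survive in $A' \cong KQ_\Gamma/\langle I_\Gamma \cup \Sigma\rangle$ once we know which arrows of $\lambda$ are cut, and then to exhibit a genuine bypass (two parallel paths one of which is a single arrow) in the quiver of $A'$ with no relation killing it — which, by the Remark that incidence algebras have no bypass, forces $A'$ not to be an incidence algebra. First I would set up notation following the picture: denote by $C_1$ and $C_2$ the two elementary cycles sharing the path $\lambda$, and by $C_3$ the third elementary cycle meeting (say) $C_1$ at a single vertex $v$ with $v \notin \lambda$. Writing $C_1$ and $C_2$ as concatenations $C_1 = \lambda\,\mu\,\alpha_1\dotsm$ and $C_2 = \lambda\,\nu\cdots$ in the evident way from the diagram — where $\alpha_1\dotsm\alpha_m$ is the $A$-path out of $C_1$ and $\gamma_1\dotsm\gamma_n$ is the $A$-path making up part of $C_2$ together with the arrow $\lambda$ (the dashed path) — I would record the relevant relations of type $2$ and type $3$ supplied by Theorem \ref{relacao}: the type $3$ commutativity relation equating the two supplements of $\lambda$ in $C_1$ and $C_2$, the type $3$ relation coming from the common vertex $v$ where $C_1$ and $C_3$ meet, and the type $2$ monomial relations saying that a path cannot run through two distinct elementary cycles.

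Next I would analyze the effect of the cut. Suppose some arrow of $\lambda$, say $\lambda_j$, lies in $\Sigma$. Since $\Sigma$ is a cutting set, $\lambda_j$ is the \emph{only} arrow cut in each elementary cycle containing it; because $\lambda \subseteq C_1 \cap C_2$, the arrow $\lambda_j$ serves as the cut for both $C_1$ and $C_2$, so no other arrow of $C_1$ or of $C_2$ is in $\Sigma$. In particular the arrows of the supplement paths of $\lambda$ in $C_1$ and in $C_2$ all survive in $A'$. The type $1$ relations are eliminated by the cut (as remarked after Theorem \ref{relacao}), the type $2$ relations do not involve $\lambda_j$ in a way that kills the surviving paths, and the crucial point is that the type $3$ commutativity relation having $\lambda$ as common supplement is \emph{destroyed}: the two parallel paths it relates both pass through $\lambda_j$, hence both become zero after the cut, so the difference relation becomes trivial and imposes nothing. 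Meanwhile the third elementary cycle $C_3$ must be cut by some arrow $\tau \in \Sigma \cap C_3$, and since $C_3$ meets $C_1$ only at the vertex $v$ and meets $C_2$ not at all, this $\tau$ is forced to lie among the arrows of $C_3$ that are \emph{not} shared with $C_1$.

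Then I would extract the bypass. Because $\lambda_j$ is cut but no arrow of the supplement of $\lambda$ in $C_1$ (respectively $C_2$) is cut, the quiver $Q_{A'}$ contains two parallel paths $p_1$ (through the $C_1$-side) and $p_2$ (through the $C_2$-side) from $t(\lambda)$ to $s(\lambda)$ — exactly the two supplements — both of which are nonzero in $A'$, since the only relations that could have equated or killed them were the now-destroyed type $3$ relation and type $1$/type $2$ relations that do not apply here; using condition (b)--(c) (the vertex $v$ lies off $\lambda$ and $C_3$ is otherwise disjoint from $C_2$) one checks the type $3$ relation at $v$ does not reduce $p_1$ to $p_2$ either. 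One of $p_1, p_2$ has length $1$ precisely when the corresponding cycle has length $2$; in the general configuration shown one can always arrange, after choosing which supplement to look at, that we have a length-$1$ arrow parallel to a longer nonzero path, i.e. a bypass — and even if both supplements have length $\geq 2$, the existence of two distinct nonzero parallel paths already contradicts the commutativity-relation structure forced on any incidence algebra (parallel paths must be equal, yet here they are linearly independent because no surviving relation identifies them). Either way $A'$ is not an incidence algebra.

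The main obstacle I anticipate is the bookkeeping in the last step: one must check carefully that \emph{no} surviving relation — in particular none of the type $2$ monomial relations, and not the type $3$ relation attached to the vertex $v$ — collapses the two parallel paths $p_1$ and $p_2$ into a single class or annihilates the shorter one. This is where hypotheses (a), (b), (c) are used in an essential way: (a) guarantees $\lambda$ is genuinely shared so the cut at $\lambda_j$ kills the commutativity relation, while (b) and (c) guarantee that the third cycle's interaction happens at a vertex off $\lambda$, so that cutting $C_3$ cannot "repair" the parallelism by killing one of $p_1,p_2$. As in the proof of Example \ref{corte1} and Lemma \ref{cortephia}, it suffices to verify the claim in the minimal configuration drawn in the statement, the general case reducing to it by the same local argument.
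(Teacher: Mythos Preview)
Your central claim is mistaken. You assert that the type~3 relation ``having $\lambda$ as common supplement'' is destroyed because ``the two parallel paths it relates both pass through $\lambda_j$''. But by definition of supplement, the two paths related by that relation are precisely the \emph{complements} of $\lambda$ in $C_1$ and $C_2$, namely $\alpha_1\cdots\alpha_m$ and $\gamma_1\cdots\gamma_n$; neither contains any arrow of $\lambda$. Hence this commutativity relation survives the cut intact, and your two ``parallel paths'' $p_1,p_2$ are in fact identified in $A'$. There is no bypass and no pair of linearly independent parallel paths coming from this part of the picture, so your argument collapses.

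The paper's proof locates the obstruction elsewhere and is much shorter. The point is not the interaction of $C_1$ and $C_2$ along $\lambda$ (which, as just noted, behaves well) but the junction of $C_3$ with $C_2$. At the vertex $t(\gamma_1)=s(\gamma_2)=s(\beta)=t(\beta')$ there are two type~2 monomial relations, $r_1=\gamma_1\beta$ and $r_2=\beta'\gamma_2$. Since $\Sigma$ already meets $C_2$ in an arrow of $\lambda$, the arrows $\gamma_1,\gamma_2$ are \emph{not} cut. The cutting set must also contain exactly one arrow of $C_3$; whichever one is chosen, at least one of $\beta,\beta'$ survives, and hence at least one of $r_1,r_2$ remains a genuine zero relation in $A'$ between surviving arrows. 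An incidence algebra has only commutativity relations (Proposition~\ref{moninc} and the surrounding discussion), so a surviving monomial relation immediately rules it out. Conditions (b) and (c) are used precisely to guarantee that $C_3$ sits at a vertex of $C_2\setminus\lambda$, so that $\gamma_1,\gamma_2$ cannot be the arrows cut from $C_3$; you treated $C_3$ as a side issue, but it is the whole story.
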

\begin{proof}
Without loss of generality, we will consider $\Gamma$ as a trivial extension with three elementary cycles satisfying the conditions of the statement of the lemma.

Thus, the relations of type $2$ are: $r1=\gamma_1 \beta$, $r2=\beta' \gamma_2$, $r3=\alpha_m \lambda \gamma_1$ e $r4=\gamma_n \lambda \alpha_1$. 
The elementary cycles are: $C_1 = \alpha_1 \alpha_2 \dotsc \alpha_{m - 1} \alpha_m \lambda$, $C_2 =  \lambda \gamma_1 \gamma_2 \dotsc \gamma_{n - 1} \gamma_n$ e $C_3 = \theta \beta' \beta$. 

Assume that some arrow in the path $\lambda$ belongs to the cutting set $\Sigma$.  We will see that, in this case,  independently of  the choice of the arrow of the elementary cycle $C_3$, the $KQ_{\Gamma}/<I_\Gamma \cup \Sigma>$ is not an incidence algebra.

We assume that $\beta' \in \Sigma$, then the relation $r1$ belongs to $<I_\Gamma \cup \Sigma>$. Consequently, $KQ_{\Gamma}/<I_\Gamma \cup \Sigma>$ is not an incidence algebra.

Analogously, we arrive at the same result if $\beta \in \Sigma$ or any arrow of $\theta$ belongs to the cutting set.
\end{proof}

\begin{theorem} \label{e7til}

The PHI algebras of the ANS family of type  $\widetilde{\mathbb{E}}_7$ are described below.

\NumTabs{2}

\begin{inparaenum}
\noindent 
\item 
\begin{tikzcd} [column sep=tiny, row sep=tiny]
				&\bullet \ar[dl] \ar[dr] \ar[dddd, dash , dotted] 	&\\
\bullet	\ar[d] 	&													&\bullet \ar[d]\\
\bullet	\ar[d] 	&													&\bullet \ar[d]\\
\bullet \ar[dr]	&													&\bullet \ar[dl]\\
				&\bullet											&
\end{tikzcd}
\tab\item 
\begin{tikzcd} [column sep=tiny, row sep=tiny]
				&\bullet \ar[dl] \ar[ddr] \ar[dddd, dash , dotted] 	&&\\
\bullet	\ar[d] 	&													&&\\
\bullet	\ar[d] 	&													&\bullet \ar[ddl] &\\
\bullet \ar[dr]	&													&&\\
				&\bullet											&\bullet \ar[l] &\bullet \ar[l]\\
\end{tikzcd}
\tab\item 
\begin{tikzcd} [column sep=tiny, row sep=tiny]
				&\bullet \ar[dl] \ar[ddr] \ar[dddd, dash , dotted] 	&\bullet \ar[l]\\
\bullet	\ar[d] 	&													&\\
\bullet	\ar[d] 	&													&\bullet \ar[ddl]\\
\bullet \ar[dr]	&													&\\
				&\bullet											&\bullet \ar[l]
\end{tikzcd}
\tab\item 
\begin{tikzcd} [column sep=tiny, row sep=tiny]
				&\bullet \ar[dl] \ar[ddr] \ar[dddd, dash , dotted] 	&\\
\bullet	\ar[d] 	&													&\\
\bullet	\ar[d] 	&													&\bullet \ar[ddl]\\
\bullet \ar[dr]	&													&\\
				&\bullet											&\\
                &\bullet \ar[u]	\ar[r]								&\bullet
\end{tikzcd}
\tab\item 
\begin{tikzcd} [column sep=tiny, row sep=tiny]
				&\bullet \ar[dl] \ar[ddr] \ar[dddd, dash , dotted] 	&\\
\bullet	\ar[d] 	&													&\\
\bullet	\ar[d] 	&													&\bullet \ar[ddl]\\
\bullet \ar[dr]	&													&\\
				&\bullet \ar[d]										&\\
                &\bullet \ar[d]										&\\
                &\bullet 											&
\end{tikzcd}
\tab\item 
\begin{tikzcd} [column sep=tiny, row sep=tiny]
                &\bullet \ar[d]										&\\
				&\bullet \ar[dl] \ar[ddr] \ar[dddd, dash , dotted] 	&\\
\bullet	\ar[d] 	&													&\\
\bullet	\ar[d] 	&													&\bullet \ar[ddl]\\
\bullet \ar[dr]	&													&\\
				&\bullet \ar[d]										&\\
                &\bullet 											&
\end{tikzcd}
\tab\item 
\begin{tikzcd} [column sep=tiny, row sep=tiny]
                &\bullet \ar[d]										&\\
                &\bullet \ar[d]										&\\
				&\bullet \ar[dl] \ar[ddr] \ar[dddd, dash , dotted] 	&\\
\bullet	\ar[d] 	&													&\\
\bullet	\ar[d] 	&													&\bullet \ar[ddl]\\
\bullet \ar[dr]	&													&\\
				&\bullet 											&
\end{tikzcd}
\tab\item 
\begin{tikzcd} [column sep=tiny, row sep=tiny]
                &\bullet 													&\\
				&\bullet \ar[dl] \ar[ddr] \ar[dddd, dash , dotted] \ar[u]	&\\
\bullet	\ar[d] 	&															&\\
\bullet	\ar[d] 	&															&\bullet \ar[ddl]\\
\bullet \ar[dr]	&															&\\
				&\bullet \ar[d]												&\\
                &\bullet 													&               
\end{tikzcd}
\tab\item 
\begin{tikzcd} [column sep=tiny, row sep=tiny]
                &\bullet \ar[d]												&\\    
                &\bullet 													&\\
				&\bullet \ar[dl] \ar[ddr] \ar[dddd, dash , dotted] \ar[u]	&\\
\bullet	\ar[d] 	&															&\\
\bullet	\ar[d] 	&															&\bullet \ar[ddl]\\
\bullet \ar[dr]	&															&\\
				&\bullet 													&            
\end{tikzcd}
\tab\item  
\begin{tikzcd} [column sep=tiny, row sep=tiny]
                &\bullet 													&\\    
                &\bullet \ar[u]												&\\
				&\bullet \ar[dl] \ar[ddr] \ar[dddd, dash , dotted] \ar[u]	&\\
\bullet	\ar[d] 	&															&\\
\bullet	\ar[d] 	&															&\bullet \ar[ddl]\\
\bullet \ar[dr]	&															&\\
				&\bullet 													&            
\end{tikzcd}
\tab\item 
\begin{tikzcd} [column sep=tiny, row sep=tiny]
&\bullet \ar[dl] \ar[ddr] \ar[ddddr, bend right, dash, dotted] &&\bullet \ar[ddddl, bend left, dash, dotted] \ar[ddl] \ar[dr] &\\
\bullet \ar[dd] &&&&\bullet \ar[dd]\\
&&\bullet \ar[dd] &&\\
\bullet \ar[drr] &&&&\bullet \ar[dll]\\
&&\bullet &&
\end{tikzcd}
\tab\item
\begin{tikzcd} [column sep=tiny, row sep=tiny]
&\bullet \ar[dl] \ar[d]\ar[ddl, dotted, dash, bend right=4] &\\
\bullet \ar[d] &\bullet \ar[dl] \ar[dd, dotted, dash] \ar[r] &\bullet \ar[d]\\
\bullet \ar[dr] &&\bullet \ar[dl]\\
&\bullet \ar[d] &\\
&\bullet &
\end{tikzcd}
\tab\item 
\begin{tikzcd}[column sep=tiny, row sep=tiny]
&\bullet \ar[d] &\\
&\bullet \ar[dl] \ar[d]\ar[ddl, dotted, dash, bend right=4] &\\
\bullet \ar[d] &\bullet \ar[dl] \ar[dd, dotted, dash] \ar[r] &\bullet \ar[d]\\
\bullet \ar[dr] &&\bullet \ar[dl]\\
&\bullet &
\end{tikzcd}
\tab\item
\begin{tikzcd}[column sep=tiny, row sep=tiny]
&\bullet \ar[dl] \ar[ddr] \ar[dddd, dotted, dash] &&\\
\bullet \ar[dd] &&&\\
&&\bullet \ar[ddl] &\bullet \ar[l]\\
\bullet \ar[dr] &&&\\
&\bullet \ar[d] &&\\
&\bullet \ar[d]&&\\
&\bullet &&
\end{tikzcd}
\tab\item
\begin{tikzcd}[column sep=tiny, row sep=tiny]
&\bullet \ar[dl] \ar[ddr] \ar[dddd, dotted, dash] &&\\
\bullet \ar[dd] &&&\\
&&\bullet \ar[ddl] &\bullet \ar[l]\\
\bullet \ar[dr] &&&\\
&\bullet &&\\
&\bullet \ar[u]&&\\
&\bullet \ar[u]&&
\end{tikzcd}
\tab\item 
\begin{tikzcd}[column sep=tiny, row sep=tiny]
&&\bullet \ar[dl] \ar[dr] \ar[dddd, dotted, dash]&\\
&\bullet \ar[d] &&\bullet \ar[d]\\
&\bullet \ar[d] &&\bullet \ar[ddl]\\
\bullet &\bullet \ar[l] \ar[dr]&&\\
&&\bullet &
\end{tikzcd}
\tab\item
\begin{tikzcd}[column sep=tiny, row sep=tiny]
&\bullet \ar[dl] \ar[ddr] \ar[dddd, dotted, dash] &&\\
\bullet \ar[dd] &&&\\
&&\bullet \ar[ddl] &\bullet \ar[l]\\
\bullet \ar[dr] &&&\\
&\bullet &&\\
&\bullet \ar[u] \ar[d]&&\\
&\bullet &&
\end{tikzcd}
\tab\item
\begin{tikzcd}[column sep=tiny, row sep=tiny]
&\bullet \ar[dl] \ar[d]\ar[ddl, dotted, dash, bend right=4] &\\
\bullet \ar[d] &\bullet \ar[dl] \ar[dd, dotted, dash] \ar[r] &\bullet \ar[d]\\
\bullet \ar[dr] &&\bullet \ar[dl]\\
&\bullet  &\\
&\bullet \ar[u]&
\end{tikzcd}
\tab\item 
\begin{tikzcd}[column sep=tiny, row sep=tiny]
&\bullet&\\
&\bullet \ar[dl] \ar[u] \ar[d]\ar[ddl, dotted, dash, bend right=4] &\\
\bullet \ar[d] &\bullet \ar[dl] \ar[dd, dotted, dash] \ar[r] &\bullet \ar[d]\\
\bullet \ar[dr] &&\bullet \ar[dl]\\
&\bullet  &
\end{tikzcd}
\tab\item
\begin{tikzcd}[column sep=tiny, row sep=tiny]
&\bullet \ar[dl] \ar[ddr] \ar[dddd, dotted, dash] &&\\
\bullet \ar[dd] &&&\\
&&\bullet \ar[ddl] &\bullet \ar[l]\\
\bullet \ar[dr] &&&\\
&\bullet \ar[d]&&\\
&\bullet &&\\
&\bullet \ar[u] &&
\end{tikzcd}
\tab\item
\begin{tikzcd}[column sep=tiny, row sep=tiny]
&\bullet \ar[dl] \ar[dr] \ar[ddd, dotted, dash] &&\\
\bullet \ar[d] &&\bullet \ar[d] &\\
\bullet \ar[dr] &&\bullet \ar[dl] \ar[dr] \ar[dd, dotted, dash]&\\
&\bullet \ar[dr] &&\bullet \ar[dl]\\
&&\bullet &
\end{tikzcd}
\tab\item 
\begin{tikzcd}[column sep=tiny, row sep=tiny]
&\bullet \ar[d]&&&\\
&\bullet \ar[dl] \ar[dr] \ar[ddd, dotted, dash]&&\\
\bullet \ar[d] &&\bullet \ar[d]&\\
\bullet \ar[dr] &&\bullet \ar[dl] \ar[r] &\bullet\\
&\bullet &&
\end{tikzcd}
\tab\item 
\begin{tikzcd}[column sep=tiny, row sep=tiny]
&\bullet &&&\\
&\bullet \ar[u] \ar[dl] \ar[dr] \ar[ddd, dotted, dash]&&\\
\bullet \ar[d] &&\bullet \ar[d]&\\
\bullet \ar[dr] &&\bullet \ar[dl] \ar[r] &\bullet\\
&\bullet &&
\end{tikzcd}
\tab\item 
\begin{tikzcd}[column sep=tiny, row sep=tiny]
&&\bullet \ar[dl] \ar[dr] \ar[dd, dotted, dash] &\\
&\bullet \ar[dl] \ar[dr] \ar[ddd, dotted, dash] &&\bullet \ar[dl]\\
\bullet\ar[d] &&\bullet \ar[d] &\\
\bullet \ar[dr] &&\bullet \ar[dl]&\\
&\bullet&&
\end{tikzcd}
\tab\item 
\begin{tikzcd}[column sep=tiny, row sep=tiny]
&\bullet \ar[dl] \ar[dr] \ar[ddd, dotted, dash]&&\\
\bullet \ar[d] &&\bullet \ar[d] &\bullet \ar[l]\\
\bullet \ar[dr] &&\bullet \ar[dl] &\\
&\bullet \ar[d] &&\\
&\bullet &&
\end{tikzcd}
\tab\item
\begin{tikzcd}[column sep=tiny, row sep=tiny]
&\bullet \ar[dl] \ar[dr] \ar[ddd, dotted, dash]&&\\
\bullet \ar[d] &&\bullet \ar[d] &\bullet \ar[l]\\
\bullet \ar[dr] &&\bullet \ar[dl] &\\
&\bullet &&\\
&\bullet \ar[u] &&
\end{tikzcd}
\tab\item
\begin{tikzcd}[column sep=tiny, row sep=tiny]
&\bullet \ar[ddl] \ar[dr] \ar[dddd, dotted, dash] &&&\\
&&\bullet \ar[d] \ar[r] &\bullet \ar[r] &\bullet\\
\bullet \ar[ddr] &&\bullet \ar[d] &&\\
&&\bullet \ar[dl] &&\\
&\bullet &&&
\end{tikzcd}
\tab\item 
\begin{tikzcd}[column sep=tiny, row sep=tiny]
&\bullet \ar[dl] \ar[dr] \ar[dddd, dotted, dash] &&\\
\bullet \ar[dd] &&\bullet \ar[d] &\\
&&\bullet \ar[d] &\\
\bullet \ar[dr] &&\bullet \ar[dl] &\bullet \ar[l] \\
&\bullet &&
\end{tikzcd}
\tab\item
\begin{tikzcd}[column sep=tiny, row sep=tiny]
&\bullet \ar[ddl] \ar[dr] \ar[dddd, dotted, dash] &&&\\
&&\bullet \ar[d] \ar[r] &\bullet &\bullet \ar[l]\\
\bullet \ar[ddr] &&\bullet \ar[d] &&\\
&&\bullet \ar[dl] &&\\
&\bullet &&&
\end{tikzcd}
\tab\item
\begin{tikzcd}[column sep=tiny, row sep=tiny]
&\bullet \ar[dl] \ar[dr] \ar[dddl, dotted, dash, bend left=15]&\\
\bullet \ar[d] &&\bullet \ar[ddll]\\
\bullet \ar[d] \ar[drr] \ar[dddr, dotted, dash, bend left=20]&&\\
\bullet \ar[d] &&\bullet \ar[d]\\
\bullet \ar[dr] &&\bullet \ar[dl]\\
&\bullet &
\end{tikzcd}
\tab\item 
\begin{tikzcd}[column sep=tiny, row sep=tiny]
&&\bullet \ar[d] &\\
&&\bullet \ar[dl] \ar[ddr] \ar[dddd, dotted, dash] &\\
\bullet \ar[r] &\bullet \ar[dd] &&\\
&&&\bullet \ar[ddl]\\
&\bullet \ar[dr] &&\\
&&\bullet \ar[d] &\\
&&\bullet &
\end{tikzcd}
\tab\item
\begin{tikzcd}[column sep=tiny, row sep=tiny]
&&\bullet \ar[d] &\\
&&\bullet \ar[dl] \ar[ddr] \ar[dddd, dotted, dash] &\\
&\bullet \ar[dd] &&\\
&&&\bullet \ar[ddl]\\
\bullet &\bullet \ar[l] \ar[dr] &&\\
&&\bullet \ar[d] &\\
&&\bullet &
\end{tikzcd}
\tab\item
\begin{tikzcd}[column sep=tiny, row sep=tiny]
&&\bullet \ar[d] &\\
&&\bullet \ar[dl] \ar[ddr] \ar[dddd, dotted, dash] &\\
&\bullet \ar[dd] &&\\
&&&\bullet \ar[ddl]\\
\bullet &\bullet \ar[l] \ar[dr] &&\\
&&\bullet &\\
&&\bullet \ar[u] &
\end{tikzcd}
\tab\item 
\begin{tikzcd}[column sep=tiny, row sep=tiny]
&&\bullet &\\
&&\bullet \ar[u] \ar[dl] \ar[ddr] \ar[dddd, dotted, dash] &\\
\bullet \ar[r] &\bullet \ar[dd] &&\\
&&&\bullet \ar[ddl]\\
&\bullet \ar[dr] &&\\
&&\bullet \ar[d] &\\
&&\bullet &
\end{tikzcd}
\tab\item
\begin{tikzcd}[column sep=tiny, row sep=tiny]
&\bullet \ar[dl] \ar[dr] \ar[dddd, dotted, dash] &&\\
\bullet \ar[dd] &&\bullet \ar[d] \ar[r] &\bullet \\
&&\bullet \ar[d] &\\
\bullet \ar[dr] &&\bullet \ar[dl] & \\
&\bullet &&
\end{tikzcd}
\tab\item
\begin{tikzcd}[column sep=tiny, row sep=tiny]
&\bullet \ar[ddl] \ar[dr] \ar[dddd, dotted, dash] &&&\\
&&\bullet \ar[d] &&\\
\bullet \ar[ddr] &&\bullet \ar[d] &&\\
&&\bullet \ar[dl] &\bullet \ar[l] &\bullet \ar[l]\\
&\bullet &&&
\end{tikzcd}
\tab\item 
\begin{tikzcd}[column sep=tiny, row sep=tiny]
&\bullet \ar[ddl] \ar[dr] \ar[dddd, dotted, dash] &&&\\
&&\bullet \ar[d] &&\\
\bullet \ar[ddr] &&\bullet \ar[d] &&\\
&&\bullet \ar[dl] &\bullet \ar[l] \ar[r] &\bullet \\
&\bullet &&&
\end{tikzcd}
\tab\item
\begin{tikzcd}[column sep=tiny, row sep=tiny]
&&&\bullet \ar[dl] \ar[ddr] \ar[dddd, dotted, dash] &&\\
&&\bullet \ar[dd] &&&\\
&&&&\bullet \ar[ddl] \ar[r] &\bullet \\
\bullet \ar[r] &\bullet \ar[r] &\bullet \ar[dr] &&&\\
&&&\bullet &&
\end{tikzcd}
\tab\item
\begin{tikzcd}[column sep=tiny, row sep=tiny]
&&&\bullet \ar[dl] \ar[ddr] \ar[dddd, dotted, dash] &&\\
&&\bullet \ar[dd] &&&\\
&&&&\bullet \ar[ddl] \ar[r] &\bullet \\
\bullet &\bullet \ar[l] \ar[r] &\bullet \ar[dr] &&&\\
&&&\bullet &&
\end{tikzcd}
\tab\item 
\begin{tikzcd}[column sep=tiny, row sep=tiny]
&\bullet \ar[dl] \ar[dr] \ar[ddl, dotted, dash, bend left=15]&&\\
\bullet \ar[d] \ar[dddr, dotted, dash, bend left=20] \ar[drr] &&\bullet \ar[dll] \ar[r] &\bullet\\
\bullet \ar[d] &&\bullet \ar[ddl] &\\
\bullet \ar[dr] &&&\\
&\bullet &&
\end{tikzcd}
\tab\item
\begin{tikzcd}[column sep=tiny, row sep=tiny]
&\bullet \ar[ddl] \ar[drr]  \ar[dddddr, dotted, dash] &&&\bullet  \ar[dddd, dotted, dash] \ar[dl] \ar[ddr] &\\
&&&\bullet \ar[dd] &&\\
\bullet \ar[dddrr] &&&&&\bullet \ar[ddl] \\
&&&\bullet \ar[dr] &&\\
&&&&\bullet \ar[dll] &\\
&&\bullet &&&
\end{tikzcd}
\tab\item
\begin{tikzcd}[column sep=tiny, row sep=tiny]
&&&\bullet \ar[dl] \ar[ddr] \ar[dddd, dotted, dash] &&\\
\bullet &\bullet \ar[l] &\bullet \ar[l] \ar[dd] &&& \\
&&&&\bullet \ar[ddl] &\bullet \ar[l] \\
&&\bullet \ar[dr] &&& \\
&&&\bullet &&
\end{tikzcd}
\tab\item 
\begin{tikzcd}[column sep=tiny, row sep=tiny]
&&&\bullet \ar[dl] \ar[ddr] \ar[dddd, dotted, dash] &&\\
\bullet \ar[r] &\bullet &\bullet \ar[l] \ar[dd] &&& \\
&&&&\bullet \ar[ddl] &\bullet \ar[l] \\
&&\bullet \ar[dr] &&& \\
&&&\bullet &&
\end{tikzcd}
\tab\item
\begin{tikzcd}[column sep=tiny, row sep=tiny]
&&&&\bullet  \ar[dddd, dotted, dash] \ar[dl] \ar[ddr] &\\
&&&\bullet \ar[dd] &&\\
&&&&&\bullet \ar[ddl] \\
&&&\bullet \ar[dl] \ar[dr] \ar[dd, dotted, dash] &&\\
&\bullet \ar[r] &\bullet \ar[dr] &&\bullet \ar[dl] &\\
&& &\bullet &&
\end{tikzcd}
\tab\item
\begin{tikzcd}[column sep=tiny, row sep=tiny]
&&\bullet \ar[dl] \ar[dddr] \ar[dddddddl, dotted, dash, bend left=5] &\\
&\bullet \ar[dl] \ar[dd] \ar[dddd, dotted, dash, bend right=35] &&\\
\bullet \ar[dd] &&&\\
&\bullet \ar[dd] &&\bullet \ar[ddddll] \\
\bullet \ar[dddr] \ar[dr] &&&\\
&\bullet &&\\
&&&\\
&\bullet &&
\end{tikzcd}
\tab\item 
\begin{tikzcd}[column sep=tiny, row sep=tiny]
&&&\bullet &\bullet \ar[l] \ar[dl] \ar[dr] \ar[dd, dotted, dash]&\\
&&&\bullet \ar[dl] \ar[dr] \ar[dd, dotted, dash] &&\bullet \ar[dl]\\
&\bullet \ar[r] &\bullet \ar[dr] &&\bullet \ar[dl] &\\
&& &\bullet &&
\end{tikzcd}
\tab\item
\begin{tikzcd}[column sep=tiny, row sep=tiny]
&\bullet \ar[d] &&&\\
&\bullet \ar[dl] \ar[ddr] \ar[ddddr, dotted, dash, bend right=35] &&\bullet \ar[ddl] \ar[ddr] \ar[ddddl, dotted, dash, bend left=25] &\\
\bullet \ar[dd] &&&&\\
&&\bullet \ar[dd] &&\bullet \ar[ddll] \\
\bullet \ar[drr] &&&&\\
&&\bullet &&
\end{tikzcd}
\tab\item
\begin{tikzcd}[column sep=tiny, row sep=tiny]
&&\bullet \ar[d] \ar[drr] \ar[dll] \ar[dddl, dotted, dash, bend right=25] \ar[ddr, dotted, dash, bend left=25]&&\\
\bullet\ar[d] &&\bullet \ar[d] \ar[dr] &&\bullet \ar[dl]\\
\bullet \ar[dr] &&\bullet \ar[dl] &\bullet &\\
&\bullet &&&
\end{tikzcd}
\tab\item 
\begin{tikzcd}[column sep=tiny, row sep=tiny]
&&\bullet \ar[dl] \ar[dr] \ar[ddd, dotted, dash] &&\\
&\bullet \ar[ddl] \ar[ddr] \ar[dddr, dotted, dash, bend right=25] &&\bullet \ar[d] &\\
&&&\bullet \ar[dl] \ar[dr] \ar[ddl, dotted, dash, bend left=10] &\\
\bullet \ar[drr] &&\bullet \ar[d] &&\bullet \ar[dll] \\
&&\bullet &&
\end{tikzcd}
\tab\item
\begin{tikzcd}[column sep=tiny, row sep=tiny]
&\bullet \ar[d] &&&\\
&\bullet \ar[ddl] \ar[ddr] \ar[ddddr, dotted, dash, bend right=25] &&\bullet \ar[ddl] \ar[ddr] \ar[ddddl, dotted, dash, bend left=25] &\\
&&&&\\
\bullet \ar[ddrr] &&\bullet \ar[dd] &&\bullet \ar[ddll] \\
&&&&\\
&&\bullet \ar[d] &&\\
&&\bullet &&
\end{tikzcd}
\tab\item
\begin{tikzcd}[column sep=tiny, row sep=tiny]
&&\bullet \ar[d] \ar[drr] \ar[dll] \ar[dddl, dotted, dash, bend right=25] \ar[dddrr, dotted, dash, bend right=25]&&\\
\bullet \ar[ddr] &&\bullet \ar[d] &&\bullet \ar[dd]\\
&&\bullet \ar[dl] \ar[dr] &&\\
&\bullet &&\bullet \ar[r] &\bullet
\end{tikzcd}
\tab\item 
\begin{tikzcd}[column sep=tiny, row sep=tiny]
&&&\bullet \ar[d] &\\
&\bullet \ar[ddl] \ar[ddr] \ar[ddddr, dotted, dash, bend right=25] &&\bullet \ar[ddl] \ar[ddr] \ar[ddddl, dotted, dash, bend left=25] &\\
&&&&\\
\bullet \ar[ddrr] &&\bullet \ar[dd] &&\bullet \ar[ddll] \\
&&&&\\
&&\bullet &&\\
&&\bullet \ar[u] &&
\end{tikzcd}
\tab\item
\begin{tikzcd}[column sep=tiny, row sep=tiny]
&&\bullet \ar[dl] \ar[dr] \ar[dd, dotted, dash] &&&\\
&\bullet \ar[dl] \ar[dr] \ar[dddr, dotted, dash, bend right=25] &&\bullet \ar[dl] &\bullet \ar[l] \ar[dr] \ar[dddll, dotted, dash, bend left=10] &\\
\bullet \ar[ddrr] &&\bullet \ar[dd] &&&\bullet \ar[ddlll]\\
&&&&&\\
&&\bullet &&&
\end{tikzcd}
\tab\item
\begin{tikzcd}[column sep=tiny, row sep=tiny]
&&\bullet \ar[dll] \ar[drr] \ar[d] \ar[ddddll, dotted, dash, bend right=25] \ar[ddr, dotted, dash, bend left=25] &&\\
\bullet \ar[ddd] &&\bullet \ar[dl] \ar[dr] \ar[dd, dotted, dash] &&\bullet \ar[dl]\\
&\bullet \ar[dr] \ar[ddl] &&\bullet \ar[dl] &\\
&&\bullet &&\\
\bullet &&&&
\end{tikzcd}
\tab\item 
\begin{tikzcd}[column sep=tiny, row sep=small]
&\bullet \ar[dl] \ar[dr] \ar[ddddrr, dotted, dash, bend right=15] &&\bullet \ar[dl] \ar[dr] \ar[dddd, dotted, dash] &&\bullet \ar[dl] \ar[dr] \ar[ddddll, dotted, dash, bend left=15] &\\
\bullet \ar[dddrrr] &&\bullet \ar[dddr] &&\bullet \ar[dddl] &&\bullet \ar[dddlll]\\
&&&&&&\\
&&&&&&\\
&&&\bullet &&&
\end{tikzcd}
\tab\item
\begin{tikzcd}[column sep=tiny, row sep=small]
&\bullet \ar[ddl] \ar[dr] \ar[ddddr, dotted, dash, bend right=25] &&\bullet \ar[dl] \ar[ddr] \ar[ddddl, dotted, dash, bend left=25] &&\\
&&\bullet \ar[d] &&&\\
\bullet \ar[ddrr] &&\bullet \ar[dd] &&\bullet \ar[ddll] &\bullet \ar[l]\\
&&&&&\\
&&\bullet &&&\\
\end{tikzcd}
\tab\item
\begin{tikzcd} [column sep=tiny, row sep=tiny]
&\bullet \ar[dr] \ar[dddl] \ar[dddddd, dotted, dash] &&\\
&&\bullet \ar[d] &\\
&&\bullet \ar[dd] &\\
\bullet \ar[dddr] &&&\\
&&\bullet \ar[d] &\\
&&\bullet \ar[dl] \ar[r] &\bullet \\
&\bullet &&
\end{tikzcd}
\tab\item 
\begin{tikzcd} [column sep=tiny, row sep=tiny]
&&\bullet \ar[d]&\\
&&\bullet \ar[d]&\\
&&\bullet \ar[d]&\\
&&\bullet \ar[dr] \ar[dl] \ar[dd, dotted, dash]&\\
\bullet \ar[r] &\bullet \ar[dr] &&\bullet \ar[dl]\\
&&\bullet &
\end{tikzcd}
\tab\item
\begin{tikzcd} [column sep=tiny, row sep=tiny]
&&\bullet &\\
&&\bullet \ar[d] \ar[u] &\\
&&\bullet \ar[d]&\\
&&\bullet \ar[dr] \ar[dl] \ar[dd, dotted, dash]&\\
\bullet \ar[r] &\bullet \ar[dr] &&\bullet \ar[dl]\\
&&\bullet &
\end{tikzcd}
\tab\item
\begin{tikzcd} [column sep=tiny, row sep=tiny]
&&&\bullet \ar[dr] \ar[ddl] \ar[dddd, dotted, dash] &&\\
&&&&\bullet \ar[dd] &\\
\bullet &\bullet \ar[l] &\bullet \ar[l] \ar[ddr] &&&\\
&&&&\bullet \ar[dl] \ar[r] &\bullet \\
&&&\bullet &&
\end{tikzcd}
\tab\item 
\begin{tikzcd}[column sep=tiny, row sep=tiny]
&&&\bullet \ar[d] &\\
&\bullet \ar[dl] \ar[ddr] \ar[ddddr, dotted, dash, bend right=25] &&\bullet \ar[ddl] \ar[ddr] \ar[ddddl, dotted, dash, bend left=25] &\\
\bullet \ar[dd] &&&&\\
&&\bullet \ar[dd] &&\bullet \ar[ddll]\\
\bullet \ar[drr] &&&&\\
&&\bullet &&\\
\end{tikzcd}
\tab\item
\begin{tikzcd} [column sep=tiny, row sep=tiny]
&&&\bullet \ar[dr] \ar[ddl] \ar[dddd, dotted, dash] &&\\
&&&&\bullet \ar[dd] &\\
\bullet \ar[r] &\bullet &\bullet \ar[l] \ar[ddr] &&&\\
&&&&\bullet \ar[dl] \ar[r] &\bullet \\
&&&\bullet &&
\end{tikzcd}
\tab\item
\begin{tikzcd}[column sep=tiny, row sep=tiny]
&&&\bullet &\\
&\bullet \ar[dl] \ar[ddr] \ar[ddddr, dotted, dash, bend right=25] &&\bullet \ar[u] \ar[ddl] \ar[ddr] \ar[ddddl, dotted, dash, bend left=25] &\\
\bullet \ar[dd] &&&&\\
&&\bullet \ar[dd] &&\bullet \ar[ddll]\\
\bullet \ar[drr] &&&&\\
&&\bullet &&\\
\end{tikzcd}
\tab\item 
\begin{tikzcd} [column sep=tiny, row sep=tiny]
&&\bullet \ar[dr] \ar[ddl] \ar[dddd, dotted, dash] &&\\
&&&\bullet \ar[d] &\\
\bullet &\bullet \ar[l] \ar[ddr] &&\bullet \ar[d] &\\
&&&\bullet \ar[dl] \ar[r] &\bullet \\
&&\bullet &&
\end{tikzcd}
\tab\item
\begin{tikzcd}[column sep=tiny, row sep=tiny]
&&&\bullet \ar[d] &\\
&&&\bullet \ar[d] &\\
&\bullet \ar[ddl] \ar[ddr] \ar[ddddr, dotted, dash, bend right=25] &&\bullet \ar[ddl] \ar[ddr] \ar[ddddl, dotted, dash, bend left=25] &\\
&&&&\\
\bullet \ar[ddrr] &&\bullet \ar[dd] &&\bullet \ar[ddll]\\
&&&&\\
&&\bullet &&\\
\end{tikzcd}
\tab\item
\begin{tikzcd} [column sep=tiny, row sep=tiny]
&\bullet \ar[dr] \ar[dl] \ar[ddd, dotted, dash] &&\\
\bullet \ar[d] &&\bullet \ar[d] \ar[r] &\bullet \\
\bullet \ar[dr] &&\bullet \ar[dl] &\\
&\bullet \ar[d] &&\\
&\bullet &&\\
\end{tikzcd}
\tab\item 
\begin{tikzcd} [column sep=tiny, row sep=tiny]
&\bullet \ar[d]&&\\
&\bullet \ar[dr] \ar[dl] \ar[ddd, dotted, dash] &&\\
\bullet \ar[d] &&\bullet \ar[d] & \\
\bullet \ar[dr] &&\bullet \ar[dl] &\bullet \ar[l]\\
&\bullet &&
\end{tikzcd}
\tab\item
\begin{tikzcd} [column sep=tiny, row sep=tiny]
&\bullet &&\\
&\bullet \ar[dr] \ar[dl] \ar[ddd, dotted, dash] \ar[u] &&\\
\bullet \ar[d] &&\bullet \ar[d] & \\
\bullet \ar[dr] &&\bullet \ar[dl] &\bullet \ar[l]\\
&\bullet &&
\end{tikzcd}
\tab\item
\begin{tikzcd} [column sep=tiny, row sep=tiny]
&\bullet \ar[ddl] \ar[dr] \ar[ddddr, dotted, dash, bend right=25] &&\bullet \ar[dl] \ar[ddr] \ar[ddddl, dotted, dash, bend left=25] &&\\
&&\bullet \ar[d] &&&\\
\bullet \ar[ddrr] &&\bullet \ar[dd] &&\bullet \ar[ddll] \ar[r] &\bullet \\
&&&&&\\
&&\bullet &&&\\
\end{tikzcd}
\tab\item 
\begin{tikzcd} [column sep=tiny, row sep=tiny]
&&&&\bullet \ar[dll] \ar[d] \ar[ddr] \ar[dddl, dotted, dash, bend right=25] \ar[dddd, dotted, dash, bend left=15] &\\
\bullet &\bullet \ar[l] &\bullet \ar[l] \ar[ddr] &&\bullet \ar[ddl] &\\
&&&&&\bullet \ar[ddl] \\
&&&\bullet \ar[dr] &&\\
&&&&\bullet &
\end{tikzcd}
\tab\item
\begin{tikzcd} [column sep=tiny, row sep=tiny]
&&&\bullet \ar[dll] \ar[d] \ar[ddrr] \ar[dddl, dotted, dash, bend right=25] \ar[dddddr, dotted, dash, bend left=15] &&\\
\bullet &\bullet \ar[l] \ar[ddr] &&\bullet \ar[ddl] &&\\
&&&&&\bullet \ar[dddl] \\
&&\bullet \ar[dr] &&&\\
&&&\bullet \ar[dr] &&\\
&&&&\bullet&
\end{tikzcd}
\tab\item
\begin{tikzcd} [column sep=tiny, row sep=tiny]
&&\bullet \ar[dl] \ar[dr] \ar[dd, dotted, dash] &&&\\
&\bullet \ar[dl] \ar[dr] \ar[dddr, dotted, dash, bend right=25] &&\bullet \ar[dddl, dotted, dash, bend left=10] \ar[dr] \ar[dl] &&\\
\bullet \ar[ddrr] &&\bullet \ar[dd] &&\bullet \ar[ddll] &\bullet \ar[l]\\
&&&&&\\
&&\bullet &&&
\end{tikzcd}
\tab\item
\begin{tikzcd} [column sep=tiny, row sep=small]
&&&\bullet \ar[dll] \ar[drr] \ar[d] \ar[ddl, dotted, dash, bend right=25] \ar[ddd, dotted, dash, bend left=25] &&&\\
\bullet &\bullet \ar[l] \ar[dr] &&\bullet \ar[dl] &&\bullet \ar[ddll] \ar[r] &\bullet \\
&&\bullet \ar[dr] &&&&\\
&&&\bullet &&&
\end{tikzcd}
\tab\item
\begin{tikzcd} [column sep=tiny, row sep=small]
&&&\bullet \ar[dll] \ar[drr] \ar[d] \ar[ddd, dotted, dash, bend right=25] \ar[ddd, dotted, dash, bend left=25] &&&\\
\bullet \ar[d] &\bullet \ar[l] \ar[ddrr] &&\bullet \ar[dd] &&\bullet \ar[ddll] \ar[r] &\bullet \\
\bullet &&&&&&\\
&&&\bullet &&&
\end{tikzcd}
\tab\item
\begin{tikzcd} [column sep=tiny, row sep=tiny]
&&\bullet \ar[dr] \ar[dl] \ar[dd, dotted, dash]&&\\
\bullet \ar[r] &\bullet \ar[dr] &&\bullet \ar[dl] \ar[r] &\bullet \ar[d] \\
\bullet \ar[u] &&\bullet &&\bullet
\end{tikzcd}
\tab\item
\begin{tikzcd} [column sep=tiny, row sep=tiny]
&&\bullet \ar[dr] \ar[dl] \ar[dd, dotted, dash]&&\\
\bullet \ar[r] &\bullet \ar[dr] &&\bullet \ar[dl] \ar[r] &\bullet \\
\bullet \ar[u] &&\bullet &&\bullet \ar[u]
\end{tikzcd}
\tab\item
\begin{tikzcd} [column sep=tiny, row sep=tiny]
&&\bullet \ar[dr] \ar[dl] \ar[dd, dotted, dash]&&\\
\bullet \ar[d] \ar[r] &\bullet \ar[dr] &&\bullet \ar[dl] \ar[r] &\bullet \\
\bullet &&\bullet &&\bullet \ar[u]
\end{tikzcd}
\tab\item
\begin{tikzcd} [column sep=tiny, row sep=tiny]
&&\bullet \ar[dr] \ar[dl] \ar[dd, dotted, dash]&&\\
\bullet \ar[d] \ar[r] &\bullet \ar[dr] &&\bullet \ar[dl] \ar[r] &\bullet \ar[d]\\
\bullet &&\bullet &&\bullet 
\end{tikzcd}
\end{inparaenum}

\end{theorem}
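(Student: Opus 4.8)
The plan is to follow exactly the computational method that was used in Sections~\ref{sec6} and \ref{sec7} for the Dynkin cases $\mathbb{E}_7$ and $\mathbb{E}_8$, specialised to the Euclidean type $\widetilde{\mathbb{E}}_7$ via the ANS framework set up earlier in this section. First I would invoke Theorem~\ref{ANS} (together with the preceding proposition removing the simply-connectedness hypothesis): to prove that each algebra $K\Delta$ listed in the statement is a PHI algebra of the ANS family of type $\widetilde{\mathbb{E}}_7$, it suffices to exhibit a representation-infinite tilted algebra $B$ of type $\widetilde{\mathbb{E}}_7$ with $T(K\Delta)\cong T(B)$; and conversely, by the Fern\'andez proposition on cutting sets, every algebra $K\Delta$ obtained by a cutting set $\Sigma$ of $T(A)$, for $A$ a schurian concealed algebra of type $\widetilde{\mathbb{E}}_7$, satisfies $T(K\Delta)\cong T(A)$, hence lies in the family.

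The core of the proof is then the enumeration step, carried out according to the script stated just before Lemma~\ref{corteAn}: run through the schurian concealed algebras $A$ of Euclidean type $\widetilde{\mathbb{E}}_7$ drawn from the Happel--Vossieck list as catalogued in Chapter~XIV of \cite{sim-sko2} (the relevant frames $\mathcal{F}r$); for each admissible operation on each frame, compute the quiver $Q_{T(A)}$ using Fern\'andez's theorem on the quiver of a trivial extension, determine the elementary cycles and the relations of type $2$ of $T(A)$; feed this data into the computer program described in Section~\ref{sec6} to obtain all cutting sets $\Sigma$ such that $KQ_{T(A)}/\langle I_{T(A)}\cup\Sigma\rangle$ is an incidence algebra; and finally discard the hereditary solutions and the concealed algebras of type $\widetilde{\mathbb{E}}_7$ themselves (which always appear as a trivial solution), keeping precisely the non-hereditary, non-concealed incidence algebras. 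As in the proofs of Theorems~\ref{teoe7} and of the $\widetilde{\mathbb{D}}_n$ cases, I would present only one representative trivial extension in detail --- writing out its relations $r1,r2,\dots$ of type $2$ and its elementary cycles $C_1,C_2,\dots$, and stating which solutions of the list the program returns --- and remark that the remaining frames and admissible operations are handled in the same fashion.

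Two technical devices reduce the bookkeeping. By Lemma~\ref{cortephia}, a cutting set can only produce a non-hereditary algebra when some arrow of $\Sigma$ lies in (at least) two elementary cycles, so I would first restrict attention to those trivial extensions having a pair of elementary cycles sharing an arrow, discarding the rest immediately; and Lemmas~\ref{cortenphia} and \ref{corteAn} (the latter controlling the $A_n$-type strands that get attached at the branch vertices, exactly the ``$\text{odd number}$'' braces appearing in the list) rule out, without recourse to the program, those choices of $\Sigma$ that place an arrow in the wrong position and hence fail to give an incidence algebra. Using the lemma on $T(A)^{op}\cong T(A^{op})$ I would also only treat one algebra from each pair $(A,A^{op})$, which explains why some entries of the list come with their opposite algebras.

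The main obstacle is not conceptual but the sheer combinatorial volume: type $\widetilde{\mathbb{E}}_7$ has many more schurian concealed algebras and admissible operations than the $\widetilde{\mathbb{D}}_4$, $\widetilde{\mathbb{D}}_5$, $\widetilde{\mathbb{E}}_6$ cases, the hereditary-algebra case (no relations, long $A_n$-tails of variable length) spawning families of trivial extensions rather than single quivers, so one must organise the case analysis so that whole infinite families collapse to a uniform computation --- this is precisely where Lemma~\ref{corteAn} does the heavy lifting, letting a single inductive argument on the number of elementary cycles cover an entire tail. Verifying that the resulting list of eighty-odd quivers with relations is complete and non-redundant (no omissions, no duplicates up to isomorphism and up to $K\Delta\mapsto K\Delta^{op}$) is the delicate part, and I would lean on the exhaustive output of the program for that, exactly as was done for $\mathbb{E}_7$ and $\mathbb{E}_8$.
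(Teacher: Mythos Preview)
Your proposal is correct and follows essentially the same approach as the paper: enumerate the schurian frames $\mathcal{F}r11,\ldots,\mathcal{F}r32$ of concealed algebras of type $\widetilde{\mathbb{E}}_7$ from the Happel--Vossieck list, compute $T(A)$ for each admissible operation, filter using Lemma~\ref{cortephia} and Lemma~\ref{cortenphia}, and feed the remaining data into the program to extract the non-hereditary incidence cuts, illustrating the procedure on $\mathcal{F}r11$. One small slip: the ``odd number'' braces you refer to belong to the $\widetilde{\mathbb{D}}_n$ list (Theorem~\ref{dntil}), not to the $\widetilde{\mathbb{E}}_7$ list, and accordingly Lemma~\ref{corteAn} plays no explicit role in the paper's $\widetilde{\mathbb{E}}_7$ argument---the filtering there is done with Lemmas~\ref{cortephia} and \ref{cortenphia} alone.
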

\begin{proof} \label{provae7til}
We consider the list of frames of concealed algebras, see \cite{ass-sko2}. In the frame part $\widetilde{\mathbb{E}}_7$  has $23$ frames: $\mathcal{F}r11$, $\dotsc$, $\mathcal{F}r32$. As we explained at the beginning of this section, we will use only schurian frames. For each frame, we will make the admissible operation resulting in a concealed schurian algebra $A$. Next, we will put the necessary information on the trivial extension $T(A)$ in the computer program. Finally, the members of this ANS family of type  $\widetilde{\mathbb{E}}_7$ will be the algebras $K\Delta$ defined by a cutting sets of $T(A)$. We will show the work on the case of $\mathcal{F}r11$, the necessary work on the other cases follows an analogous path.

We apply the admissible operation $1$ in the Euclidean frame $\widetilde{\mathbb{E}}_7$, resulting in the hereditary algebra of type $\widetilde{\mathbb{E}}_7$, this algebra is a concealed algebra. Thanks to the theorem \ref{ANS}, we will analyze the trivial extension of the concealed algebra by noting the existence of some cutting set which defines a non-hereditary PHI algebra. For this, in the trivial extension, we must have at least two elementary cycles that have at least one common arrow, according to lemma \ref{cortephia}.

Therefore, we must have two maximal paths in hereditary algebras that have at least length $2$ and one arrow in common. Up to dual graphs, the possibilities are:

\begin{tikzcd} [column sep=scriptsize, row sep=scriptsize]
&\bullet \ar[dl] \ar[drrr] &&&\\
***  \ar[r, dashed] &\bullet \ar[u] &\bullet \ar[l] &\bullet \ar[l] &\bullet \ar[l]
\end{tikzcd}
\hfill
\begin{tikzcd} [column sep=scriptsize, row sep=scriptsize]
&&&\bullet \ar[dlll] \ar[dr] &\\
\bullet	\ar[r] &\bullet \ar[r] &\bullet \ar[r] &\bullet \ar[u] &*** \ar[l, dashed]
\end{tikzcd}
\hfill
\begin{tikzcd} [column sep=scriptsize, row sep=scriptsize]
&\bullet \ar[dl] \ar[dr] &\\
*** \ar[r, dashed] &\bullet \ar[u] &*** \ar[l, dashed] 
\end{tikzcd}

The symbol $***$ can be replaced by 
\begin{tikzcd}[column sep=scriptsize, row sep=scriptsize]
\bullet \ar[r, bend right] &\bullet \ar[l, bend right]
\end{tikzcd}, or
\begin{tikzcd}[column sep=scriptsize, row sep=scriptsize]
\bullet \ar[r, bend right] &\bullet \ar[l, bend right] \ar[r, bend right] &\bullet \ar[l, bend right]
\end{tikzcd}, or
\begin{tikzcd}[column sep=scriptsize, row sep=scriptsize]
\bullet \ar[rr, bend right] &\bullet \ar[l] &\bullet \ar[l]
\end{tikzcd}, or
\begin{tikzcd}[column sep=scriptsize, row sep=scriptsize]
\bullet \ar[r] &\bullet \ar[r] &\bullet \ar[ll, bend right]
\end{tikzcd}.

We will investigate all the trivial extensions of the case 1. We will put aside the trivial extensions that satisfy the hypothesis of the lemma \ref{cortenphia}:

One last remark, the path
\begin{tikzcd}
\bullet \ar[r, dashed, dash] &\bullet
\end{tikzcd} 
has length one or two.

Therefore, as a consequence of this filter, we obtain only the trivial extension, on the right of the picture below:

\begin{tikzcd} [column sep=tiny, row sep=normal]
&&&\bullet &&&\\
\bullet	\ar[r] &\bullet \ar[r] &\bullet \ar[r] &\bullet \ar[u] &\bullet \ar[l] &\bullet \ar[l] &\bullet \ar[l]
\end{tikzcd}\hfill%
\begin{tikzcd} [column sep=tiny, row sep=normal]
&&&\bullet \ar[drrr] \ar[dlll] &&&\\
\bullet	\ar[r] &\bullet \ar[r] &\bullet \ar[r] &\bullet \ar{u}{\alpha} &\bullet \ar[l] &\bullet \ar[l] &\bullet \ar[l]
\end{tikzcd}

A direct application of the lemma \ref{cortephia}, we have the cutting set $\{ \alpha \}$ that defines the solution 1.
\end{proof}

A work for the future is to apply the theorem \ref{ANS} in frames $\mathcal{F}r33$, $\dotsc$, $\mathcal{F}r149$ from the list of Happel and Vossieck \cite{hap-vos} with an analogous demonstration of the previous theorem. This would solve the description of PHI algebras of the ANS family of type  $\widetilde{\mathbb{E}}_8$.

\appendix

\section{The source code of the main page of the site} \label{ape:prog}

\begin{verbatim}
<!DOCTYPE html>
<html>
<head>
<title> 
The cutting sets of given trivial extension that 
define incidence algebras
</title>
<meta name="description" content="The cutting sets">
<meta http-equiv="Content-Type" content="text/html; 
charset=utf-8"> 
</head>
<body>
<script src="app.js"></script>
<h1> 
The cutting sets of given trivial extension that 
define incidence algebras
</h1> 
<h2>
Complete the form below to calculate the cutting sets
</h2>
<form name="Untitled-2" method="post">
	<fieldset>
		<legend>
			Number of relations of type 2		
		</legend>
			<table cellspacing="3">
  				<tr>
   					<td>
    					<label for="rels">
    						Relations
    					</label>
   					</td>
   					<td align="left">
    					<input type="text" name="rels" id="relsid" 
required="required" placeholder=0>
					</td>
				</tr>
			</table>
	</fieldset>
	<fieldset>
		<legend>
			Number of different arrows in these relations		
		</legend>
			<table cellspacing="3">
  				<tr>
   					<td>
    					<label for="flechas">
    						Arrows
    					</label>
   					</td>
   					<td align="left">
    					<input type="text" name="flechas" 
id="flechasid" required="required" placeholder=0>
					</td>
				</tr>
			</table>
	</fieldset>
	<fieldset>
		<legend>
			Number of elementary cycles		
		</legend>
			<table cellspacing="3">
  				<tr>
   					<td>
    					<label for="ciclos">
    						Cycles
    					</label>
   					</td>
   					<td align="left">
    					<input type="text" name="ciclos" id="ciclosid" 
required="required" placeholder=0>
					</td>
				</tr>
			</table>
	</fieldset>
	<input type="button" id="enviarid" onclick="Enviar();" 
value="Send">
	<input type="reset" onClick="Limpar();" value="Clean">
</form>
</body>
</html>
\end{verbatim}

\section{The source code for app.js}
\begin{verbatim}
function Enviar() {
	// input variables
	flechas = document.getElementById("flechasid");
	relacoes = document.getElementById("relsid");  
	ciclos = document.getElementById("ciclosid");
	// previous variables only reading
	document.getElementById('flechasid').readOnly = true;
	document.getElementById('relsid').readOnly = true;
	document.getElementById('ciclosid').readOnly = true;
	// transform these variables into integers 
	var fls = parseInt(flechas.value, 10);
	var rels = parseInt(relacoes.value, 10);
	var cics = parseInt(ciclos.value,10);	
	// verification of incoming information
	if (fls > 0 && fls < 30 && cics > 0 && cics < 10 &&
rels > 0 && rels < 50) { 
		// preparation for table creation
		var cab1 = [];
		var cab2 = [];
		var lin = [];
		for (var i=1; i<=rels; i++){
			cab1.push('r'+i);
		};
		for (var i=1; i<=cics; i++){
			cab2.push('c'+i);
		};
		for (var i=1; i<=fls; i++){
			lin.push('alfa'+i);
		};
		var cab = cab1;
		for (var i=0; i < cab2.length; i++){
			cab.push(cab2[i]);
		};
		var data = {
			celulas: cab,
			idLinhas: lin
		};
		var table = document.createElement('table');
		// table header
		var tr = document.createElement('tr');
		var th = document.createElement('th');
		tr.appendChild(th); 
		data.celulas.forEach(function (celula) {
  			var th = document.createElement('th');
    		th.innerHTML = celula;
    		tr.appendChild(th);
		});
		table.appendChild(tr);
		// body
		data.idLinhas.forEach(function (id) {
    		// create new line
    		var tr = document.createElement('tr');
    		tr.dataset.id = id;
    		// first cell with line name
    		var td = document.createElement('td');
    		td.innerHTML = id;
    		tr.appendChild(td);
    		// scroll array of TDs
    		data.celulas.forEach(function (celula) {
        		var td = document.createElement('td');
        		var input = document.createElement('input');
        		input.type = 'checkbox';
        		input.name = celula + [];
        		td.appendChild(input);
        		tr.appendChild(td);
			});
    		table.appendChild(tr);
		});
		// add table to document
		document.body.appendChild(table);
		// disable submit button
		document.getElementById("enviarid").disabled = true;
		// create process button
  		var botao = document.createElement('input');
  		botao.type = 'button';
 		botao.value = 'Ready!';
  		// process
  		botao.onclick = function() {
			// function to add the vectors
			Array.prototype.add = function( b ) {
    			var a = this,
        		c = [];
    			if( Object.prototype.toString.call( b ) === 
    			'[object Array]' ) {
        			if( a.length !== b.length ) {
            		throw "Array lengths do not match.";
        			} else {
            		for( var i = 0; i < a.length; i++ ) {
                		c[ i ] = a[ i ] + b[ i ];
            		}
        			}
    			} else if( typeof b === 'number' ) {
        			for( var i = 0; i < a.length; i++ ) {
            			c[ i ] = a[ i ] + b;
        			}
    			}
    			return c;
			};
			// function to make the combination
			function combinations( inpSet, qty ) {
  				var inpLen = inpSet.length;
  				var combi = [];
  				var result = []; 
  				var recurse = function( left, right ) {
    				if ( right === 0 ) {
      					result.push( combi.slice(0) );
    				} else {
      				for ( var i = left; i <= inpLen - right; i++ ) {
        					combi.push( inpSet[i] );
        					recurse( i + 1, right - 1 );
        					combi.pop();
      					}
    				}
  				};
  				recurse( 0, qty );
  				return result;
			}
  			// create data object
  			var et = {  };
  			for (var i=0; i < lin.length; i++){
				et['alfa'+(i+1)]=[];
  			};
			// transform checkbox in 0 or 1
			for (var i in cab){
				var cols = document.getElementsByName(cab[i]);
				var temp = [];
				for (var j = 0, l = cols.length; j < l; j++) {
 					if (cols[j].checked) {
						et['alfa'+(j+1)].push(1);
					} else {
						et['alfa'+(j+1)].push(0);
					};
				};
			};
			// column with the total of each arrow in the cycles
			for (var i in et){
    			var temp = [];
    			for (var j=rels; j < rels + cics; j++){
        			temp.push(et[i][j]);
    			}
    			et[i].push(
            	temp.reduce(
               		function(prev, cur) {
                    		return prev + cur;
                	}
            	)
    			);
			}
			// create the variable mat
			var mat = [];
			for (var i in et){
    			mat.push(et[i]);
			}
			// reference array with total column TotRe2
			var TotRe2 = [];
			for (var i in et){
    			var temp = [];
    			temp.push(i);
    			var temp2 = [];
    			for (var j=0; j < rels; j++){
        			temp2.push(et[i][j]);
    			}
    			temp.push(
            	temp2.reduce(
               	function(prev, cur) {
                  	return prev + cur;
                	}
            	)
    			);
    			TotRe2.push(temp);
			}
			// descending organization of TotRe2
			TotRe2.sort(function (a, b) {
  				if (a[1] < b[1]) {
					return 1;
				}
				if (a[1] > b[1]) {
					return -1;
				}
				// a must be equal to b
				return 0;
			});
			// selection of the arrow with more presence in the 
relations of type 2 and variables
			var ref = TotRe2[0][0];
			var corte = cics - et[ref][rels+cics];
			var soma = et[ref];
			var sol = [ref];
			// cutting set of trivial extensial
			while (cics * TotRe2[0][1] >= rels){
				// separate the arrows that are not in the cycles 
of the ref arrow
				for (var j= rels; j < rels + cics; j++){
					if (et[ref][j] !== 0){
						for (var i in mat){
							if (mat[i][j] !== 0 && mat[i] !== 'fora' && i !== ref){
								mat[i]='fora';
							}
						}
					}
				}
				// function that returns false if sum is not solution
				function solucao(){
					for (var i=0; i < rels; i++){
						if (soma[i] === 0){
							return false;						
						}					
					}
					for (var i=rels; i < rels + cics; i++){
						if (soma[i] !== 1){
							return false;						
						}	
					}
					return true;
				} 				
				// see if the initial sum is a solution 
				if (corte === 0){
					if (solucao()){
						alert('Answer ' + sol);
					} else {
						alert("Some typing error"); 
					}
				// otherwise go to the cutting process
				} else {
					// variables to make the combination
					var x = [];
					for (i in mat){
						if (mat[i] !== 'fora' && i !== ref){
							var aux = parseInt(i,10) + 1;
							x.push('alfa' + aux);
						}
					}
					// cutting set
					while (corte > 0){
						// combination
	        			var d = combinations(x,corte);
						// sum		
						for (var i=0; i < d.length; i++){
							for (var j=0; j < corte; j++){
								var a = et[d[i][j]];
								var c = soma.add( a );
								soma = c;
							}
            			// see if the sum is a solution 
            			if (solucao()){
							// organize the solution
							for (var j=0; j < corte; j++){
								sol.push(d[i][j]);
							}				
							// show the solution
							alert('Answer ' + sol);
						}
							soma = et[ref];
							sol = [ref];
						}
						corte = corte - 1;
					}
				}
				// preparation for a new reference
				et[ref] = 'fora';
				var mat = [];
				for (var i in et){
					mat.push(et[i]);
				}
				// remove the ref from the array to put the new ref
				TotRe2.shift();
				ref = TotRe2[0][0];
				corte = cics - et[ref][rels+cics];
				soma = et[ref];
				sol = [ref];
				if (TotRe2.length === 1){
					break;
				}
			}
  		};
		// Add that button
		document.body.appendChild(botao);
	} else {
		alert('Invalid information!');
		// free previous variables 
		document.getElementById('flechasid').readOnly = false;
		document.getElementById('relsid').readOnly = false;
		document.getElementById('ciclosid').readOnly = false;
	};
} 
function Limpar(){
	history.go(0);
	document.getElementById("enviarid").disabled = false;
}
\end{verbatim} 

\section*{Acknowledgements}

The first named author has been supported by the tematic project of Fapesp 2014/09310-5. 

The second named author acknowledges support from CAPES, in the form of a PhD Scholarship, PhD made at programa de Matem\'atica, IME-USP, (Brazil).

\bibliography{ref}

\end{document}